\newtheorem{theorem}{Theorem}[section]
\newtheorem{lemma}[theorem]{Lemma}
\newtheorem{corollary}[theorem]{Corollary}
\newtheorem{proposition}[theorem]{Proposition}
\newtheorem{definition}[theorem]{Definition}
\newtheorem{example}[theorem]{Example}
\newtheorem{remark}[theorem]{Remark}
\newtheorem{claim}[theorem]{Claim}
\newcommand{\R}{\mathbb R}
\newcommand{\Q}{\mathbb Q}
\begin{document}

\title[ Ambient Lipschitz Geometry of Normally Embedded Surface Germs
 ]{Ambient Lipschitz Geometry of Normally Embedded Surface Germs}

\author[]{Lev Birbrair}
\address{Departamento de Matem\'atica, Universidade Federal do Cear\'a
(UFC), Campus do Pici, Bloco 914, Cep. 60455-760. Fortaleza-Ce,
Brasil}
\address{Department of Mathematics, Jagiellonian University, Profesora Stanisława Łojasiewicza 6, 30-348, Kraków, Poland}
\email{lev.birbrair@gmail.com}

\author[]{Davi Lopes Medeiros}
\address{Departamento de Matem\'atica, Universidade Federal do Cear\'a
	(UFC), Campus do Pici, Bloco 914, Cep. 60455-760. Fortaleza-Ce,
	Brasil}
\email{profdavilopes@gmail.com}

\thanks{L. Birbrair: Research supported under CNPq 302056/2022-0 grant and by U1U/W16/NO/01.01 grant by Jagiellonean University}
\thanks{D. L. Medeiros: Research supported by the Coordenação de Aperfeiçoamento de Pessoal de Nível Superior – Brasil (CAPES) – Finance Code 001, and by the Serrapilheira Institute (grant number Serra– R-2110-39576)}

\date{\today}

\keywords{definable surfaces, Lipschitz geometry, normal embedding, Lipschitz Extensions}
\subjclass[2010]{51F30, 14P10, 03C64}
\begin{abstract}
We study the ambient Lipschitz geometry of semialgebraic surfaces. It was discovered in \cite{BBG} that ambient Lipschitz geometry is different from the outer Lipschitz geometry.  We show that two surface germs in $\R^3$, Lipschitz normally embedded and with isolated singularity, are ambient bi-Lipschitz equivalent if, and only if, they are outer bi-Lipschitz equivalent and ambient topologically equivalent.
\end{abstract}

\maketitle

\tableofcontents

\section{Introduction}\label{sec:intro}

Lipschitz geometry of singularities is an intensively developing part of singularity theory. The classification of singular sets up to bi-Lipschitz equivalence is an interesting research topic, since it is closely related to topological, differential and analytical equivalences.

There are two natural metrics on a connected set $X\subset\R^n$: the \emph{inner metric}, where the distance between two points in $X$ is the length of the shortest path in $X$ connecting these points, and the \emph{outer metric}, where the distance between two points of $X$ is their euclidean distance in $\R^n$. A set $X$ is called \emph{Lipschitz normally embedded} (see \cite{birbrair2000normal}) if its inner and outer metrics are equivalent.

There are three natural equivalence relations associated with those metrics. Two sets $X \subset \R^m$ and $Y \subset \R^n$ are \emph{inner} (resp., \emph{outer}) \emph{bi-Lipschitz equivalent} if there is an inner (resp., outer) bi-Lipschitz homeomorphism $h:X\to Y$. The sets $X$ and $Y$ are \emph{ambient bi-Lipschitz equivalent} if the homeomorphism $h:X\to Y$ can be extended to a bi-Lipschitz homeomorphism $H: \R^m \to \R^n$ on the ambient space. Ambient equivalence is stronger than  outer equivalence, and  outer equivalence is stronger than  inner equivalence.  The finiteness theorems of Mostowski \cite{Mostowski} and Valette \cite{valette2005Lip} show that there are finitely many ambient bi-Lipschitz equivalence classes in any definable family.

The most recent contribution to Lipschitz geometry is the so-called metric knot theory.   Birbrair and Gabrielov \cite{BG} studied the relationship between ambient and outer bi-Lipschitz equivalence. This issue is relevant when two semialgebraic set germs have the same ambient topology. Surprisingly, they found infinitely many  semialgebraic surface singularity germs in $\R^3$, with the same ambient topology and outer metric structure, but which are not ambient bi-Lipschitz equivalent. In $\R^4$ the problem is even richer and closely related to the classical knot theory. Birbrair, Gabrielov and Brandenbursky \cite{BBG} found another result, relating surface singularity germs in  $\R^4$ and knots: For any knot $K$ one can construct a surface singularity germ $X_K$ such that\\
i) the link of $X_K$ at the origin is a trivial knot;\\
ii) all germs $X_K$ are outer bi-Lipschitz equivalent;\\
iii) $X_{K}$ and $X_{L}$ are ambient bi-Lipschitz equivalent only if the knots $K$ and $L$ are isotopic.

On the other hand,   Birbrair, Fernandes and Jelonek \cite{extention} proved that if two semialgebraic sets $X_1$ and $X_2$ in $\mathbb{R}^{2k+1}$ with codimension at least $k+1$ are outer bi-Lipschitz equivalent, then they are ambient bi-Lipschitz equivalent.

The main result of this paper is the following. If $X\subset \R^3$ is a germ of a real semialgebraic LNE surface, with isolated singularity, then the ambient Lipschitz geometry of $X$ is completely determined by the topology and outer Lipschitz geometry. In other words, two germs $X, Y \subset \R^3$ are ambient Lipschitz equivalent if they are ambient topologically equivalent and outer Lipschitz equivalent. Notice that the majority of the well known invariants in Lipschitz geometry are created in order to show that two given surfaces are NOT bi-Lipschitz equivalent (inner, outer or ambient). However, to establish whether two given semialgebraic germs are bi-Lipschitz equivalent is, in general, a much more difficult problem. For this purpose one must construct a ``vocabulary'' in order to classify such sets. Birbrair and Gabrielov  \cite{BG}  conjectured that Lipschitz normally embedded semialgebraic surfaces germs are ambient bi-Lipschitz equivalent if and only if they are outer bi-Lipschitz equivalent. The present paper proves this conjecture for germs of semialgebraic surfaces in $\R^3$ with isolated singularity. The proof is essentially constructive, as we build an ambient bi-Lipschitz map mainly in an effective way.

The paper is structured as follows. Section \ref{preliminaries} is devoted to the basic definitions and the known facts in Lipschitz geometry of surfaces. In Section 2.1 we define the notions of Lipschitz normal embedding (LNE), H\"older triangles, curvilinear triangles and horns. In Section 2.2 we recall the theorem of topological local conical structure. We also introduce Proposition \ref{colagem-bi-lip-inner}, which allows one to obtain an inner bi-Lipschitz map by gluing other bi-Lipschitz maps. These results are introduced in order to make the exposition self-contained.

In Section 3 we develop some technical lemmas in order to construct ambient bi-Lipschitz maps. Usually the link of a semialgebraic set at a point is an intersection
with a small sphere. However, in Section 3.1 we consider the plane link, which is the intersection of the set with a plane,
close to the singular point. We prove that for any semialgebraic surface there exists an ambient bi-Lipschitz homeomorphism, sending a germ of a surface into a cone with  vertex at this point and sending the usual links into their respective plane links.

In Section 3.2 we start to construct a ``vocabulary'' of bi-Lipschitz maps. We define a so-called translation map along an arc and a function dilatation map. The translation and dilatation maps move a germ $(X,x_0)$ into a cone with the same vertex $x_0$, leaving the  cone invariant. The translation map along an arc moves each plane link of $X$ by the vector corresponding to the intersection of the arc with this link. The dilatation map with respect to a function $f$ dilates the plane link of $X$ at distance $t$ from $x_0$ by the value $f(x)$. These maps are examples of semialgebraic ambient bi-Lipschitz maps and they will be used in the proof of the general result; Figure \ref{fig32} shows how those maps move the germ.

The idea of the proof of the main result is the following. One can divide the germ of the surface into a family of so-called synchronized H\"older triangles. For each of those triangles one can construct a semialgebraic ambient bi-Lipschitz map, defined in $\R^3$, such that the image of the triangle becomes linear on each plane link (we call them linear triangles). Then we obtain a germ, constructed as a union of linear triangles. The important part of the paper is a simplification of such germs. The main goal is to obtain a surface such that its plane link is either a segment or a union of three segments. Then one can prove that the germ of the surface is ambient bi-Lipschitz equivalent to a H\"older triangle in the first case, and to a horn in the second case.

This idea is realized in Sections 4--11. Section 4 is devoted to establishing the fundamental definitions and properties of synchronized triangles. A synchronized triangle is a H\"older triangle, defined as the graph of a semialgebraic function over a plane, passing through the singular point. This semialgebraic function can be considered as a family of semialgebraic functions, defined on a family of segments. A synchronized triangle is called $M$-bounded if all the derivatives of all the functions from the functions family are bounded by a constant $M$.

In Section 5 we prove that for any surface germ one can construct a so-called convex decomposition. It decomposes the surface into synchronized triangles, each generated by a family of convex functions.  Section 6 is devoted to the so-called ambient isotopy in curvilinear rectangles, which is a way to continually deform synchronized triangles, preserving the ambient Lipschitz structure. The technique used in this paper is closely related to \cite{birbrair2007K}.

In Section 7.1 we define linear triangles and their fundamental elements. Then in Section 7.2 we study the so-called kneadable triangles. Kneadable triangles are H\"older triangles where one can construct an ambient bi-Lipschitz isotopy between the identity map and the linear triangle, determined by the boundary arcs of the triangle (Figure \ref{fig33} explains this construction). In Section 8 we prove that every LNE surface germ can be decomposed into finitely many kneadable triangles. The LNE condition is essential in this step -- otherwise one cannot produce an ambient bi-Lipschitz map, transforming the surface into a finite union of linear triangles.

In Section 9 we study polygonal surfaces, i.e.~LNE surfaces such that the plane links are polygons. Such surfaces can be considered as unions of linear H\"older triangles, corresponding to the edges of the sections. Each such edge vanishes at the singular point with a rate. It was proved in \cite{birbrair1999} that the union of two H\"older triangles with different rates is inner bi-Lipschitz equivalent to a H\"older triangle with minimal vanishing order. We prove that if the union of the triangles is Lipschitz normally embedded, then the statement remains true for the ambient Lipschitz equivalence and for linear triangles. That is why the number of linear triangles with non-minimal order can be reduced. This way we prove that such surfaces are ambient bi-Lipschitz equivalent to surfaces where all the vanishing rates of all the edges are equal.

A polygonal surface may have a link homeomorphic to a segment, or  a link homeomorphic to a circle. Notice that we may suppose that all the triangles have the same order. In both cases the goal in Section 10 is to reduce the number of such triangles. We show that one can achieve this reduction by using an ambient bi-Lipschitz map restricted to a suitable region obtained from a triangulation of the link. Finally, we show that if the link is homeomorphic to a segment then the germ is ambient bi-Lipschitz equivalent to a H\"older triangle, and if the link is homeomorphic to a circle, then the corresponding reduction procedure makes the link  triangular, which is ambient bi-Lipschitz equivalent to a horn (or to a cone if the exponent is~1).

Finally, in Section 11 we prove the main result. For this purpose, we define an extended canonical tree, a combinatorial object that provides information about the inner Lipschitz structure of each connected component of the link, and the ambient topology of the surface. Then we prove that such extended canonical trees describe completely the ambient Lipschitz geometry of LNE surface germs with isolated singularity. In the end, we exhibit an example of two LNE surface germs in $\R^3$, with non-isolated singularity, that are outer bi-Lipschitz equivalent and topologically equivalent, but are not ambient bi-Lipschitz equivalent, showing that the main result is the best possible in $\R^3$.

We would like to thank Alexandre Fernandes, Edson Sampaio, \mbox{Maciej} Denkowski and Rodrigo Mendes for their interest in this work and for valuable conversations and suggestions.

\newpage

\section*{Notations List}\label{sec:notations}

\begin{itemize}
	\item $\|(x_1,...,x_n)\|=(x_1^2+...+x_n^2)^{\frac{1}{2}}$.
	
	\item $\{\gamma_u(t)\}_{0\le u \le 1}$: arc coordinate system of a synchronized triangle (see Definition \ref{arc-coord}).
	
	\item $\{\gamma_{u,v}(t)\}_{(u,v) \in [0,1]^2}$: arc coordinate system of a curvilinear rectangle (see Definition \ref{arc-coord-curv-rectangle}).
	
	\item $\overline{\gamma_1 \gamma_2}$: Linear triangle determined by $\gamma_1$ and $\gamma_2$ (see Definition \ref{linear-triangle}).
	
	\item $\overrightarrow{\gamma_1 \gamma_2}(t) = \frac{\gamma_2(t) - \gamma_1 (t)}{\| \gamma_2(t) - \gamma_1 (t) \|}$ (see Definition \ref{linear-triangle}).
	
	\item $\overrightarrow{\gamma_1 \gamma_2}(t) = \frac{\gamma_2(t) - \gamma_1 (t)}{\| \gamma_2(t) - \gamma_1 (t) \|}$ (see Definition \ref{linear-triangle}).
	
	\item $\angle \gamma_1 \gamma_2 \gamma_3 (t)$: angle formed by $\overrightarrow{\gamma_2 \gamma_1} (t)$ and $\overrightarrow{\gamma_2 \gamma_3}(t)$ (see Definition \ref{linear-triangle}). 
	
	\item $\angle \gamma_1 \gamma_2 \gamma_3$: angle formed by $\overrightarrow{\gamma_2 \gamma_1}$ and $\overrightarrow{\gamma_2 \gamma_3}$ (see Definition \ref{linear-triangle}).
	
	\item $(\gamma_1 \dots \gamma_n)$: (open or closed) polygonal surface germ (see Definition \ref{poly-surface-1}).
	
	\item $\mathbb{B}_r^{n}(p)=\{x\in \R^n \, ; \, \|x-p\|<r\}$.
	
	\item $C_{a}^{n+1}$, $C_{a}^{n+1}(t), C_{a}^{n+1}[t]$: see Definitions \ref{cone} and \ref{link-plano}.
	
	\item $d$, $d_{X}$: inner and outer metric on $X$ (see Definition \ref{Outer-metric}).
	
	\item $int(X)$, $\partial X$: set of interior points and set of boundary points of $X$, respectively.	
	
	\item $J_{\varphi}$: Jacobian of the map $\varphi: \R^m \to \R^n$.
	
	\item $\mathbb{S}_r^{n-1}(p)=\{x\in \R^n; \|x-p\|=r\}$.
	
	\item $\mathbb{S}_r^{n-1}=\mathbb{S}_r^{n-1}(0)$; $\mathbb{S}^{n-1}=\mathbb{S}_1^{n-1}(0)$.
	
	\item $sing(X)$: set of singular points of $X$ (see Remark \ref{Rem:smooth}).
	
	\item $T(\gamma_{1},\gamma_{2})$: H\"older triangle with boundary arcs $\gamma_1, \gamma_2$ (see Definition \ref{Holder-triangle}).
	
	\item $tord$ and $tord_X$: tangency order on outer and inner metric (see Definition \ref{tord}).
	
	\item $U_{a}^{n+1}$, $U_{a}^{n+1}(t)$: see Definition \ref{cone}.	
	
	\item $V(X)$: Valette link of $X$ (see Definition \ref{arc}).
	
	\item $X_t = X \cap \mathbb{S}_{t}^{n-1}$.
	
	\item $X(t)=X\cap C_a^{n+1}(t)$ and $X[t]=X\cap C_a^{n+1}[t]$: see Definition \ref{link-plano}.
	
	\item Let $f,g\colon (0,\varepsilon)\to [0,+\infty)$ be functions. We write $f\lesssim g$ if there is a constant $f(t)\leq C g(t)$ for all $t\in (0,\varepsilon)$. We write $f\approx g$ if $f\lesssim g$ and $g\lesssim f$. We write $f\ll g$ if $\lim\limits_{t\to 0^+} \frac{f(t)}{g(t)}=0$.
	
	\item Ambient bi-Lipschitz isotopy/isotopic: See Definitions \ref{amb-isotopy} and \ref{amb-isotopy-equiv}.	
	
	\item Arc: see Definition \ref{arc}.
	
	\item Bi-Lipschitz equivalent: see Definition \ref{outer-lip}.
	
	\item Bi-Lipschitz map: see Definitions \ref{Lip} and \ref{Inner-metric}.
	
	\item Canonical tree: see Definition \ref{canonical-tree}.	
	
	\item Curvilinear rectangle/region: see Definition \ref{curvilinear-rectangle}.

	\item Curvilinear triangle: see Definition \ref{curvilinear}.

	\item Extended canonical tree: see Definition \ref{extended-canonical}.
	
	\item Generating functions family: see Definition \ref{synch-triangle-def}.	
	
	\item H\"older triangle, Horn: see Definitions \ref{Holder-triangle} and \ref{horn}.
	
	\item Kneading envelope: see Definiton \ref{kneading-envelope} and Remark \ref{kneading-envelope-remark}.	
	
	\item Kneadable triangle in a supporting envelope: see Definitions \ref{kneadable} and \ref{supporting-envelope}.
	
	\item LNE, Lipschitz Normally Embedded: see Definition \ref{LNE}.
	
	\item Separating family of cones: see Definition \ref{separating-cones}.
	
	\item Synchronized/Convex Decomposition: see Propositions \ref {synch-decomp} and \ref{convex-decomp}.
	
	\item Synchronized triangle: see Definition \ref{synch-triangle-def}.
	
	\item Synchronized triangles aligned on boundary arcs: see Definition \ref{curvilinear-rectangle}.
\end{itemize}

\section{Preliminaries}\label{preliminaries}

\subsection{Basic Definitions}\label{BasicDefinition}

\begin{definition}\label{Lip}\normalfont
	Given two metric spaces $(X_1, d_1)$ and $(X_2, d_2)$, we say that a map $\varphi: X_1 \to X_2$ is \textit{bi-Lipschitz for the metrics $d_1$ and $d_2$} (or bi-Lipschitz, for short) if there is a real number $C \ge 1$ such that
	\begin{equation*}
		\frac{1}{C} \cdot d_1(p, q) \leq d_2(\varphi(p), \varphi(q)) \leq C \cdot d_1(p, q), \quad \forall p, q \in X_1.
	\end{equation*}
	For each $C \ge 1$ that satisfies such condition, we say that $\varphi$ is $C$-bi-Lipschitz. Furthermore, we say that the metric spaces $(X_1, d_1)$ and $(X_2, d_2)$ are \textit{bi-Lipschitz equivalent} if there is a bi-Lipschitz map $\varphi: X_1 \to X_2$.
\end{definition}

\begin{definition}\label{Outer-metric}\normalfont
	Given $X \subset \mathbb{R}^{n}$, we define the following two metrics in $X$:
	\begin{itemize}
		\item \textit{outer metric of $X$}: we define the outer metric of $X$ as the distance $d: X \times X \to \mathbb{R}_{+}$, given by $d(x, y) = \|x - y\|$, for all $x, y \in X$;
		\item \textit{inner metric of $X$}: if $X$ is path-connected, we define the inner metric of $X$ as the distance $d_{X}: X \times X \to \mathbb{R}_{+}$, given by $d_X(x, y) = \inf{\{ l(\alpha) \}}$, for all $x, y \in X$. The infimum is taken over all rectifiable paths $\alpha \subset X$ from $x$ to $y$, and $l(\alpha)$ is the length of $\alpha$ (if such a rectifiable path $\alpha$ does not exist, we define $d_X(x, y) = \infty$).
	\end{itemize}
\end{definition}

\begin{definition}\label{Inner-metric}\normalfont
	Given two sets $X_1 \subset \mathbb{R}^{m}$, $X_2 \subset \mathbb{R}^{n}$, we define the following bi-Lipschitz maps from $X_1$ to $X_2$:
	\begin{itemize}
		\item \textit{outer bi-Lipschitz map}: an outer bi-Lipschitz map between $X_1$ and $X_2$ is a map that is bi-Lipschitz for the outer metrics of $X_1$ and $X_2$;
		\item \textit{inner bi-Lipschitz map}: an inner bi-Lipschitz map between $X_1$ and $X_2$ (assuming $X_1$ and $X_2$ are path-connected) is a map that is bi-Lipschitz for the inner metrics of $X_1$ and $X_2$.
		\item \textit{ambient bi-Lipschitz map} an ambient bi-Lipschitz map between $X_1$ and $X_2$ is a bi-Lipschitz map $\varphi : \mathbb{R}^{m} \to \mathbb{R}^{n}$ with respect to the outer metric, such that $\varphi (X_1) = X_2$. Notice that if such an ambient bi-Lipschitz map exists, then $m=n$.
	\end{itemize}
\end{definition}

\begin{definition}\label{outer-lip}\normalfont
	Given two sets $X_1 \subset \mathbb{R}^m$, $X_2 \subset \mathbb{R}^n$, we say that $X_1$ and $X_2$ are \textit{outer bi-Lipschitz equivalent} (resp. inner, ambient) if there is an outer bi-Lipschitz (resp. inner, ambient) map between $X_1$ and $X_2$. Given $p\in X_1$, $q\in X_2$, we say that two germs $(X_1,p)$, $(X_2,q)$ are \textit{outer bi-Lipschitz equivalent} (resp. inner, ambient) if there are neighborhoods $U$ of $p$ and $V$ of $q$ and an outer bi-Lipschitz (resp. inner, ambient) map between $X_1\cap U$ and $X_2 \cap V$.
\end{definition}

\begin{remark}\label{amb-out-inn}\normalfont
	If $X_1$ and $X_2$ are ambient bi-Lipschitz equivalent, then $X_1$ and $X_2$ are outer bi-Lipschitz equivalent, and if $X_1$ and $X_2$ are outer bi-Lipschitz equivalent, then $X_1$ and $X_2$ are inner bi-Lipschitz equivalent. However, the converses are not generally true (for counterexamples, see \cite{BBG}).
\end{remark}

\begin{definition}\label{LNE}\normalfont
	We say that a path-connected set $X \subset \mathbb{R}^{n}$ is \textit{Lipschitz normally embedded} (or LNE, for short) if the outer metric and inner metric are bi-Lipschitz equivalent i.e. there is a constant $C \ge 1$ such that
	\begin{equation*}
		d_X(x,y) \leq C \cdot d(x,y); \quad \forall \; x,y \in X.
	\end{equation*}
	In this case, we say that $X$ is $C$-LNE. Given $p\in X$, we say that $X$ is \textit{Lipschitz normally embedded at $p$} (or LNE at $p$) if there exists a neighborhood $U$ of $p$ such that $X \cap U$ is LNE, or equivalently, the germ $(X,p)$ is LNE. If $X\cap U$ is $C$-LNE for some $C\ge 1$, we say that $(X,p)$ is $C$-LNE.
\end{definition}

\begin{definition}\label{arc}\normalfont
	An \textit{arc in $\mathbb{R}^{n}$} with initial point $p$ is a germ at the origin of a semialgebraic map $\gamma : [0, t_0) \to \mathbb{R}^{n}$, for some $t_0 >0$, such that $\gamma (0) = p$. Every arc with an initial point at the origin will be simply denoted as an arc. Given a germ at the origin of a set $X$, the set of all arcs $\gamma \subset X$ is called the \textit{Valette link of $X$} and is denoted by $V(X)$ (See \cite{Valette-Link}).
\end{definition}

\begin{remark}\normalfont
	Usually, we identify an arc $\gamma$ with its image $\gamma(t) \in \mathbb{R}^{n}$ obtained by intersecting $\gamma$ with a sphere centered at $p$ with radius $t$ (or with a plane $\{x_{n+1}=t\}$), for $t$ sufficiently small. This intersection is unique, by Theorem \ref{conical}.	
\end{remark}

\begin{definition}\label{tord}\normalfont
	Given a set $X$ and two arcs $\gamma_1, \gamma_2 \subset V(X)$, we define the \textit{order of tangency of $\gamma_1, \gamma_2$ in the outer metric}, denoted as $tord(\gamma_1, \gamma_2)$, as the exponent $\beta \in \mathbb{Q}$ such that there exists a constant $c>0$ satisfying
	$$\| \gamma_1 (t) - \gamma_2 (t) \| = ct^{\beta}+o(t^\beta)$$
	Notice that, if $\gamma_1 \neq \gamma_2$, by the Newton-Puiseux theorem, such constants $c$ and $\beta$ exist. We also define $tord(\gamma,\gamma) =\infty$ for every curve $\gamma \in V(X)$. Similarly, we define the \textit{order of tangency of $\gamma_1, \gamma_2$ in the inner metric} and denote it as $tord_X(\gamma_1, \gamma_2)$.
\end{definition}

\begin{remark}\label{Rem:tord}\normalfont
	These order of tangency values are rational numbers (or elements of the field of exponents in an o-minimal structure) satisfying $1 \le tord(\gamma_1,\gamma_2) \le tord_X(\gamma_1, \gamma_2)$ for all $\gamma_1, \gamma_2 \in V(X)$. Moreover, $X$ is LNE if, and only if, $tord(\gamma_1,\gamma_2) \le tord_X(\gamma_1, \gamma_2)$ for all $\gamma_1, \gamma_2 \in V(X)$ (see \cite{comRodrigo}).
\end{remark}

\begin{definition}\label{curvilinear}\normalfont
	A set $A \subset \mathbb{R}^n$ is a \textit{curvilinear triangle with vertices $a_1, a_2, a_3$} if the following holds:
	\begin{itemize}
		\item $A$ is a 2-dimensional topological submanifold with boundary.
		\item The boundary $\partial A$ of $A$ is the union of the points $a_1, a_2, a_3$ and the smooth curves connecting these points, which are called the edges of the triangle.
		\item The interior $int(A)$ of $A$ is smooth.
	\end{itemize}
	We can choose one of its vertices and denote it as the \textit{main vertex of the triangle}. If $a_1$ is the main vertex of $A$, we denote \textit{the boundary arcs of} $A$ as the germs, at $a_1$, of each of the edges connecting $a_1$ to $a_2$ and $a_1$ to $a_3$.
\end{definition}

\begin{remark}\label{Rem:smooth}\normalfont
	Given an $n$-dimensional set $X$, the set of smooth points of $X$ consists of all points $p\in X$ such that the tangent cone to $X$ at $p$ is isomorphic to $\mathbb{R}^n$. The remaining points of $X$ will be called singular, and we denote the set of such points as $sing(X)$.
\end{remark}

\begin{definition}\label{Holder-triangle}\normalfont
	Given a rational number $\alpha \ge 1$, we define the \textit{$\alpha$-standard H\"older triangle} as the set
	$$T_{\alpha} = \{ (x,y) \in \mathbb{R}^{2} \mid 0\le x \le 1 ; 0\le y \le x^{\alpha} \}.$$
	
	The curves $l_0 := \{ (x,0) \in \mathbb{R}^{2} : 0\le x \le 1\}$ and $l_1 := \{ (x,x^{\alpha}) \in \mathbb{R}^{2} : 0\le x \le 1 \}$ are defined as the \textit{boundary arcs of $T_{\alpha}$}. We say that a set $X\subset \mathbb{R}^{n}$ is an \textit{$\alpha$-H\"older triangle with the main vertex at $a \in X$} if there exists an inner bi-Lipschitz map $\varphi : (T_\alpha,0) \to (X,a)$. Here, $T_\alpha, X$ are seen as metric spaces with the induced metric from $\mathbb{R}^2,\mathbb{R}^n$, respectively. The sets $\gamma_0=\varphi (l_0)$, $\gamma_1=\varphi (l_1)$ are defined as the \textit{boundary arcs of $X$} and we also denote the H\"older triangle $X$ as $T(\gamma_0,\gamma_1)$.
\end{definition}

\begin{remark}\label{embedding-remark}\normalfont
	For each integer $n\ge 2$, if we consider the embedding $f: \mathbb{R}^2 \to \mathbb{R}^n$; $f(x,y)=(y,0,\dots,0,x); \; \forall (x,y) \in \mathbb{R}^2$, then for each rational number $\alpha \ge 1$, we define the $\alpha$-standard H\"older triangle embedded in $\mathbb{R}^n$ as the set $f(T_\alpha)$. It is easy to see that the $\alpha$-standard H\"older triangle embedded in $\mathbb{R}^n$ is an $\alpha$-H\"older triangle with the main vertex at the origin.
\end{remark}

\begin{definition}\label{horn}\normalfont
	Given a rational number $\beta\ge 1$, we define the \textit{$\beta$-standard horn} as the set
	$$H_{\beta} = \{ (x,y,t) \in \mathbb{R}^{3} \mid 0\le t \le 1 \; ; \; x^2 + y^2 = t^{2\beta} \}.$$
	
	We say that a set $X\subset \mathbb{R}^{n}$ is a \textit{$\beta$-horn with the main vertex at $a \in X$} if there exists an inner bi-Lipschitz map $\varphi : (H_\beta,0) \to (X,a)$. Here, $H_\beta$ and $X$ are seen as metric spaces with the induced metric from $\mathbb{R}^2$ and $\mathbb{R}^n$, respectively.
\end{definition}

\subsection{Preliminary Results}

\begin{theorem}[Local Conical Structure] \label{conical}
	Let $X\subset \mathbb{R}^n$ be a semialgebraic set and $p\in X$ be a non-isolated point of $X$. For each $\varepsilon>0$, let $p*(\mathbb{S}_{\varepsilon}^{n-1}(p)\cap X)$ be the cone with vertice $p$ and base $\mathbb{S}_{\varepsilon}^{n-1}(p)\cap X$, that is,
	$$p*(\mathbb{S}_{\varepsilon}^{n-1}(p)\cap X)=\{ \lambda p+ (1-\lambda)x \mid \lambda \in [0,1] \;;\; x \in \mathbb{S}_{\varepsilon}^{n-1}(p) \cap X \}.$$
	
	Then, there is $\varepsilon>0$ small enough and a semialgebraic homeomorphism $h:\overline{\mathbb{B}_{\varepsilon}^{n}(p)}\cap X \to p*(\mathbb{S}_{\varepsilon}^{n-1}(p)\cap X)$ such that $\| h(x) - p \|=\|x-p\|$, for all $x \in \overline{\mathbb{B}_{\varepsilon}^{n}(p)}\cap X$, and $h|_{\mathbb{S}_{\varepsilon}^{n-1}(p) \cap X} = id_{\mathbb{S}_{\varepsilon}^{n-1}(p) \cap X}$.
\end{theorem}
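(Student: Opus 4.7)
The plan is to derive the statement from Hardt's semialgebraic triviality theorem applied to the distance-to-$p$ function. Set $\rho:X\setminus\{p\}\to(0,\infty)$ by $\rho(x)=\|x-p\|$; this is a proper semialgebraic function on $X\cap(\overline{\mathbb{B}_{r}^{n}(p)}\setminus\{p\})$ for any $r>0$. Hardt's theorem applied near $0$ produces some $\varepsilon>0$, the semialgebraic link $L:=X\cap\mathbb{S}_\varepsilon^{n-1}(p)$, and a semialgebraic trivialization
$$\Phi:X\cap\bigl(\overline{\mathbb{B}_\varepsilon^n(p)}\setminus\{p\}\bigr)\longrightarrow (0,\varepsilon]\times L$$
satisfying $\mathrm{pr}_1\circ\Phi=\rho$. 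By composing with the fibered semialgebraic self-homeomorphism $(t,y)\mapsto(t,\sigma^{-1}(y))$ of $(0,\varepsilon]\times L$, where $\sigma:L\to L$ is the restriction of the second component of $\Phi$ to $L$, I may further assume $\Phi(y)=(\varepsilon,y)$ for every $y\in L$.

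With $\Phi$ normalized in this way, I would define the required map by radial rescaling:
$$h(x)=\begin{cases} p & \text{if } x=p,\\ p+\dfrac{t}{\varepsilon}(y-p) & \text{if } x\ne p \text{ and }\Phi(x)=(t,y). \end{cases}$$
Then $h(x)$ is by construction a convex combination of $p$ and a point $y\in L$, so $h(x)\in p*(\mathbb{S}_\varepsilon^{n-1}(p)\cap X)$. The identities $\|h(x)-p\|=\frac{t}{\varepsilon}\|y-p\|=t=\|x-p\|$ and $h|_{L}=\mathrm{id}_{L}$ are immediate from the construction and the normalization of $\Phi$. Continuity of $h$ at $p$ is forced by distance preservation, bijectivity follows from $\Phi$ being a homeomorphism, and the inverse $h^{-1}$ is obtained analogously using $\Phi^{-1}$; all maps involved are semialgebraic, so $h$ is a semialgebraic homeomorphism.

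The essential ingredient, and the only serious step, is Hardt's triviality of $\rho$ at $0$. This is established by selecting a semialgebraic Thom (or Whitney) stratification of $X$ that refines $\{p\}$; for $\varepsilon>0$ sufficiently small, every sphere $\mathbb{S}_t^{n-1}(p)$ with $t\in(0,\varepsilon]$ meets each stratum of $X\setminus\{p\}$ transversally, and one obtains the trivialization by integrating a controlled stratified vector field lifting $-\partial_t$ from the base to $X\cap\rho^{-1}((0,\varepsilon])$. Once this (classical) fact is in place, the normalization of $\Phi$ along the link and the radial rescaling defining $h$ are purely formal, and the semialgebraic category is preserved throughout.
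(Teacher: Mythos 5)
The paper does not prove this theorem; it cites Coste's lecture notes (Theorem 4.4), and the standard proof there is exactly the one you reconstruct: apply Hardt's semialgebraic triviality theorem to the distance function $\rho(x)=\|x-p\|$, normalize the trivialization over the fiber $L=X\cap\mathbb{S}_\varepsilon^{n-1}(p)$, and convert it into the conic homeomorphism by radial rescaling. That part of your argument is correct and complete: the reduction of $(0,r]$ to a trivializing interval $(0,\varepsilon]$, the normalization $\Phi|_L=(\varepsilon,\mathrm{id})$, the verification that $h$ lands in the cone, preserves $\|\cdot-p\|$, restricts to the identity on $L$, and extends continuously by $h(p)=p$ are all as in the reference.

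The one point to correct is your sketch of how Hardt's theorem itself is established. Integrating a controlled stratified vector field lifting $-\partial_t$ is the Thom--Mather first isotopy lemma; it produces a \emph{topological} trivialization, but the flow of a semialgebraic vector field is in general not semialgebraic, so this route does not yield the semialgebraic homeomorphism that the statement (and the rest of the paper, which repeatedly uses semialgebraicity of arcs and links) requires. The semialgebraic Hardt theorem is instead proved by semialgebraic cell decomposition (or triangulation) and induction on dimension. Since Hardt's theorem in its semialgebraic form is a classical citable result, this does not invalidate your proof --- but if you intend to justify that ingredient rather than cite it, the vector-field argument is the wrong mechanism for the category you need.
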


\begin{proof}
	See \cite{coste-livro-semialg}, Theorem 4.4.
\end{proof}

\begin{proposition}\label{bi-lip-extende-fecho}
	Let $(X_1,d_1)$ and $(X_2,d_2)$ be metric spaces, and let $U_1 \subseteq X_1, U_2 \subseteq X_2$ be open sets such that $\overline{U_1}=X_1,\overline{U_2} =X_2$. If $\varphi: X_1 \to X_2$ is a homeomorphism, such that $\varphi(U_1)=U_2$ and $\varphi|_{U_1} : U_1 \to U_2$ is a $C$-bi-Lipschitz map, then $\varphi$ is a $C$-bi-Lipschitz.
\end{proposition}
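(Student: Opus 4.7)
The plan is to use a standard density and continuity argument: extend the bi-Lipschitz inequality from $U_1$ to all of $X_1$ by approximation. The fact that $\varphi$ is a homeomorphism of the whole spaces (not just defined on $U_1$) is what allows us to conclude; without it one would only obtain a uniformly continuous extension.

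First I would fix arbitrary points $p, q \in X_1$. Since $\overline{U_1} = X_1$, I can choose sequences $(p_n), (q_n) \subset U_1$ with $p_n \to p$ and $q_n \to q$ in the metric $d_1$. Because $\varphi$ is continuous on all of $X_1$ (it is a homeomorphism), it follows that $\varphi(p_n) \to \varphi(p)$ and $\varphi(q_n) \to \varphi(q)$ in $d_2$. Since $p_n, q_n \in U_1$, I can apply the $C$-bi-Lipschitz bound
\[
\tfrac{1}{C}\, d_1(p_n, q_n) \le d_2(\varphi(p_n), \varphi(q_n)) \le C\, d_1(p_n, q_n).
\]
Now pass to the limit as $n \to \infty$, using continuity of the distance functions $d_1$ and $d_2$ in each variable, to obtain
\[
\tfrac{1}{C}\, d_1(p, q) \le d_2(\varphi(p), \varphi(q)) \le C\, d_1(p, q).
\]
Since $p, q \in X_1$ were arbitrary, this shows that $\varphi$ is $C$-bi-Lipschitz on $X_1$.

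There is essentially no obstacle here; the only thing to verify carefully is that the inverse bound also survives the limit, which it does because continuity of $d_2$ together with continuity of $\varphi$ gives convergence of both sides of the inequality simultaneously. The hypothesis $\varphi(U_1) = U_2$ is not even strictly needed for the argument, but it guarantees that the restricted map $\varphi|_{U_1}: U_1 \to U_2$ makes sense as stated. Note also that the conclusion automatically implies $\varphi^{-1}$ is $C$-Lipschitz, since the lower bound $\tfrac{1}{C} d_1(p,q) \le d_2(\varphi(p),\varphi(q))$ rewrites, after substituting $p = \varphi^{-1}(x)$, $q = \varphi^{-1}(y)$, as $d_1(\varphi^{-1}(x), \varphi^{-1}(y)) \le C\, d_2(x,y)$.
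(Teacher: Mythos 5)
Your argument is correct and is essentially the same as the paper's: both approximate $p,q$ by sequences in $U_1$, apply the bi-Lipschitz bound there, and pass to the limit using continuity of $\varphi$ (the paper just writes out the limit step via explicit triangle-inequality estimates with $\varepsilon$ instead of invoking continuity of the distance functions). No gap.
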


\begin{proof}
	Given $p,q \in X_1$, there are sequences $\{p_n\}_{n\in \mathbb{Z}_{\ge 1}}, \{q_n\}_{n\in \mathbb{Z}_{\ge 1}} \subseteq U_1$ such that $p_n \to p$ e $q_n \to q$. Since $\varphi$ is homeomorphism, for every $\varepsilon>0$, there is $N_0 \in \mathbb{Z}_{\ge 1}$ such that, for all $n > N_0$ integer, we have
	$$d_1(p,p_n),d_1(q,q_n),d_2(\varphi(p),\varphi(p_n)),d_2(\varphi(q),\varphi(q_n))<\varepsilon.$$
	
	Since $\varphi|_{U_1}$ is $C$-bi-Lipschitz, we have
	$$d_2(\varphi(p),\varphi(q)) \le d_2(\varphi(p),\varphi(p_n)) + d_2(\varphi(p_n),\varphi(q_n))+d_2(\varphi(q_n),\varphi(q))<$$
	$$< \varepsilon + C \cdot d_1(p_n,q_n)+\varepsilon \le 2\varepsilon+ C \cdot (d_1(p_n,p)+d_1(p,q)+d_1(q,q_n))<$$
	$$<2\varepsilon +C \cdot (\varepsilon+d_1(p,q)+\varepsilon) =2(C+1)\varepsilon+C \cdot d_1(p,q).$$
	
	Since this holds for every $\varepsilon>0$, we have $d_2(\varphi(p),\varphi(q)) \le C\cdot d_1(p,q)$. Similarly, $d_1(p,q) \le C \cdot d_2(\varphi(p),\varphi(q))$. Therefore, $\varphi$ is $C$-bi-Lipschitz.
\end{proof}

\begin{proposition}\label{extensão-bi-lip-invariante-bola}
	Let $U,V \subseteq \mathbb{R}^n$ be open and not empty, and let $\psi: \overline{U} \to \overline{V}$, $\varphi: \overline{V} \to \overline{V}$ be outer bi-Lipschitz maps. If $\varphi(p) = p, \, \forall \, p \in \partial V$, then the map $\Phi: \mathbb{R}^n \to \mathbb{R}^n$, given by
	$$\Phi(p) =
	\begin{cases}
		p, & p \notin U \\
		\psi^{-1} \circ \varphi \circ \psi (p), & p \in \overline{U}
	\end{cases}$$
	is an outer bi-Lipschitz map.
\end{proposition}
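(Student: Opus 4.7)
The plan is to verify three things in order: that $\Phi$ is well-defined, that it is a bijection of $\mathbb{R}^n$, and that it is bi-Lipschitz for the Euclidean metric. Well-definedness is the easiest: the two cases in the definition overlap on $\partial U = \overline{U}\setminus U$, and for $p\in\partial U$ the bi-Lipschitz homeomorphism $\psi\colon \overline{U}\to\overline{V}$ sends $\partial U$ to $\partial V$, on which $\varphi$ is the identity; thus $\psi^{-1}\circ\varphi\circ\psi(p)=\psi^{-1}(\psi(p))=p$. The bijection part is also straightforward: $\Phi$ is the identity on $\mathbb{R}^n\setminus U$, and $\Phi|_{\overline{U}}=\psi^{-1}\circ\varphi\circ\psi$ is a bijection $\overline{U}\to\overline{U}$ that restricts to a bijection $U\to U$, because $\psi(U)=V$ and $\varphi(V)=V$.

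For the bi-Lipschitz property, let $C_{\psi}$ and $C_{\varphi}$ be bi-Lipschitz constants for $\psi$ and $\varphi$, and set $C=C_{\psi}^{2}C_{\varphi}$, so that $\Phi|_{\overline{U}}$ is $C$-bi-Lipschitz as a composition. I will handle three cases for a pair $p,q\in\mathbb{R}^n$. If both $p,q\notin U$, then $\Phi(p)=p$, $\Phi(q)=q$, and the inequality is trivial. If both $p,q\in\overline{U}$, it is exactly the $C$-bi-Lipschitz bound on the composition. The substantive case is mixed: without loss of generality $p\in U$ and $q\notin \overline{U}$.

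In the mixed case, the Euclidean segment $[p,q]$ starts in $U$ and ends outside $\overline{U}$, so by connectedness it meets $\partial U$ at some point $r$. Since $r\in\partial U$, we have $\Phi(r)=r$, and since $p,r\in\overline{U}$ we may apply the $C$-bi-Lipschitz estimate there. For the upper bound,
\begin{equation*}
\|\Phi(p)-\Phi(q)\|\le\|\Phi(p)-r\|+\|r-q\|\le C\|p-r\|+\|r-q\|\le C\,\|p-q\|,
\end{equation*}
where the last inequality uses $\|p-r\|+\|r-q\|=\|p-q\|$ because $r\in[p,q]$. For the lower bound I repeat the trick with the roles of $p,q$ and $\Phi(p),\Phi(q)$ swapped, which is possible because $\Phi(U)=U$, so that $\Phi(p)\in U$ and $\Phi(q)=q\notin\overline{U}$. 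The segment $[\Phi(p),q]$ meets $\partial U$ at some point $r'$, which is fixed by $\Phi^{-1}$; applying the $C$-bi-Lipschitz estimate for $\Phi^{-1}$ on $\overline{U}$ to the pair $\Phi(p),r'$ gives
\begin{equation*}
\|p-q\|\le\|p-r'\|+\|r'-q\|\le C\|\Phi(p)-r'\|+\|r'-q\|\le C\,\|\Phi(p)-q\|=C\,\|\Phi(p)-\Phi(q)\|.
\end{equation*}

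The main obstacle is precisely this mixed case, where $\Phi$ behaves nontrivially on one side of $\partial U$ and as the identity on the other; the key insight is that since $\Phi$ fixes $\partial U$ pointwise, inserting a crossing point of the segment through $\partial U$ turns the triangle inequality into an equality in one of the two summands, and the other summand lies entirely inside $\overline{U}$ where the bi-Lipschitz bound on the composition is available. Using $\Phi(U)=U$ lets one symmetrize the argument and obtain the lower bound with the same constant $C$. Combining the three cases, $\Phi$ is $C$-bi-Lipschitz on $\mathbb{R}^n$.
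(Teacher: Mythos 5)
Your proof is correct and follows essentially the same route as the paper's: verify that $\Phi$ is well defined because $\psi$ carries $\partial U$ to $\partial V$ where $\varphi$ is the identity, handle the two unmixed cases trivially, and in the mixed case insert the point where the segment $[p,q]$ crosses $\partial U$ (which $\Phi$ fixes) so that the triangle inequality collapses to $\|p-q\|$. Your explicit symmetrization via $\Phi(U)=U$ for the lower bound is just a spelled-out version of the paper's ``similarly''.
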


\begin{proof}
	Suppose that $\varphi$, $\psi$ are $C$-bi-Lipschitz, for some $C\ge 1$. Then, the map $\psi^{-1} \circ \varphi \circ \psi : \overline{U} \to \overline{U}$ is $C^{3}$-bi-Lipschitz. Since $\psi$ is homeomorphism, for any $p\in \partial U$, $\psi(p) \in \partial V$. Then, $\psi^{-1} \circ \varphi \circ \psi (p) = \psi^{-1} \circ \psi (p)=p$, showing that $\Phi$ is well defined.
	
	Now we will prove that $\Phi$ is $C^3$-bi-Lipschitz. The result is immediate if $p,q \in U$ or $p,q \notin U$. If $p \in U, q \notin U$, let $r \in \partial U$ be a point in the line segment connecting $p$ and $q$. Then we have
	$$\| \Phi(p)-\Phi(q) \| \le \| \Phi(p)-\Phi(r) \| + \| \Phi(r)-\Phi(q) \| = \| \Phi(p)-\Phi(r) \| + \| r-q \| \le $$
	$$\le C^3 \cdot \| p-r \| + \| r-q \| \le C^3 \cdot \left( \| p-r \| + \| r-q \| \right) = C^3 \cdot \| p-q \|.$$
	Similarly, $\| p - q \| \le C^3 \cdot \| \Phi(p) - \Phi(q) \|$. Therefore, $\Phi$ is an outer bi-Lipschitz map.
\end{proof}

\begin{proposition} \label{colagem-bi-lip-inner}
	Let $X_1,X_2 \subseteq \mathbb{R}^n$, $Y_1,Y_2 \subseteq \mathbb{R}^m$ be closed arcwise connected sets such that $X_1\cap X_2, Y_1 \cap Y_2 \ne \emptyset$. Suppose $\varphi_1 : X_1 \to Y_1$ and $\varphi_2 : X_2 \to Y_2$ are inner bi-Lipschitz maps satisfying $\varphi_1(p)=\varphi_2(p)$, for every $p\in X_1 \cap X_2$, and $\varphi_1^{-1}(q)=\varphi_2^{-1}(q)$, for every $q\in Y_1 \cap Y_2$. Then, if $X=X_1\cup X_2$ and $Y=Y_1 \cup Y_2$, the map $\varphi: X \to Y$ given by $\varphi(p)=\varphi_i(p)$, if $p\in X_i$ ($i=1,2$), is an inner bi-Lipschitz map.
\end{proposition}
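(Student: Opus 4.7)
The plan is to verify that the glued map $\varphi$ is a well-defined bijection and that it is $C$-inner bi-Lipschitz, where $C\ge 1$ is any common inner bi-Lipschitz constant for $\varphi_1$ and $\varphi_2$. Well-definedness of $\varphi$ is immediate from the hypothesis $\varphi_1=\varphi_2$ on $X_1\cap X_2$, and the dual compatibility $\varphi_1^{-1}=\varphi_2^{-1}$ on $Y_1\cap Y_2$ allows one to glue $\varphi_1^{-1}$ and $\varphi_2^{-1}$ into a well-defined two-sided inverse, so $\varphi$ is a bijection.

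For the metric estimate, I would fix $p,q\in X$ and $\varepsilon>0$, and choose a rectifiable path $\alpha:[0,1]\to X$ from $p$ to $q$ with $l(\alpha)\le d_X(p,q)+\varepsilon$. The key step is to produce a finite partition $0=s_0<s_1<\cdots<s_k=1$ such that each image $\alpha([s_j,s_{j+1}])$ is contained in one of $X_1,X_2$. Whenever $\alpha$ passes from $X_1\setminus X_2$ into $X_2\setminus X_1$ (or vice versa), I would insert a time $s$ at which $\alpha(s)\in X_1\cap X_2$; such a time exists because $X_1\setminus X_2$ and $X_2\setminus X_1$ are disjoint open subsets of $X$ (using closedness of $X_1,X_2\subseteq\R^n$), so the connected image of a continuous curve passing between them must meet $X_1\cap X_2$. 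On each resulting subinterval, where $\alpha([s_j,s_{j+1}])\subseteq X_{i_j}$, the inner bi-Lipschitz property of $\varphi_{i_j}$ yields
\[
d_{Y_{i_j}}\!\bigl(\varphi(\alpha(s_j)),\varphi(\alpha(s_{j+1}))\bigr)\le C\, d_{X_{i_j}}\bigl(\alpha(s_j),\alpha(s_{j+1})\bigr)\le C\, l\bigl(\alpha|_{[s_j,s_{j+1}]}\bigr).
\]
Since any rectifiable path in $Y_{i_j}$ is also a rectifiable path in $Y$, one has $d_Y\le d_{Y_{i_j}}$; summing over $j$ via the triangle inequality in $Y$ yields $d_Y(\varphi(p),\varphi(q))\le C\,l(\alpha)\le C(d_X(p,q)+\varepsilon)$, and letting $\varepsilon\to 0$ gives one Lipschitz inequality. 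The reverse inequality follows by applying the same argument to $\varphi^{-1}$, whose pieces $\varphi_i^{-1}:Y_i\to X_i$ satisfy the analogous compatibility hypotheses.

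The main obstacle is producing the \emph{finite} partition in the key step: a priori a rectifiable path could switch between $X_1\setminus X_2$ and $X_2\setminus X_1$ infinitely many times. In the semialgebraic context in which this proposition is applied, one may restrict to semialgebraic paths $\alpha$ when computing $d_X(p,q)$, in which case $\alpha^{-1}(X_1\cap X_2)$ is a semialgebraic subset of $[0,1]$ and hence has finitely many connected components, ensuring finitely many switches. For a general rectifiable $\alpha$ the same bound can still be reached by applying the subinterval estimate to the countable family of maximal open intervals of $[0,1]\setminus\alpha^{-1}(X_1\cap X_2)$ (whose endpoints lie in $X_1\cap X_2$, so that $\varphi_1$ and $\varphi_2$ agree there) and then passing to the limit using continuity of $\beta=\varphi\circ\alpha$ together with the closedness of $\alpha^{-1}(X_1\cap X_2)$.
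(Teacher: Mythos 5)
Your proof follows essentially the same strategy as the paper's: choose a near-optimal rectifiable path, break it into pieces each lying in a single $X_i$, push each piece forward by the corresponding $\varphi_i$, and sum. The main difference is how the two arguments handle the bookkeeping. The paper does not attempt an ordered finite partition at all: it bounds the length of $\varphi(\gamma)$ by the sum of the total lengths of $\varphi_1(\gamma\cap X_1)$ and $\varphi_2(\gamma\cap X_2)$, each at most $C\ell$, accepting the constant $2C$ that comes from double-counting the part of $\gamma$ in $X_1\cap X_2$; this sidesteps your finiteness problem (and also the question of how the pieces are ordered). Your version, when the partition is genuinely finite, is cleaner and yields the sharper constant $C$. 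Two caveats on your general-case patch. First, the proposition is stated for arbitrary closed arcwise connected sets, so the semialgebraic reduction is only available in the paper's applications, not for the statement as written. Second, the limiting argument over the maximal complementary intervals of $\alpha^{-1}(X_1\cap X_2)$ does not by itself control $d_Y(\varphi(p),\varphi(q))$: those intervals need not exhaust $[0,1]$, and the variation of $\beta=\varphi\circ\alpha$ over the closed set $\alpha^{-1}(X_1\cap X_2)$ (which can be uncountable, with $\alpha$ moving a positive distance on it) is left unaccounted for. For parameters $s<s'$ in that set with $\alpha([s,s'])\subseteq X_1\cap X_2$ either map gives the bound $C\,l(\alpha|_{[s,s']})$, but for $s,s'$ in that set whose intermediate excursions leave both the intersection and a fixed $X_i$, neither $d_{X_1}(\alpha(s),\alpha(s'))$ nor $d_{X_2}(\alpha(s),\alpha(s'))$ is bounded by $l(\alpha|_{[s,s']})$, so the estimate you need there is exactly the statement being proved at a smaller scale. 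This is the same looseness present in the paper's phrase ``union of curves and points,'' so you are not worse off than the source, but the limit step as you describe it is not yet a proof; if you want a fully general argument you should either bound the total variation of $\beta$ over $[0,1]$ by splitting it between $\alpha^{-1}(X_1)$ and $\alpha^{-1}(X_2)$ as the paper implicitly does (settling for $2C$), or restrict to the class of paths actually needed in the applications.
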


\begin{proof}
	Since $\varphi_1(p)=\varphi_2(p), \, \forall\,  p \in X_1 \cap X_2$, $\varphi$ is well defined. Since $\varphi_1, \varphi_2$ are homeomorphisms, by the Gluing Lemma for homeomorphisms we have that $\varphi$ is also a homeomorphism. As $X_1, X_2,Y_1, Y_2$ are arcwise connected and $X_1\cap X_2, Y_1 \cap Y_2 \ne \emptyset$, we conclude that $X$ and $Y$ are also arcwise connected.
	
	For $k=1,2$, since $X_k \subseteq X$ and $Y_k \subseteq Y$, we have $d_{X_k}(a,b) \ge d_X (a,b), \forall \, a,b \in X_k$ and $d_{Y_k}(a,b) \ge d_Y (a,b), \forall \, a,b\in Y_k$. Suppose that each $\varphi_k$ is inner $C$-bi-Lipschitz. We will prove that if $p, q\in X$ and $d_X(p,q) < \infty$, then
	$$\dfrac{1}{2C} \cdot d_X(p,q) \le d_Y(\varphi(p),\varphi(q)) \le 2C \cdot d_X(p,q), \quad \forall \, p,q\in X.$$
	
	Given $\varepsilon>0$, consider a rectifiable curve $\gamma : [0,1] \to X$ of length $\ell$ such that $\gamma(0)=p,\gamma(1)=q$ and $\ell < d_X(p,q) + \varepsilon$. We have that $\gamma \cap X_1$ and $\gamma \cap X_2$ are, each one, the union of curves and points, such that the sum of their lengths, in $X_1$ and $X_2$, respectively, is at most the length of $\gamma$ in $X$, which is $\ell$.
	
	\begin{figure}[!h]
		\centering
		\includegraphics[width=12cm]{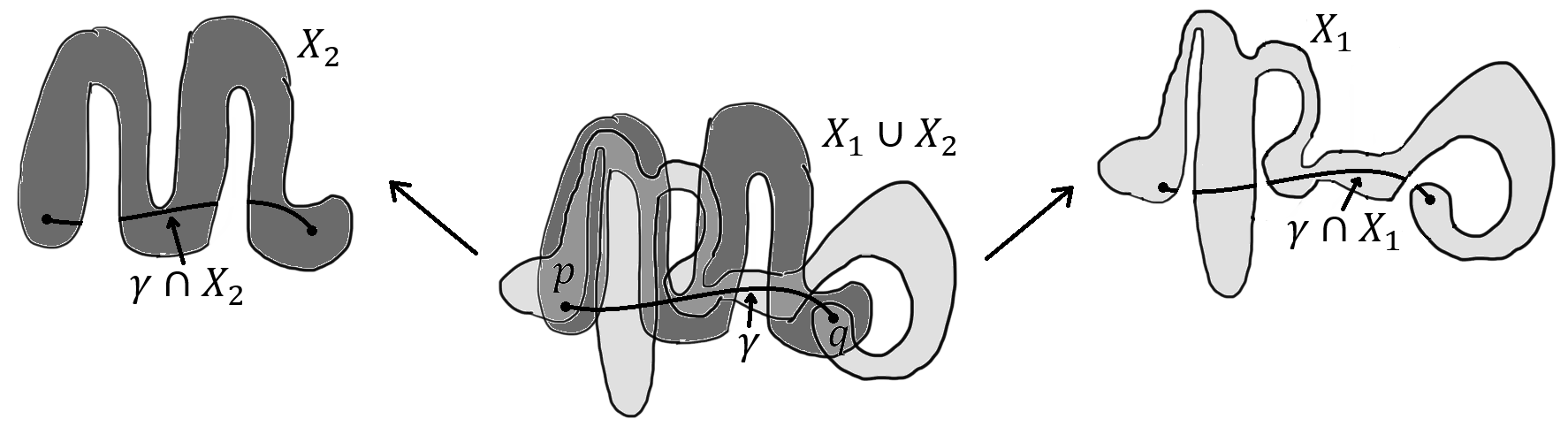}
		\label{fig31}
		\caption{Proof of Proposition \ref{colagem-bi-lip-inner}.}
	\end{figure}
	
	Since $\varphi(\gamma \cap X_k) =\varphi_k(\gamma \cap X_k)$ and $\varphi_k$ is inner $C$-bi-Lipschitz, the sum of the lengths, in $Y_k$, of the curves in $\varphi(\gamma \cap X_k)$ is at most $C\cdot \ell$. Then, the length, in $Y$, of $\varphi(\gamma) = \varphi_1(\gamma \cap X_1) \cup \varphi_2(\gamma \cap X_2)$ is at most $2C\cdot \ell < 2C(d_X(p,q)+\varepsilon)$. Therefore, $d_Y(\varphi(p),\varphi(q)) <2C(d_X(p,q)+\varepsilon)$ and since this holds for every $\varepsilon>0$, we have $d_Y(\varphi(p),\varphi(q)) \le 2C\cdot d_X(p,q)$. Similarly, $d_X(p,q) \le 2C \cdot d_Y(\varphi(p),\varphi(q))$, finishing the proof. Notice also that this implies $d_X(p,q)=\infty$ if, and only if, $d_Y(\varphi(X),\varphi(Y))=\infty$. We conclude that $\varphi$ is a $2C$-inner bi-Lipschitz map.
\end{proof}

\begin{definition}\label{Def:bounded-away}\normalfont
	Let $m,n \in \mathbb{Z}_{\ge 1}$ and let $A_{m,n}$ be a real matrix. We say that $A$ is \textit{bounded away from 0 and infinity in $X$} if there is $M>0$ such that $\frac{1}{M} < \| A \| < M$. If $\{A(p)\}_{p\in X}$ is a family of matrices, we say that $\{A(p)\}_{p\in X}$ is \textit{bounded away from 0 and infinity} if there is $M>0$ such that, for every $p \in X$, $\frac{1}{M} < \| A(p) \| < M$.
\end{definition}

\begin{remark}\label{norm-remark}
	In Definition \ref{Def:bounded-away}, $\left\| \cdot \right\|$ denotes a fixed matrix norm. Since all such norms are equivalent, the definition is independent of the chosen norm. Unless otherwise specified in this paper, we use the maximum norm on real matrices.
\end{remark}

\begin{proposition}\label{jacobiano-lip}
	Let $X,Y \subseteq \mathbb{R}^{n}$ be connected and open sets, and let $\varphi : \overline{X} \to \overline{Y}$ be a homeomorphism such that $\varphi_0 := \varphi|_{X} : X \to Y$ is a diffeomorphism. If the jacobians $\{J_{\varphi_0}(p)\}_{p \in X }$ and $\{J_{{\varphi_0}^{-1}}(p)\}_{p \in X}$ are bounded away from 0 and infinity in $X$, then $\varphi$ is an inner bi-Lipschitz map.
\end{proposition}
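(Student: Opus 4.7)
The plan is two-stage. First, I would establish that the restriction $\varphi_0\colon X\to Y$ is bi-Lipschitz for the inner metrics $d_X,d_Y$ by integrating the Jacobian bound along paths. Second, I would extend the bi-Lipschitz property to $\varphi\colon\overline X\to\overline Y$ (with inner metrics $d_{\overline X},d_{\overline Y}$) via a density and continuity argument, in the spirit of Proposition~\ref{bi-lip-extende-fecho} but transplanted from the ambient to the inner metric setting.

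For the first stage, fix $M>0$ bounding $\|J_{\varphi_0}(p)\|$ and $\|J_{\varphi_0^{-1}}(p)\|$ in the operator norm for all $p\in X$, absorbing a norm-equivalence constant from Remark~\ref{norm-remark}. For any piecewise $C^1$ path $\gamma\colon[0,1]\to X$, the chain rule gives
$$\|(\varphi_0\circ\gamma)'(t)\|=\|J_{\varphi_0}(\gamma(t))\,\gamma'(t)\|\le M\,\|\gamma'(t)\|$$
almost everywhere, so $\mathrm{length}(\varphi_0\circ\gamma)\le M\,\mathrm{length}(\gamma)$. Approximating an arbitrary rectifiable path by piecewise $C^1$ ones in the open set $X$ and infimizing over paths joining $p,q\in X$ yields $d_Y(\varphi_0(p),\varphi_0(q))\le M\,d_X(p,q)$; the symmetric argument applied to $\varphi_0^{-1}$ gives the reverse inequality, so $\varphi_0$ is $M$-inner-bi-Lipschitz.

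For the second stage, given $p,q\in\overline X$ and a rectifiable $\alpha\colon[0,1]\to\overline X$ from $p$ to $q$ with length close to $d_{\overline X}(p,q)$, I would approximate $\alpha$ by paths $\alpha_n\colon[0,1]\to X$ whose endpoints converge to $p,q$ in Euclidean norm and whose lengths converge to $\mathrm{length}(\alpha)$, obtained by pushing $\alpha$ slightly inward on $\alpha^{-1}(\partial X)$, which is possible because $X$ is open. Stage~1 applied to each $\alpha_n$ yields $\mathrm{length}(\varphi\circ\alpha_n)\le M\,\mathrm{length}(\alpha_n)$; passing to the limit using continuity of $\varphi$ on $\overline X$ and taking the infimum over $\alpha$ gives $d_{\overline Y}(\varphi(p),\varphi(q))\le M\,d_{\overline X}(p,q)$, and the analogous argument with $\varphi^{-1}$ completes the proof. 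The main obstacle is justifying the perturbation in Stage~2: for arbitrary open $X$ the inner metric on $\overline X$ may differ drastically from the inner metric on $X$ (as with slit domains), so the approximation requires a tame boundary, which holds in the semialgebraic setting where the proposition is applied and is the inner-metric counterpart of the closure extension established in Proposition~\ref{bi-lip-extende-fecho}.
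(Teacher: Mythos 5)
Your proposal follows essentially the same route as the paper: Stage 1 is exactly the paper's argument (bound the length of $\varphi_0\circ\gamma$ by integrating $\|J_{\varphi_0}(\gamma(t))\,\gamma'(t)\|\le C\|\gamma'(t)\|$, infimize over rectifiable curves joining $a,b\in X$, and argue symmetrically for $\varphi_0^{-1}$). For Stage 2 the paper simply invokes Proposition \ref{bi-lip-extende-fecho} applied to the closures equipped with their inner metrics; the subtlety you flag --- that $d_{\overline X}$ restricted to $X$ need not coincide with $d_X$ when the boundary is wild (slit domains), so the extension to the closure genuinely relies on the tameness present in the paper's applications --- is real and is precisely the point that the paper's one-line citation passes over without comment.
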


\begin{proof}
	Since $\{J_{\varphi_0}(p)\}_{p \in X }$ and $\{J_{{\varphi_0}^{-1}}(p)\}_{p \in Y}$ are bounded away from 0 and infinity in $X$, there is $C>0$ such that $$\frac{1}{C} < \left\| J_{\varphi_0}(p) \right\|, \left\| J_{{\varphi_0}}^{-1}(\varphi_0(p)) \right\| < C \, , \, \forall \, p \in X.$$
	Given $a,b \in X$ and $\varepsilon>0$, consider a rectifiable curve $\gamma : [0,1] \to X$, with $\gamma(0)=a$, $\gamma(1)=b$, and length $\ell(\gamma) < d_X(a,b)+\varepsilon$. We have
	$$d_Y(\varphi_0(a),\varphi_0 (b)) \le \ell(\varphi_0 (\gamma)) = \int_{0}^{1} \| (\varphi_0 \circ \gamma)^{\prime}(t) \| \,dt = \int_{0}^{1} \| J_{\varphi_0}(\gamma(t)) \cdot \gamma^{\prime}(t) \| \,dt \le$$
	$$\le \int_{0}^{1} \| J_{\varphi_0}(\gamma(t)) \| \cdot \| \gamma^{\prime}(t) \| \,dt \le \int_{0}^{1} C \cdot \| \gamma^{\prime}(t) \| \,dt =C \int_{0}^{1} \| \gamma^{\prime}(t) \| \,dt =$$
	$$ = C\cdot \ell(\gamma) < C(d_X(a,b)+\varepsilon) \therefore  d_Y(\varphi_0(a),\varphi_0 (b)) < C\cdot d_X(a,b) + C\varepsilon.$$
	
	As the above inequality holds for all $\varepsilon >0$, we have $d_Y(\varphi_0(a),\varphi_0 (b)) \le C\cdot d_X(a,b)$, for all $a,b \in X$. Similarly, we have $d_X(a,b) \le C \cdot d_Y(\varphi_0(a),\varphi_0 (b))$, proving that $\varphi_0$ is a $C$-bi-Lipschitz map. Since $\varphi$ is a homeomorphism, $\varphi$ is a inner $C$-bi-Lipschitz map by Proposition \ref{bi-lip-extende-fecho}.
\end{proof}

\begin{proposition} \label{im-LNE-is-LNE}
	Let $X \subseteq \mathbb{R}^n$ be a LNE set and let $\Phi: \mathbb{R}^n \to \mathbb{R}^n$ be an outer bi-Lipschitz map. Then, $\Phi(X) \subseteq \mathbb{R}^n$ is a LNE set.
\end{proposition}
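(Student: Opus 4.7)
The plan is to transport the LNE constant of $X$ through $\Phi$, using the fact that $\Phi$ is outer bi-Lipschitz on all of $\mathbb{R}^n$ and hence, by restriction, outer bi-Lipschitz between $X$ and $\Phi(X)$.

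First I would fix constants: let $C\ge 1$ be such that $X$ is $C$-LNE, i.e.\ $d_X(x,y)\le C\cdot d(x,y)$ for all $x,y\in X$, and let $K\ge 1$ be such that $\Phi$ is $K$-bi-Lipschitz with respect to the outer (Euclidean) metric on $\mathbb{R}^n$. I would then take arbitrary points $\Phi(x),\Phi(y)\in \Phi(X)$ and aim to bound $d_{\Phi(X)}(\Phi(x),\Phi(y))$ by a uniform multiple of $\|\Phi(x)-\Phi(y)\|$.

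The key step is to transport rectifiable paths. Given $\varepsilon>0$, choose a rectifiable path $\gamma\subset X$ from $x$ to $y$ with length $\ell(\gamma)<d_X(x,y)+\varepsilon$. Since $\Phi|_X\colon X\to \Phi(X)$ is $K$-bi-Lipschitz (it is the restriction of a $K$-bi-Lipschitz map), the image $\Phi\circ\gamma$ is a rectifiable path in $\Phi(X)$ joining $\Phi(x)$ to $\Phi(y)$, and a standard estimate on rectifiable curves under a $K$-Lipschitz map gives $\ell(\Phi\circ\gamma)\le K\cdot \ell(\gamma)$. Hence
\begin{equation*}
d_{\Phi(X)}(\Phi(x),\Phi(y))\;\le\; \ell(\Phi\circ\gamma)\;\le\; K\cdot\ell(\gamma)\;<\;K(d_X(x,y)+\varepsilon).
\end{equation*}
Letting $\varepsilon\to 0^+$ yields $d_{\Phi(X)}(\Phi(x),\Phi(y))\le K\cdot d_X(x,y)$.

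Finally I would chain the three inequalities. Using the LNE property of $X$, then the previous bound, and then the lower bi-Lipschitz estimate for $\Phi$,
\begin{equation*}
d_{\Phi(X)}(\Phi(x),\Phi(y))\;\le\; K\cdot d_X(x,y)\;\le\; KC\cdot\|x-y\|\;\le\; K^{2}C\cdot\|\Phi(x)-\Phi(y)\|.
\end{equation*}
This shows $\Phi(X)$ is $K^{2}C$-LNE, completing the proof. There is no serious obstacle here: the only minor subtlety is checking that a $K$-Lipschitz image of a rectifiable path is rectifiable with length multiplied by at most $K$, which is an elementary fact about curve length.
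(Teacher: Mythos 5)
Your proof is correct and follows essentially the same route as the paper's: transport a near-optimal rectifiable path through $\Phi$, bound its length by the Lipschitz constant, and chain this with the LNE inequality for $X$ and the lower bi-Lipschitz bound for $\Phi$. The only cosmetic difference is that you keep the two constants $C$ and $K$ separate (obtaining $K^2C$) while the paper takes a single common constant $C$ and concludes $C^3$-LNE.
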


\begin{proof}
	Let $C>1$ such that $X$ is $C$-LNE and $\Phi$ is outer $C$-bi-Lipschitz. For each $\varepsilon>0$ and $a=\Phi(p), b = \Phi(q)$ in $X$, let $\gamma \subset X$ be a rectifiable curve connecting $p$ to $q$, with length $\ell<d_X(p,q)+\varepsilon$. since $X$ is $C$-LNE, we have $\ell \le C \cdot \| p-q\| +\varepsilon$, and since $\Phi$ is outer $C$-bi-Lipschitz, the length of $\Phi(\gamma)$ is at most $C\cdot \ell$. Therefore,
	$$d_{\Phi(X)}(a,b) \le C\cdot \ell \le C^2 \cdot \| p-q\|+C\cdot\varepsilon \le C^3 \cdot \| a-b\|+C\cdot\varepsilon.$$
	Since this inequality holds for all $\varepsilon>0$, we conclude that $\Phi(X)$ is $C^3$-LNE.
\end{proof}

\begin{proposition} \label{grafico-LNE}
Let $x_1,x_2 \in \R$, with $x_1 < x_2$, and let $f,g: [x_1,x_2] \to \mathbb{R}$ be piecewise smooth functions satisfying $g(x) \le f(x)$, for all $x \in[x_1,x_2]$. If there is $M>0$ such that $|f^{\prime}(x)|,|g^{\prime}(x)| < M$, for all $x$ where $f,g$ are differentiable, then $X$ is LNE, where 
$$X = \{(x,y) \in \mathbb{R}^2 \mid x_1 \le x \le x_2 \; ; \; g(x) \le y \le f(x) \}.$$
\end{proposition}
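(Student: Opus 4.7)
The plan is to prove the LNE property of $X$ directly from the definition. Since the outer metric is automatically bounded above by the inner metric, it suffices to exhibit a constant $C$ such that $d_X(p,q) \le C\,\|p-q\|$ for all $p,q \in X$. I would construct, for each pair of points, an explicit rectifiable path in $X$ of length at most $\sqrt{1+M^2}\,\|p-q\|$, giving $C = \sqrt{1+M^2}$.

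Given $p=(a,b)$ and $q=(c,d)$ in $X$, the idea is to take the straight segment from $p$ to $q$ and ``clamp'' its $y$-coordinate to stay inside the fiber $[g(x),f(x)]$. Concretely, set $x(t)=(1-t)a+tc$, $\lambda(t)=(1-t)b+td$, and define $\gamma:[0,1]\to\mathbb{R}^2$ by
$$
\gamma(t) = \Bigl( x(t),\; \max\bigl\{\, g(x(t)),\; \min\bigl(f(x(t)),\,\lambda(t)\bigr)\,\bigr\} \Bigr).
$$
A case analysis (whether $\lambda(t)$ lies inside, above, or below $[g(x(t)),f(x(t))]$) shows that $\gamma(t)\in X$ for all $t$, that $\gamma(0)=p$, $\gamma(1)=q$, and that $\gamma$ is continuous. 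Using $|f'|,|g'|\le M$, the functions $g\circ x$ and $f\circ x$ are $M|c-a|$-Lipschitz in $t$, while $\lambda$ is $|d-b|$-Lipschitz; hence the second coordinate of $\gamma$, being a max/min of these three, is Lipschitz with constant $\max\{M|c-a|,|d-b|\}$, and the first coordinate is $|c-a|$-Lipschitz.

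Consequently $\gamma$ is Lipschitz, so $\gamma'$ exists almost everywhere (Rademacher). Using $\max(A,B)^2 \le A^2+B^2$ for $A,B\ge 0$, one obtains
$$
|\gamma'(t)|^2 \le (c-a)^2 + M^2(c-a)^2 + (d-b)^2 \le (1+M^2)\,\|p-q\|^2
$$
almost everywhere. Integrating yields length$(\gamma) \le \sqrt{1+M^2}\,\|p-q\|$, and therefore $d_X(p,q)\le\sqrt{1+M^2}\,\|p-q\|$, proving that $X$ is $\sqrt{1+M^2}$-LNE.

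I do not expect any serious obstacle. Path-connectedness of $X$, needed to make sense of $d_X$, is immediate: any point of $X$ can be joined to the graph of $g$ by the vertical segment in its fiber. The only mild technicality is that $\gamma$ is just Lipschitz rather than piecewise smooth (the max/min can switch on a complicated set if $f$ or $g$ have many pieces), but absolute continuity of Lipschitz maps guarantees that length$(\gamma) = \int_0^1 |\gamma'(t)|\,dt$, which is what legitimizes the above estimate. The formula for $\gamma$ is also robust under pinches $f(x_0)=g(x_0)$, since the clamp collapses to the common value and the path still lies in $X$.
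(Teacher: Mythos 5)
Your proof is correct, and it reaches the conclusion by a genuinely different (and slightly sharper) route than the paper. The paper also argues by exhibiting an explicit path, but its path is different: from $A$ it follows the curve $x\mapsto \lambda g(x)+(1-\lambda)f(x)$ (the fiberwise affine interpolation with the barycentric coordinate $\lambda$ of $A$ frozen) over to the fiber of $B$, then drops vertically to $B$. That path is piecewise smooth, so its length formula is immediate, but comparing $\|A-C\|+\|C-B\|$ to $\|A-B\|$ requires an auxiliary law-of-cosines estimate (Lemma \ref{abertura-limitada}), and the resulting LNE constant is $T\sqrt{1+M^2}$ with $T$ depending on $M$ through that lemma. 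Your clamped-segment path $\gamma(t)=\bigl(x(t),\max\{g(x(t)),\min(f(x(t)),\lambda(t))\}\bigr)$ avoids the angle lemma entirely and yields the cleaner constant $\sqrt{1+M^2}$ in one integration; the price is that $\gamma$ is only Lipschitz rather than piecewise smooth, so you must (and correctly do) invoke absolute continuity to justify $\mathrm{length}(\gamma)=\int_0^1|\gamma'|$. One shared implicit hypothesis in both arguments: $f$ and $g$ must be continuous across the breakpoints of their smooth pieces for the $M$-Lipschitz bound (and indeed for the proposition itself) to hold; this is the intended reading of ``piecewise smooth'' here and is not a defect of your argument.
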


\begin{proof}
We will prove first the following auxiliary lemma.

\begin{lemma}\label{abertura-limitada}
Let $P,Q,R \in \mathbb{R}^n$ be distinct points such that $\cos{(\angle PQR )} \le 1 - \frac{2}{T^2}$, for some $T>1$. Then, $d(P,Q) + d(Q,R) \le T\cdot d(P,R).$
\end{lemma}

\begin{proof}
Let $x=d(P,Q)$ and $y=d(Q,R)$. By the law of cosines in $PQR$, we have
$$d(P,R)^2 = x^2+y^2 - 2xy \cdot cos{(\angle ABC )} \ge x^2 + y^2 - 2xy \left(1-\frac{2}{T^2} \right) = $$
$$=\frac{1}{T^2} \left( x+y \right)^2 +\left( 1 - \frac{1}{T^2} \right) (x-y)^2 \ge \frac{1}{T^2} \left( x+y \right)^2 = \left( \frac{d(P,Q)+d(Q,R)}{T} \right)^2.$$
Therefore, $d(P,Q) + d(Q,R) \le T\cdot d(P,R).$
\end{proof}

Back to the proof of Proposition \ref{grafico-LNE}, let $A, B \in X$, $A=(x_A,y_A), B=(x_B,y_B)$. If $x_A=x_B$, then $d(A,B) = d_X(A,B)$. If $x_A \ne x_B$, suppose without loss of generality $x_A < x_B$ and take $\lambda \in [0,1]$ satisfying $y_A = \lambda g(x_A) + (1-\lambda )f(x_A)$. Let
$$\gamma: [x_A,x_B] \to \mathbb{R} \; ; \; \gamma(x)=\lambda g(x)+ (1-\lambda) f(x) \; , \; \forall x \in [x_A,x_B],$$
and let $C=(x_B,\gamma(x_B))$. For each $x \in [x_A,x_B]$ where $f,g$ are differential in $x$, we have
$$|\gamma^{\prime}(x)|= |g^{\prime}(x) + (1-\lambda) f^{\prime}(x)| \le \lambda |g^{\prime}(x)| + (1-\lambda) |f^{\prime}(x)| < \lambda M + (1-\lambda) M=M$$

The length of $\gamma$ is $\ell(\gamma) = \int \sqrt{1+\gamma^{\prime}(x)^2}dx$, where the integral is taken over all non-singular points of $f$ and $g$ in $[x_A,x_B]$. Since $\gamma^{\prime}(x)^2 \le M^2$, we have
$$\ell(\gamma) \le \int_{x_A}^{x_B} \sqrt{1+M^2} dx =  \sqrt{1+M^2}\cdot (x_B - x_A) \le \sqrt{1+M^2} \cdot d(A,C). $$

If $C \ne B$, let $\alpha= \angle ACB$. Since $|\gamma^{\prime}(x)| \le M$, we have
$$|\cot{\alpha}| = \dfrac{|\gamma(x_B)-y_{A}|}{x_B-x_{A}} \le M \Rightarrow |\cos{\alpha}| = \sqrt{\dfrac{1}{\dfrac{1}{\cot^2{\alpha}}+1}} \le \sqrt{\dfrac{1}{\dfrac{1}{M^2}+1}} = 1 - \dfrac{2}{T^2},$$
for some $T > 1$. If $N = T \cdot\sqrt{1+M^2}$, by Lemma \ref{abertura-limitada}, we have
$$d_X(A,B) \le d_X(A,C)+d_X(C,B) = d_X(A,C)+\| C - B \| \le \ell(\gamma)+ \| C - B \| \le$$
$$\le \sqrt{1+M^2} \cdot \| A - C\| + \|C-B\| \le \sqrt{1+M^2}\cdot(\|A - C\|+\|C-B\|)$$
$$\le \sqrt{1+M^2}\cdot T \cdot \|A - B \| \therefore d_X(A,B) \le N \cdot d(A,B).$$

Such a conclusion also holds for $B=C$, because
$$d_X(A,B) \le \ell(x) \le \sqrt{1+M^2}\cdot d(A,C) = \sqrt{1+M^2}\cdot d(A,B) \le N \cdot d(A,B).$$

Therefore $X$ is $N$-LNE, and the result follows.
\end{proof}

\begin{theorem}\label{Edson-Rodrigo}
Let $X \subset \mathbb{R}^{n}$ be a closed semialgebraic set, such that $0 \in X$ and the link of $X$ is connected. Then, $X$ is LNE at 0 if and only if there is a constant $C\ge1$ such that $X_t$ is $C$-LNE, for all $t>0$ small enough.
\end{theorem}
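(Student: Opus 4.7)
The plan is to prove the two implications separately, leveraging the semialgebraic conical structure from Theorem \ref{conical} as the main technical tool. Throughout, I would fix a semialgebraic conical homeomorphism $h: X \cap \overline{\mathbb{B}_\epsilon^n}(0) \to 0 * X_\epsilon$ with $\|h(x)\| = \|x\|$; for each $t \in (0, \epsilon)$ this induces a canonical homeomorphism $X_\epsilon \to X_t$ and a "radial" retraction $\pi_t$ onto $X_t$ from a tubular neighborhood.

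For the direction $(\Rightarrow)$, assuming $X$ is $K$-LNE at $0$, I would fix $p, q \in X_t$ and set $r = \|p-q\|$. In the case $r \le t/(4K)$, the $K$-LNE path $\alpha \subset X$ of length at most $Kr$ joining $p$ to $q$ stays in the annulus $\{t/2 \le \|x\| \le 3t/2\}$. Projecting $\alpha$ onto $X_t$ via $\pi_t$ yields a path in $X_t$ whose length is bounded by a fixed multiple of $Kr$, because on this annulus $\pi_t$ is uniformly Lipschitz with a constant independent of $t$ (using the standard rescaling $x \mapsto x/t$ that reduces to a fixed model based on $X_\epsilon$). In the remaining case, when $r$ is comparable to $t$, the bound is immediate from the uniform bound on the diameter of $X_t/t$, which follows from semialgebraic compactness of the family of rescaled links together with the connectedness hypothesis.

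For the direction $(\Leftarrow)$, assuming a uniform constant $C$ with each $X_t$ being $C$-LNE, I would fix $p, q \in X$ with $s = \|p\| \le \|q\| = t$ and build a path in $X$ from $p$ to $q$ in two stages. First, the radial trajectory $\rho_p : [s, t] \to X$ given by $\rho_p(\tau) = h^{-1}\bigl( \tau \cdot h(p)/s \bigr)$, which satisfies $\|\rho_p(\tau)\| = \tau$ and $\rho_p(t) \in X_t$. Second, a $C$-LNE path in $X_t$ joining $\rho_p(t)$ to $q$, of length at most $C\|\rho_p(t) - q\|$. Using the triangle inequality $\|\rho_p(t) - q\| \le \|\rho_p(t) - p\| + \|p - q\|$, the resulting path has length at most $\mathrm{length}(\rho_p) + C\|\rho_p(t) - p\| + C\|p - q\|$.

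The main obstacle is bounding $\mathrm{length}(\rho_p)$ and $\|\rho_p(t) - p\|$ by a constant multiple of $t - s \le \|p - q\|$: the purely topological conical structure only provides $\|\rho_p(\tau)\| = \tau$, with no length control. To obtain genuine bi-Lipschitz control in the radial direction, I would use the uniform LNE hypothesis on $X_t$ together with a semialgebraic Lipschitz cell decomposition (in the spirit of Mostowski's Lipschitz stratification or Pawłucki's theorem) that refines $h$ into a piecewise-radial parametrization. The key geometric fact is that the uniform $C$-LNE of the links prevents the radial fibers of this parametrization from twisting faster than at linear rate, which forces $\mathrm{length}(\rho_p) \lesssim t - s$. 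Once this estimate is established, summing the two stages yields $d_X(p,q) \lesssim \|p - q\|$, completing the argument.
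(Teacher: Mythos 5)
The paper does not prove this statement at all: it is imported from Mendes--Sampaio \cite{LLNE-LNE} (their Theorem 3.1), so your argument has to stand on its own, and as written both implications have genuine gaps. In the direction $(\Rightarrow)$, everything rests on the claim that the radial retraction $\pi_t$ onto $X_t$ is Lipschitz with a constant independent of $t$. But $\pi_t$ is built from the conical homeomorphism $h$ of Theorem \ref{conical}, which is only a homeomorphism preserving distance to the origin; it carries no Lipschitz information, and a retraction defined through it need not be Lipschitz even for a single fixed $t$. The proposed justification --- that rescaling $x \mapsto x/t$ ``reduces to a fixed model based on $X_\varepsilon$'' --- is not correct: $\frac{1}{t}X \cap \{3/4 \le \|x\| \le 5/4\}$ is a genuinely $t$-dependent family of sets (degenerating to the tangent cone as $t \to 0$), and $h$ does not conjugate under rescaling to a fixed map. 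Obtaining uniform Lipschitz control transverse to the spheres is precisely the hard content of this implication, not a compactness remark. The same objection applies to the case $\|p-q\| \approx t$: what is needed there is an $O(t)$ bound on the \emph{inner} diameter of $X_t$, which is a special case of the statement being proved and does not follow from the outer diameter bound.

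In the direction $(\Leftarrow)$ you correctly isolate the obstacle --- no length control on the radial trajectories $\rho_p$ --- but the proposed resolution is not an argument. Uniform $C$-LNE of the slices is an intrinsic property of each slice taken separately and gives no information on how consecutive slices sit relative to one another in $\mathbb{R}^n$; nothing in it ``prevents the radial fibers from twisting,'' and no estimate on the length of $\rho_p$ is actually derived. This direction does, however, admit a short proof by a different route: by the arc criterion of \cite{comRodrigo} (quoted in Remark \ref{Rem:tord}) it suffices to show that the inner and outer tangency orders coincide for every pair of arcs. Parametrizing arcs by distance to the origin, $\gamma_1(t),\gamma_2(t)\in X_t$, and
\[
d_X\bigl(\gamma_1(t),\gamma_2(t)\bigr) \le d_{X_t}\bigl(\gamma_1(t),\gamma_2(t)\bigr) \le C\,\|\gamma_1(t)-\gamma_2(t)\|,
\]
which forces the two orders to be equal; combined with the trivial reverse comparison this yields LNE. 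I would suggest rebuilding the proof around the arc criterion (or following \cite{LLNE-LNE}) rather than around the conical structure homeomorphism, which is the wrong tool here because it has no metric content.
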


\begin{proof}
See \cite{LLNE-LNE}, Theorem 3.1.
\end{proof}

\section{Technical Lemmas in Ambient Lipschitz Geometry}\label{technical}

\subsection{Reduction to Plane Links}

In this section, we show that the main problem of the paper can be reduced to the analysis on surfaces within a specific type of cone, whose sections orthogonal to an axis are closed balls that can be interpreted as its plane link. Such a reduction will be important for the achievement of calculations of ambient bi-Lipschitz maps in later sections.

\begin{definition}\label{cone}\normalfont
Given $n \in \mathbb{Z}_{\ge 1}$ and $a, R>0$, define
$$C_{a}^{n+1}=\{ (x_1, \dots, x_n, t) \in \mathbb{R}^{n+1} \mid t\ge 0; \; x_1^2 + \dots + x_n^2 \le (at)^2 \}.$$
\end{definition}

Let us define the following sets.

$$ -C_{a}^{n+1}=\{ - p \mid p \in C_a^{n+1} \}\, ; \, C_{a}^{n+1}(R) = C_{a}^{n+1} \cap \{t=R\} ;  $$
$$U_{a}^{n+1} = \mathbb{R}^{n+1} \setminus -C_{a}^{n+1}\, ; \, U_{a}^{n+1}(R) :=U_{a}^{n+1} \cap \mathbb{S}^{n}(0,R).$$

\begin{figure}[h]
\centering
\includegraphics[width=13cm]{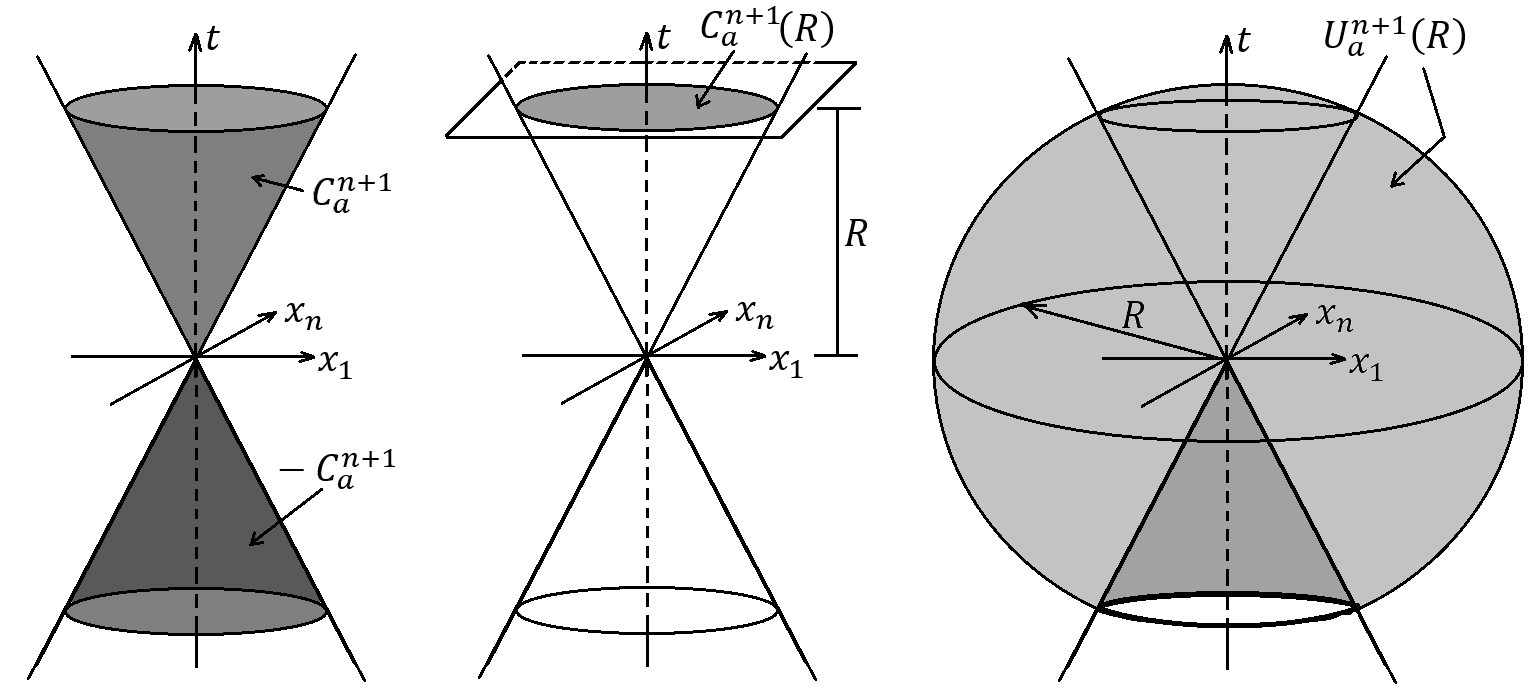}
\label{fig2}
\caption{Graphic representations of $C_{a}^{n+1}$, $-C_{a}^{n+1}$, $C_{a}^{n+1}(R)$ and $ U_a^{n+1}(R)$.}
\end{figure}

We know that, for each $R>0$, the stereographic projection
$$\psi_{R}: \mathbb{S}^n(0,R) - \{ (0,\dots,0,-R) \} \to \mathbb{R}^{n}\times \{ R \}$$
$$(x_1,\dots,x_n,x_{n+1}) \mapsto \left( \lambda x_1, \dots, \lambda x_n, R \right); \; \lambda=\dfrac{2R}{x_{n+1}+R}$$

is a diffeomorphism, whose inverse is
$$\psi_{R}^{-1}: \mathbb{R}^{n}\times \{R \} \to \mathbb{S}^n(0,R) - \{ (0,\dots,0,-R) \}$$
$$ (x_1,\dots,x_n, R) \mapsto \left( \lambda^{\prime}x_1, \dots, \lambda^{\prime}x_n, (2\lambda^{\prime} -1) R \right) \; ; \lambda^{\prime} = \dfrac{4R^2}{x_1^2+\dots+x_n^2+4R^2}.$$

Furthermore, for every $a>2$ large enough, there is $a^{\prime}>0$ sufficiently small such that $\psi_R|_{\overline{U_{a^{\prime}}^{n+1 }(R)}}$ is a diffeomorphism over its image $C_a^{n+1}(R)$, and that $a^{\prime}$ depends only on $a$ (more precisely, $a^{ \prime}=\frac{4a}{a^2+4}$).

\begin{proposition} \label{reduction}
The map $\psi : \overline{U_{a^{\prime}}^{n+1}} \to C_a^{n+1}$ given by $\psi (x_1,\dots ,x_n, R) = \psi_R (x_1,\dots ,x_n,R)$, for all $(x_1,\dots ,x_n,R) \in \overline{U_{a^{\prime}}^{n+1}}$ , is an outer bi-Lipschitz map.
\end{proposition}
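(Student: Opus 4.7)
The plan rests on the observation that $\psi$ is positively $1$-homogeneous. Indeed, if $p = (x_1,\dots,x_n,x_{n+1}) \in \overline{U_{a'}^{n+1}}$ has $\|p\| = R$, then $\psi(p)$ is obtained by applying $\psi_R$ to $p$ (viewed as a point of $\mathbb{S}^n(0,R)$), yielding $\bigl(\tfrac{2R}{x_{n+1}+R}x_1, \dots, \tfrac{2R}{x_{n+1}+R}x_n, R\bigr)$. Replacing $p$ by $\lambda p$ for $\lambda > 0$ rescales both $R$ and $x_{n+1}$ by $\lambda$, leaves the coefficient $\tfrac{2R}{x_{n+1}+R}$ unchanged, and so gives $\psi(\lambda p) = \lambda\psi(p)$. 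Hence $\psi$ maps rays through the origin to rays through the origin, and it suffices to study $\psi$ on a compact sphere-slice.

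Set $S := \overline{U_{a'}^{n+1}} \cap \mathbb{S}^n$. The restriction $\psi|_S$ coincides with $\psi_1|_{\overline{U_{a'}^{n+1}(1)}}$, which by the discussion preceding the proposition is a smooth diffeomorphism onto $C_a^{n+1}(1)$. Compactness of $S$ and continuity of the Jacobians of $\psi|_S$ and its inverse imply that $\psi|_S$ is bi-Lipschitz onto its image. Writing $\psi(\hat p) = \mu(\hat p)\hat\phi(\hat p)$ with $\mu(\hat p) := \|\psi(\hat p)\|$ and $\hat\phi(\hat p) := \psi(\hat p)/\mu(\hat p)$, the function $\mu$ is then bounded between two positive constants on $S$ and Lipschitz, and $\hat\phi : S \to \mathbb{S}^n$ is bi-Lipschitz onto its image.

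The last step extends this control from the slice to the whole cone via the polar identity: for any unit vectors $\hat p,\hat q \in \mathbb{S}^n$ and $r,s\ge 0$,
\begin{equation*}
\|r\hat p - s\hat q\|^2 = (r-s)^2 + rs\,\|\hat p - \hat q\|^2,
\end{equation*}
and its analogue for $\psi(p) = r\mu_1\hat\phi_1$, $\psi(q) = s\mu_2\hat\phi_2$ (with $\mu_i = \mu(\hat p_i)$, $\hat\phi_i = \hat\phi(\hat p_i)$),
\begin{equation*}
\|\psi(p) - \psi(q)\|^2 = (r\mu_1 - s\mu_2)^2 + rs\,\mu_1\mu_2\,\|\hat\phi_1 - \hat\phi_2\|^2.
\end{equation*}
Assume without loss of generality $r \le s$, so $r^2 \le rs$. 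Decomposing $r\mu_1 - s\mu_2 = r(\mu_1-\mu_2) - (s-r)\mu_2$ and combining with the Lipschitz bound on $\mu$ and the bi-Lipschitz bound on $\hat\phi$ yields $\|\psi(p)-\psi(q)\|^2 \lesssim (r-s)^2 + rs\,\|\hat p-\hat q\|^2 = \|p-q\|^2$; the reverse inequality is obtained by the same argument applied to $\psi^{-1}$, and the case $p = 0$ or $q = 0$ is immediate from homogeneity. The main technical hurdle is precisely this last step: the cross-term $r^2\|\hat p-\hat q\|^2$ emerging from $(r\mu_1 - s\mu_2)^2$ must be absorbed into $rs\,\|\hat p-\hat q\|^2$, which is exactly the inequality delivered by the WLOG reduction $r \le s$.
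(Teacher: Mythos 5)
Your proof is correct, and it shares the paper's central idea -- the positive $1$-homogeneity of $\psi$ reduces everything to a compact slice, where smoothness gives uniform bounds -- but the mechanism for passing from the slice back to the full cone is genuinely different. The paper verifies that the Jacobian of $\psi^{-1}$ is bounded away from $0$ and infinity (constant along rays by homogeneity, bounded on the compact slice $C_a^{n+1}(1)$), invokes Proposition \ref{jacobiano-lip} to get an \emph{inner} bi-Lipschitz map, and then upgrades to the outer metric using that the domain and target are LNE; note that this last step tacitly requires $\overline{U_{a'}^{n+1}}$, and not just $C_a^{n+1}$, to be LNE. Your polar-identity computation $\|r\hat p - s\hat q\|^2 = (r-s)^2 + rs\,\|\hat p-\hat q\|^2$ instead produces the outer bi-Lipschitz estimate directly, with the WLOG $r\le s$ absorbing the cross-term $r^2(\mu_1-\mu_2)^2$ into $rs\,\|\hat p-\hat q\|^2$; this bypasses the inner-to-outer step entirely and is more self-contained, at the cost of being longer than the paper's two-line appeal to an already-established criterion. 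The one point both arguments leave implicit is the same: to get a Lipschitz bound on the slice from a bounded derivative one needs the slice (a large spherical cap in your case, the set $\overline{U_{a'}^{n+1}}$ in the paper's) to be quasi-convex, which is elementary here but worth a sentence.
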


\begin{proof}
The set $C_{a}^{n+1}$ is obviously path connected and LNE, so it suffices to check if $\psi^{-1}$ satisfies the conditions of Proposition \ref{jacobiano-lip}. The map $\psi|_{U_{a^{\prime}}^{n+1}}$ is a diffeomorphism whose inverse is given by
$$\psi^{-1}|_{int(C_a^{n+1})}(x_1,\dots,x_n, R) = \psi_R^{-1} (x_1,\dots,x_n, R ) \; ; \; \forall (x_1,\dots,x_n, R) \in int(C_a^{n+1}).$$

Since $C_a^{n+1}(1)$ is compact, $\psi_1^{-1}|_{C_a^{n+1}(1)}$ is bounded away from 0 and infinity. Since $\psi^{-1}$ sends the cone of $C_a^{n+1}(1)$ to the cone of $\overline{U_{a^{\prime}}^{n+1}(1)}$, with $\psi^{-1}(C_a^{n+1}(R))=\overline{U_{a^{\prime}}^{n+1}(R)}$, for all $R>0$, $\psi^{-1}$ is bounded away from 0 and infinity.
\end{proof}

\begin{proposition} \label{suficiencia-em-cone}
Assume that Theorem \ref{main} holds for every LNE surface germ $(X,0) \in C_a^{3}$. Thus, the Theorem \ref{main} is valid for every surface germ LNE $(X,0) \subset \mathbb{R}^3$.
\end{proposition}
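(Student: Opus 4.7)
The strategy is to use the map $\psi$ from Proposition \ref{reduction} to transport LNE germs of $\R^3$ into $C_a^3$, invoke the assumed cone case, and then transport the resulting ambient equivalence back via Proposition \ref{extensão-bi-lip-invariante-bola}.

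Let $(X,0), (Y,0) \subset \R^3$ be LNE surface germs with isolated singularity that are outer bi-Lipschitz equivalent (via some $h\colon X\to Y$) and ambient topologically equivalent (via some $H\colon \R^3\to\R^3$). Since each tangent cone at $0$ is a semialgebraic cone of dimension at most $2$, we pick directions $v_X, v_Y \in \mathbb S^2$ outside $T_0 X$ and $T_0 Y$ respectively, and apply rotations $R_X, R_Y \in \mathrm{SO}(3)$ sending $v_X, v_Y$ to $-e_3$. Rotations are ambient isometries, so all hypotheses are preserved by passing to $R_X(X)$ and $R_Y(Y)$, and proving the conclusion for these implies the conclusion for the original germs. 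We may therefore assume that in some neighborhood of the origin $X, Y \subset U_{a'}^3$, where $a' = \tfrac{4a}{a^2+4}$. Define $X' = \psi(X)$ and $Y' = \psi(Y)$, both LNE germs with isolated singularity contained in $C_a^3$.

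The conjugation $\psi \circ h \circ \psi^{-1}$ is an outer bi-Lipschitz equivalence between $X'$ and $Y'$. For the ambient topological equivalence, one extends $\psi\colon \overline{U_{a'}^3} \to C_a^3$ to a homeomorphism $\tilde\psi\colon \R^3 \to \R^3$: in spherical coordinates, each of $\R^3\setminus\overline{U_{a'}^3}$ and $\R^3\setminus C_a^3$ is a product of $(0,\infty)$ with an open topological $2$-disk on $\mathbb S^2$, hence each is homeomorphic to $\R^3$, and one can match them by a homeomorphism agreeing with $\psi$ on the common boundary cone. Then $\tilde\psi\circ H \circ \tilde\psi^{-1}$ witnesses the ambient topological equivalence of $X'$ and $Y'$ in $\R^3$.

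By the assumed cone case, there is an ambient bi-Lipschitz map $\varphi\colon\R^3\to\R^3$ with $\varphi(X')=Y'$. Crucially, the constructive proof of the main theorem for germs in $C_a^3$---namely, the translation and dilation maps of Section~3.2 and all subsequent ambient bi-Lipschitz maps built from them in Sections~4--11---produces $\varphi$ satisfying $\varphi(C_a^3)=C_a^3$ and $\varphi = \mathrm{id}$ on $\R^3\setminus\mathrm{int}(C_a^3)$, so in particular $\varphi|_{\partial C_a^3}=\mathrm{id}$. Applying Proposition \ref{extensão-bi-lip-invariante-bola} with $U = U_{a'}^3$, $V = \mathrm{int}(C_a^3)$, and the above $\psi,\varphi$ produces an ambient bi-Lipschitz map $\Phi_0\colon\R^3\to\R^3$ that is the identity off $U_{a'}^3$ and equals $\psi^{-1}\circ\varphi\circ\psi$ on $\overline{U_{a'}^3}$. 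Thus
$$\Phi_0(X) = \psi^{-1}\bigl(\varphi(\psi(X))\bigr) = \psi^{-1}(\varphi(X')) = \psi^{-1}(Y') = Y,$$
yielding the desired ambient bi-Lipschitz equivalence. The main obstacle is the requirement that $\varphi$ can be arranged to be the identity outside $C_a^3$; this relies on the explicit construction of the ambient bi-Lipschitz maps in the cone case, rather than on the bare statement of the main theorem there.
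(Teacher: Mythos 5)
Your reduction mechanism is exactly the paper's: rotate so that the tangent directions of the germ avoid $-e_3$, push the germ into $C_a^3$ by the stereographic map $\psi$ of Proposition \ref{reduction}, apply the cone case in its boundary-fixing form, and conjugate back with Proposition \ref{extensão-bi-lip-invariante-bola}. You also correctly isolate the crux of the matter, namely that the cone-case equivalences must be realized by maps of $C_a^3$ that are the identity on $\partial C_a^3$ — this is precisely how the paper states and uses the hypothesis — so the core of the argument is sound.

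The one mismatch is in what you actually prove. Proposition \ref{suficiencia-em-cone} concerns Theorem \ref{main}, the classification of a \emph{single} connected-link germ up to ambient equivalence with a standard model $T_\alpha$ or $H_\beta$: the paper takes one germ $X$, uses the cone case to get boundary-fixing maps carrying $\psi(X)$ onto $\psi(T_\alpha)$ or $\psi(H_\beta)$, and conjugates back, landing on the standard model with no further input. You instead prove a two-germ comparison statement whose hypotheses (outer bi-Lipschitz and ambient topological equivalence of $X$ and $Y$) are not part of Theorem \ref{main}. To recover the proposition you must specialize $Y$ to the standard model, and then you owe three things you do not supply: that $X$ is outer bi-Lipschitz equivalent to $T_\alpha$ (resp.\ $H_\beta$) — which follows from the inner classification of \cite{birbrair1999} plus the fact that both germs are LNE — that $X$ is ambient topologically equivalent to it (conical structure plus unknottedness of arcs and circles in $\mathbb{S}^2$), and that the two cone-case equivalences for $X'$ and $Y'$ target the \emph{same} model so that they can be composed. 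None of this is difficult, but it is extra machinery that the paper's one-germ formulation avoids entirely, and as written your step ``by the assumed cone case, $\varphi(X')=Y'$'' silently absorbs it.
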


\begin{proof}
Given $X \subset \mathbb{R}^3$ LNE, there exists a vector $u \notin S = \{ \gamma^{\prime}(0) \mid \gamma \in V(X) \}$ . Taking a rotation of axes, if necessary, we can assume that $u=(0,0,-p)$, for some $p>0$. Being $S$ closed, there exists $a^{\prime}>0$ small enough such that $u \notin S, \forall u \in \overline{U_{a^{\prime}}^{3}}$. Hence, $\psi: (\overline{U_{a^{\prime}}^{3}},0) \to (C_a^{3},0)$ is an outer bi-Lipschitz map and so $(\psi(X),0)$ is a germ of a LNE surface, by Proposition \ref{im-LNE-is-LNE}. 

If $\varphi_1,\dots,\varphi_m : (C_a^{3},0) \to (C_a^{3},0)$ are sequences of outer bi-Lipschitz maps such that $\varphi_i(p)=p $, for all $p\in \partial C_a^{3}$ and $i=1,\dots,m$, and if $\varphi := \varphi_m \circ \dots \circ \varphi_1$ is such that $ \varphi(\psi(X),0)= \psi(T_{\alpha},0)$ or $\psi(H_{\beta})$, then taking the map $\Phi$ of Proposition \ref{extensão-bi-lip-invariante-bola}, we have, by the hypothesis of the Proposition, that $\Phi(X,0)=(T_{\alpha},0)$ or $(H_{\beta},0)$.
\end{proof}

\begin{figure}[h]
\centering
\includegraphics[width=15cm]{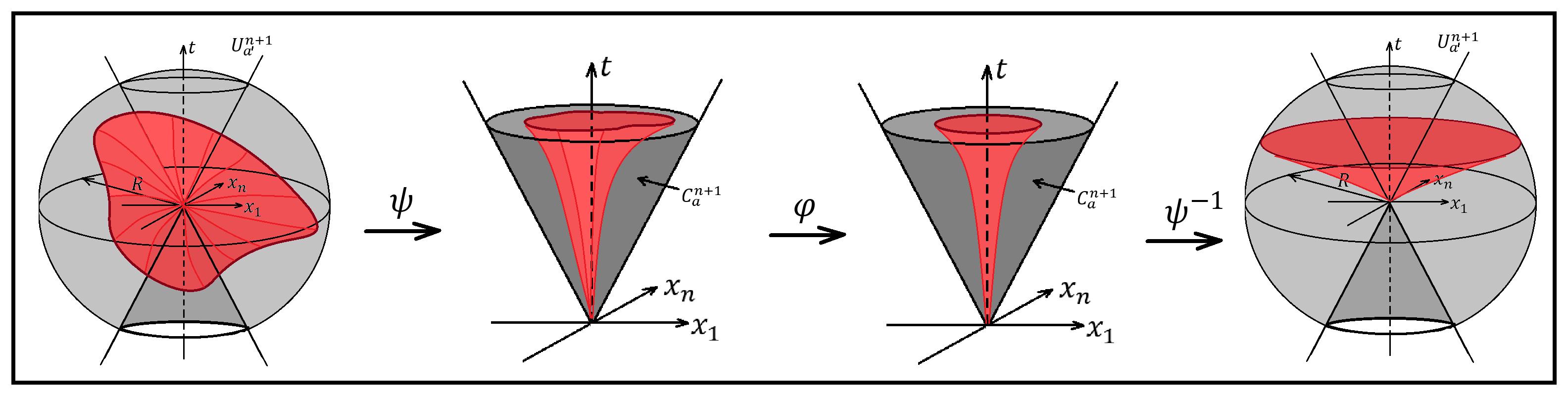}
\label{fig31}
\caption{Proof of Proposition \ref{suficiencia-em-cone}.}
\end{figure}

Motivated by Propositions \ref{reduction} and \ref{suficiencia-em-cone}, we have the following definitions:

\begin{definition} \label{link-plano}\normalfont
For each $X \subset C_a^{n+1}$ and each $t>0$, define the sets:
$$C_a^{n+1}[t] = \{ (x_1, \dots, x_n,t) \in C_a^{n+1} \mid 0<t < t \}\cup \{ 0 \} $$
$$C_a^{n+1}(t) = \{ (x_1, \dots, x_n, t) \in C_a^{n+1} \}$$
$$X[t]=X\cap C_a^{n+1}[t]$$
$$X(t)=X\cap C_a^{n+1}(t)$$

The set $X(t)$ is the $t$-plane link of $X$.
\end{definition}

\begin{definition} \label{equiv-amb-cone}\normalfont
Given the germs of sets $(X,0),(Y,0) \subset (C_a^{n+1},0)$, we say that $(X,0)$ and $(Y,0)$ are ambient bi-Lipschitz equivalents in $(C_a^{n+1},0)$ if there is $\varepsilon>0$ small enough and a outer bi-Lipschitz map $\varphi: (C_a^{n+1},0) \to (C_a^{n+1},0)$ such that
\begin{enumerate}
\item $\varphi(p) = p$, for all $p \in \partial C_a^{n+1} \cap C_a^{n+1}[\varepsilon]$;
\item $\varphi(X[\varepsilon])=Y[\varepsilon].$
\end{enumerate}
\end{definition}

\subsection{Translation and Dilatation along Arcs}

Once the reduction has been made, we provide some lemmas that allow the translation and dilatation of the plane links of the studied surfaces.

\begin{definition}\label{arc-translation}\normalfont
 Let $a_0>0$, $X \subset C_{a_0}^{n+1}$ be a surface and $(\gamma,0) \subset (C_{a_0},0)$, $\gamma(t) = (y_1(t),\dots,y_n(t),t)$ an arc. The \emph{arc translation of $X$ along $\gamma$} is the set, defined as
$$X^{\gamma} =\{(x_1+y_1(t),\dots,x_n+y_n(t),t) \mid (x_1,\dots,x_n,t) \in X\}.$$
\end{definition}

\begin{lemma}[Arc Translation Lemma] \label{translation}
There exists $a\ge a_0$ such that $(X,0),(X^{\gamma},0)$ are ambient bi-Lipschitz equivalents in $(C_a^{n+1},0)$.
\end{lemma}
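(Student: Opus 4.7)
The plan is to define $\varphi$ slice by slice: in each horizontal plane $\{t=\text{const}\}$, translate points near the cone's axis by the vector $(y_1(t),\ldots,y_n(t))$ and smoothly interpolate to the identity near the lateral boundary $\partial C_a^{n+1}$. Since $\gamma\subset C_{a_0}^{n+1}$ gives $\|(y_1(t),\ldots,y_n(t))\|\leq a_0 t$, choosing $a$ sufficiently large relative to $a_0$ leaves room to carry out the interpolation and also guarantees $X^\gamma\subset C_a^{n+1}$.

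Concretely, I would fix a $C^1$ cutoff $\rho:[0,1]\to[0,1]$ with $\rho\equiv 1$ on $[0,\tfrac{1}{2}]$, $\rho\equiv 0$ on $[\tfrac{3}{4},1]$, and $\|\rho'\|_\infty<\infty$; write $y(t)=(y_1(t),\dots,y_n(t))$; and pick $a$ large enough (depending on $a_0$, $n$, and $\|\rho'\|_\infty$). Then define
$$\varphi(x',t)=\Bigl(x'+\rho\Bigl(\tfrac{\|x'\|}{at}\Bigr)\,y(t),\,t\Bigr),\qquad \varphi(0,0)=(0,0).$$
Two of the three required properties are immediate. When $\|x'\|/(at)\geq \tfrac{3}{4}$, and in particular on $\partial C_a^{n+1}$ where the ratio equals $1$, $\rho$ vanishes and $\varphi=\mathrm{id}$. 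When $(x',t)\in X\subset C_{a_0}^{n+1}$ one has $\|x'\|/(at)\leq a_0/a<\tfrac{1}{2}$ for $a\geq 2a_0$, so $\rho=1$ and $\varphi(x',t)=(x'+y(t),t)$, which matches the definition of $X^\gamma$. The same size condition on $a$ ensures $\varphi$ maps $C_a^{n+1}$ into itself.

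The substance of the proof is showing $\varphi$ is an outer bi-Lipschitz homeomorphism. Its Jacobian is block-triangular,
$$J_\varphi=\begin{pmatrix}I_n+A(x',t) & b(x',t)\\ 0 & 1\end{pmatrix},$$
with $A_{ij}=\rho'(\|x'\|/(at))\,x_i\,y_j(t)/(\|x'\|\,at)$, so $\|A\|\lesssim \|\rho'\|_\infty\,a_0/a$, and $\|b\|\lesssim \|\rho'\|_\infty\,a_0+\|y'(t)\|$. The key analytic input is that $\|y'(t)\|$ remains bounded as $t\to 0^+$: since $\gamma$ is a semialgebraic arc at the origin with $\|y(t)\|\leq a_0 t$, each $y_j$ has a Puiseux expansion whose leading exponent is $\geq 1$, and so $y_j'(t)=O(1)$. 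With $a$ taken sufficiently large, $I_n+A$ is invertible and both $J_\varphi$ and $J_\varphi^{-1}$ are uniformly bounded on $int(C_a^{n+1})$; this set is convex, so its inner and outer metrics coincide. Propositions \ref{jacobiano-lip} and \ref{bi-lip-extende-fecho} then yield the outer bi-Lipschitz property on all of $C_a^{n+1}$ (bijectivity of each slice map follows from the perturbation-of-identity estimate $\|\varphi_t-\mathrm{id}\|_{\mathrm{Lip}}<1$ together with the boundary condition). The main obstacle will be controlling the Jacobian near the vertex: the semialgebraicity of $\gamma$ is essential to keep $\|y'(t)\|$ bounded, and without it the $t$-derivative $\partial_t[\rho(\|x'\|/(at))\,y(t)]$ could blow up and destroy the Lipschitz estimate.
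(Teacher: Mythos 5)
Your proof is correct and follows the same overall strategy as the paper's: translate by the full vector $y(t)$ on a central region containing $X(t)$, damp the translation to the identity as one approaches $\partial C_a^{n+1}$, and verify the bi-Lipschitz property via uniform Jacobian bounds (Proposition \ref{jacobiano-lip}), convexity of the cone, and Proposition \ref{bi-lip-extende-fecho}, with the boundedness of $|y_i(t)/t|$ and $|y_i'(t)|$ (from semialgebraicity and $\gamma\subset C_{a_0}^{n+1}$) as the key analytic input. The difference is in the interpolation device. The paper foliates the annular region between the ball $D(t)$ of radius $\sqrt{a}\,t$ and $\partial C_a^{n+1}(t)$ by spheres $\Gamma_\lambda(t)$, translates $\Gamma_\lambda$ by $\lambda y(t)$ onto a second family $\Gamma'_\lambda$, and must then prove the target leaves are pairwise disjoint (Claim \ref{homotetias}) and estimate $t\,\partial\lambda/\partial x_i$ and $t\,\partial\lambda/\partial t$ by implicit differentiation, finally gluing the two pieces with Proposition \ref{colagem-bi-lip-inner}. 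Your explicit radial cutoff $\rho(\|x'\|/(at))$ collapses all of that: the map is globally $C^1$ on the open cone minus the vertex (so no gluing is needed), injectivity on each slice follows from the elementary perturbation-of-identity bound $\|\rho'\|_\infty a_0/a<1$, and the Jacobian entries are computed directly rather than implicitly. The only cosmetic slip is the transposition in $A_{ij}$ (the correct entry is $\rho'(\|x'\|/(at))\,x_j y_i(t)/(\|x'\|\,at)$), which does not affect the norm estimate. Your version is shorter and arguably cleaner; what the paper's leaf-by-leaf construction buys is a template that is reused almost verbatim for the Function Dilatation Lemma \ref{dilatation}, where the deformation is a dilation rather than a translation and a single additive cutoff would not apply as directly.
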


\begin{proof}
Since $(\gamma,0) \subset (C_{a_0},0)$ is an arc, there is $\gamma^{\prime}(0)=(y_1^{\prime}(0),\dots , y_n^{\prime}(0),1) \in C_a^{n+1}$. For simplicity, we will write $y_i=y_i(t)$ and $y_i^{\prime} = y_i^{\prime}(t)$. We have that $M>0$ and $\varepsilon>0$ exist small enough such that $\left| \frac{y_i}{t} \right|, \left| y_i^{\prime} \right| < M$, for $i=1,\dots,n$ and all $0<t < \varepsilon$. For each $a>\max \{ 1,a_0 \}$. Let $b=\sqrt{a}$. Consider the sets, defined as follows:

$$ D^{\prime} = \{ (x_1,\dots,x_n,t) \in C_a^{n+1}[\varepsilon] \mid (x_1-y_1)^2+\dots+(x_n-y_n )^2 \le (bt)^2 \};$$
$$ D = \{ (x_1,\dots,x_n,t) \in C_a^{n+1}[\varepsilon] \mid x_1^2+\dots+x_n^2 \le (bt)^2 \}; $$
$$ \Gamma_{\lambda}^{\prime} = \{ (x_1,\dots,x_n,t) \in C_a^{n+1}[\varepsilon] \mid (x_1-\lambda y_1)^2 +\dots+(x_n- \lambda y_n)^2 = (\lambda bt + (1-\lambda)at)^2 \} \, (0\le \lambda \le 1);$$
$$ \Gamma_{\lambda} = \{ (x_1,\dots,x_n,t) \in C_a^{n+1}[\varepsilon] \mid x_1^2+\dots+x_n^2 = (\lambda bt + (1-\lambda)at)^2 \} \, (0\le \lambda \le 1).$$

Evidently $(\gamma,0) \subset (D^{\prime},0)$, $\Gamma_{1}^{\prime} = \partial D^{\prime}$, $\Gamma_{1} =\partial D$ and $\Gamma_{0}^{\prime}=\Gamma_{0}=\partial C_a^{n+1}$. We also have that $C_a^{n+1}(t) = D(t) \cup \bigcup_{0\le \lambda \le 1} \Gamma_{\lambda}(t) = D^{\prime}( t) \cup \bigcup_{0\le \lambda \le 1} \Gamma_{\lambda}^{\prime}(t)$.

\begin{claim} \label{homotetias}
There is $a>0$ large enough such that
\begin{enumerate}
\item $X(t) \subset D(t)$ and $X^{\gamma}(t) \subset D^{\prime}(t)$, for each $0<t\ll 1$;
\item the $n$-spheres $D(t),D^{\prime}(t)$ are inside $C_a^{n+1}(t)$, for each $0<t\ll 1$;
\item for $0\le \lambda_1 < \lambda_2 \le 1$, we have $\Gamma_{\lambda_1}(t) \cap \Gamma_{\lambda_2}(t) = \emptyset$ and $\Gamma_{\lambda_1 }^{\prime} \cap \Gamma_{\lambda_2}^{\prime} = \emptyset$.
\end{enumerate}
\end{claim}

\begin{proof}
Since $X \subset C_{a_0}$ and we can make $\varepsilon$ small enough so that $\| \gamma(t) \|$ is bounded, for $0<t<\varepsilon$, we obtain that there exist $a$  big enough so that the $n$-balls $D(t)$ and $D^{\prime }(t)$, both of radius $bt>a_0 t$, contain $X(t)$ and $X^{\gamma}(t)$, respectively. This implies condition (1). We have $ \| (y_1,\dots,y_n) \| < (a-b)t$, since $b, a-b \to \infty$ when $a \to \infty$. Since the common center of $D(t),C_{a}^{n+1}(t)$ is $\gamma_0(t)=(0,\dots,0,t)$, we obtain that $D( t) \subset int(C_a^{n+1}(t))$, and since $\|\gamma(t) - \gamma_0(t)\| = \| (y_1,\dots,y_n) \| < at-bt$, we have $D^{\prime}(t) \subset int(C_a^{n+1}(t)$. The condition (2) is proved.

To prove (3), for every $0\le \lambda \le 1$ and $t>0$ small enough, let $\gamma_{\lambda}(t)= \lambda \gamma(t)+(1-\lambda)\gamma_0(t)$ and $R_{\lambda} = \lambda b + (1-\lambda)a$. Also, let $\tilde{\gamma}(t)=(\frac{a}{a-b}y_1,\dots,\frac{a}{a-b}y_n,t)$. A direct calculation shows that, for every $0\le \lambda_1 < \lambda_2 \le 1$,

$$\gamma_{\lambda_1}(t) - \tilde{\gamma}(t) = \frac{R_{\lambda_1}t}{R_{\lambda_2}t} (\gamma_{\lambda_2}(t)-\tilde{\gamma}(t)).$$

This implies that the homotety $H$ wih center $\tilde{\gamma}(t)$ and ratio $\frac{R_{\lambda_1}}{R_{\lambda_2}}$ sends $\Gamma_{\lambda_2}^{\prime}(t)$ to $\Gamma_{\lambda_1}^{\prime}(t)$. Suppose $p \in \Gamma_{\lambda_1 }^{\prime} \cap \Gamma_{\lambda_2}^{\prime}$. Then, $p \ne \tilde{\gamma}(t)$, because $\tilde{\gamma}(t) \in int(D^{\prime}(t))$ and $int(D^{\prime}(t))$ in on the interior of the balls with boundary $\Gamma_{\lambda_1}^{\prime}(t)$, $\Gamma_{\lambda_2}^{\prime}(t)$. Since $\tilde{\gamma}(t)$, $p$ and $H(p)$ are collinear, we have $p=H(p)$, which implies $\frac{R_{\lambda_1}}{R_{\lambda_2}}=1$, that is $\lambda_1 = \lambda_2$, a contradiction. Therefore, $\Gamma_{\lambda_1}^{\prime} \cap \Gamma_{\lambda_2}^{\prime} = \emptyset$. An analogous argument taking $ \gamma_0(t)=\gamma(t)=\tilde{\gamma}(t)$ shows that $\Gamma_{\lambda_1}(t) \cap \Gamma_{\lambda_2}(t) = \emptyset$.

\begin{figure}[h]
\centering
\includegraphics[width=10cm]{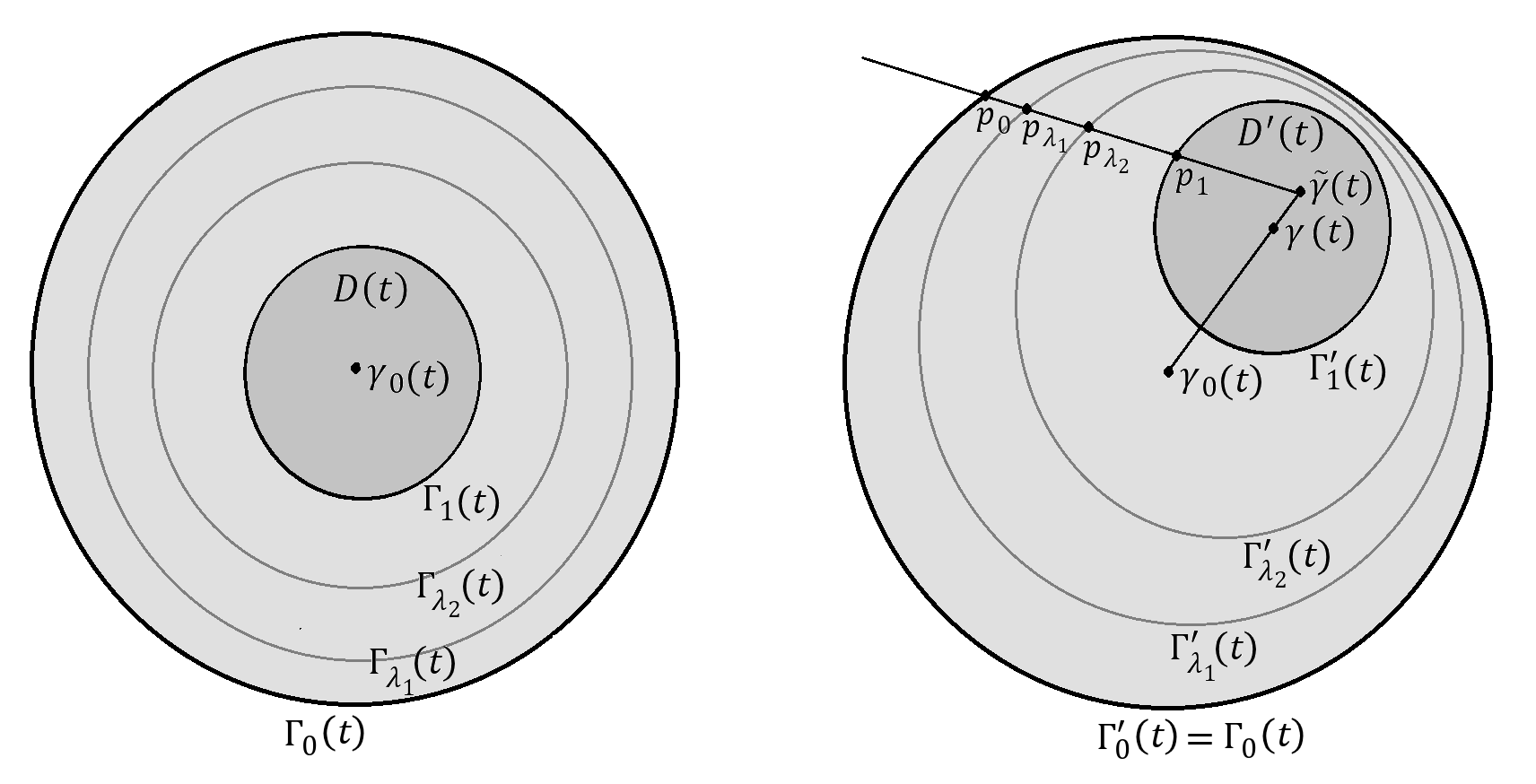}
\label{fig3}
\caption{Proof of Claim \ref{homotetias}.}
\end{figure}

\end{proof}

Now define $\varphi : (C_a^{n+1},0) \to (C_a^{n+1},0)$ as
$$\varphi(x_1,\dots, x_n,t)=
\begin{cases}
       (x_1-y_1,\dots,x_n-y_n,t), & (x_1,\dots,x_n,t) \in D(t) \\
       (x_1-\lambda y_1,\dots,x_n-\lambda y_n,t), & (x_1,\dots,x_n,t) \in \Gamma_{\lambda}(t)
   \end{cases}.
$$

Notice that for $\lambda = 1$, we have $(x_1-\lambda y_1,\dots,x_n-\lambda y_n,t)=(x_1-y_1,\dots,x_n-y_n,t)$, implying that $ \varphi$ is continuous. Also, $\varphi(D(t))=D^{\prime}(t)$, $\varphi(X')=X$, $\varphi(\Gamma_{\lambda}(t))=\Gamma_{\lambda}^{\prime}(t)$ and $\varphi$ have an inverse $\varphi^{-1}: (C_a^{n+1},0) \to (C_a^{n+1 },0)$, given by
$$\varphi^{-1}(x_1,\dots, x_n,t)=
\begin{cases}
       (x_1+y_1,\dots,x_n+y_n,t), & (x_1,\dots,x_n,t) \in D^{\prime}(t) \\
       (x_1+\lambda y_1,\dots,x_n+\lambda y_n,t), & (x_1,\dots,x_n,t) \in \Gamma_{\lambda}^{\prime}(t)
   \end{cases}.
$$

This shows that $\varphi$ is a diffeomorphism by taking the constraint on each of the open sets $D$ and $U=\bigcup_{0 < \lambda < 1} \Gamma_{\lambda}$. As $\overline{D},\overline{U}$ are path connected and have intersection equal to $\Gamma_1$, by Proposition \ref{colagem-bi-lip-inner}, it suffices to prove that $\varphi$ is inner bi-Lipschitz , since $(C_a^{n+1},0)$ is LNE (as it is convex). 

For that, let's check if $\varphi|_{(D,0)}$, $\varphi|_{(U,0)}$ satisfies the conditions of Theorem \ref{jacobiano-lip}. Given $p=(x_1, \dots, x_n, t) \in D[\varepsilon]$ calculating the Jacobian $J_{\varphi}(p)$, we have

$$
J_{\varphi} (p) = \left(
\begin{array}{ccccc}
1 & 0 & \dots & 0 & -y_1^{\prime} \\
0 & 1 & \dots & 0 & -y_2^{\prime} \\
\vdots & \vdots & \ddots & \vdots & \vdots \\
0 & 0 & \dots & 1 & -y_n^{\prime} \\
0 & 0 & \dots & 0 & 1
\end{array} \right)
$$

Since $|y_i^{\prime}|<M$, for $i=1,\dots,n$, we have $\varphi $ is bounded away from 0 and infinity in $(D,0)$, and similarly the same holds for $\varphi^{-1}$. Therefore, $\varphi|_{(D,0)}$ is a bi-Lipschitz map. Now, given $p=(x_1, \dots, x_n, t) \in U[\varepsilon]$, the $(n+1)$th row of $J_{\varphi}(p)$ is $( 0,\dots,0,1)$, showing that $\| J_{\varphi}(p) \| \ge 1$. Let us then check whether the other entries of the Jacobian are bounded. Indeed, for $1\le i \le n$, when deriving the equation $(x_1-\lambda y_1)^2+\dots+(x_n- \lambda y_n)^2 = (\lambda bt + ( 1-\lambda)at)^2$, we get:

$$\sum_{k=1,k\ne i}^{n} 2(x_k-\lambda y_k) \left( -\frac{\partial \lambda}{\partial x_i}y_k \right) + 2( x_i-\lambda y_i) \left(1 -\frac{\partial \lambda}{\partial x_i}y_i \right) = 2(\lambda bt+(1-\lambda)at)(bt-at)\frac{ \partial \lambda}{\partial x_i} \Rightarrow$$
$$\Rightarrow \left( t\frac{\partial \lambda}{\partial x_i} \right)\left( (a-b)(\lambda b + (1-\lambda)a) - \sum_{k=1 ,}^{n} \left( \frac{x_k-\lambda y_k}{t}\right) \left( \frac{y_k}{t} \right) \right) = \frac{\lambda y_i - x_i }{t}$$

As $\left| (a-b)(\lambda b + (1-\lambda)a) - \sum_{k=1,}^{n} \left( \frac{x_k-\lambda y_k}{t}\right) \left( \frac{y_k}{t} \right) \right| \ge (a-b)b-n(a+1.M)M > 0$ for $a$ big enough, and $\left| \frac{\lambda y_i - x_i}{t} \right| \le 1.M+a$, so $\left( t\frac{\partial \lambda}{\partial x_i} \right)$ is bounded. Similarly, $\left( t\frac{\partial \lambda}{\partial t} \right)$ is bounded. Therefore, for $1\le i,j \le n$,

$$ \left| \frac{\partial \varphi_{i}}{\partial x_j} \right| = \left| \frac{\partial x_i - \lambda y_i}{\partial x_j} \right| = \left| \frac{\partial x_i}{\partial x_j} - \frac{\partial \lambda}{\partial x_j} y_i \right| \le \left| \frac{\partial x_i}{\partial x_j} \right| + \left| t \frac{\partial \lambda}{\partial x_j} \right| . \left| \frac{y_i}{t} \right| \le 1 + \left| t \frac{\partial \lambda}{\partial x_j} \right| M$$
$$ \left| \frac{\partial \varphi_{i}}{\partial t} \right| = \left| \frac{\partial x_i - \lambda y_i}{\partial t} \right| = \left| \frac{\partial \lambda}{\partial t} y_i + \lambda y_i^{\prime} \right| \le \left| t \frac{\partial \lambda}{\partial t} \right|.\left| \frac{y_i}{t}\right| + \lambda . \left| y_i^{\prime} \right| \le \left| \frac{\partial \lambda}{\partial t} \right|.M + 1.M, $$

which shows that $\varphi$ is bounded away from 0 and infinity in $(U,0)$. Similarly, $\varphi^{-1}$ is bounded away from 0 and infinity in $(U^{\prime},0)$. Thus, $\varphi|_{(U,0)}$ is a bi-Lipschitz map.
\end{proof}

\begin{definition}\label{Function dilatation}\normalfont
Let $a_0>0$, $X \subset C_{a_0}^{n+1}$ be a surface and $f: (\mathbb{R},0) \to \mathbb{R}$ be a semialgebraic function germ whose Puiseux series is $f (t)=c_0+o(1)$; $c_0 >0$. The \emph{dilatation of $X$ with respect to $f$} is the set defined as
$$f \cdot X=\{(f(t)x_1 (t),\dots,f(t)x_n,t) \mid (x_1,\dots,x_n,t) \in X\}.$$
\end{definition}

\begin{lemma}[Function Dilatation Lemma] \label{dilatation}
There is $a\ge a_0$ such that $(X,0),(f \cdot X,0)$ are ambient bi-Lipschitz equivalent in $(C_a^{n+1},0)$.
\end{lemma}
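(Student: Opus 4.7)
The plan mirrors the proof of the Arc Translation Lemma (Lemma \ref{translation}): I would build an ambient map that rescales the plane links $X(t)$ by $f(t)$ in a core region around the axis, interpolates radially to the identity on an annular buffer inside $C_a^{n+1}(t)$, and is the identity outside $C_a^{n+1}$. The hypothesis that $f$ is semialgebraic with Puiseux expansion $f(t) = c_0 + o(1)$, $c_0 > 0$, supplies $\varepsilon > 0$ and constants $0 < M_1 \le M_2$ with $M_1 \le f(t) \le M_2$ on $(0,\varepsilon)$, and also forces $tf'(t) \to 0$ (each term $c_j t^{\alpha_j}$ with $\alpha_j > 0$ of $f - c_0$ contributes $c_j \alpha_j t^{\alpha_j}$ to $t f'(t)$), so $|tf'(t)| \le M_3$ on $(0,\varepsilon)$.

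Concretely, I would choose $a \ge \max\{a_0, 1\}$ large, set $b = \sqrt{a}$ and $r = (x_1^2+\cdots+x_n^2)^{1/2}$, and define the inner slices $D(t) = \{r \le bt\} \cap C_a^{n+1}(t)$, $D'(t) = \{r \le f(t)bt\} \cap C_a^{n+1}(t)$, together with the interpolating spheres
\[ \Gamma_\lambda(t) = \{r = (\lambda b + (1-\lambda)a)t\}, \qquad \Gamma'_\lambda(t) = \{r = (\lambda f(t) b + (1-\lambda)a)t\}, \quad \lambda \in [0,1]. \]
For $a$ large enough on $(0,\varepsilon)$ we have $a_0 < b$ and $f(t)b < a$, which by the same arguments as in Claim \ref{homotetias} gives $X(t) \subset D(t)$, $(f\cdot X)(t) \subset D'(t)$, inclusion of $D(t), D'(t)$ in $\mathrm{int}\,C_a^{n+1}(t)$, and pairwise disjointness of the spheres for distinct $\lambda$. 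The candidate map $\varphi : (C_a^{n+1}, 0) \to (C_a^{n+1}, 0)$ is
\[ \varphi(x_1,\ldots,x_n,t) = \begin{cases} (f(t)x_1, \ldots, f(t)x_n, t) & \text{on } D(t), \\ (\mu x_1, \ldots, \mu x_n, t) & \text{on } \Gamma_\lambda(t), \end{cases} \]
with $\mu = (\lambda f(t) b + (1-\lambda)a)/(\lambda b + (1-\lambda)a)$, so that $\varphi$ maps $\Gamma_\lambda(t)$ radially onto $\Gamma'_\lambda(t)$. The two formulas agree on $\Gamma_1$ (both reduce to the $f(t)$-scaling) and $\varphi$ is the identity on $\Gamma_0 = \partial C_a^{n+1}$, hence $\varphi$ is a continuous homeomorphism with inverse of the same form with $1/f$ in place of $f$, and by construction $\varphi(X) = f \cdot X$.

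To conclude I would apply Proposition \ref{colagem-bi-lip-inner} to the restrictions to $\overline{D}$ and $\overline{U}$, $U = \bigcup_{0<\lambda<1} \Gamma_\lambda$, each verified via Proposition \ref{jacobiano-lip}. On $D$ the Jacobian is block-triangular with $f(t) I_n$ in the $x$-block, $1$ in the $(n{+}1,n{+}1)$ entry, and last column entries $f'(t) x_i = (tf'(t))(x_i/t)$ bounded by $b M_3$; its determinant equals $f(t)^n \ge M_1^n$. On $U$, substituting $\lambda = (at - r)/((a-b)t)$ into $\mu$ yields the compact form
\[ \mu = 1 + \frac{(at-r)(f(t)-1) b}{r(a-b)}, \]
and since $\mu$ depends on $x$ only through $r$, the matrix determinant lemma gives $\det J_\varphi = \mu^{n-1}\bigl(1 - b(f(t)-1)/(a-b)\bigr)$, which is bounded away from $0$ for large $a$ with $b = \sqrt{a}$. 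Entry-wise bounds on $\mu$, $\partial_{x_j}\mu$, and $\partial_t \mu$ follow by direct computation: each $|x_i|$ is absorbed by $r$ or $at$, each occurrence of $f'(t)$ is packaged as $(tf'(t))/t$ with the $1/t$ cancelled by the bound $|x_i| \le r \le at$, and the factor $b/(a-b) \to 0$ controls the perturbation terms. The same bounds hold for $\varphi^{-1}$. Since $(C_a^{n+1}, 0)$ is convex, inner equals outer, so $\varphi$ is the desired outer bi-Lipschitz self-map of $C_a^{n+1}$ fixing the lateral boundary and sending $X$ to $f \cdot X$.

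The main obstacle I anticipate is precisely the analytic point above: $f'(t)$ need not be bounded near $0$ (if the leading non-constant Puiseux exponent of $f$ is less than $1$, $f'(t)$ blows up). Every such occurrence in the Jacobian must be factored as $(tf'(t))/t$ and paired with a geometric coordinate of size $O(t)$ to absorb the singular factor. Once this bookkeeping is done uniformly in $\lambda$ and $t$, the Jacobian estimates on $U$ reduce to elementary inequalities that parallel the ones in Lemma \ref{translation}.
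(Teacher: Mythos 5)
Your proposal is correct and follows essentially the same route as the paper's proof: the same sets $D$, $D'$, $\Gamma_\lambda$, $\Gamma'_\lambda$ with $b=\sqrt a$, the same interpolating map, the same gluing via Proposition \ref{colagem-bi-lip-inner} and Jacobian bounds via Proposition \ref{jacobiano-lip}, and the same mechanism of packaging $f'(t)$ as $(tf'(t))/t$ absorbed by $|x_i|\le at$. The only difference is cosmetic: you solve for $\lambda$ explicitly in terms of $r$ and $t$ and use the matrix determinant lemma, where the paper differentiates the defining equation of $\Gamma_\lambda$ implicitly; both yield the same estimates.
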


\begin{proof}
Notice initially that $\displaystyle \lim_{t \to 0} f(t) = c_0$ and $\displaystyle \lim_{t \to 0} tf^{\prime}(t)=\displaystyle \lim_{t \to 0} \sum_{q\ge 0} q c_q t^q =0$. Hence, suppose that $\varepsilon>0$ is small enough such that $|t f^{\prime} (t)| < \frac{c_0}{2} < f(t) < \frac{3c_0}{2}$, for $0<t<\varepsilon$. For each $a>\max \{ 1,a_0 \}$, let $b=\sqrt{a}$.
Let us define the sets:
$$ D = \{ (x_1,\dots,x_n,t) \in C_a^{n+1}[\varepsilon] \mid x_1^2+\dots+x_n^2 \le (bt)^2 \};$$
$$ D^{\prime} = \{ (x_1,\dots,x_n,t) \in C_a^{n+1}[\varepsilon] \mid x_1^2+\dots+x_n^2 \le (f (t)bt)^2 \};$$
$$ \Gamma_{\lambda} = \{ (x_1,\dots,x_n,t) \in C_a^{n+1}[\varepsilon] \mid x_1^2+\dots+x_n^2 = (\lambda bt + (1-\lambda)at)^2 \} \, (0\le \lambda \le 1);$$
$$ \Gamma_{\lambda}^{\prime} = \{ (x_1,\dots,x_n,t) \in C_a^{n+1}[\varepsilon] \mid x_1^2+\dots+x_n^ 2 = (\lambda f(t)bt + (1-\lambda)at)^2 \} \, (0\le \lambda \le 1).$$

Similarly to Claim \ref{homotetias}, we can prove the following:

\begin{claim}
There is $a>0$ large enough such that
\begin{enumerate}
\item $X(t) \subset D(t)$ and $f \cdot X(t) \subset D^{\prime}(t)$, for each $0<t\ll 1$;
\item the $n$-spheres $D(t),D^{\prime}(t)$ are inside $C_a^{n+1}(t)$, for each $0<t\ll 1$;
\item for $0\le \lambda_1 < \lambda_2 \le 1$, we have $\Gamma_{\lambda_1}(t) \cap \Gamma_{\lambda_2}(t) = \emptyset$ and $\Gamma_{\lambda_1 }^{\prime} \cap \Gamma_{\lambda_2}^{\prime} = \emptyset$.
\end{enumerate}
\end{claim}

Now define $\varphi : (C_a^{n+1},0) \to (C_a^{n+1},0)$ as
$$\varphi(x_1,\dots, x_n,t)=
\begin{cases}
       (f(t)x_1,\dots,f(t)x_n,t), & (x_1,\dots,x_n,t) \in D(t) \\
       \left( \left( \frac{\lambda f(t)b + (1-\lambda)a}{\lambda b + (1-\lambda) a}\right)x_1,\dots,\left( \frac{\lambda f(t)b + (1-\lambda)a}{\lambda b + (1-\lambda) a}\right)x_n,t \right), & (x_1,\dots,x_n, t) \in \Gamma_{\lambda}(t)
   \end{cases}.
$$

Notice that for $\lambda = 1$,
$$(f(t)x_1,\dots,f(t)x_n,t) = \left( \left( \frac{\lambda f(t)b + (1-\lambda)a}{\lambda b + (1-\lambda) a}\right)x_1,\dots,\left( \frac{\lambda f(t)b + (1-\lambda)a}{\lambda b + (1-\lambda) a}\right)x_n,t \right),$$
implying that $\varphi$ is continuous. Also, $\varphi(D(t))=D(t)^{\prime}$, $\varphi(X')=X$, $\varphi(\Gamma_{\lambda}(t))=\Gamma_{\lambda}^{\prime}(t)$ and $\varphi$ has a continuous inverse $\varphi^{-1}: (C_a^{n+1},0) \to (C_a^{n+ 1},0)$, given by
$$\varphi^{-1}(x_1,\dots, x_n,t)=
\begin{cases}
       (\frac{1}{f(t)}x_1,\frac{1}{f(t)}x_n,t), & (x_1,\dots,x_n,t) \in D^{\prime}( t) \\
       \left( \left( \frac{\lambda b + (1-\lambda) a}{\lambda f(t)b + (1-\lambda)a}\right)x_1,\dots,\left( \frac{\lambda b + (1-\lambda) a}{\lambda f(t)b + (1-\lambda)a}\right)x_n,t \right), & (x_1,\dots,x_n, t) \in \Gamma_{\lambda}^{\prime}(t)
   \end{cases}.
$$

This shows that $\varphi$ is a diffeomorphism by taking the constraint on each of the open sets $D$ and $U=\bigcup_{0 < \lambda < 1} \Gamma_{\lambda}$. As $\overline{D},\overline{U}$ are path connected and have intersection equal to $\Gamma_1$, by Proposition \ref{colagem-bi-lip-inner}, it suffices to prove that $\varphi$ is inner bi-Lipschitz , since $(C_a^{n+1},0)$ is LNE (as it is convex). For that, let's check if $\varphi|_{(D,0)}$, $\varphi|_{(U,0)}$ satisfies the conditions of Theorem \ref{jacobiano-lip}. Given $p=(x_1, \dots, x_n, t) \in D[\varepsilon]$ calculating the Jacobian $J_{\varphi}(p)$, we have

$$
J_{\varphi} (p) = \left(
\begin{array}{ccccc}
f(t) & 0 & \dots & 0 & x_1 f^{\prime}(t) \\
0 & f(t) & \dots & 0 & x_2 f^{\prime}(t) \\
\vdots & \vdots & \ddots & \vdots & \vdots \\
0 & 0 & \dots & f(t) & x_n f^{\prime}(t) \\
0 & 0 & \dots & 0 & 1
\end{array} \right)
$$

Hence, $\det{J_{\varphi} (p)}= f(t)^n \in \left( \left( \frac{c_0}{2} \right)^n, \left( \frac{ 3c_0}{2} \right)^n \right)$ and as $|f(t)|<\frac{3c_0}{2}$, $|x_i f^{\prime}(t)| =\left| \frac{x_i}{t} \right|.|t f^{\prime}(t)| <a.\frac{c_0}{2}$, for $i=1,\dots,n$, we have that $\varphi$ is bounded away from 0 and infinity in $(D,0)$. Therefore, $\varphi|_{(D,0)}$ is a bi-Lipschitz map. Now, given $p=(x_1, \dots, x_n, t) \in U[\varepsilon]$, the $(n+1)$th row of $J_{\varphi}(p)$ is $( 0,\dots,0,1)$, showing that $\| J_{\varphi}(p) \| \ge 1$. Let us then check whether the other entries of the Jacobian are bounded. Indeed, for $1\le i \le n$, when deriving the equation $x_1^2+\dots+x_n^2 = (\lambda bt + (1-\lambda)at)^2$, we get
$$2x_i = 2(\lambda bt+(1-\lambda )at)(bt-at)\frac{\partial \lambda}{\partial x_i} \Rightarrow \left( t\frac{\partial \lambda}{ \partial x_i} \right)\left( (a-b)(\lambda b + (1-\lambda)a) \right)= \frac{x_i}{t}.$$

Since $\left| (a-b)(\lambda b + (1-\lambda)a) \right| \ge (a-b)b> 0$ and $\left| \frac{x_i}{t} \right| \le a$, $\left( t\frac{\partial \lambda}{\partial x_i} \right)$ is bounded. Similarly, $\left( t\frac{\partial \lambda}{\partial t} \right)$ is also bounded. Denoting $k=\frac{\lambda b + (1-\lambda) a}{\lambda f(t)b + (1-\lambda)a}$, we have $\frac{f(b) t}{a} \le k \le \frac{a}{b}$ and
$$t\frac{\partial k}{\partial x_i} = \frac{\left( b-a \right) \left( t \frac{\partial \lambda}{\partial x_i} \right)(\lambda f (t)b + (1-\lambda)a) - (\lambda b + (1-\lambda)a) \left( t \frac{\partial \lambda}{\partial x_i} \right)(f( t)b-a)}{(\lambda f(t)b + (1-\lambda)a)^2};$$
$$t\frac{\partial k}{\partial t} = \frac{\left( b-a \right) \left( t \frac{\partial \lambda}{\partial t} \right)(\lambda f (t)b + (1-\lambda)a) - (\lambda b + (1-\lambda)a)\left( \left( t \frac{\partial \lambda}{\partial t} \right) (f(t)b-a) + b \lambda(tf^{\prime}(t)) \right)}{(\lambda f(t)b + (1-\lambda)a)^2}.$$

Thus, such expressions are bounded, because $\lambda f(t)b + (1-\lambda)a \ge f(t)b$ and all terms in the numerator are bounded. Then, for $1\le i,j \le n$,
$$ \left| \frac{\partial \varphi_{i}}{\partial x_j} \right| = \left| \frac{\partial (kx_i)}{\partial x_j} \right| = \left| k \frac{\partial x_i}{\partial x_j} + \frac{\partial k}{\partial x_j} x_i \right| \le k + \left| t \frac{\partial k}{\partial x_j} \right| . \left| \frac{x_i}{t} \right| \le \frac{a}{b} + \left| t \frac{\partial \lambda}{\partial x_j} \right| a$$
$$ \left| \frac{\partial \varphi_{i}}{\partial t} \right| = \left| \frac{\partial (kx_i)}{\partial t} \right| =\left| t\frac{\partial k}{\partial t} \right| . \left| \frac{ x_i}{t} \right| \le \left|t \frac{\partial k}{\partial t} \right|a,$$

which shows that $\varphi$ is bounded away from 0 and infinity in $(U,0)$. Similarly, $\varphi^{-1}$ is bounded away from 0 and infinity in $(U^{\prime},0)$. Therefore, $\varphi|_{(U,0)}$ is a bi-Lipschitz map.
\end{proof}

\begin{corollary}\label{ajuste-cornetas}
Let $g: (\mathbb{R},0) \to \mathbb{R}$ be a semialgebraic function germ whose Puiseux series is $g(t)=c_{\beta}t^{\beta}+o (t^{\beta})$; $c_\beta >0$, for some rational $\beta \ge 1$. So, the germ at $0$ of
$$X = \{ (x_1, x_2, t) \in C_a^{3} \mid x_1^2 + x_2^2 = g(t)^2 \}$$
is ambient bi-Lipschitz equivalent, in $C_a^3$, to the standard $\beta$-horn germ.
\end{corollary}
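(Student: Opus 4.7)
The plan is to apply the Function Dilatation Lemma (Lemma \ref{dilatation}) with a carefully chosen function $f$ to transform the surface $X$ directly onto the standard $\beta$-horn $H_\beta = \{x_1^2 + x_2^2 = t^{2\beta}\}$.

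First I would set
\begin{equation*}
f(t) = \frac{t^\beta}{g(t)}.
\end{equation*}
Since $g(t) = c_\beta t^\beta + o(t^\beta)$ with $c_\beta > 0$, the quotient $g(t)/t^\beta = c_\beta + o(1)$ is a semialgebraic function germ that is bounded away from $0$ near the origin, so $f$ is a well-defined semialgebraic function germ whose Puiseux series begins $f(t) = \tfrac{1}{c_\beta} + o(1)$, with a positive constant leading term. This is exactly the hypothesis required in Definition \ref{Function dilatation} and Lemma \ref{dilatation}.

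Next I would verify that $f \cdot X = H_\beta$ as germs at the origin. Indeed, for any $(x_1,x_2,t) \in X$ we have $x_1^2 + x_2^2 = g(t)^2$, hence
\begin{equation*}
\bigl(f(t)x_1\bigr)^2 + \bigl(f(t)x_2\bigr)^2 = f(t)^2\, g(t)^2 = \frac{t^{2\beta}}{g(t)^2}\cdot g(t)^2 = t^{2\beta},
\end{equation*}
so every point of $f \cdot X$ lies on $H_\beta$; the reverse inclusion is obtained by noting that every point on $H_\beta$ is the image of $(x_1/f(t), x_2/f(t), t) \in X$. Applying Lemma \ref{dilatation} then produces an $a \ge a_0$ and an outer bi-Lipschitz map of $(C_a^3,0)$, fixing $\partial C_a^3$, that carries $(X,0)$ onto $(f \cdot X, 0) = (H_\beta,0)$, which is the desired ambient bi-Lipschitz equivalence in $C_a^3$.

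There is essentially no substantive obstacle here: the whole content of the corollary is packed into the verification that the multiplicative correction factor $f(t) = t^\beta/g(t)$ has a positive nonzero constant term, and the heavy analytic work (construction of the ambient map, boundedness of Jacobians, etc.) is already carried out inside the proof of Lemma \ref{dilatation}. In this sense the corollary is a direct reformulation of the dilatation lemma at the level of horns.
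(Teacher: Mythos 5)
Your proof is correct, and it is a genuinely more direct route than the one in the paper. You observe that the standard horn $H_\beta$ already lies in $C_a^3$ (since $\beta\ge 1$ and $a>1$) with $t$-plane link exactly the circle of radius $t^\beta$, so a single application of the Function Dilatation Lemma with $f(t)=t^\beta/g(t)=\tfrac{1}{c_\beta}+o(1)$ carries $(X,0)$ onto $(H_\beta,0)$ by a boundary-fixing bi-Lipschitz self-map of the cone; this is literally the conclusion asserted. The paper instead first computes $\psi(H_\beta)=X'$, a surface with plane links of radius $h(t)=d_\beta t^\beta+o(t^\beta)$, applies the Dilatation Lemma with $f=h/g$ to get $(X,0)\sim(X',0)$ in $C_a^3$, and then invokes Proposition \ref{suficiencia-em-cone} to translate this back to an equivalence with $(H_\beta,0)$. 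The only thing the paper's detour buys is that it lands on $\psi(H_\beta)$, which is the normal form used throughout the paper's bookkeeping (germs of $\R^3$ are identified with germs in the cone via $\psi$, and Proposition \ref{suficiencia-em-cone} is stated in terms of reaching $\psi(T_\alpha)$ or $\psi(H_\beta)$); your version reaches $H_\beta$ itself, which is equivalent for all downstream uses since the boundary-fixing cone map extends by the identity (Proposition \ref{extensão-bi-lip-invariante-bola}). Two harmless caveats that both proofs share: the Dilatation Lemma may enlarge the aperture from $a$ to some $a'\ge a$, and one should note that $f\cdot X=H_\beta$ only as germs (i.e.\ for $t$ small); neither affects the statement.
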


\begin{proof}
Let $\psi$ be the map of Proposition \ref{reduction}. The germ of $\psi(H_{\beta})$ is $(X^{\prime},0)$, where
$$X^{\prime} = \{ (x_1, x_2, t) \in C_a^{3} \mid x_1^2 + x_2^2 = h(t)^2 \},$$

for some function germ $h: (\mathbb{R},0) \to \mathbb{R}$ whose Puiseux series is $h(t)=d_{\beta}t^{\beta}+o( t^{\beta})$, $d_\beta >0$. By the Function dilatation Lemma, applied to $f(t)=\frac{h(t)}{g(t)}$, we have that $(X,0)$ is ambient bi-Lipschitz equivalent to $(X^{ \prime},0)$ in $C_a^{3}$. From Proposition \ref{suficiencia-em-cone}, $(X,0)$ is ambient bi-Lipschitz equivalent to $(H_{\beta},0)$.
\end{proof}

\begin{figure}[h]
\centering
\includegraphics[width=12cm]{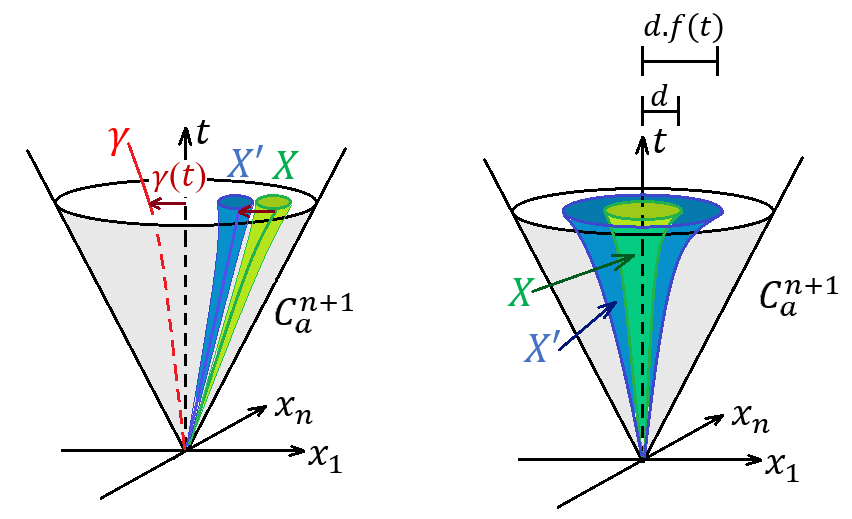}
\label{fig32}
\caption{Graphic representations of Lemma \ref{translation} (left) and Lemma \ref{dilatation} (right).}
\end{figure}

\section{Synchronized Triangles and Curvilinear Rectangles}

\subsection{Definitions and Examples}

We define the concepts of synchronized triangles, curvilinear rectangles and regions delimited by triangles and we will also see some examples of such objects, as well as important classifications, such as the arc coordinate system. The main idea is to select a suitable subset of the Valette link of such triangles that shall be useful for construction of ambient bi-Lipschitz maps.

\begin{definition}\normalfont \label{synch-triangle-def}
Let $a>0$, let $(\gamma_0,0),(\gamma_1,0) \subseteq (C_{a}^{3},0)$ be two distinct arc and let $ (T,0) \subseteq (C_{a}^{3},0)$ a H\"older triangle germ whose boundary arcs are $(\gamma_0,0),(\gamma_1,0)$. We say that $(T,0)$ is a \textit{synchronized triangle germ} if $\gamma_0(0)=\gamma_1(0)=(0,0,0)$ and for all $t>0$ small enough
\begin{itemize}
\item $\gamma_0 = \gamma_0(t) = (x_0(t),y_0(t),t)$, $\gamma_1 = \gamma_1(t) = (x_1(t),y_1(t),t)$ and $x_0(t)<x_1(t)$;
\item $\pi_z (T\cap\{z= t\})$ is the graph of a semialgebraic function $f_t : [x_0(t),x_1(t)] \to \mathbb{R}$ satisfying $f_t (x_0 (t)) = y_0(t)$ and $f_t (x_1 (t)) = y_1(t)$.
\end{itemize}
Here, $\pi_z : \mathbb{R}^3 \to \mathbb{R}^2$; $\pi_z(x,y,z)=(x,y)$ is the $z$-projection. The set of functions $\{ f_t \}_{0<t\ll 1}$ is the $\textit{generating functions family}$ of $(T,0)$.

\begin{figure}[h]
\centering
\includegraphics[width=11cm]{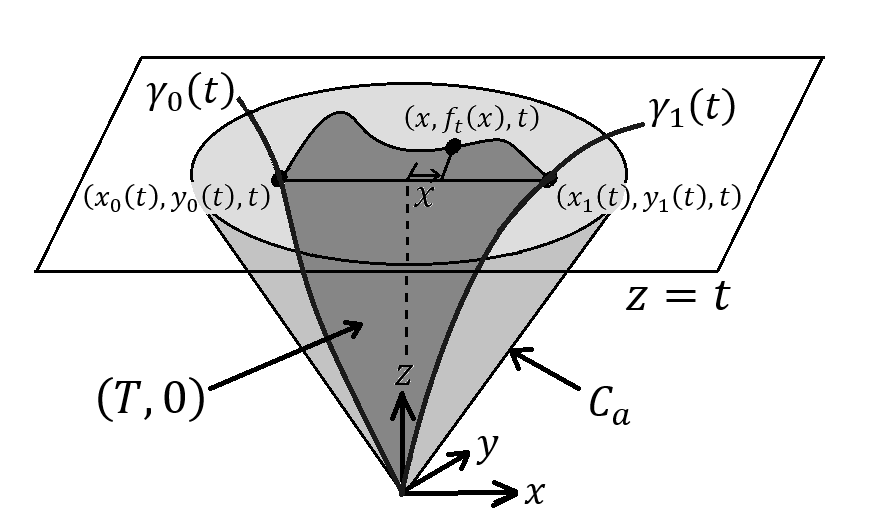}
\label{fig4}
\caption{Synchronized triangle germ.}
\end{figure}
\end{definition}

\begin{remark} \label{decomp-synch-differential}
If $(T,0)$ is a synchronized triangle and $\{f_t \}_{0 < t < \varepsilon}$ is its generating functions family, we have that $(sing(T),0)$ is a union of finitely many arcs $(\delta_i,0)$, with $\delta_i (t) = (r_i(t),s_i(t),t)$ , for each $i$. We can also assume, reordering the indices if necessary, that $x_0(t) = r_1(t) < \dots < r_n(t) = x_1(t)$ and that each $f_t$ is differentiable in $\bigcup_{i= 1}^{n} (r_i(t),r_{i+1}(t))$. That is why $f_t$ is piecewise smooth, for all $t>0$ small enough.
\end{remark}

\begin{definition} \label{arc-coord}\normalfont
For every $t>0$ small enough and for every $u\in [0,1]$, let
$$x_u (t)= u \cdot x_1 (t) + (1- u) \cdot x_0 (t) \in [x_0(t), x_1(t)].$$

For every point $P \in (T,0)$, there are unique $t>0$ and $u \in [0,1]$ such that $P=(x_u (t), f_t (x_u (t)),t)$. Thus, if we let for each $u \in [0,1]$ the arcs $\gamma_{u}$ in $C_a^{3}$ as $\gamma_u(0)=0$ and $\gamma_{u} (t) = (x_u (t), f_t(x_u(t)),t)$, for $0<t\ll 1$, then the family of arcs $\gamma_{u} \subset V(T)$ have union equal to $(T,0)$. We define $\{\gamma_{u}\}_{0\le u\le 1}$ as the \textit{arc coordinate system of the synchronized triangle $(T,0)$}. Since $(\gamma_{u},0)$ is semialgebraic for each $0\le u \le 1$, the limit $\displaystyle \lim_{t \to 0 ^{+}} (\gamma_{u}^{\prime} (t))$ exists and is a vector, which is denoted by $\gamma_{u}^{\prime}(0)$.
\end{definition}

\begin{definition}\normalfont \label{synch-C1}
Given $M>0$, we say that a synchronized triangle $(T,0)$, with generating functions family $\{f_t\}$, is $\textit{M-bounded}$ if for every $t>0$ small enough and for all $a\in [x_0(t),x_1(t)]$, the lateral derivatives of $f_t$ in $a$ are bounded by $M$, that is
$$M > \left| \dfrac { \partial f_t}{\partial x_{+}}(a) \right| = \lim_{h \to 0^+} \dfrac { f_t (a+h) - f_t (a)}{h} \, ; \, x_0(t) \le a < x_1(t);$$
$$M > \left| \dfrac { \partial f_t}{\partial x_{-}}(a) \right| = \lim_{h \to 0^-} \dfrac { f_t (a+h) - f_t (a)}{h} \, ; \, x_0(t) < a \le x_1(t).$$

We also say that a synchronized triangle $(T,0)$ is $\textit{of class $C^1$}$ (or $C^1$, for short) if $(T,0)$ is a curvilinear triangle (see Definition \ref{curvilinear}). Finally, we say that a synchronized triangle $(T,0)$, which is $C^1$, is $\textit{convex}$ if every function of its generating functions family is convex.
\end{definition}

\begin{example}\label{ex-1-synch}
Consider the set
$$T= \{ (x,t^2-x^2,t) \mid 0 < t \le 1; -t \le x \le t \} \subset C_{\sqrt{2}}.$$

Notice that $(T,0)$ is a H\"older triangle with main vertex at the origin, and whose boundary arcs are the germs $(\gamma_0,0),(\gamma_1,0)$, where $\gamma_0( t)= (-t,0,t); \gamma_1(t) = (t,0,t)$, for $0< t \le 1$. $(T,0)$ is also a synchronized triangle germ, where for every $t \in [0,1)$,
$$ f_t : [-t,t] \to \mathbb{R}\, ; \, f_t(x) = t^2 - x^2$$
is generating functions family of $(T,0)$. We have
$$\frac{\partial f_t}{\partial x}(a) = -2a \in [-2t,2t] \subseteq [-2,2]\, ; \, \forall \, 0<t\le 1 \, , \, -t < a < t,$$
which shows that $(T,0)$ is $2$-bounded and is $C^1$.
\end{example}

\begin{example}\label{ex-2-synch}
Consider the set in $C_{\sqrt{2}}$
$$T= \left\{ \left( x,\frac{x}{t}+t,t\right) \mid 0 < t < 1; -t^2 \le x \le 0 \right\} \cup \left\{ \left( x,-\frac{x}{t}+t,t \right) \mid 0 < t < 1; 0 \le x \le t^2 \right\}.$$

Notice that $(T,0)$ is a H\"older triangle with main vertex at the origin, and whose boundary arcs are the germs $(\gamma_0,0),(\gamma_1,0)$, where $\gamma_0( t)= (-t^2,0,t); \gamma_1(t) = (t^2,0,t)$, for $0< t < 1$. $(T,0)$ is also a synchronized triangle, where, for every $t \in (0,1)$,
$$ f_t : [-t,t] \to \mathbb{R}; \; f_t(x) = \begin{cases}
       \frac{x}{t}+t, & x \in [-t^2,0] \\
       -\frac{x}{t}+t, & x \in [0,t^2]
   \end{cases}$$
is the generating functions family of $(T,0)$. Notice also that $(T,0)$ is synchronized but not of class $C^1$. Since
$$ \left| \frac{\partial f_t}{\partial x}(a) \right| = \dfrac{1}{t}; \forall \, 0< t < 1, a \in (-t^2,0)\cup(0,t^2),$$
we have that $(T,0)$ is not $M$-bounded, for all $M > 0$, because $\lim_{t \to 0^{+}} \frac{1} {t} = +\infty$.
\end{example}

\begin{definition}\normalfont \label{curvilinear-rectangle}
Let $a>0$ and let $(T_1,0), (T_2,0) \subset (C_a^{3},0)$ be synchronized triangle germs. We say that \textit{ $(T_1,0), (T_2,0)$ are aligned on boundary arcs} if there are four distinct curve germs $(\gamma_0,0),(\gamma_1,0),(\rho_0 ,0),(\rho_1,0) \subseteq (C_{a}^{3},0)$, such that, for all $t>0$ small enough,
\begin{itemize}
\item $\gamma_0 = \gamma_0(t) = (x_0(t),y_0(t),t), \gamma_1 = \gamma_1(t) = (x_1(t),y_1(t),t)$;
\item $\rho_0 = \rho_0(t) = (x_0(t),w_0(t),t), \gamma_1 = \gamma_1(t) = (x_1(t),w_1(t),t)$;
\item $(\gamma_0,0),(\gamma_1,0)$ are the boundary arcs of $(T_1,0)$ and $(\rho_0, 0),(\rho_1,0)$ are the boundary arcs of $(T_2,0)$, with $x_0(t)<x_1(t)$.
\end{itemize}

Suppose further that $\{f_t\}, \{g_t\}$ are the generating function families of $(T_1,0)$ and $(T_2,0)$, respectively, and for all $t >0$ small enough,
\begin{itemize}
\item $f_t(x_0)=y_0(t)$, $f_t(x_1)=y_1(t)$, $g_t(x_0)=w_0(t)$ and $g_t(x_1)=w_1(t)$;
\item $f_t(x) \ge g_t(x)$, for all $x_0(t) \le x \le x_1(t)$.
\end{itemize}

We define the \textit{curvilinear rectangle bounded by $(T_1,0),(T_2,0)$} as the germ of the following semialgebraic set:
$$R = \{ (x,y,t)\in C_a^{3} \; \mid \; x_0(t) \le x \le x_1 (t) \; ; \; g_t(x) \le y \le f_t(x) \}$$

If $(\gamma_0,0)=(\rho_0,0)$ and $(\gamma_1, 0) = (\rho_1,0)$, the curvilinear rectangle bounded by $(T_1,0),(T_2,0 )$ is also called the \textit{region bounded by $(T_1,0),(T_2,0)$}.

\begin{figure}[h]
\centering
\includegraphics[width=12cm]{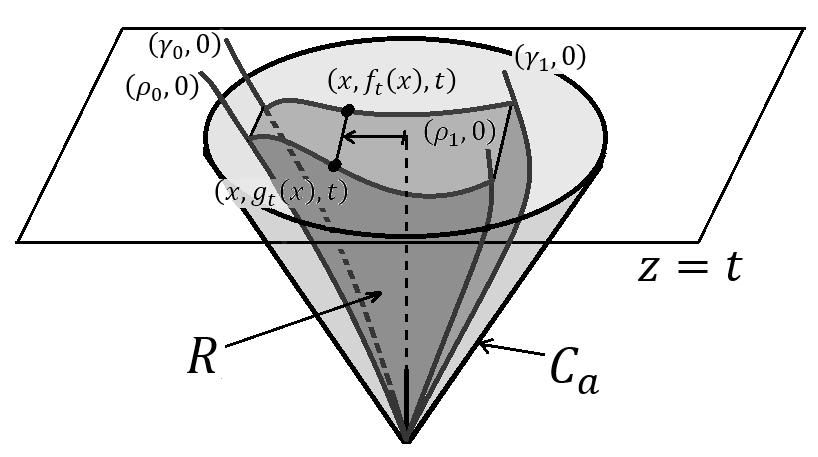}
\label{fig5}
\caption{Curvilinear rectangle bounded by $(T_1,0)$, $(T_2,0)$.}
\end{figure}
\end{definition}

\begin{definition}\normalfont \label{arc-coord-curv-rectangle}
For each $t>0$ and for each $(u,v)\in [0,1]^2$, let
$$x_u (t)= u \cdot x_1 (t) + (1- u) \cdot x_0 (t) \in [x_0(t), x_1(t)];$$
$$\sigma_{u,v} (t)= v \cdot f_t (x_u(t)) + (1- v) \cdot g_t (x_u(t)) \in [g_t (x_u(t) ), f_t (x_u(t))].$$

For every point $P \in (R,0)$, there are $t>0$, $(u,v) \in [0,1]$ such that $P=(x_u (t ), \sigma_{u,v} (t),t)$. Furthermore, for each $P\in (R,0)$, $u$ and $t$ are unique and if $f_t(x_u(t)) > g_t(x_u(t))$, $v$ will also be unique (if $f_t(x_u(t)) = g_t(x_u(t))$, we have $P=(x_u (t), \sigma_{u,v} (t),t)$ for every $v \in [0,1]$). Thus, if we define, for each $(u,v) \in [0,1]^2$, the semialgebraic arc $\gamma_{u,v}$ in $(C_a^{3},0)$ as $\gamma_{u,v}(0)=0$ and $\gamma_{u,v} (t) = (x_u (t), \sigma_{u,v} (t),t) \; ; \;t>0$, then the family of arcs $\gamma_{u,v} \subset V(R)$ have union equal to $(R,0)$. We define $\{\gamma_{u,v}\}_{(u,v)\in [0,1]^2}$ as the \textit{arc coordinate system of the curvilinear rectangle bounded by $ ( T_1,0),(T_2,0)$}. Since $(\gamma_{u,v},0)$ is semialgebraic for each $0\le u,v \le 1$, the limit $\displaystyle \lim_{t \to 0^{+}} (\gamma_{u,v}^{\prime} (t))$ exists and is a vector, which is denoted by $\gamma_{u,v}^{\prime }(0)$.
\end{definition}

\subsection{Basic Properties}

We show now some basic structural properties of synchronized triangles and sets delimited by triangles aligned on boundary arcs. Consider all sets as in the definitions of this section.

\begin{proposition}\label{curv-rect-is-LNE}
Let $(T_1,0), (T_2,0) \subset (C_a^3,0)$ be germs of $C^1$, $M$-bounded synchronized triangles, aligned on the boundary arcs. Then the curvilinear rectangle bounded by $(T_1,0)$ and $(T_2,0)$ is the germ of a semialgebraic LNE set.
\end{proposition}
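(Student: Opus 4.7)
The plan is to prove LNE of $R$ at the origin by combining Proposition~\ref{grafico-LNE} (which controls the geometry of each plane link) with Theorem~\ref{Edson-Rodrigo} (which converts uniform LNE of links into LNE of the germ), and to bridge plane links with sphere links via the stereographic reduction of Proposition~\ref{reduction}. Semialgebraicity of $R$ is immediate from its definition in terms of the semialgebraic data $x_0(t), x_1(t), f_t, g_t$, so the content lies in the LNE assertion.

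First I would establish that the plane sections $R(t)=R\cap\{z=t\}$ are uniformly LNE in $t$. For each small $t>0$, $R(t)$ viewed as a subset of $\mathbb{R}^2\cong\{z=t\}$ coincides with $\{(x,y):x_0(t)\le x\le x_1(t),\ g_t(x)\le y\le f_t(x)\}$. By Remark~\ref{decomp-synch-differential} and the $M$-boundedness hypothesis on $T_1$ and $T_2$, the functions $f_t$ and $g_t$ are piecewise smooth with derivatives bounded by $M$ at every point of differentiability, uniformly in $t$. Proposition~\ref{grafico-LNE} then supplies a constant $N$ depending only on $M$ such that each $R(t)$ is $N$-LNE.

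To pass from uniform LNE of plane links to LNE of the germ $(R,0)$, I would pull $R$ back through the global outer bi-Lipschitz map $\psi:\overline{U_{a'}^{3}}\to C_a^{3}$ of Proposition~\ref{reduction}, with bi-Lipschitz constant $K$. Setting $\widetilde{R}=\psi^{-1}(R)$, the sphere link $\widetilde{R}\cap\mathbb{S}_t^{2}$ equals $\psi^{-1}(R(t))$, since $\psi$ sends each sphere section of its domain onto the plane section $C_a^{3}(t)$. The restriction $\psi^{-1}|_{R(t)}:R(t)\to\psi^{-1}(R(t))$ is bi-Lipschitz with constant $K$ independent of $t$, so the argument of Proposition~\ref{im-LNE-is-LNE} shows that every sphere link of $\widetilde{R}$ is LNE with a constant depending only on $M$ and $K$. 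Because $\widetilde{R}$ is closed semialgebraic with connected link (each $R(t)$ is a topological disk, hence so is its preimage under $\psi^{-1}$), Theorem~\ref{Edson-Rodrigo} gives that $\widetilde{R}$ is LNE at $0$. A final application of Proposition~\ref{im-LNE-is-LNE} to $\psi$ then transports the LNE property back to $R$.

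The main technical obstacle is the mismatch between the plane-section geometry naturally controlled by $M$-boundedness and the sphere-section geometry required by Theorem~\ref{Edson-Rodrigo}. The map $\psi$ resolves this cleanly precisely because it is globally bi-Lipschitz, so all constants transfer with uniformity in $t$. A purely synthetic attempt that builds an explicit short path between two points of $R$ at different heights would run into trouble at this step, since $M$-boundedness gives no control whatsoever on $\partial f_t/\partial t$, and hence the vertical arcs $\gamma_{u,v}(t)$ of Definition~\ref{arc-coord-curv-rectangle} cannot be estimated a priori; the detour through $\psi$ and Theorem~\ref{Edson-Rodrigo} is the mechanism that bypasses this difficulty.
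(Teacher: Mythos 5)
Your proof is correct and follows essentially the same route as the paper's, which simply invokes Proposition~\ref{grafico-LNE} to get a uniform LNE constant for the sections and then applies Theorem~\ref{Edson-Rodrigo}. The additional step you insert --- transporting the plane links to sphere links through the bi-Lipschitz map $\psi$ of Proposition~\ref{reduction} before invoking Theorem~\ref{Edson-Rodrigo}, which is stated for sphere links $X_t=X\cap\mathbb{S}_t^{n-1}$ --- is a detail the paper's two-line proof leaves implicit, and you handle it correctly.
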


\begin{proof}
Since each $f_t$, $g_t$ have derivative bounded by $M$, by Proposition \ref{grafico-LNE}, the link of the curvilinear rectangle bounded by $(T_1,0)$ and $(T_2,0)$ is $C$-LNE, for some uniform constant $C>0$. By Theorem \ref{Edson-Rodrigo}, $(R,0)$ is LNE.
\end{proof}

\begin{corollary}
Let $(T,0) \subset (C_a^3,0)$ be a $C^1$, $M$-bounded synchronized triangle germ, and let $\{ \gamma_u \}_{0\le u\le 1}$ be the arc coordinate system of $(T,0)$. If $(T,0)$ is an $\alpha$-H\"older triangle, for some $\alpha \in \mathbb{Q}_{\ge 1}$, then for every $0\le u_1 < u_2 \le 1$ we have
$$tord(\gamma_{u_1},\gamma_{u_2})=tord_{T}(\gamma_{u_1},\gamma_{u_2})=\alpha.$$
\end{corollary}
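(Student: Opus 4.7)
My plan is to sandwich both $\|\gamma_{u_1}(t)-\gamma_{u_2}(t)\|$ and $d_T(\gamma_{u_1}(t),\gamma_{u_2}(t))$ between two positive multiples of $|u_2-u_1|\,t^\alpha$, and then invoke semialgebraicity: since both distance functions admit Puiseux expansions in $t$, the double bound forces the leading exponent to be $\alpha$.

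First I would exploit the $M$-bounded condition. Together with Remark \ref{decomp-synch-differential}, it says that each $f_t$ is piecewise $C^1$ with derivatives bounded in absolute value by $M$, and hence $M$-Lipschitz on $[x_0(t),x_1(t)]$. Using $x_{u_i}(t)=u_i\,x_1(t)+(1-u_i)x_0(t)$, this yields, for $0\le u_1<u_2\le 1$,
\begin{equation*}
|u_2-u_1|\,|x_1(t)-x_0(t)| \;\le\; \|\gamma_{u_1}(t)-\gamma_{u_2}(t)\| \;\le\; \sqrt{1+M^2}\,|u_2-u_1|\,|x_1(t)-x_0(t)|.
\end{equation*}
The horizontal slice $T\cap\{z=t\}$ is the graph of $f_t$; the piece of this graph over $[x_{u_1}(t),x_{u_2}(t)]$ is a rectifiable path inside $T$ joining $\gamma_{u_1}(t)$ to $\gamma_{u_2}(t)$ of length $\int\sqrt{1+f_t'(x)^2}\,dx\le \sqrt{1+M^2}\,|u_2-u_1|\,|x_1(t)-x_0(t)|$. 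Combined with $d_T\ge\|\cdot\|$, this shows that the inner distance obeys the same two-sided estimate as the outer distance.

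Next I would establish $|x_1(t)-x_0(t)|\approx t^\alpha$. Since $(T,0)$ is an $\alpha$-H\"older triangle with boundary arcs $\gamma_0,\gamma_1$, the inner bi-Lipschitz equivalence with the standard triangle $T_\alpha$ forces $d_T(\gamma_0(t),\gamma_1(t))\approx t^\alpha$; here I use that on $C_a^3$ the $z$-coordinate of a point is comparable to its Euclidean norm, so the parameterizations of $\gamma_0,\gamma_1$ by $z$ and by distance from the origin are bi-Lipschitz equivalent, and tangency orders are preserved. Specializing the two-sided inner estimate to $u_1=0$, $u_2=1$ gives $|x_1(t)-x_0(t)|\gtrsim t^\alpha$, while the chain $|x_1(t)-x_0(t)|\le\|\gamma_0(t)-\gamma_1(t)\|\le d_T(\gamma_0(t),\gamma_1(t))\approx t^\alpha$ supplies the matching upper bound.

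Substituting back, for $u_1\ne u_2$ both $\|\gamma_{u_1}(t)-\gamma_{u_2}(t)\|$ and $d_T(\gamma_{u_1}(t),\gamma_{u_2}(t))$ are $\approx|u_2-u_1|\,t^\alpha$. Since $\gamma_{u_1},\gamma_{u_2}$ are semialgebraic arcs in a semialgebraic set, both distance functions of $t$ are semialgebraic and so admit Puiseux expansions; the two-sided $t^\alpha$-bound pins the leading exponent of each to be exactly $\alpha$, yielding $tord(\gamma_{u_1},\gamma_{u_2})=tord_T(\gamma_{u_1},\gamma_{u_2})=\alpha$. The only delicate point is the identification $d_T(\gamma_0(t),\gamma_1(t))\approx t^\alpha$ from the inner bi-Lipschitz equivalence with $T_\alpha$; once that is granted, everything else is routine manipulation of the $M$-Lipschitz control on $f_t$.
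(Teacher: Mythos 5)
Your proof is correct. The paper states this corollary without proof (it is placed right after Proposition \ref{curv-rect-is-LNE}, the intended source being that $(T,0)$ is LNE so that $tord$ and $tord_T$ coincide by Remark \ref{Rem:tord}, together with an implicit width estimate), and your argument supplies exactly the missing details in the natural way: the $M$-Lipschitz control on $f_t$ sandwiches both $\|\gamma_{u_1}(t)-\gamma_{u_2}(t)\|$ and $d_T(\gamma_{u_1}(t),\gamma_{u_2}(t))$ between multiples of $|u_2-u_1|\,|x_1(t)-x_0(t)|$, and the H\"older exponent of $T$ pins $|x_1(t)-x_0(t)|\approx t^{\alpha}$. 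The one point you rightly flag as delicate --- that $tord_T(\gamma_0,\gamma_1)=\alpha$ follows from the inner bi-Lipschitz equivalence with $T_\alpha$ despite the reparameterization of the boundary arcs --- is handled correctly, since on $C_a^3$ the parameterizations by the last coordinate and by distance to the origin are comparable and tangency orders are invariant under such reparameterizations.
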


\begin{proposition}\label{synch-derivative-along-t}
Let $a,M>0$ and let $(T,0) \subset (C_a^3,0)$ be a $C^1$, $M$-bounded synchronized triangle germ. If $\{ f_t \}$ is the generating functions family of $(T,0)$, then there is $\varepsilon>0$ small enough and $N>1$ large enough such that, for all $0<t < \varepsilon$, $x \in (x_0(t),x_1(t))$ and $u \in [0,1]$,
$$\left| \dfrac{\partial}{\partial t}(f_t)(x) \right| \; , \; \left| \gamma_{u}^{\prime}(t) \right| < N.$$
\end{proposition}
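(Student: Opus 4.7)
The plan is to use semialgebraic/Puiseux techniques throughout, reducing the uniform bound on $\partial f_t/\partial t$ to the more basic fact that a semialgebraic arc lying in the cone $C_a^3$ automatically has bounded derivative. First I would establish this auxiliary fact: if $\eta:[0,t_0)\to\R^3$ is a semialgebraic arc with $\eta(0)=0$ and $\eta\subset C_a^3$, then $|\eta'(t)|$ is bounded as $t\to 0^+$. Indeed, writing $\eta(t)=(\eta_1(t),\eta_2(t),t)$, the cone condition gives $|\eta_i(t)|\le at$, so by Puiseux $\eta_i(t)=c_i t^{\alpha_i}+o(t^{\alpha_i})$ with $\alpha_i\ge 1$, whence $\eta_i'(t)=c_i\alpha_i t^{\alpha_i-1}+o(t^{\alpha_i-1})=O(1)$. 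Applied to $\gamma_0$ and $\gamma_1$, this produces a constant $N_0$ with $|x_i'(t)|,|y_i'(t)|<N_0$ for $i=0,1$ and $t$ small, hence $|x_u'(t)|=|u x_1'(t)+(1-u)x_0'(t)|<N_0$ uniformly in $u$.

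Next I would show, by contradiction, that $\partial f_t/\partial t$ is uniformly bounded on the region where it is defined. Using the decomposition of Remark \ref{decomp-synch-differential}, define
\[
\phi(t) := \sup\Bigl\{\,\bigl|\tfrac{\partial}{\partial t} f_t(x)\bigr| : x\in[x_0(t),x_1(t)]\setminus\{\delta_1(t),\dots,\delta_n(t)\}\Bigr\},
\]
a semialgebraic function of $t$. If $\phi(t)\not<N'$ for any $N'$, then $\phi(t)\to+\infty$ as $t\to 0^+$, and by definable choice there is a semialgebraic selection $x^*:(0,\varepsilon')\to\R$ with $x^*(t)\in[x_0(t),x_1(t)]$, avoiding the curves $\delta_i$, and $\bigl|\tfrac{\partial}{\partial t} f_t(x^*(t))\bigr|\ge\tfrac12\phi(t)$. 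Form the semialgebraic curve $\eta(t):=(x^*(t),f_t(x^*(t)),t)$. Since $\eta(t)\in T\subset C_a^3$ and $\eta(t)\to 0$, the auxiliary fact applied to $\eta$ gives $|\eta'(t)|<N_1$ for some $N_1$. But since $x^*$ avoids the non-smooth curves, the chain rule gives
\[
\eta'(t) = \bigl(x^{*\prime}(t),\; \tfrac{\partial}{\partial t} f_t(x^*(t)) + \tfrac{\partial}{\partial x} f_t(x^*(t))\cdot x^{*\prime}(t),\; 1\bigr),
\]
so $\bigl|\tfrac{\partial}{\partial t} f_t(x^*(t))\bigr|\le N_1+M\cdot N_0$, contradicting divergence of the second component. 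This yields the desired constant $N'$ with $|\partial_t f_t(x)|<N'$ for all admissible $x$ and small $t$.

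Finally, I would combine the two bounds. For every $u\in[0,1]$ and $t$ small (and outside finitely many values where $f_t$ may fail to be smooth at $x_u(t)$),
\[
\gamma_u'(t) = \bigl(x_u'(t),\; \tfrac{\partial}{\partial t} f_t(x_u(t)) + \tfrac{\partial}{\partial x} f_t(x_u(t))\cdot x_u'(t),\; 1\bigr),
\]
and each coordinate is bounded in terms of $N_0$, $N'$, $M$ alone. Taking $N:=N_0+N'+MN_0+1+1$ (or any convenient value) yields $|\gamma_u'(t)|<N$ uniformly in $u$.

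The main obstacle, and the only non-routine step, is the contradiction in the middle paragraph: one must argue semialgebraically to produce a single arc $\eta$ witnessing the failure of the uniform bound, which requires a careful application of definable choice to pick $x^*(t)$ close to the sup while avoiding the finitely many singular curves $\delta_i$, and then rely on the Puiseux-based lemma in the cone to reach a contradiction. The rest is bookkeeping via the chain rule and the hypothesis that $f_t$ is $M$-bounded in the $x$-direction.
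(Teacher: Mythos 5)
Your argument is correct and follows essentially the same route as the paper's: both reduce the uniform bound on $\partial_t f_t$ to the fact that a semialgebraic arc lying in $C_a^3$ has bounded derivative near the origin, and both use the $M$-bound on $\partial_x f_t$ to transfer that bound to $\partial_t f_t$ (you via the chain rule along a single arc obtained by definable choice, the paper via finite difference quotients along curve-selection arcs in the level sets $A_n=\{|\partial_t f_t|>n\}$). Your derivation of the bound on $\gamma_u'$ directly from the bound on $\partial_t f_t$ is a mild streamlining of the paper, which instead repeats the curve-selection argument; the only details to tidy are that the supremum $\phi(t)$ could a priori be $+\infty$, so the selection should be made from a truncated set such as $\{x : |\partial_t f_t(x)|\ge \tfrac{1}{2}\min(\phi(t),1/t)\}$, and that the bound on $|x^{*\prime}(t)|$ comes from the first coordinate of $\eta'$ (so the constant is $N_1+MN_1$ rather than $N_1+MN_0$).
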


\begin{proof}
The following result can be seen as a straightforward consequence of \cite{Kurdyka1992OnAS}. Here, we will give a different proof, using the curve selection lemma. Since $(T,0)$ is $C^1$, we can take $\varepsilon_0 >0$ such that $\dfrac{\partial}{\partial t}(f_t)(x)$ is well defined, for every $0<t < 2\varepsilon_0$ and every $x_0(t) < x < x_1(t)$. For each $n \in \mathbb{Z}_{\ge 1}$, let
$$A_n = \left\{ (t,x) \in (0,2\varepsilon_0) \times \mathbb{R} \mid x_0(t) < x < x_1(t) ; \left| \dfrac{\partial}{\partial t}(f_t)(x) \right| > n \right\}.$$

Suppose that for all $n \in \mathbb{Z}_{\ge 1}$ and all $t >0$ small enough, we have $A_n \cap C_a^3(t) \ne \emptyset$. As $A_n$ is semialgebraic, by the curve selection lemma there is an arc $(\psi_n,0) \subseteq (A_n,0)$. Writing $\psi_n = \psi_n(t) = (x(t), y(t),t)$ and noting that $\psi_n$ is semialgebraic, there exists $0< \varepsilon < \varepsilon_0$ small enough so that $ \psi_n^{\prime}(t) = (x^{\prime}(t),y^{\prime}(t),1)$ is well defined and continuous, for all $t \in [0 ,\varepsilon]$, where $\psi_n^{\prime} (0) =\displaystyle \lim_{t \to 0^{+}} (\psi_n^{\prime} (t))$. As $(\psi_n,0) \subset (C_a^3,0)$, we have $\psi_n^{\prime} (0) = (x^{\prime}(0),y^{\prime}( 0),1) \in C_a^3$, where $(x^{\prime}(0))^2+(y^{\prime}(0))^2 \le a^2$ and then $ \left| x^{\prime}(0) \right|, \left| y^{\prime}(0) \right| \le a$. Thus, by the continuity of $\psi_n^{\prime}$ around 0,assume that $\varepsilon >0$ is small enough such that $\left| x^{\prime}(\varepsilon) \right|, \left| y^{\prime}(\varepsilon) \right| \le a+a = 2a$.

Consider $\psi_n(\varepsilon) = (x,y,\varepsilon)$ and, for each $t_0 \in \mathbb{R}$ of sufficiently small absolute value, consider the points
$$P(t_0) = \psi_n(\varepsilon+t_0) = (x_{P(t_0)},f_{\varepsilon + t_0}(x_{P(t_0)}),\varepsilon+t_0),$$
$$Q(t_0) = (x,f_{\varepsilon+t_0}(x),\varepsilon+t_0).$$
Notice that $P(0)=Q(0)=(x,y,\varepsilon)$. Using that
$$\psi_n^{\prime} (\varepsilon) = (x^{\prime}(\varepsilon),y^{\prime}(\varepsilon),1) = (r_1,s_1,1),$$
$$Q^{\prime}(\varepsilon) = \left( 0, \dfrac{\partial}{\partial t}(f_t)\left( \varepsilon \right),1 \right) = (0,s_2 ,1),$$
we can write
$$x_{P(t_0)} = x + r_1 \cdot t_0 + o(t_0),$$
$$f_{\varepsilon+t_0}(x_{P(t_0)}) = y + s_1 \cdot t_0 + o(t_0),$$
$$x_{Q(t_0)} = x, f_{\varepsilon+t_0}(x) = y + s_2 \cdot t_0 + o(t_0).$$
Thus, for $t_0 \ne 0$ small enough, and from $|r_1|,|s_1| \le 2a, |s_2| >n$, we obtain
$$\left| x_{P(t_0)} - x \right| = \left| r_1\cdot t_0 + o(t_0) \right|< (|r_1|+1)\cdot |t_0| \le (2a+1) \cdot |t_0|$$
$$\left| f_{\varepsilon+t_0}(x_{P(t_0)}) - f_{\varepsilon+t_0}(x) \right| \ge \left| f_{\varepsilon+t_0}(x) - y \right| - \left| f_{\varepsilon+t_0}(x_{P(t_0)}) - y \right| =$$
$$=\left| s_2\cdot t_0 + o(t_0) \right| - \left| s_1\cdot t_0 + o(t_0) \right| > (|s_2| - 1)\cdot |t_0| - (|s_1| + 1) \cdot |t_0| = $$
$$= (|s_2| - |s_1| - 2) \cdot |t_0| > (n-2a-2) \cdot |t_0|.$$
Therefore, since each $f_t$ has derivative bounded by $M$, we get
$$M \ge \left| \dfrac{f_{\varepsilon+t_0}(x_{P(t_0)}) - f_{\varepsilon+t_0}(x)}{x_{P(t_0)} - x} \right| > \dfrac{(n-2a-2) \cdot |t_0|}{(2a+1) \cdot |t_0|} = \dfrac{n-2a-2 }{2a+1},$$
a contradiction for large enough $n$. Then, there is $n \in \mathbb{Z}_{\ge 1}$ and $\varepsilon > 0$ small enough such that $A_n \cap C_a^3[\varepsilon] = \emptyset$. Since $A_1 \supseteq A_2 \supseteq \dots$, it follows that $A_n \cap C_a^3[\varepsilon] = \emptyset$, for all $n \ge N$. We conclude that $\left| \dfrac{\partial}{\partial t}(f_t)(x) \right| < N$, for all $0 < t < \varepsilon$ and $x_0(t) < x < x_1(t)$, proving the first inequality.

The proof of the second inequality is analogous, and we will write it for completeness. The arcs $(\gamma_0 , 0)$ and $(\gamma_1, 0 )$ are semialgebraic and are in $(C_a^3,0)$. Taking the Puiseux series at each coordinate, there are $\alpha,\beta \in \mathbb{Q}_{\ge 1}$ and $r,s>0$ such that
$$ x_0(t) = r\cdot t^{\alpha} + o(t^\alpha)$$
$$x_1(t) - x_0(t) = s \cdot t^{\beta}+o(t^\beta).$$

As $x_u (t)= u \cdot x_1 (t) + (1- u) \cdot x_0 (t) = x_0(t) + u \cdot (x_1(t)-x_0(t))$ , when differentiating in $t$, we obtain
$$x_u^{\prime} (t) = r\alpha \cdot t^{\alpha - 1}+o(t^{\alpha -1})+ u \cdot (s \beta \cdot t^ {\beta -1} +o(t^{\beta -1})).$$
Hence, for $\varepsilon_0 > 0$ small enough, $\left| x_u^{\prime} (t) \right| < N_0$, for all $u\in [0,1]$ and for some constant $N_0>0$ that does not depend on $t$ and $u$. Now, for every $n \in \mathbb{Z}_{\ge 1}$, define
$$B_n = \left\{ (u,t) \in (0,1) \times (0,2\varepsilon_0) \mid \left| \gamma_{u}^{\prime} (t) \right| > n \right\}.$$
Suppose that for every $n \in \mathbb{Z}_{\ge 1}$ and every $\varepsilon >0$ small enough, $B_n \cap C_a^3(\varepsilon) \ne \emptyset$. As $B_n$ is semialgebraic, by the curve selection lemma there are arcs $(\psi_n,0) \subseteq (B_n,0)$. Writing $\psi_n = \psi_n(t) = (x(t), y(t),t)$ and noting that $\psi_n$ is semialgebraic, there exists $0< \varepsilon < \varepsilon_0$ such that $\psi_n ^{\prime}(t) = (x^{\prime}(t),y^{\prime}(t),1)$ is well defined and continuous, for all $t \in [0,\varepsilon]$, where $\psi_n^{\prime} (0) =\displaystyle \lim_{t \to 0^{+}} (\psi_n^{\prime} (t))$. As $(\psi_n,0) \subset (C_a^3,0)$, we have $\psi_n^{\prime} (0) = (x^{\prime}(0),y^{\prime}( 0),1) \in C_a^3$, where $(x^{\prime}(0))^2+(y^{\prime}(0))^2 \le a^2$ and then $ \left| x^{\prime}(0) \right|, \left| y^{\prime}(0) \right| \le a$. Thus, by the continuity of $\psi_n^{\prime}$, we can assume that $\varepsilon$ is small enough such that $\left| x^{\prime}(\varepsilon) \right|, \left| y^{\prime}(\varepsilon) \right| \le a+a = 2a$.

Consider $\psi_n(\varepsilon) = \gamma_{u}(\varepsilon)$, for some $u \in [0,1]$, and, for every $t_0 \in \mathbb{R}$ of sufficiently small absolute value, consider the points
$$P(t_0) = \psi_n(\varepsilon+t_0) = (x_{P(t_0)},f_{\varepsilon + t_0}(x_{P(t_0)}),\varepsilon+t_0),$$
$$Q(t_0) = \gamma_{u}(\varepsilon + t_0)=(x_{Q(t_0)},f_{\varepsilon + t_0}(x_{Q(t_0)}),\varepsilon+t_0).$$
Notice that $P(0)=Q(0)=(x,y,\varepsilon)$. Since
$$\psi_n^{\prime} (\varepsilon) = (x^{\prime}(\varepsilon),y^{\prime}(\varepsilon),1) = (r_1,s_1,1),$$
$$\gamma_{u}^{\prime} (\varepsilon) = \left( x_u^{\prime}(\varepsilon),(f_{t}(x_u(t)))^{\prime} (\varepsilon),1 \right) = (r_2,s_2,1),$$
we have
$$x_{P(t_0)} = x + r_1 \cdot t_0 + o(t_0),$$
$$f_{\varepsilon+t_0}(x_{P(t_0)}) = y + s_1 \cdot t_0 + o(t_0),$$
$$x_{Q(t_0)} = x + r_2 \cdot t_0 + o(t_0),$$
$$f_{\varepsilon+t_0}(x_{Q(t_0)}) = y + s_2 \cdot t_0 + o(t_0).$$
Thus, for $t_0 \ne 0$ small enough, and from $|r_1|,|s_1| \le 2a, |r_2| < N_0, |s_2| >n$, we obtain
$$\left| x_{P(t_0)} - x_{Q(t_0)} \right| \le \left|x_{P(t_0)} - x \right| + \left|x - x_{Q(t_0)}\right| = \left| r_1\cdot t_0 + o(t_0) \right| + \left| r_2\cdot t_0 + o(t_0) \right| <$$
$$<(|r_1|+1)\cdot |t_0| + (|r_2|+1)\cdot |t_0| = (|r_1|+|r_2|+2)\cdot |t_0| < (2a+N_0+2) \cdot |t_0|$$
$$\left| f_{\varepsilon+t_0}(x_{P(t_0)}) - f_{\varepsilon+t_0}(x_{Q(t_0)}) \right| \ge \left| f_{\varepsilon+t_0}(x_{Q(t_0)}) - y \right| - \left| f_{\varepsilon+t_0}(x_{P(t_0)} - y) \right| $$
$$=\left| s_2\cdot t_0 + o(t_0) \right| - \left| s_1\cdot t_0 + o(t_0) \right| > (|s_2| - 1)\cdot |t_0| - (|s_1| + 1) \cdot |t_0| = $$
$$= (|s_2| - |s_1| - 2) \cdot |t_0| > (n - 2a -2) \cdot |t_0|.$$
Therefore, as each $f_t$ has derivative bounded by $M$,
$$M > \left| \dfrac{f_{\varepsilon+t_0}(x_{P(t_0)}) - f_{\varepsilon+t_0}(x_{Q(t_0)})}{x_{P(t_0)} - x_{Q( t_0)}} \right| > \dfrac{(n-2a-2) \cdot |t_0|}{(2a+N_0+2) \cdot |t_0|} = \dfrac{n-2a-2}{2a+N_0+2},$$
a contradiction for $n$ large enough. Notice that $a,N_0$ does not depend on $n$. Therefore, there are $n \in \mathbb{Z}_{\ge 1}$ and $\varepsilon > 0$ small enough such that $B_n \cap C_a^3[\varepsilon] = \emptyset$. Since $B_1 \supset B_2 \supset \dots$, it follows that $B_n \cap C_a^3[\varepsilon] = \emptyset$, for every $\varepsilon > 0$ small enough and every large $n \in \mathbb{N }$.
\end{proof}

\begin{corollary}\label{synch-derivative-along-t-curv-rect}
Let $a,M>0$ and let $(T_1,0), (T_2,0) \subset (C_a^3,0)$ be $C^1$, $M$-bounded synchronized triangles, aligned on the boundary arcs. Let $\{ \gamma_{u,v} \}_{(u,v)\in[0,1]^2}$ be the arc coordinate system of the curvilinear rectangle bounded by $(T_1,0)$ and $( T_2,0)$. Then, there are $\varepsilon>0,N>1$ such that, for all $(u,v,t) \in [0,1]^2 \times [0,\varepsilon)$, $\left| \gamma_{u,v}^{\prime}(t) \right| < N$.
\end{corollary}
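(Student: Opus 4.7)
The plan is to reduce this statement directly to Proposition \ref{synch-derivative-along-t} by observing that each arc $\gamma_{u,v}$ of the curvilinear rectangle is the affine combination, with weight $v$, of the corresponding arcs in the coordinate systems of $T_1$ and $T_2$. More precisely, let $\{\gamma_u^{(1)}\}_{0\le u\le 1}$ be the arc coordinate system of $(T_1,0)$ and $\{\gamma_u^{(2)}\}_{0\le u\le 1}$ the arc coordinate system of $(T_2,0)$. Since $T_1$ and $T_2$ are aligned on boundary arcs, both coordinate systems use the same horizontal function $x_u(t)=u\cdot x_1(t)+(1-u)\cdot x_0(t)$, so
\[
\gamma_u^{(1)}(t)=(x_u(t),f_t(x_u(t)),t),\qquad \gamma_u^{(2)}(t)=(x_u(t),g_t(x_u(t)),t).
\]
A direct computation then gives
\[
\gamma_{u,v}(t)=(x_u(t),\sigma_{u,v}(t),t)=v\cdot \gamma_u^{(1)}(t)+(1-v)\cdot \gamma_u^{(2)}(t),
\]
because the $x$- and $t$-coordinates are the same on both arcs and $\sigma_{u,v}=v f_t(x_u)+(1-v)g_t(x_u)$ is an affine combination of $f_t(x_u)$ and $g_t(x_u)$ in $v$.

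Differentiating in $t$ (which is legal for $t>0$ small enough, since the triangles are $C^1$), I obtain
\[
\gamma_{u,v}^{\prime}(t)=v\cdot (\gamma_u^{(1)})^{\prime}(t)+(1-v)\cdot (\gamma_u^{(2)})^{\prime}(t).
\]
Now apply Proposition \ref{synch-derivative-along-t} separately to the $C^1$, $M$-bounded synchronized triangles $(T_1,0)$ and $(T_2,0)$: there exist $\varepsilon_1,\varepsilon_2>0$ and constants $N_1,N_2>1$ with $|(\gamma_u^{(i)})^{\prime}(t)|<N_i$ for every $u\in[0,1]$, $0<t<\varepsilon_i$, $i=1,2$. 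Setting $\varepsilon=\min(\varepsilon_1,\varepsilon_2)$ and $N=\max(N_1,N_2)$, the triangle inequality together with $v\in[0,1]$ yields
\[
|\gamma_{u,v}^{\prime}(t)|\le v\cdot N_1+(1-v)\cdot N_2\le N,
\]
for every $(u,v,t)\in[0,1]^2\times(0,\varepsilon)$, which is exactly the claim.

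There is no real obstacle here; the only subtle point is the decomposition $\gamma_{u,v}=v\,\gamma_u^{(1)}+(1-v)\,\gamma_u^{(2)}$, which relies on both triangles sharing the same horizontal parametrisation $x_u(t)$. This is guaranteed by the definition of ``aligned on boundary arcs'' (Definition \ref{curvilinear-rectangle}), which forces the boundary arcs of $T_1$ and $T_2$ to have identical $x$-coordinates at every level $t$. Once this identification is in place, the corollary is an immediate affine-combination consequence of the single-triangle bound established in Proposition \ref{synch-derivative-along-t}.
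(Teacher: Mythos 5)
Your proof is correct and follows essentially the same route as the paper: both apply Proposition \ref{synch-derivative-along-t} to the two bounding triangles (equivalently, to the arcs $\gamma_{u,0}$ and $\gamma_{u,1}$) and then exploit the affine dependence of $\sigma_{u,v}$ on $v$. The paper carries out the estimate coordinate by coordinate and recombines with $\sqrt{N_0^2+N_0^2+1}$, whereas you package it as the vector identity $\gamma_{u,v}=v\,\gamma_u^{(1)}+(1-v)\,\gamma_u^{(2)}$, which is a marginally cleaner presentation of the same argument.
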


\begin{proof}
By Proposition \ref{synch-derivative-along-t}, there are $\varepsilon > 0$ and $N_0 > 1$ such that $|\gamma_{u,v}^{\prime} (t)| < N_0$ for all $(u,v,t) \in [0,1] \times \{0,1\} \times [0,\varepsilon)$. So

$$|\gamma_{u,0}^{\prime} (t)| = \left| \left( x_u^{\prime}(t), (g_{t}(x_u(t)))^{\prime}(t),1 \right) \right| < N_0 \Rightarrow \left| x_u^{\prime}(t) \right|, \left| (g_{t}(x_u(t))^{\prime}(t) \right| < N_0,$$
$$|\gamma_{u,1}^{\prime} (t)| = \left| \left( x_u^{\prime}(t), (f_{t}(x_u(t))^{\prime}(t),1 \right) \right| < N_0 \Rightarrow \left| (f_{t}(x_u(t))^{\prime}(t) \right| < N_0.$$
Then,
$$(\sigma_{u,v}(t))^{\prime}(t) = v \cdot (f_t (x_u(t))^{\prime}(t) + (1- v) \cdot (g_t (x_u(t))^{\prime}(t) \Rightarrow$$
$$|(\sigma_{u,v}(t))^{\prime}(t)| \le v\cdot \left| (f_{t}(x_u(t))^{\prime}(t) \right| + (1-v) \cdot \left| (g_{t}(x_u(t))^{\prime}(t) \right| < v\cdot N_0 + (1-v) \cdot N_0 = N_0,$$
where
$$|\gamma_{u,v}^{\prime} (t)| = \left| (x_u^{\prime}(t),(\sigma_{u,v}(t))^{\prime}(t),1) \right| < \sqrt{N_0^2 + N_0^2 +1} = N.$$
\end{proof}

\section{Convex Decomposition}

The propositions in this section essentially use the techniques developed in \cite{parusinski1994lipschitz} and in \cite{Kurdyka1992OnAS}. We will present a proof with some specific adaptations to obtain synchronized triangles, similarly to what is done in \cite{birbrair1999}.

For each angle $\theta \in [0,2\pi]$, define the axis rotation map $r_{\theta} : \mathbb{R}^3 \to \mathbb{R}^3$ as
$$r_{\theta}(x,y,t) = (x\cdot \cos{\theta} + y\cdot \sin{\theta}, -x\cdot \sin{\theta} + y\cdot \cos{\theta},t).$$
Notice that $r_{\theta}$ is an ambient bi-Lipschitz map, since $r_{\theta}$ is an isometry.

\begin{proposition}\label{synch-decomp}
Let $a>0$ and let $(X,0) \subset (C_{a}^{3},0)$ be a pure, semialgebraic, closed 2-dimensional surface germ. Then, there exists $n \in \mathbb{Z}_{\ge 1}$ and $M>0$ such that
\begin{enumerate}
\item $(X,0)$ is the union of H\"older triangles $(X_1,0), \dots (X_n, 0)$, where, for each $i,j \in \{ 1, \dots , n\}\, ; \, i \ne j$, we have that $(X_i, 0) \cap (X_j, 0)=\{ 0 \}$ or $(X_i, 0) \cap (X_j, 0)$ is an arc. Furthermore, if $\Gamma := \{ (X_i, 0) \cap (X_j, 0) : i,j \in \{ 1, \dots , n\}; i \ne j \}$, then the elements of $\Gamma - \{ 0 \}$ are boundary arcs of $(X_1, 0), \dots (X_n , 0)$;
\item $(sing(X),0) \subset \Gamma$;
\item there are angles $\theta_1, \dots, \theta_n$ such that $(T_i,0)$ is a $C^1$, $M$-bounded synchronized triangle, where $T_i = r_{\theta_i}(X_i)$, for each $i=1,\dots,n$.
\end{enumerate}
Any decomposition $(X,0) = \bigcup_{i=1}^{n} (X_i,0)$ of the germ $(X,0)$ satisfying such conditions is defined as a \emph{synchronized decomposition of $(X ,0)$}.
\end{proposition}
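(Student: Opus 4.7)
The strategy is to build the decomposition by three successive refinements, combining a cylindrical cell decomposition of the germ with a gradient-bounded refinement in the spirit of \cite{Kurdyka1992OnAS} and \cite{parusinski1994lipschitz}, then applying a final cut by Puiseux arcs through the origin to enforce the H\"older-triangle shape.

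\textbf{First refinement: obtaining graphs.} I would begin by applying a semialgebraic cell decomposition of a neighborhood of $0$ in $\R^3$ that is compatible with $X$, $\mathrm{sing}(X)$, and the $z$-coordinate function. This partitions $X$ into finitely many open smooth $2$-cells plus lower-dimensional pieces. By a generic rotation $r_{\theta}$ in the $(x,y)$-plane (which preserves $C_a^3$ since $r_\theta$ fixes $t$), the tangent plane to each $2$-cell avoids the $(0,1,0)$-direction, so after rotating by an appropriate angle $\theta_i$ the cell becomes the graph of a semialgebraic $C^1$ function $F_i(x,t)$ over a semialgebraic region in the $(x,t)$ half-plane $\{t\geq 0\}$. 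A standard triangulation of this region into cylindrical cells of the form $\{x_0^{(j)}(t)\le x\le x_1^{(j)}(t),\ 0<t<\varepsilon\}$, with $x_0^{(j)}<x_1^{(j)}$ semialgebraic functions satisfying $x_0^{(j)}(0)=x_1^{(j)}(0)=0$, gives a first decomposition of $(X,0)$ into pieces $(Y_j,0)$, each of which after the rotation $r_{\theta_i}$ is the graph of a semialgebraic function $f_t^{(j)}:[x_0^{(j)}(t),x_1^{(j)}(t)]\to\R$ over a one-parameter family of segments.

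\textbf{Second refinement: enforcing the $M$-bound.} Each piece above is already a synchronized triangle in the sense of Definition~\ref{synch-triangle-def}, and (after adjusting $\varepsilon$) is a H\"older triangle with the main vertex at the origin, since its boundary arcs are Puiseux arcs and its plane link is a graph over a single segment. What is missing is the uniform bound $M$ on the lateral derivatives $|\partial_{x^{\pm}} f_t^{(j)}|$. To achieve this, I would invoke Kurdyka's L-regular decomposition: further subdivide each piece so that the semialgebraic function $F_i$ restricted to each subpiece has uniformly bounded partial $x$-derivative. Concretely, the set of critical thresholds where $|\partial_x F_i|$ attains its suprema is controlled by the Puiseux expansions of the tangencies of the graph with vertical planes $\{x=\mathrm{const}\}$; by splitting along these critical arcs and passing to finer cells, one reaches a decomposition in which the gradient norm is bounded uniformly by some $M>0$, and the boundary of each piece is again a union of Puiseux arcs through the origin. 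Simultaneously, I would refine so that $\mathrm{sing}(X)$ lies inside the union of the boundary arcs, which is possible since $\mathrm{sing}(X)$ is a $1$-dimensional semialgebraic germ and can be added to the list of boundary arcs of the decomposition.

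\textbf{Final bookkeeping.} After these refinements, I set $X_i$ to be the closure of a maximal union of adjacent pieces sharing the common rotation $r_{\theta_i}$ and forming a single synchronized H\"older triangle; condition (1) follows because distinct pieces meet only along boundary arcs or at the origin by construction, condition (2) holds by the choice made when placing $\mathrm{sing}(X)$ into boundary arcs, and condition (3) is the content of the two preceding refinements.

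\textbf{Main obstacle.} The genuinely delicate point is the simultaneous achievement of the three requirements: the graph condition (which fixes the rotation $r_{\theta_i}$ once and for all on the piece), the H\"older-triangle shape of the $(x,t)$-domain (which forces the boundary to consist of Puiseux arcs meeting only at the origin), and the uniform gradient bound $M$ (which may force additional subdivisions whose new boundary arcs need to be added to the previous list). Handling these interactions is what makes the argument technical; each refinement preserves the previously achieved properties because cylindrical cell decomposition and L-regular refinement both respect arbitrary finite prescribed partitions, so the three steps can be carried out in sequence without spoiling one another.
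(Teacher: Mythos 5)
Your overall architecture (cells first, then fix the derivative bound by subdivision) contains a genuine gap in the second refinement, and it is precisely the point where the paper's proof does something different.

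The claim that a uniform bound on $|\partial_x F_i|$ can be achieved by ``splitting along critical arcs and passing to finer cells'' while keeping the rotation $r_{\theta_i}$ fixed is false. Once the graph direction is frozen, no subdivision of the base can bound the derivative: for the piece of Example \ref{ex-2-synch}, $f_t(x)=x/t+t$ on $[-t^2,0]$, one has $|\partial_x f_t|\equiv 1/t\to\infty$ on \emph{every} subpiece; similarly, for the half-cone written as $y=\sqrt{t^2-x^2}$, the derivative blows up on every subpiece whose closure meets the arc $x=\pm t$. The derivative bound is not a property you can retrofit by refinement — it is decided by the choice of $\theta_i$, which must be taken according to the limiting direction of the unit normal on the piece, not merely ``generically'' so as to achieve the graph property. (Kurdyka's L-regular decomposition, which you invoke, does produce bounded-gradient graphs, but only because it re-chooses an orthogonal frame for each piece; here you are constrained to rotations about the $t$-axis, and you have already spent your rotation in step one.) Relatedly, you never use the hypothesis $X\subset C_a^3$, which is what makes rotations about the $t$-axis sufficient at all: it forces the tangent planes of $X$ near $0$ to be uniformly bounded away from horizontal (the paper proves this via the Gauss map and the curve selection lemma), hence the unit normal has horizontal component bounded below, hence at every smooth point either $|\langle n,e_x\rangle|>\sin\delta$ or $|\langle n,e_y\rangle|>\sin\delta$. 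The paper then splits $X$ into the two closures of these Gauss-map preimages, applies a pancake decomposition to each to get injective projections and arc-wise intersections, and uses only the two rotations $\theta=0$ and $\theta=\pi/2$; the $M$-bound comes for free from the $\sin\delta$ lower bound on the normal. To repair your argument you would need to insert this Gauss-map dichotomy \emph{before} choosing the rotations, subdividing so that on each piece the normal direction stays in a fixed half-cone about $e_x$ or about $e_y$.

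Your first refinement also overstates what a single generic horizontal rotation can do for a fixed $2$-cell: on a cell where the normal direction sweeps a full arc of horizontal directions (a sector of a cone), every rotation leaves points of vertical tangency, so the cell must first be cut according to the Gauss map — again the step your proposal omits.
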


\begin{proof}
Let $\{ e_x,e_y,e_z \}$ be the canonical orthonormal basis in $\mathbb{R}^3$, $R_{ij} = span \{e_i,e_j\}$ and $\pi_{ij} : \mathbb{R}^3 \to R_{ij}$ be the orthogonal projection. Consider the Gauss map
$$G: X\setminus sing(X) \to G(2,3) \; ; \; G(p)=T_p(X) \in G(2,3) \; ; \; \forall p \in X \setminus sing(X).$$
For each $\delta>0$ and $\{i,j\} \in \{x,y,z\}$, define
$$U_{ij}(\delta)=\left\{ m \in G(2,3) \mid \angle(m,R_{ij}) < \dfrac{\pi}{2} - \delta \right\}.$$

\begin{claim}\label{grassmanian}
There are $\varepsilon, \delta >0$ small enough such that
$$p \in (X\setminus sing(X)) \cap C_a^{3}[\varepsilon] \Rightarrow G(x) \in U_{xz}(\delta) \cup U_{yz}(\delta). $$
\end{claim}
\begin{proof}
For each $p\in X\setminus sing(X)$, let $(x_p,y_p,z_p)$ be the unit vector orthogonal to $T_p(X)$. Notice that $p \notin U_{xz}(\delta) \cup U_{yz}(\delta) \Leftrightarrow |x_p|,|y_p| \le \sin{\delta}$. Therefore, if we prove that there is $\rho = 1 - \sqrt{1-2\sin^2(\delta)}>0$ small enough such that, for some $\varepsilon>0$,
$$p \in (X\setminus sing(X)) \cap C_a^{3}[\varepsilon] \Rightarrow |z_p| < 1 - \rho,$$
then the claim will be proved, since $x_p^2 + y_p^2 = 1 - z_p^2 > 2\sin^2(\delta)$ would imply $|x_p| > \sin{\delta}$ or $|y_p| > \sin{\delta}$. Take $\rho = 1-\sqrt{\frac{2a^2}{1+2a^2}}$ and suppose that, for every $\varepsilon > 0$, there is $p\in (X\setminus sing(X )) \cap C_a^{3}[\varepsilon]$ such that $|z_p| \ge 1-\rho$. By the curve selection lemma, there is $\varepsilon_0 >0$ and a curve $\gamma: [0,\varepsilon_0] \to C_a^{3}$, $\gamma(t)=(x(t),y(t),t)$ such that $|z_{\gamma(t)}| \ge 1 - \rho$, for all $0 < t < \varepsilon_0$. As $(x^{\prime}(t),y^{\prime}(t),1) \in T_{\gamma(t)}(X)$, for every $0\le t < \varepsilon_0$ , we have
$$0=(x^{\prime}(t),y^{\prime}(t),1) \cdot (x_{\gamma(t)},y_{\gamma(t)},z_{\gamma(t)}) \Rightarrow |-z_{\gamma(t)}| = |x^{\prime}(t) \cdot x_{\gamma(t)} + y^{\prime}(t) \cdot y_{\gamma(t)}| \le$$
$$\le \sqrt{(x^{\prime}(t))^2+(y_{\prime}(t))^2}\sqrt{(x_{\gamma(t)})^2+ (y_{\gamma(t)})^2}=\sqrt{(x^{\prime}(t))^2+(y^{\prime}(t))^2}\sqrt{1- z_{\gamma(t)}^2}$$
$$\Rightarrow (x^{\prime}(t))^2+(y^{\prime}(t))^2 \ge \dfrac{z_{\gamma(t)}^2}{1- z_{\gamma(t)}^2} \ge 2a^2.$$
On the other hand, as $(\gamma,0) \subset (C_a^{3},0)$ is semialgebraic, we also have $(x^{\prime}(0),y^{\prime}(0),1) \in C_a^{3}$, where $x^{\prime}(0)+y^{\prime}(0) \le a^2$, a contradiction.
\end{proof}

Now, consider $X_x = \overline{G^{-1}(U_{xz}(\delta)}$ and $X_y =\overline{ G^{-1}(U_{yz}(\delta)) \setminus X_x}$. For each $t=x,y$, there is a pancake decomposition $\{X_t^{k}\}_{1 \le k\le K_t}$ of $X_t$ (see \cite{birbrair2000normal}) satisfying the following conditions:
\begin{enumerate}
\item All sets $X_t^k$ are semialgebraic satisfying $0 \in X_t^k$;
\item $(X_t^k,0)$ is a germ of a curvilinear triangle with vertex at the origin (in particular, a H\"older triangle);
\item For $k_1 \ne k_2$, $(X_t^{k_1},0) \cap (X_t^{k_2},0)$ is $\{0\}$ or an arc.
\item for each $1\le k\le K_t$, $\pi_{tz}|_{X_t^k\cap C_a^{3}(\varepsilon)}$ is a homeomorphism over its image.
\end{enumerate}

\begin{figure}[h]
\centering
\includegraphics[width=12cm]{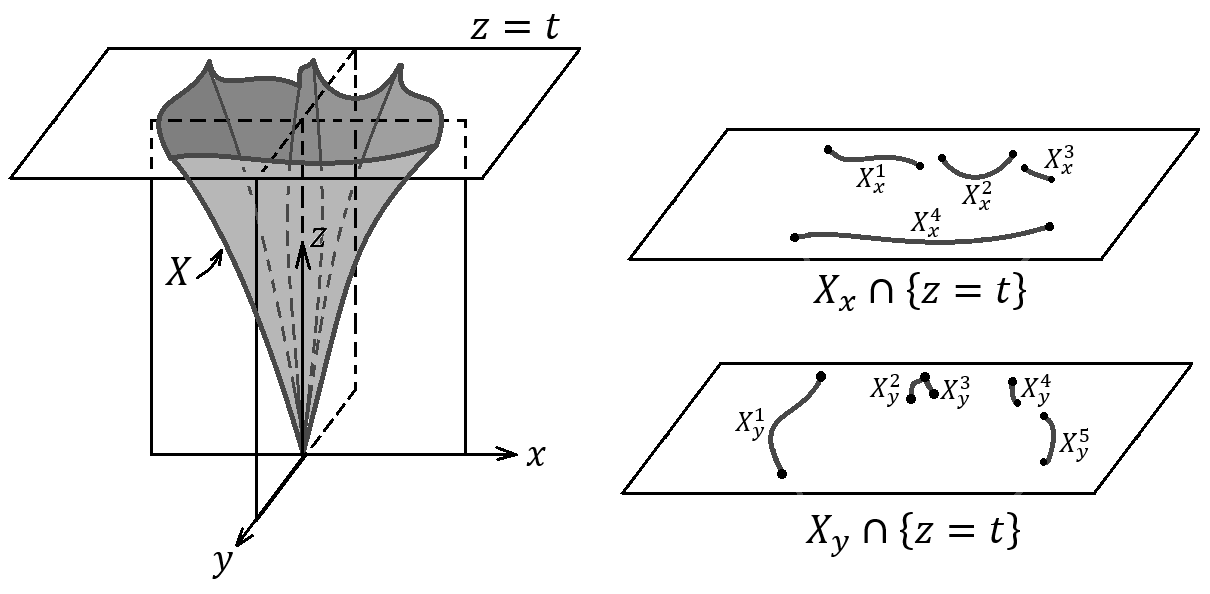}
\label{fig6}
\caption{Proof of Proposition \ref{synch-decomp}.}
\end{figure}

Finally, taking $T_x^{k} = r_{0}(X_x^{k})$ and $T_y^{k} = r_{\frac{\pi}{2}}(X_y^{k})$, and considering $\{T_1, \dots , T_n \}$ a re-indexation of the sets $T_x^{k},T_y^{k}$, we have that each $(T_i,0)$ is a germ of a $C^1$, $M$-bounded synchronized triangle.
\end{proof}

\begin{remark}\label{refinement-decomp}
Given a synchronized decomposition $P$ of $(X,0)$, notice that if we choose a H\"older triangle germ $(X_k,0)=(T(\gamma_1, \gamma_2),0)$ from this decomposition, an arc $\gamma \in V(X_k)$ and divide $(X_k,0)$ into two germs of H\"older triangles $(X_{k,0},0)=(T(\gamma_1,\gamma),0)$ and $(X_{k,1},0)=(T(\gamma,\gamma_2),0)$, then letting $ P'$ be the partition $(X,0) = \left( \bigcup_{i=1, i \ne k}^{n} (X_i,0) \right) \cup (X_{k,0}, 0) \cup (X_{k,1},0)$, $P'$ will also fulfill the conditions of Proposition \ref{synch-decomp}, thus being a synchronized decomposition of $(X,0)$. The partition $P'$ is defined as a \emph{direct refinement of $P$}. We also say that a synchronized decomposition $P'$ of a germ $(X,0)$ is a \emph{refinement of the synchronized decomposition $P$} of $(X,0)$ if there is a finite sequence of decompositions $P=P_0,P_1 ,\dots,P_n=P'$ such that $P_i$ is a direct refinement of $P_{i-1}$, for all $i=1,\dots,n$.
\end{remark}

\begin{proposition}[Convex Decomposition] \label{convex-decomp}
Let $a>0$ and let $(X,0) \subset (C_{a}^{3},0)$ be a pure, semialgebraic, closed 2-dimensional surface germ. Given any synchronized decomposition of $(X,0)$, we have that, for every $\delta>0$, there is a synchronized decomposition $(X,0) = \bigcup_{i=1}^{n} (X_i, 0)$ which is a refinement of the initial decomposition, such that
\begin{enumerate}
\item If $\{f_{t}\}$ is the generating functions family of $(T_i,0) = (r_{\theta_i}(X_i),0)$, then for each $t>0$ small enough $f_{t} : [x_0(t),x_1(t)] \to \mathbb{R}$ is a convex function;
\item For every $t>0$ small enough, we have:
$$\left| \dfrac { \partial f_t}{\partial x_{+}}(x_0(t)) - \dfrac{ \partial f_t}{\partial x_{-}}(x_1(t)) \right| < \delta.$$
\end{enumerate}
Any synchronized decomposition of the surface germ $(X,0)$ that fulfills these conditions is defined as a \emph{$\delta$-convex decomposition of $(X,0)$}.
\end{proposition}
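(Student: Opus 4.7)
The plan is to refine the given synchronized decomposition in two successive stages: first cut each triangle $T_i=r_{\theta_i}(X_i)$ along semialgebraic arcs where $f_t$ switches between convex and concave, flipping the rotation on concave pieces to obtain condition~(1); then, on each resulting convex piece, cut again along finitely many ``equal-slope'' arcs so the total variation of the slope of $f_t$ on each final piece is smaller than $\delta$, giving condition~(2). Every cut is performed along a semialgebraic arc, so by Remark~\ref{refinement-decomp} the final partition is a refinement of the initial decomposition.

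For the first stage, I fix a synchronized triangle $(X_i,0)$ with generating functions family $\{f_t\}$ on $[x_0(t),x_1(t)]$. By Remark~\ref{decomp-synch-differential}, $f_t$ is smooth away from finitely many semialgebraic arcs; on each smooth stratum the set $\{(x,t) : (f_t)''(x)>0\}$ is semialgebraic, and a cell decomposition of it produces finitely many semialgebraic arcs along which I subdivide $T_i$, obtaining sub-triangles on which $f_t$ is either convex or concave for every small $t$. On each concave piece I replace the rotation angle $\theta_i$ by $\theta_i+\pi$; since $r_\pi$ sends $(x,f_t(x),t)$ to $(-x,-f_t(x),t)$, the new generating function is $g_t(u)=-f_t(-u)$, which is convex, preserves $M$-boundedness (as $|g_t'(u)|=|f_t'(-u)|$), and keeps the piece a $C^1$ synchronized triangle. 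For the second stage, on any convex piece with generating family $\{\tilde f_t\}$ on $[\tilde x_0(t),\tilde x_1(t)]$, I set $\Delta(t):=\tfrac{\partial \tilde f_t}{\partial x_-}(\tilde x_1(t))-\tfrac{\partial \tilde f_t}{\partial x_+}(\tilde x_0(t))$, which lies in $[0,2M]$ by convexity and $M$-boundedness; fix $N\in\mathbb{Z}_{\ge 1}$ with $2M/N<\delta$, and for $k=1,\dots,N-1$ define
$$z_k(t)=\inf\Bigl\{x\in[\tilde x_0(t),\tilde x_1(t)] \;\Big|\; \tfrac{\partial \tilde f_t}{\partial x_+}(x)\ge \tfrac{\partial \tilde f_t}{\partial x_+}(\tilde x_0(t))+\tfrac{k}{N}\Delta(t)\Bigr\}.$$
Each $z_k$ is a semialgebraic function of $t$; subdividing along the corresponding arcs yields sub-triangles on each of which the slope of $\tilde f_t$ varies by at most $\Delta(t)/N<\delta$, which is exactly condition~(2).

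The main technical obstacle will be justifying that all these selections produce genuine semialgebraic arcs emanating from the origin, rather than sets that accumulate pathologically at $t=0$. This rests on semialgebraic triviality near $t=0$, in the spirit of \cite{parusinski1994lipschitz} and \cite{Kurdyka1992OnAS}: the cell decomposition of $\{(f_t)''>0\}$ is combinatorially constant on a small interval $(0,\varepsilon)$, and the infimum defining $z_k(t)$ yields a semialgebraic function of $t$, hence admits a Puiseux expansion and extends continuously to $t=0$. A minor bookkeeping verification is that replacing $\theta_i$ by $\theta_i+\pi$ on a single sub-piece is harmless for the synchronized decomposition structure, since the angles in Proposition~\ref{synch-decomp} are chosen independently for each piece of the refinement.
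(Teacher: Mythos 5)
Your proposal is correct and follows essentially the same two-stage strategy as the paper: the paper likewise cuts along the semialgebraic inflection locus $\{f_t''=0\}$ (decomposed into finitely many arcs and H\"older triangles) and applies $r_\pi$ to the concave pieces, and then equipartitions the slope range of each convex piece — working with $\rho_t(x)=\arctan f_t'(x)$ rather than with $f_t'$ directly — to force condition (2). The only substantive divergence is that the paper additionally re-rotates each final piece by (approximately) the arctangent of its minimal slope, so that the resulting generating functions are $\delta/2$-bounded and not merely of total slope variation less than $\delta$; this stronger conclusion is not demanded by the literal statement, but it is what is invoked downstream (e.g.\ in Proposition \ref{linear-decomposition}), so it is worth recording that your construction can be upgraded to it by the same final rotation.
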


\begin{proof}
We will prove the proposition in the case where $(X,0)$ is the germ of a $C^1$, $M$-bounded synchronized triangle, since the general case follows from this, when applying the result in each synchronized triangle germ of the decomposition of $(X,0)$ obtained from Proposition \ref{synch-decomp}. Consider $\{f_{t}\} \; ; \; f_{t}: [x_0(t),x_1(t)] \to \mathbb{R}$ the generating functions family of $(X,0)$, and let
$$A=\left\{ (x,f_t(x),t) \in X\cap C_a^{3}[\varepsilon] \mid f_t^{\prime \prime}(x) = 0 \right \}.$$
The germ $(A,0)\subseteq (X,0)$ is semialgebraic with dimension lesser than or equal to 2, and thus $(A,0)$ is either the empty set or a finite union of arcs and/or H\"older triangles. In the first case, we have that $f_t^{\prime \prime}(x)$ is always positive or negative, and then $f_t$ is a convex or concave function for all $0<t \ll 1$ (in this last one, $-f_t$ is convex, and so we can consider the germ $(r_{\pi}(X),0)$ instead of $(X, 0)$). In the second case, consider $(\gamma_1,0), \dots, (\gamma_n,0)$ the isolated arcs in $A$, together with the boundary arcs of the H\"older triangles in $A$ and the boundary arcs of $X$. We can assume, through a re-indexing of indexes, that, for $\varepsilon>0$ small enough and that, for each $0\le i\le n$, we have
$$\gamma_i(t)=(x_i(t),f_t(x_i(t)),t); 0<t < \varepsilon.$$

Therefore, $f_t|_{[x_i(t),x_{i+1}(t)]}$ is convex, for all $0<t \ll 1$, or concave, for all $0<t  \ll 1$. Thus, if we divide $(X,0)$ into $n$ germs of H\"older triangles, with $(X_i,0)$ being the germ delimited by $(\gamma_{i-1},0),(\gamma_i ,0)$ ($i=1,\dots,n$), then we can choose angles $\theta_1, \dots, \theta_n$ such that each $(T_i,0)$ has convex functions as its generating functions family ( $\theta_i = 0$, if $f_t|_{[x_i(t),x_{i+1}(t)]}$ is convex and $\theta_i = \pi$, if $f_t|_{[x_i (t),x_{i+1}(t)]}$ is concave).

\begin{figure}[h]
\centering
\includegraphics[width=13cm]{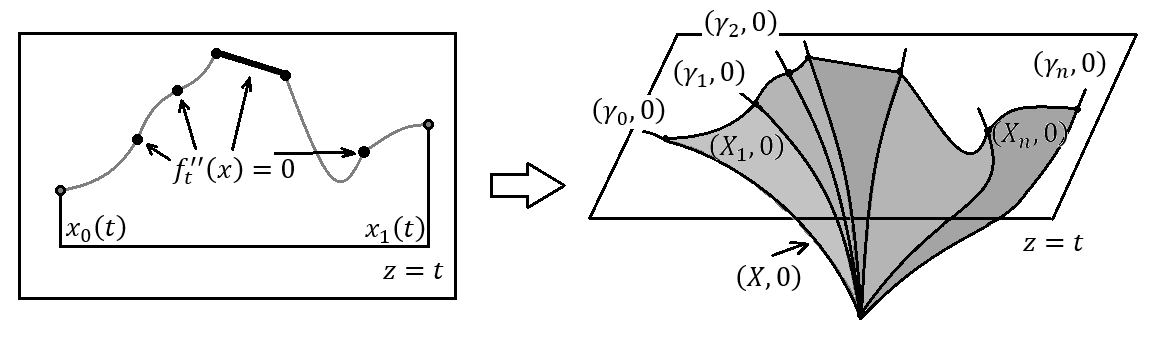}
\label{fig7}
\caption{Decomposition by arcs determined by $A$.}
\end{figure}

We have shown that every synchronized partition has a refinement that fulfills the first condition of the proposition, and thus it suffices to prove the proposition assuming that the generating functions family of $(X,0)$ is a family of convex functions, because in this way we can consider a refinement of the refinement obtained here, in which the second condition is also fulfilled. Assume this is the case. and define, for each $0<t \ll 1$,
$$\rho_t: [x_0(t),x_1(t)] \to \left( -\frac{\pi}{2},\frac{\pi}{2} \right); \rho_t(x)= \arctan{f_t^{\prime}(x)},$$
where, for convenience, we denote $f_t^{\prime}(x_0(t))= \dfrac { \partial f_t}{\partial x_{+}}(x_0(t))$ and $f_t^{\prime} (x_1(t))= \dfrac { \partial f_t}{\partial x_{-}}(x_1(t))$. Notice that, by the convexity of $f_t$, we have, for all $0<t \ll 1$ and for each $x,y \in [x_0(t),x_1(t) ]$, with $x<y$, that $\rho_t(x) \le \rho_t(y)$. Hence, if $a= \displaystyle \lim_{t \to 0^{+}} \rho_t(x_0(t))$ and $b= \displaystyle \lim_{t \to 0^{+}} \rho_t( x_1(t))$, then $a\le b$ and, for every $\kappa > 0$, $0<t  \ll 1$ and every $x_0(t) \le x \le x_1(t)$,
$$a - \kappa <\rho_t(x_0(t) ) \le \rho_t(x) \le \rho_t(x_1(t)) < b+ \kappa.$$
We have now 2 cases to consider:

\textit{Case 1:} if $2\tan(b-a) < \delta$, then take $\theta = a-\kappa$ and let $(T,0) = (r_\theta (X),0)$ . Notice that $(T,0)$ is a $C^1$, convex and $\tan{(b-a + 2\kappa)}$-bounded synchronized triangle. Taking $\kappa$ small enough such that $2\tan{(b-a + 2\kappa)} < \delta$, the result follows.

\begin{figure}[h]
\centering
\includegraphics[width=11cm]{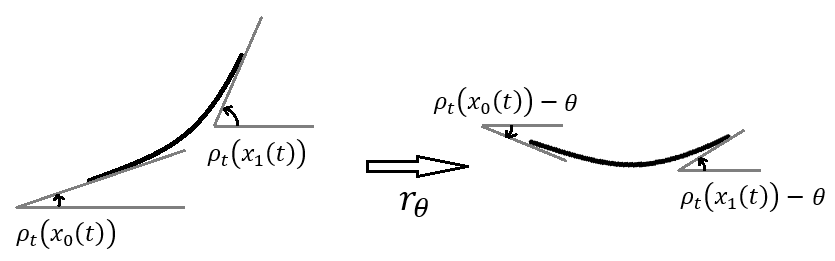}
\label{fig8}
\caption{Angle adjustment for a $\delta$-convex decomposition.}
\end{figure}

\textit{Case 2:} if $2\tan(b-a) \ge \delta$, then consider $n \in \mathbb{Z}_{\ge 1}$ such that $2\tan(\frac{b-a}{n}) < \delta $, and for every $0<t \ll 1$, $i=0,1, \dots, n$ let $p_i(t) \in [x_0(t),x_1(t)]$ be the only real number satisfying
$$\rho_t(p_i(t)) = \frac{1}{n}\left( (n-i)\cdot \rho_t(x_0(t)) + i \cdot \rho_t(x_1(t)) \right).$$

The uniqueness is guaranteed by the convexity of functions $f_t$ and by the inequality $\rho_t(x_0(t)) < \rho_t(x_1(t))$. If $\gamma_i(t) = (p_i(t),f_t(p_i(t)),t)$, then when we divide $(X,0)$ into $n$ H\"older triangles $(X_1,0 ), \dots , (X_n,0)$, with $(X_i,0)$ being the germ delimited by $(\gamma_{i-1},0),(\gamma_i,0)$ ($i=1 , \dots, n$), and then we apply the same argument of Case 1 to each $(X_i,0)$.

\begin{figure}[h]
\centering
\includegraphics[width=14cm]{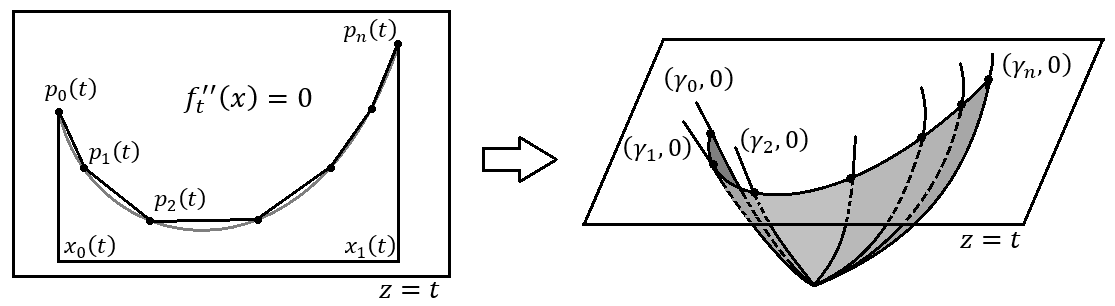}
\label{fig9}
\caption{$\delta$-convex decomposition on a synchronized triangle.}
\end{figure}
\end{proof}

\section{Ambient Isotopy in Curvilinear Rectangles}

This section is devoted to define the concept of ambient bi-Lipschitz isotopy and to prove the ambient isotopy theorem in curvilinear rectangles. This structural core result is, in a certain way, an extension of the $\mathcal{K}$-bi-Lipschitz equivalence between function germs studied in \cite{birbrair2007K}.

\begin{definition}\normalfont \label{amb-isotopy}
Let $X, X_0, X_1 \subseteq \mathbb{R}^n$ be sets such that $X_1, X_2 \subseteq X$. We say that $X_1, X_2$ are \textit{ambient bi-Lipschitz isotopic in $X$} if there is a continuous map $\varphi : X\times [0,1] \to X$ such that, if we denote $\varphi_ {\tau}(p) = \varphi(p,\tau)$, then
\begin{enumerate}
\item $\varphi_{\tau}: X \to X$ is an outer bi-Lipschitz map, for all $0 \le \tau \le 1$;
\item $\varphi_0 = id_{X}$;
\item $\varphi_1(X_1)=X_2$.
\end{enumerate}

The map $\varphi$ is an \textit{ambient bi-Lipschitz isotopy in $X$, taking $X_1$ into $X_2$}. We also say that the isotopy $\varphi$ is \textit{invariant on the boundary} if $\varphi_{\tau}|_{\partial X}=id_{\partial X}$, for all $0 \le \tau \le 1$.
\end{definition}

Analogously, we can define ambient bi-Lipschitz isotopy for germs as follows.

\begin{definition}\normalfont \label{amb-isotopy-equiv}
Let $(X,0), (X_0,0), (X_1,0) \subseteq (\mathbb{R}^n,0)$ be set germs such that $(X_1,0), (X_2,0) \subseteq (X,0)$. We say that $(X_1,0), (X_2,0)$ are \textit{ambient Bi-Lipschitz isotopic in $(X,0)$} if there is a continuous map $\varphi : (X,0)\times [ 0,1] \to (X,0)$ such that, if we denote $\varphi_{\tau}(p) = \varphi(p,\tau)$, then:
\begin{enumerate}
\item $\varphi_{\tau}: (X,0) \to (X,0)$ is an bi-Lipschitz map, for all $0 \le \tau \le 1$;
\item $\varphi_0 = id_{(X,0)}$;
\item $\varphi_1((X_1,0))=(X_2,0)$.
\end{enumerate}

The map $\varphi$ is an \textit{ambient bi-Lipschitz isotopy in $(X,0)$, taking $(X_1,0)$ into $(X_2,0)$}. We also say that the isotopy $\varphi$ is \textit{invariant on the boundary} if $\varphi_{\tau}|_{(\partial X,0)}=id_{(\partial X,0)}$, for all $0 \le \tau \le 1$.
\end{definition}

\begin{theorem}[Ambient bi-Lipschitz isotopy in curvilinear rectangles]\label{amb-isotopy-criterion}
Let $a>0$ and let $(T_1,0),(T_2,0),(W_1,0),(W_2,0) \subset (C_a^{3},0)$ be synchronized triangles, each two of them aligned on boundary arcs. Suppose that for every $t>0$ small enough, there is $M>1$ such that the following holds:
\begin{itemize}
\item $(T_1,0),(T_2,0),(W_1,0),(W_2,0)$ are $M$-bounded and $\{f_t \}, \{g_t \}, \{a_t \}, \{b_t \}$ are their respective generating function families;
\item $(T_1,0)$ has $(\gamma_0,0),(\gamma_1,0)$ as boundary arcs, where
$$\gamma_0 = \gamma_0(t) = (x_0(t), y_0(t), t) \; ; \; \gamma_1 = \gamma_1 (t) = (x_1(t),y_1(t),t)$$
and $x_0(t) < x_1(t)$;
\item for all $x \in (x_0(t),x_1(t))$, we have
$$g_t(x) < a_t(x),b_t(x) < f_t(x),$$
$$\dfrac{1}{M} \le \dfrac{a_t(x)-g_t(x)}{f_t(x) - g_t(x)}, \dfrac{b_t(x)-g_t(x)} {f_t(x) - g_t(x)} \le 1-\dfrac{1}{M}.$$
\end{itemize}
If $(R,0)$ is the curvilinear rectangle bounded by $(T_1,0),(T_2,0)$, then there is a continuous map $\varphi : (R,0)\times [0,1] \to (R,0)$ such that, if we denote $\varphi_{\tau}(p) = \varphi(p,\tau)$, we have
\begin{enumerate}
\item $\varphi_{\tau}: (R,0) \to (R,0)$ is an outer bi-Lipschitz map, for all $0 \le \tau \le 1$;
\item $\varphi_0 = id_{(R,0)}$ and $\varphi_{\tau}|_{(T_1,0)\cup(T_2,0)}=id_{(T_1 \cup T_2,0) )}$, for every $0 \le \tau \le 1$;
\item For all small $t>0$ and $x\in [x_0(t),x_1(t)]$,
$$\varphi_1(x,f_t(x),t) = (x,f_t(x),t),$$
$$\varphi_1(x,g_t(x),t) = (x,g_t(x),t),$$
$$\varphi_1(x,a_t(x),t) = (x,b_t(x),t).$$
\end{enumerate}

In other words, $\varphi$ is an ambient bi-Lipschitz isotopy in $(R,0)$, taking $(W_1,0)$ into $(W_2,0)$. Furthermore, if $(T_1,0)$ and $(T_2,0)$ have the same boundary arcs, $\varphi$ is also invariant on the boundary.
\end{theorem}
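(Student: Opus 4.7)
The plan is to build the isotopy as a piecewise-linear interpolation in the transverse direction, using the arc-coordinate description of $R$. For $(x,y,t) \in R$ with $g_t(x) < f_t(x)$, set
\[
v(x,y,t) = \frac{y-g_t(x)}{f_t(x)-g_t(x)},
\]
so that $W_1$ and $W_2$ are the level sets $\{v = \alpha(x,t)\}$ and $\{v = \beta(x,t)\}$, where
\[
\alpha = \frac{a_t-g_t}{f_t-g_t}, \qquad \beta = \frac{b_t-g_t}{f_t-g_t}.
\]
By hypothesis $\alpha, \beta \in [1/M,\,1-1/M]$, so $\alpha_\tau := (1-\tau)\alpha + \tau\beta$ also lies there. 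Let $h_\tau:[0,1]\to[0,1]$ be the continuous piecewise-linear homeomorphism fixing $0$ and $1$ that sends $\alpha$ to $\alpha_\tau$, and define
\[
\varphi_\tau(x,y,t) = \bigl(x,\; g_t(x) + h_\tau(v(x,y,t))\,(f_t(x)-g_t(x)),\; t\bigr).
\]
The combinatorial requirements (1)--(3) are then immediate: the two linear branches of $h_\tau$ agree at $v=\alpha$, giving continuity; $\varphi_0=\mathrm{id}$ since $h_0=\mathrm{id}$; $T_1$ ($v=1$) and $T_2$ ($v=0$) are fixed; at $\tau=1$ the graph of $a_t$ is sent to the graph of $(1-\beta)g_t+\beta f_t = b_t$, i.e.\ $W_1\to W_2$. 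Joint continuity in $(p,\tau)$ is clear since $h_\tau$ depends linearly on $\tau$ through $\alpha_\tau$.

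The core of the proof is to verify that each $\varphi_\tau$ is outer bi-Lipschitz with a constant uniform in $\tau$. Split $R$ into $R_1 = \{y\le a_t(x)\}$ and $R_2=\{y\ge a_t(x)\}$; on $R_1$ the map reads $(x,y,t)\mapsto(x,\,g_t + K_1(y-g_t),\,t)$ with $K_1=\alpha_\tau/\alpha$, and on $R_2$ it reads $(x,y,t)\mapsto(x,\,g_t+\alpha_\tau\Delta+K_2(y-a_t),\,t)$ with $K_2=(1-\alpha_\tau)/(1-\alpha)$ and $\Delta = f_t-g_t$. Both $K_1,K_2$ lie in the bounded interval $[1/(M-1),\,M-1]$, and the Jacobian matrix
\[
J_{\varphi_\tau} = \begin{pmatrix} 1 & 0 & 0 \\ \partial_x Y & \partial_y Y & \partial_t Y \\ 0 & 0 & 1 \end{pmatrix}
\]
has $\partial_y Y = K_i$ bounded away from $0$ and infinity, so its determinant is too. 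The delicate estimates are for $\partial_x Y$ and $\partial_t Y$.

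Here the key identity, obtained by direct differentiation, is
\[
\Delta \cdot \partial_x\alpha = (a_t' - g_t') - \alpha(f_t'-g_t'),
\]
with analogous formulas for $\partial_t\alpha$, $\partial_x\beta$, $\partial_t\beta$. The right-hand sides are uniformly bounded: the $x$-derivatives by $M$-boundedness of $T_1,T_2,W_1,W_2$, and the $t$-derivatives by Proposition~\ref{synch-derivative-along-t} applied to each of the four triangles. Thus $\partial_x\alpha$, $\partial_t\alpha$, $\partial_x\beta$, $\partial_t\beta$ (and hence $\partial_x K_i$, $\partial_t K_i$) are $O(1/\Delta)$, but in the chain rule for $\partial_x Y$ and $\partial_t Y$ each such factor always appears multiplied by $y-g_t \le \alpha\Delta\le \Delta$ on $R_1$, respectively $f_t-y\le(1-\alpha)\Delta\le\Delta$ on $R_2$, precisely because $h_\tau$ interpolates linearly from $0$ at $v=0$ and to $1$ at $v=1$. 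The $1/\Delta$ singularities therefore cancel exactly, yielding a uniform bound on all entries of $J_{\varphi_\tau}$ and, by the same token, of $J_{\varphi_\tau^{-1}}$.

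Consequently Proposition~\ref{jacobiano-lip} applies on $\mathrm{int}(R_1)$ and $\mathrm{int}(R_2)$, giving inner bi-Lipschitz maps that extend to the closures by Proposition~\ref{bi-lip-extende-fecho}. The two pieces agree on the graph of $a_t$ and on its image, so Proposition~\ref{colagem-bi-lip-inner} glues them into an inner bi-Lipschitz self-map of $R$. Since $R$ is LNE by Proposition~\ref{curv-rect-is-LNE}, this upgrades to outer bi-Lipschitz with a constant depending only on $M$. The final invariance-on-boundary claim is automatic: if $\gamma_i=\rho_i$ then the ``sides'' of $R$ collapse, so $\partial R = T_1\cup T_2$, on which $\varphi_\tau$ is already the identity. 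The principal obstacle in this scheme is precisely the uniform Jacobian control near the boundary where $\Delta\to 0$; both the linear-interpolation structure of $h_\tau$ and the uniform separation $\alpha,\beta\in[1/M,\,1-1/M]$ are needed to produce the cancellation that defeats the $1/\Delta$ blow-up of the derivatives of $\alpha$ and $\beta$.
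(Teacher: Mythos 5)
Your proposal is correct and follows essentially the same route as the paper: the same fiberwise piecewise-linear reparametrization $h_\tau$ sending $\alpha$ to $(1-\tau)\alpha+\tau\beta$, the same splitting of $R$ along the graph of $a_t$, the same Jacobian estimates exploiting the cancellation of the $1/(f_t-g_t)$ blow-up against the factor $y-g_t$ (resp.\ $f_t-y$), and the same chain of Propositions \ref{jacobiano-lip}, \ref{bi-lip-extende-fecho}, \ref{colagem-bi-lip-inner} and \ref{curv-rect-is-LNE} to upgrade to an outer bi-Lipschitz map. The only detail the paper adds is a final subdivision of $R$ along the arcs of $sing(T_1\cup T_2\cup W_1\cup W_2)$ to reduce the general piecewise-smooth case to the $C^1$ case, which your gluing framework absorbs without change.
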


\begin{proof}
Let us first consider the case where $(T_1,0),(T_2,0),(W_1,0),(W_2,0)$ are $C^1$. By Proposition \ref{synch-derivative-along-t} we can take $N>1$ large enough such that, for all $t> 0$ small enough and $x_0(t) < x < x_1(t)$,
$$\left| \frac{\partial x_0}{\partial t}(t) \right|, \left| \frac{\partial x_1}{\partial t}(t) \right|, \left| \dfrac{\partial}{\partial t}(f_t)(x) \right|, \left| \dfrac{\partial}{\partial t}(g_t)(x) \right|, \left| \dfrac{\partial}{\partial t}(a_t)(x) \right|, \left| \dfrac{\partial}{\partial t}(b_t)(x) \right| < N.$$
Let $\{\gamma_{u,v}\}_{(u,v) \in [0,1]^2}$ be the arc coordinate system of $(R,0)$. For all $ t>0$ small and all $x\in (x_0(t),x_1(t))$, $x=x_u(t)$, with $u\in (0,1)$, we can write
$$(x,g_t(x),t)=(x_u(t),\sigma_{u,0}(t),t)=\gamma_{u,0}(t),$$
$$(x,f_t(x),t)=(x_u(t),\sigma_{u,1}(t),t)=\gamma_{u,1}(t),$$
$$(x,a_t(x),t)=(x_u(t),\sigma_{u,\alpha_t(u)}(t),t)=\gamma_{u,\alpha_t(u)}( t),$$
$$(x,b_t(x),t)=(x_u(t),\sigma_{u,\beta_t(u)}(t),t)=\gamma_{u,\beta_t(u)}( t),$$
where, for each $t>0$, we consider the functions $\alpha_t ,\beta_t : [0,1] \to [0,1]$ as
$$\alpha_t(u)=\dfrac{a_t(x_u(t))-g_t(x_u(t))}{f_t(x_u(t))-g_t(x_u(t))}; \; \beta_t(u)=\dfrac{b_t(x_u(t))-g_t(x_u(t))}{f_t(x_u(t))-g_t(x_u(t))}, \forall u \in[0,1];$$
$$\alpha_t(0) = \displaystyle \lim_{u \to 0^{+}} (\alpha_t(u)); \alpha_t(1) = \displaystyle \lim_{u \to 1^{-}} (\alpha_t(u)); \beta_t(0) = \displaystyle \lim_{u \to 0^{+}} (\beta_t(u)), \beta_t(1) = \displaystyle \lim_{u \to 1^{-}} (\beta_t(u)).$$

Notice that, from hypothesis, we have $\frac{1}{M} \le \alpha_t(u),\beta_t(u) \le 1-\frac{1}{M}$. Furthermore,
$$ \sigma_{u,\alpha_t(u)}(t) = g_t(x_u(t)) + \alpha_t(u) \cdot (f_t(x_u(t)) - g_t (x_u(t) ))=$$
$$ = g_t(x_u(t)) + (a_t(x_u(t))-g_t(x_u(t))) = a_t(x_u(t)),$$
$$ \sigma_{u,\beta_t(u)}(t) = g_t(x_u(t)) + \beta_t(u) \cdot (f_t(x_u(t)) - g_t (x_u(t) )) = $$
$$ =g_t(x_u(t)) + (b_t(x_u(t))-g_t(x_u(t))) = b_t(x_u(t)),$$
which shows that $\alpha_t , \beta_t$ are well defined for each $u \in (0,1)$. Furthermore, as $g_t,f_t,a_t,b_t$ are $C^1$, $\alpha_t , \beta_t$ are also $C^1$ and this also implies that $\alpha_t,\beta_t$ are also defined in $\{0,1\}$. Now, for every small $t>0$, $(u,\tau) \in [0,1]^2$, define $\beta_t^{\tau}(u)=(1-\tau) \cdot \alpha_t(u) + \tau \cdot \beta_t(u)$ and $\psi_{u,t}^{\tau}:[0,1] \to [0,1]$ as
$$
\psi_{u,t}^{\tau}(v) =
   \begin{cases}
       \left( \dfrac{\beta_t^{\tau}(u)}{\alpha_t(u)} \right)v, & 0\le v \le \alpha_t(u) \\
       1- \left( \dfrac{1-\beta_t^{\tau}(u)}{1-\alpha_t(u)} \right)(1-v), & \alpha_t(u)\le v \le 1
   \end{cases}.
$$

Notice that $\psi_{u,t}^{\tau}(v)$ is well defined, as $\psi_{u,t}^{\tau}(\alpha_t(u))=\beta_t^{\tau}(u)$ in both rules, $\psi_{u,t}^{\tau}(0)=0$, $\psi_{u,t}^{\tau}(1)=1$ and $\psi_{u,t}^{\tau}$ is piecewise linear, hence continuous. Notice also that, from the inequalities $\frac{1}{M} \le \alpha_t(u)\le 1-\frac{1}{M}$, the denominators involved in the expressions are all positive. We also have $\psi_{u,t}^{0} = id_{[0,1]}$ and $(\psi_{u,t}^{\tau})^{-1} : [0, 1] \to [0,1]$ is given by

$$
(\psi_{u,t}^{\tau})^{-1}(v) =
   \begin{cases}
       \left( \dfrac{\alpha_t(u)}{\beta_t^{\tau}(u)} \right)v, & 0\le v \le \beta_t^{\tau}(u) \\
       1- \left( \dfrac{1-\alpha_t(u)}{1-\beta_t^{\tau}(u)} \right)(1-v), & \beta_t^{\tau}(u) \le v \le 1
   \end{cases}.
$$

Notice also that $(\psi_{u,t}^{\tau})^{-1}(v)$ is well defined, as $(\psi_{u,t}^{\tau})^{- 1}(\beta_t^{\tau}(u))=\alpha_t(u)$ in both rules, $(\psi_{u,t}^{\tau})^{-1}(0)= 0$, $(\psi_{u,t}^{\tau})^{-1}(1)=1$ and $(\psi_{u,t}^{\tau})^{-1} $ is piecewise linear, hence continuous. We also have $\frac{1}{M} \le \alpha_t(u), \beta_t(u) \le 1-\frac{1}{M} \Rightarrow \frac{1}{M} \le \beta_t^{\tau}(u) \le 1-\frac{1}{M}$, where the denominators involved in the expressions are all positive.

Now, consider the following sets in $R$:
$$R_1 = \{ (x_u(t),\sigma_{u,v}(t),t) | \; t>0; \; 0<u<1; \; 0 < v < \alpha_t(u) \} \subset R,$$
$$R_1^{\prime} = \{ (x_u(t),\sigma_{u,v}(t),t) | \; t>0; \; 0<u<1; \; 0 < v < \beta_t(u) \} \subset R,$$
$$R_2 = \{ (x_u(t),\sigma_{u,v}(t),t) | \; t>0; \; 0<u<1; \; \alpha_t(u) < v < 1 \} \subset R,$$
$$R_2^{\prime} = \{ (x_u(t),\sigma_{u,v}(t),t) | \; t>0; \; 0<u<1; \; \beta_t(u) < v < 1 \} \subset R.$$

Notice that $(R_1,0),(R_2,0),(R_1^{\prime},0),(R_2^{\prime},0)$ are open subset germs of $(R,0)$ (with the induced topology of $\mathbb{R}^3$) and $(\overline{R_1} \cup \overline{R_2},0), (\overline{R_1^{\prime}} \cup \overline{ R_2^{\prime}},0)=(R,0)$. Finally, define the family of maps $\varphi_{\tau}: (R,0) \to (R,0)$, $0 \le \tau \le 1$, as
$\varphi_{\tau}(\gamma_{u,v}(t)) = \gamma_{u, \psi_{u,t}^{\tau}(v)}(t) \; ; \; (u,v) \in [0,1]^2.$

\begin{figure}[h]
\centering
\includegraphics[width=15cm]{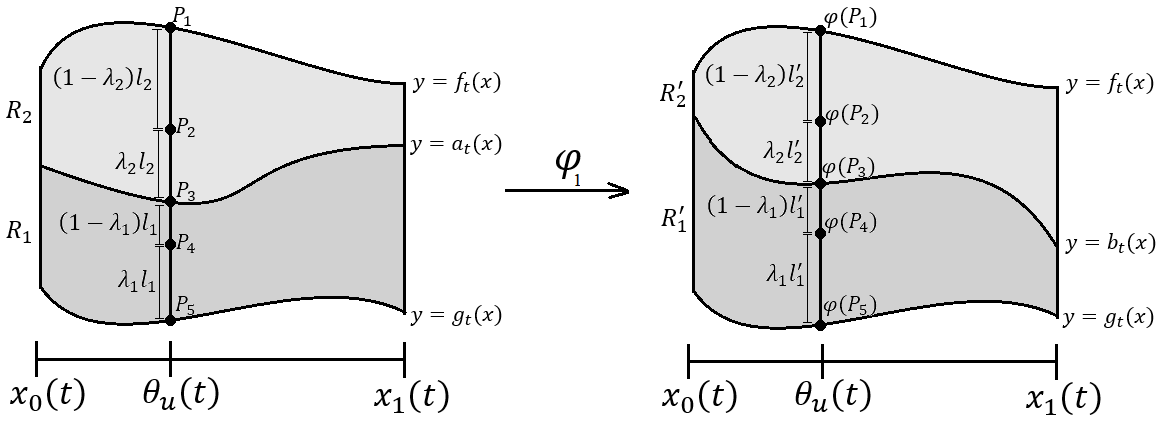}
\label{fig5b}
\caption{Geometric interpretation of  $\varphi_1$ in Theorem {amb-isotopy-criterion}.}
\end{figure}

A direct calculation shows that
$$\varphi_0 = id_{(R,0)}\; , \; \varphi_{\tau}(\gamma_{u,0}(t))=\gamma_{u,0}(t) \; , \; \varphi_{\tau}(\gamma_{u,1}(t))=\gamma_{u,1}(t),$$
$$\varphi_1((R_1,0)) = (R_1^{\prime},0) \; ; \; \varphi_1((R_2,0))=(R_2^{\prime},0).$$
The map $\varphi_\tau$ has an inverse $(\varphi_\tau)^{-1}: (R,0) \to (R,0)$ given by
$ (\varphi_{\tau})^{-1}(\gamma_{u,v}(t)) = \gamma_{u, (\psi_{u,t}^{\tau})^{- 1}(v)}(t)$. As $\psi_{u,t}^{\tau}$, $(\psi_{u,t}^{\tau})^{-1}$ are continuous, $\varphi_\tau$ is a homeomorphism. If we prove that $\varphi_{\tau}|_{(R_1,0)}$ and $\varphi_{\tau}|_{(R_2,0)}$ fulfies the conditions of Proposition \ref{jacobiano-lip} , then the proof is complete, since if such maps are inner bi-Lipschitz, then as $\varphi_{\tau}|_{\overline{(R_1,0)}}$ and $\varphi_{\tau }|_{\overline{(R_2,0)}}$ are continuous, by Proposition \ref{bi-lip-extende-fecho}, $\varphi_{\tau}|_{\overline{(R_1,0)}}$ and $\varphi_{\tau}|_{\overline{(R_2,0)}}$ are inner bi-Lipschitz as well. Moreover,
$$\emptyset \ne (\overline{R_1} \cap \overline{R_2},0) = \{ (x_u(t),\sigma_{u,\alpha_t(u)}(t),t) \; ; \; t>0 \; ; \;0 \le u \le 1 \} \subset (R,0),$$
$$(\overline{R_1} \cup \overline{R_2},0) = (R,0)$$
and from the fact that $\overline {R_1}, \overline {R_2}$ are connected semialgebraic sets, we obtain that, by Proposition \ref{colagem-bi-lip-inner}, $\varphi_{\tau}$ is an inner bi-Lipschitz map , which implies that it is also an outer bi-Lipschitz map, because $(R,0)$ is LNE, by Proposition \ref{curv-rect-is-LNE}.

We will study the jacobian $J_{\varphi_{\tau}} (p)$ for all $p\in (R_1,0)$ (the case $p\in (R_2,0)$ is analogous). Notice that if $p=\gamma_{u,v}(t)=(x,y,t)$, then $\varphi_{\tau}(p)=(x,h,t)$, where
$$x=x_u(t)=u\cdot x_0(t) + (1-u)\cdot x_1(t),$$
$$y=v\cdot f_t(x) + (1-v)\cdot g_t(x),$$
$$h=f_t(x)+ \left( \dfrac{\beta_t^{\tau}(u)}{\alpha_t(u)} \right)(y-f_t(x)).$$
Then
$$h=\sigma_{u,\left( \frac{\beta_t^{\tau}(u)}{\alpha_t(u)} \right)v}\left( t \right)= \left( \dfrac{\beta_t^{\tau}(u)}{\alpha_t(u)}\cdot v \right)g_t(x_u(t))+\left(1-\dfrac{\beta_t^{\tau} (u)}{\alpha_t(u)}\cdot v \right)f_t(x_u(t))=$$
$$=f_t(x_u(t))+\left( \dfrac{\beta_t^{\tau}(u)}{\alpha_t(u)} \right)\left( g_t(x_u^{\tau }(t))-f_t(x_u^{\tau}(t)) \right)\cdot v =$$
$$= f_t(x)+\left( \dfrac{\beta_t^{\tau}(u)}{\alpha_t(u)} \right)(g_t(x)-f_t(x))\left( \dfrac{y-f_t(x)}{g_t(x)-f_t(x)} \right) = f_t(x)+ \left( \dfrac{\beta_t^{\tau}(u)}{\alpha_t( u)} \right)(y-f_t(x)).$$

Calculating the jacobian matrix, we get:
$$
J_{\varphi_{\tau}} (x,y,t)= \left(
\begin{array}{ccc}
\frac{\partial x}{\partial x} & \frac{\partial x}{\partial y} & \frac{\partial x}{\partial t} \\
\frac{\partial h}{\partial x} & \frac{\partial h}{\partial y} & \frac{\partial h}{\partial t} \\
\frac{\partial t}{\partial x} & \frac{\partial t}{\partial y} & \frac{\partial t}{\partial t} \\
\end{array}
\right)(x,y,t) = \left(
\begin{array}{ccc}
1 & 0 & \frac{\partial x}{\partial t} \\
\frac{\partial h}{\partial x} & \frac{\partial h}{\partial y} & \frac{\partial h}{\partial t} \\
0 & 0 & 1 \\
\end{array}
\right)(x,y,t).
$$

We prove that there exists $T>0$ such that $\| J_{\varphi_{\tau}} (x,y,t) \| < T$, since $\| J_{\varphi_{\tau}} (x,y,t) \| \ge 1$. In order to do so, let us study each of the partial derivatives separately. Calculating $\frac{\partial x}{\partial t}$ and remembering that $\left| \frac{\partial x_0}{\partial t}(t) \right|, \left| \frac{\partial x_1}{\partial t}(t) \right| < N$, we have
$$\dfrac{\partial x}{\partial t}=\dfrac{\partial}{\partial t}(u\cdot x_0(t)+(1-u)\cdot x_1(t))=u\cdot \dfrac{\partial x_0}{\partial t}(t)+ (1-u)\dfrac{\partial x_1}{\partial t}(t) \Rightarrow$$
$$\Rightarrow \left| \dfrac{\partial x}{\partial t} \right| \le u\left| \dfrac{\partial x_0}{\partial t}(t) \right| + (1-u) \left| \dfrac{\partial x_1}{\partial t}(t) \right| \le u\cdot N+(1-u)\cdot N=N.$$
Expanding $\frac{\partial h}{\partial x}$ and denoting $u^{\prime}(x)=\frac{\partial u}{ \partial x}(x)$ the derivative at $x$ , we get
$$\dfrac{\partial h}{\partial x}=\dfrac{\partial }{\partial x}\left( f_t(x) + \left( \dfrac{\beta_t^{\tau}(u) }{\alpha_t(u)} \right)(y-f_t(x)) \right) =$$
$$= \dfrac{\partial }{\partial x}\left( (1-\tau) \cdot y + \tau \left( f_t(x) + \left( \dfrac{b_t(x)-g_t( x)}{a_t(x)-g_t(x)} \right)(y-f_t(x)) \right) \right) =$$
$$=\tau \cdot \big[ f_t^{\prime}(x)-\left( \dfrac{b_t(x)-g_t(x)}{a_t(x)-g_t(x)} \right) f_t^{\prime}(x)+$$
$$+\left( (b_t^{\prime}(x) -g_t^{\prime}(x))\left( \dfrac{y-f_t(x)}{a_t(x)-g_t(x) } \right) - \left( \dfrac{b_t(x)-g_t(x)}{a_t(x)-g_t(x)} \right)\left( \dfrac{y-f_t(x)}{a_t (x)-g_t(x)} \right)(a_t^{\prime}(x)-g_t^{\prime}(x)) \right) \big].$$
Since $y \ge g_t(x)$,
$$\left| \dfrac{y-f_t(x)}{a_t(x)-g_t(x)} \right| \le \left| \dfrac{f_t(x)-g_t(x)}{a_t(x)-g_t(x)} \right| < M,$$
and since $b_t(x) \le f_t(x)$, $\frac{1}{M}(f_t(x)-g_t(x)) \le a_t(x)-g_t(x)$, we have:
$$\left| \dfrac{b_t(x)-g_t(x)}{a_t(x)-g_t(x)} \right| \le \left| \dfrac{f_t(x)-g_t(x)}{a_t(x)-g_t(x)} \right| \le \left| \dfrac{f_t(x)-g_t(x)}{\frac{1}{M}(f_t(x)-g_t(x))} \right| = M.$$
Finally, from $|f_t^{\prime}(x)|,|g_t^{\prime}(x)|,|a_t^{\prime}(x)|,|b_t^{\prime}(x) | < M$, by the triangular inequality we obtain
$$\dfrac{\partial h}{\partial x} <1 \cdot [ M + M\cdot M + ((M+M)\cdot M + M \cdot M \cdot (M+M))] = T_2.$$
Calculating $\frac{\partial h}{\partial y}$, we have
$$\frac{\partial h}{\partial y}=\frac{\partial}{\partial y}\left( f_t(x)+ \left( \dfrac{\beta_t^{\tau}(u) }{\alpha_t(u)} \right)(y-f_t(x)) \right)=$$
$$=(1-\tau)+\tau \cdot \left( \dfrac{\beta_t(u)}{\alpha_t(u)} \right)= (1-\tau)+\tau \cdot \left ( \dfrac{b_t(x)-g_t(x)}{a_t(x)-g_t(x)} \right).$$
Hence, $\frac{1}{M} < \left| \frac{\partial h}{\partial y} \right| <M$, because
$$\dfrac{|b_t(x)-g_t(x)|}{|a_t(x)-g_t(x)|} > \dfrac{|b_t(x)-g_t(x)|}{|f_t( x)-g_t(x)|} \ge \dfrac{\frac{1}{M}|f_t(x)-g_t(x)|}{|f_t(x)-g_t(x)|}=\frac {1}{M}$$
$$\dfrac{|b_t(x)-g_t(x)|}{|a_t(x)-g_t(x)|} < \dfrac{|f_t(x)-g_t(x)|}{|a_t( x)-g_t(x)|} \le \dfrac{|f_t(x)-g_t(x)|}{\frac{1}{M}|f_t(x)-g_t(x)|}=M.$$
Expanding $\frac{\partial h}{\partial t}$ and denoting $\dot u (x) = \frac{\partial u}{\partial t}(x)$ the derivative in $t$, we have
$$\dfrac{\partial h}{\partial t}=\dfrac{\partial }{\partial t}\left( f_t(x) + \left( \dfrac{\beta_t^{\tau}(u)}{\alpha_t(u)} \right)(y-f_t(x)) \right) = $$
$$ = \dfrac{\partial }{\partial t}\left((1-\tau) \cdot y + \tau \cdot \left( f_t(x) + \left( \dfrac{b_t(x)-g_t(x)}{a_t(x)-g_t(x)} \right)(y-f_t(x)) \right) \right)=$$
$$=\tau \cdot \big[ \dot f_t(x)-\left( \dfrac{b_t(x)-g_t(x)}{a_t(x)-g_t(x)} \right) \dot f_t(x)+$$
$$+\left( ( \dot b_t(x) -  \dot g_t(x))\left( \dfrac{y-f_t(x)}{a_t(x)-g_t(x)} \right) - \left( \dfrac{b_t(x)-g_t(x)}{a_t(x)-g_t(x)} \right)\left( \dfrac{y-f_t(x)}{a_t(x)-g_t(x)} \right)(\dot a_t(x)-\dot g_t(x)) \right) \big].$$

We have already seen in the development of $\frac{\partial h}{\partial x}$ that
$$\left| \frac{y-f_t(x)}{a_t(x)-g_t(x)} \right|, \left| \frac{b_t(x)-g_t(x)}{a_t(x)-g_t(x)} \right|< M,$$
and since $|\dot f_t(x)|,|\dot g_t(x)|,|\dot a_t(x)|,|\dot b_t(x)| < N$, it follows, by the triangular inequality, that
$$\dfrac{\partial h}{\partial t} < 1\cdot [ N + M\cdot N + \left( \left( N+N \right) \cdot M + M \cdot M \cdot \left ( N+N \right) \right) ] = T_3 .$$
Therefore, for every $p\in R_1$,
$$1 \le \|J_{\varphi_{\tau}} (p)\| < 1 + 0 + N + T_2 + M + T_3 + 0 + 0 + 1 = T$$
$$\frac{1}{M} < \left| \frac{\partial h}{\partial y} \right| = |\det{(J_{\varphi_{\tau}}(p))}| < M,$$
finishing the proof of this case.

For the general case, where $(T_1,0),(T_2,0),(W_1,0),(W_2,0)$ are not necessarily $C^1$, by Proposition \ref{decomp-synch-differential}, if we consider the decomposition
$$sing(T_1\cup T_2 \cup W_1 \cup W_2,0) = \bigcup_{i=1}^{n} (\gamma_i,0),$$
with $\gamma_i(t)=(r_i(t),s_i(t),t)$, for $i=1,\dots,n$, and assuming $r_1(t) < \dots < r_n(t) $, for every small $t>0$, then, for $i=1,\dots, n-1$, let
$$D_i= \{ (x,y,t) \in C_a^{3} \mid t>0 \; , \; r_i(t) \le x \le r_{i+1}(t) \},$$
$$(T_1^{(i)},0) = (T_1,0)\cap (D_i,0) \, , \, (T_2^{(i)},0) = (T_2,0)\cap (D_i,0);$$
$$(W_1^{(i)},0) = (W_1,0)\cap (D_i,0) \, , \, (W_2^{(i)},0) = (W_2,0)\cap (D_i,0).$$
We have that each $(T_1^{(i)},0),(T_2^{(i)},0),(W_1^{(i)},0),(W_2^{(i)},0 )$ is a $C^1$, $M$-bounded synchronized triangle germ. Therefore, if $(R_i,0) = (R \cap D_i,0)$ is the curvilinear rectangle bounded by $(T_1^{(i)},0)$ and $(T_2^{(i)},0 )$, by the previous case there is an inner bi-Lipschitz map $(\varphi_i)_{\tau} : (R_i,0) \to (R_i,0)$ satisfying the theorem conditions, for every $0 \le \tau \le 1$. Notice also that, by the way $(\varphi_i)_{\tau}$ was defined for each $i$, we have
$$(\varphi_i)_{\tau} (r_{i+1}(t), y, t) = (\varphi_{i+1})_{\tau} (r_{i+1}(t ), y, t) \; ; \; \forall \; (r_{i+1}(t), y, t) \in (R_i \cap R_{i+1},0).$$
Since $R_i$ is path connected, then applying Proposition \ref{colagem-bi-lip-inner} consecutively to the germs $(R_1,0) \cup (R_2,0)$, $(R_1,0) \cup (R_2,0) \cup (R_3,0)$ and so on, we can define $\varphi_{\tau} : (R,0) \to (R,0)$ satisfying all the desired conditions.
\end{proof}

\begin{remark} \label{amb-isotopy-extension}
If $(T_1,0)$ and $(T_2,0)$ have the same boundary arcs, $\varphi$ is an ambient bi-Lipschitz isotopy taking $(W_1,0)$ and $(W_2,0)$, invariant on the boundary $(T_1\cup T_2,0)$. By Proposition \ref{extensão-bi-lip-invariante-bola}, $\varphi$ can be extended to an ambient bi-Lipschitz isotopy $\Phi: (C_a^3,0)\times[0,1] \to (C_a^3,0)$ taking $(W_1,0)$ into $(W_2,0)$, invariant on the boundary of $(C_a^3,0)$. In particular, the map $\Phi_1$ shows that $W_1, W_2$ are ambient bi-Lipschitz equivalents in $(C_a^3,0)$.
\end{remark}

\section{Linear and Kneadable Triangles}

\subsection{Linear Triangles Determined by Arcs}

In this section, we will introduce the definition of linear triangles determined by arcs and its related concepts. As an application of Theorem \ref{amb-isotopy-criterion}, we will also prove that such triangles are ambient bi-Lipschitz equivalent to the normal embedding of some H\"older triangle.

\begin{definition}\normalfont \label{linear-triangle}
Let $n \in \mathbb{N}_{\ge 1}$, $a>0$ and $\gamma_1,\gamma_2 \subset C_a^{n+1}$ be two distinct arcs, with $\gamma_i(t)=(x_i( t),t) \in \R^n \times \R$ ($i=1,2$), for every $t>0$ small enough. We define the \textit{linear triangle determined by $\gamma_1,\gamma_2$} as the germ at the origin of the set
$$\overline{\gamma_1 \gamma_2}=\{ (\lambda x_1(t)+(1-\lambda)x_2(t), t) \in \R^n \times \R \mid t>0 \; ; \; 0 \le \lambda \le 1 \}.$$
For each $t>0$ small enough, we also define the unit vector
$\overrightarrow{\gamma_1 \gamma_2}(t) = \frac{\gamma_2(t) - \gamma_1 (t)}{\| \gamma_2(t) - \gamma_1 (t) \|}$. As $\gamma_1,\gamma_2$ are semialgebraic, the limit $\lim_{t \to 0^+} \frac{\gamma_2(t) - \gamma_1 (t)}{\| \gamma_2(t) - \gamma_1 (t) \|}$ exists and we denote it as $\overrightarrow{\gamma_1 \gamma_2}$. 

Given three arcs $\gamma_1,\gamma_2,\gamma_3 \subset C_a^{n+1}$ satisfying $tord(\gamma_i,\gamma_j) \ne \infty$, for $i \ne j$, we define, for each $t>0$ , the angle $\angle \gamma_1 \gamma_2 \gamma_3 (t)$ as the angle formed by $\overrightarrow{\gamma_2 \gamma_1} (t)$ and $\overrightarrow{\gamma_2 \gamma_3}(t)$. Similarly, we define the angle $\angle \gamma_1 \gamma_2 \gamma_3$ as the angle formed by $\overrightarrow{\gamma_2 \gamma_1}$ and $\overrightarrow{\gamma_2 \gamma_3}$.
\end{definition}

\begin{remark}
	Notice that $\overline{\gamma_1 \gamma_2} = \overline{\gamma_2 \gamma_1}$, $\overrightarrow{\gamma_1 \gamma_2} = - \overrightarrow{\gamma_2 \gamma_1}$ and $\overrightarrow{\gamma_1 \gamma_2}(t) = -\overrightarrow{\gamma_2 \gamma_1}(t)$, for every $t>0$ small enough.
\end{remark}

\begin{remark}\label{angle-linear-triangle}
If $(\overline{\gamma_1 \gamma_2} \cup \overline{\gamma_2 \gamma_3},0)$ is a LNE surface germ, then $\angle \gamma_1 \gamma_2 \gamma_3 > 0$ (such statement follows directly from Theorem \ref{Edson-Rodrigo} and Lemma \ref{abertura-limitada}). Hence, given $\varepsilon > 0$, we can assume that for all $t>0$ small enough, $\angle \gamma_1 \gamma_2 \gamma_3 (t) \in (\angle \gamma_1 \gamma_2 \gamma_3 -\varepsilon,\angle \gamma_1 \gamma_2 \gamma_3 +\varepsilon)$.
\end{remark}

\begin{proposition}\label{linear-triangle-is-holder}
Let $a>0$, $(\gamma_1,0), (\gamma_2,0) \subset (C_a^{3},0)$ be two arcs such that $tord(\gamma_1,\gamma_2) = \alpha \ne \infty$. Then $(\overline{\gamma_1 \gamma_2},0)$ is ambient bi-Lipschitz equivalent to the standard $\alpha$-H\"older triangle embedded in $\mathbb{R}^3$.
\end{proposition}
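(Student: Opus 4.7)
The plan is to construct an explicit chain of ambient bi-Lipschitz transformations that carry $\overline{\gamma_1\gamma_2}$ to $f(T_\alpha)$. By Proposition \ref{suficiencia-em-cone} it suffices to build each step inside a cone $C_a^3$ (with $a$ chosen large enough as each step may demand), so that after extension by the identity (via Proposition \ref{extensão-bi-lip-invariante-bola}) the whole composition is a global bi-Lipschitz map of $\R^3$. First I would apply the Arc Translation Lemma (Lemma \ref{translation}) along the arc $-\gamma_1$, obtaining an ambient bi-Lipschitz equivalence in $C_a^3$ pushing $\gamma_1$ to the $t$-axis $\gamma_0(t)=(0,0,t)$. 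The triangle becomes $\overline{\gamma_0\widetilde\gamma}$ with $tord(\gamma_0,\widetilde\gamma)=\alpha$, and by Newton--Puiseux $\widetilde\gamma(t)=(c_1 t^\alpha+o(t^\alpha),\,c_2 t^\alpha+o(t^\alpha),\,t)$ with $(c_1,c_2)\neq(0,0)$.

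Next I would apply the rotation $r_\theta$ in the $xy$-plane (a rigid isometry of $\R^3$ preserving $C_a^3$) with $\theta$ chosen so that the leading direction of $\widetilde\gamma$ lies along the positive $x$-axis, and then the Function Dilation Lemma (Lemma \ref{dilatation}) with $f(t)=t^\alpha/(c t^\alpha+o(t^\alpha))$, whose Puiseux series has the strictly positive constant term $1/c$. After these two moves $\widetilde\gamma(t)=(t^\alpha,y(t),t)$ with $y(t)=o(t^\alpha)$; Puiseux gives $y(t)=b t^\beta+o(t^\beta)$ for some rational $\beta>\alpha$.

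Finally I would eliminate the residual $y$-coordinate by the shear $\Phi(x,y,t)=\bigl(x,\,y-(y(t)/t^\alpha)\,x,\,t\bigr)$, which fixes $\gamma_0$ pointwise and sends $\widetilde\gamma$ to $\delta(t)=(t^\alpha,0,t)$; the linear triangle $\overline{\gamma_0\delta}$ is, by Remark \ref{embedding-remark}, exactly the standard $\alpha$-H\"older triangle embedded in $\R^3$. Since $y(t)/t^\alpha=b t^{\beta-\alpha}+o(t^{\beta-\alpha})$ and $t\cdot\frac{d}{dt}(y(t)/t^\alpha)$ both tend to $0$ as $t\to 0^+$, on $C_a^3$ (where $|x|\le at$) the entries of the Jacobians of $\Phi$ and $\Phi^{-1}$ are uniformly bounded, so Proposition \ref{jacobiano-lip} combined with the convexity (hence LNE property) of the cone yields that $\Phi$ is outer bi-Lipschitz on $C_a^3$. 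To turn $\Phi$ into an honest ambient bi-Lipschitz map of $C_a^3$ that fixes $\partial C_a^3$ pointwise (so that Proposition \ref{extensão-bi-lip-invariante-bola} provides an extension to $\R^3$), I would interpolate $\Phi$ with the identity through a family of intermediate cones $\Gamma_\lambda$, mirroring the $\lambda$-interpolation carried out in the proof of Lemma \ref{translation}. The main obstacle is precisely this last step: rechecking that the interpolated shear has uniformly bounded Jacobians on every $\Gamma_\lambda$-layer. However, because the relevant coefficients $y(t)/t^\alpha$ and $t\cdot(y(t)/t^\alpha)'$ vanish at $t=0$, the estimates are strictly analogous to (and in fact simpler than) those in Lemma \ref{translation}, so no new ideas are needed. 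Composing the four maps produces the required ambient bi-Lipschitz equivalence $(\overline{\gamma_1\gamma_2},0)\sim(f(T_\alpha),0)$.
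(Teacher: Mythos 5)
Your proposal is correct in substance but flattens the triangle by a genuinely different mechanism than the paper. The paper also begins with the Arc Translation Lemma and a rotation (it normalizes $\gamma_2$, you normalize $\gamma_1$ -- immaterial), but then it sandwiches the resulting linear triangle between explicit piecewise-linear synchronized triangles $T_1,T_2$ and invokes the ambient isotopy theorem in curvilinear rectangles (Theorem \ref{amb-isotopy-criterion}, via Proposition \ref{synch-triangle-is-kneadable}-type estimates) to push it onto the plane $y=0$, finishing with one Function Dilatation to match $\psi(T_\alpha)$ and then Proposition \ref{suficiencia-em-cone}. You instead normalize the $x$-coordinate to exactly $t^\alpha$ by a dilatation first and then kill the residual $y$-coordinate with the shear $\Phi(x,y,t)=(x,\,y-(y(t)/t^\alpha)x,\,t)$. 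This buys a very explicit map and lets you land directly on $f(T_\alpha)$ (so you only need extension by the identity, not the stereographic conjugation of Proposition \ref{suficiencia-em-cone}); the cost is that the shear is a third ``vocabulary'' lemma that the paper never proves, so you must carry out its proof in full rather than cite existing machinery.

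The only place where ``strictly analogous'' undersells the work is the interpolation $(x,y,t)\mapsto(x,\,y-\lambda\, g(t)x,\,t)$ on the layers $\Gamma_\lambda$: unlike in Lemmas \ref{translation} and \ref{dilatation}, the image of the circle $\Gamma_\lambda(t)$ is an ellipse, so the image foliation is not the obvious explicit family and you must separately verify that the interpolated map is a bijection of the annular region onto itself fixing $\Gamma_0=\partial C_a^3(t)$. This does go through -- since $|t\,\partial\lambda/\partial y|$ is bounded as in Lemma \ref{translation} and $g(t)\to 0$, one gets $\partial\Phi_2/\partial y>0$ on each vertical line for $t$ small, whence injectivity, and surjectivity follows from continuity and boundary behaviour -- but it is a point you must actually write down, not merely a repetition of the translation estimates. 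With that addition, together with the routine checks that each of your four moves carries linear triangles to linear triangles on every plane link, the argument is complete and correct.
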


\begin{proof}
Let $\psi$ be the map of Proposition \ref{reduction}. If $T$ denotes the standard $\alpha$-H\"older triangle embedded in $\mathbb{R}^3$, then the germ of $\psi(T)$ is $(Y,0)$, where
$$Y = \{ (x, 0, t) \in C_a^{3} \mid 0 \le x \le h(t)\},$$
for some real function $h$ whose Puiseux series is $h(t)=d_{\alpha}t^{\alpha}+o(t^{\alpha})$; $d_\alpha >0$. Our goal is to prove that $(X,0)$ is ambient bi-Lipschitz equivalent to $(Y,0)$ in $(C_a^{3},0)$, since if this is done, then the result will follow by Proposition \ref{suficiencia-em-cone}.

Initially, notice that by Lemma \ref{translation}, applied to the arc $\gamma_2$, we can assume without loss of generality that $\gamma_2(t)=(0,0,t)$ and $\gamma_1(t )=\gamma(t)=(x(t),y(t),t)$. Since the limit $ \lim_{t \to 0^+} \frac{(x(t),y(t))}{\sqrt{x(t)^2+y(t)^2}}$ is well defined, we can apply a rotation of axes and assume that
$$x(t)>0 \; , \; \displaystyle \lim_{t \to 0^+} \frac{y(t)}{x(t)}=0.$$

Now consider the sets
$$T_1^{(1)}=\{ (x,x+x(t),t) \in C_a^{3} \mid -x(t) \le x \le 0 \},$$
$$T_1^{(2)} = \{ (x,x(t),t) \in C_a^{3} \mid 0 \le x \le 2x(t) \},$$
$$T_1^{(3)} = \{ (x,-x+3x(t),t) \in C_a^{3} \mid 2x(t) \le x \le 3x(t) \},$$
$$T_2^{(1)}=\{ (x,-x-x(t),t) \in C_a^{3} \mid -x(t) \le x \le 0 \},$$
$$T_2^{(2)} = \{ (x,-x(t),t) \in C_a^{3} \mid 0 \le x \le 2x(t) \},$$
$$T_2^{(3)} = \{ (x,x-3x(t),t) \in C_a^{3} \mid 2x(t) \le x \le 3x(t) \},$$
$$W_1^{(1)}=\left\{ (x,0,t) \in C_a^{3} \mid -x(t) \le x \le 0 \right\},$$
$$W_1^{(2)}=\left\{ (x,y,t) \in C_a^{3} \mid 0 \le x \le x(t) \; ; \; y=\left(\frac{x}{x(t)}\right)y(t) \right\},$$
$$W_1^{(3)}= \left\{ (x,y,t) \in C_a^{3} \mid x(t) \le x \le 2x(t) \; ; \; y=\left(\frac{2x(t)-x}{x(t)}\right)y(t) \right\},$$
$$W_1^{4}=\left\{ (x,0,t) \in C_a^{3} \mid 2x(t) \le x \le 3x(t) \right\},$$
$$W_2 = \left\{ (x,0,t) \in C_a^{3} \mid -x(t) \le x \le 3x(t) \right\},$$
and define
$$T_1 =T_1^{(1)}\cup T_1^{(2)}\cup T_1^{(3)},$$
$$T_2 =T_2^{(1)}\cup T_2^{(2)}\cup T_2^{(3)},$$
$$W_1 =W_1^{(1)}\cup W_1^{(2)}\cup W_1^{(3)}\cup W_1^{(4)}.$$

\begin{figure}[h]
\centering
\includegraphics[width=15cm]{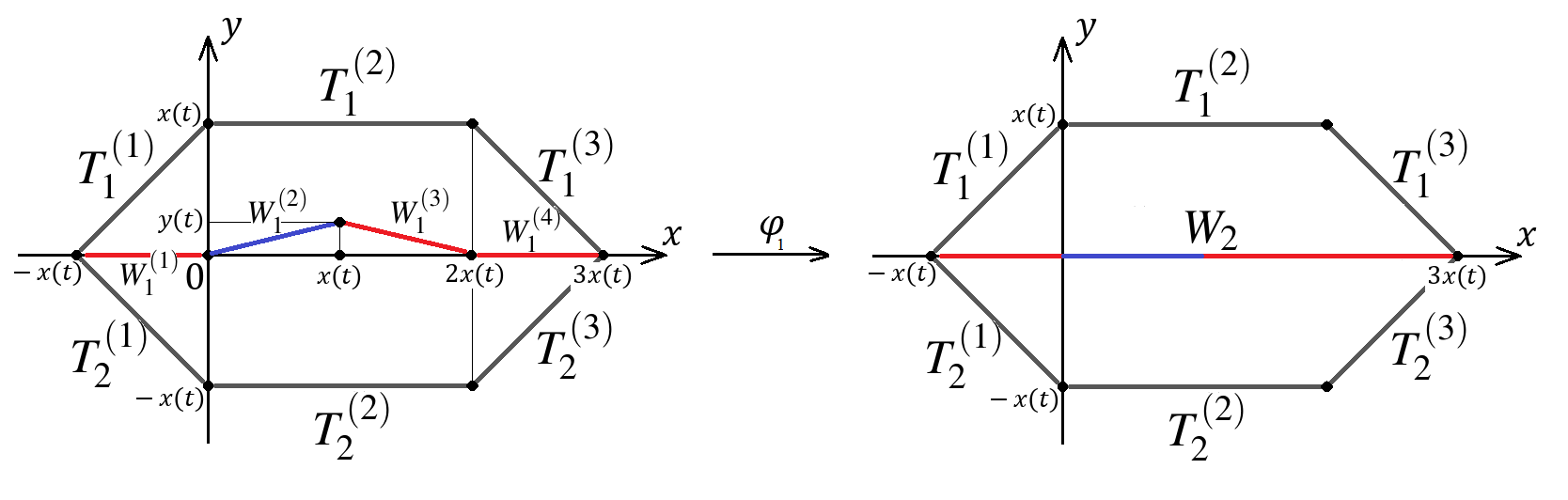}
\label{fig5a}
\caption{Proof of Proposition \ref{linear-triangle-is-holder}.}
\end{figure}

For each $\varepsilon>0$, the synchronized triangles $(T_1,0),(T_2,0),(W_1,0),(W_2,0)$ satisfies the conditions of Theorem \ref{amb-isotopy-criterion} for $M=2+\varepsilon$, as the limits
$$\displaystyle \lim_{t \to 0^+} \frac{x(t) - 0}{x(t) - (-x(t))} \; , \displaystyle \lim_{t \to 0^+} \frac{0 - (-x(t))}{x(t) - (-x(t))},$$
$$\displaystyle \lim_{t \to 0^+} \frac{x(t) - \frac{x}{x(t)}.y(t)}{x(t) - (-x(t ))} \; , \displaystyle \lim_{t \to 0^+} \frac{\frac{x}{x(t)}.y(t)-(-x(t))}{x(t) - (-x( t))},$$
$$\displaystyle \lim_{t \to 0^+} \frac{x(t) - \left(\frac{2x(t)-x}{x(t)}\right).y(t)} {x(t) - (-x(t))} \; , \displaystyle \lim_{t \to 0^+} \frac{\left(\frac{2x(t)-x}{x(t)}\right ).y(t)-(-x(t))}{x(t) - (-x(t))},$$
are equal to $\frac{1}{2}$, and the respective generating function families, which are piecewise linear, have derivatives bounded by $1$. Hence, by Remark \ref{amb-isotopy-extension}, there is an ambient bi-Lipschitz isotopy $\varphi: (C_a,0)\times[0,1] \to (C_a,0)$ taking $(W_1,0) $ into $(W_2,0)$. In particular, $(W_1^{(2)},0)=(\overline{\gamma_1 \gamma_2},0)$ is ambient bi-Lipschitz equivalent to the germ of $\tilde Y = \left\{ (x,0,t) \in C_a^{3} \mid 0 \le x \le x(t) \right\}$. Finally, as $tord(\gamma_1,\gamma_2) = \alpha \ne \infty$, and such value is Lipschitz invariant, it follows that the Puiseux expansion of $x(t)$ is $x(t)=c_{ \alpha}x^{\alpha} + o(t^{\alpha})$, for some $c_{\alpha}>0$. Applying Lemma \ref{dilatation} to the function $f(t)=\frac{h(t)}{x(t)}$ shows that $(\tilde Y ,0)$ is ambient bi-Lipschitz  equivalent to $(Y,0)$.
\end{proof}

\subsection{Kneadable Triangles and Supporting Envelopes}

Next, we will look at the concepts of kneadable triangles, supporting and kneading envelopes. We will also consider examples of such structures and a property relating them to synchronized triangles.

\begin{definition}\normalfont \label{kneadable}
Let $(T,0) \subset (C_a^3,0)$ be a H\"older triangle with main vertex at the origin, with $\gamma_1,\gamma_2$ as its boundary arcs, and let $(U,0)$ be a closed set germ containing $(T,0)$. We say that $(T,0)$ is \textit{kneadable in $(U,0)$} if there exists an ambient bi-Lipschitz isotopy in $U$ that takes $(T,0)$ into $(\overline{ \gamma_1 \gamma_2},0)$, invariant on the boundary of $(U,0)$.
\end{definition}

\begin{definition}\normalfont \label{supporting-envelope}
Let $a,M,\delta>0$ and let $(T,0) \subset (C_a^{3},0)$ be a $M$-bounded synchronized triangle germ. Let $\gamma_i(t)=(x_i(t),y_i(t),t)$, $i=0,1$, be the boundary arcs of $(T,0)$, with $x_0(t) < x_1(t)$, and let $\{ a_t \}$ be the generating functions family of $(T,0)$. For each small $t>0$ and for $i=0,1$, define
$$m_{t,i} = \inf\left\{\dfrac{a_t(x)-y_i(t)}{x-x_i(t)} \; ; \; x_0(t) < x < x_1(t)\right\},$$
$$M_{t,i} = \sup\left\{\dfrac{a_t(x)-y_i(t)}{x-x_i(t)} \; ; \; x_0(t) < x < x_1(t)\right\}.$$
Since $(T,0)$ is $M$-bounded, then $|m_{i,t}|,| M_{i,t}| \le M$. We define the \textit{$\delta$-supporting envelope of $(T,0)$} as the germ, at the origin, of 
$$U_{\delta}(T)= \left\{ (x,y,t) \mid t>0 \; ; \; x_0(t) \le x \le x_1(t); g_t(x) \le y \le f_t(x) \right\},$$
where
$$g_t(x)=max\{y_0(t)+(m_{t,0}-\delta)(x-x_0(t)); y_1(t)+(M_{t,1}+\delta)(x-x_1(t))\},$$
$$f_t(x)=min\{y_0(t)+(M_{t,0}+\delta)(x-x_0(t)); y_1(t)+(m_{t,1}-\delta)(x-x_1(t))\}.$$
\end{definition}

\begin{remark} \label{supporting-envelope-remark}
$(U_{\delta}(T),0)$ is the region delimited by the synchronized triangles $(T_1,0)$, $(T_2,0)$, whose generating functions families are $\{f_t\}$ and $\{g_t\}$, respectively. In the proof of Proposition \ref{synch-triangle-is-kneadable}, it will be shown that $g_t(x) \le a_t(x) \le f_t(x)$, for every small $t>0$ and every $x \in [x_1 (t),x_2(t)]$, which implies that $(U_{\delta}(T),0)$ is a germ of a closed semialgebraic set containing $(T,0)$.
\end{remark}

\begin{figure}[h]
\centering
\includegraphics[width=14cm]{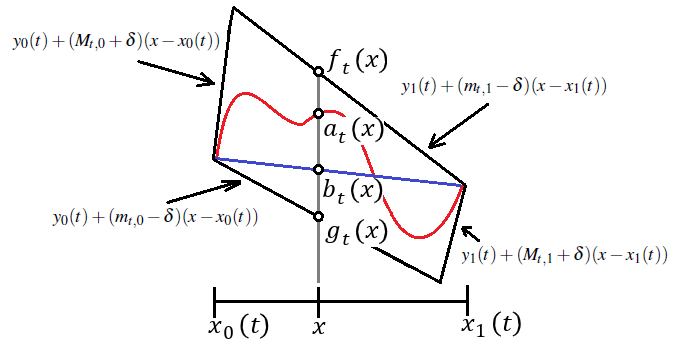}
\label{fig33}
\caption{Geometric representation of the $\delta$-supporting envelope of $(T,0)$.}
\end{figure}

\begin{example}\label{supporting-envelope-ex1}\normalfont
Consider the synchronized triangle given as the germ of
$$T= \{(x,-t-x,t) \in C_2^3 \mid -t \le x \le 0 \} \cup \{ (x,x-t,t) \in C_2^3 \mid 0 \le x \le t\}.$$
For every $0<\delta <1$, we have $U_{\delta}(T) = U_1 \cup U_2$, where
$$ U_1 =\{(x,y,t) \in C_2^3 \mid -t \le x \le 0 \; ; \; -(1+\delta)(t+x) \le y \le \delta (t+x) \},$$
$$ U_2 =\{(x,y,t) \in C_2^3 \mid 0 \le x \le t \; ; \; (1+\delta)(x-t) \le y \le \delta (t-x) \}.$$
More generally, if $(T,0) \subset (C_a^3,0)$ is a synchronized triangle, such that its plane link $T(t)$ is the union of two line segments $\overline{pq} , \overline{qr}$, with $x_p < x_q < x_r$, and if $u = \frac{y_q-y_p}{x_q-x_p}$, $v = \frac{y_r-y_p}{x_r-x_p }$, $w = \frac{y_r-y_q}{x_r-x_q}$ are the angular coefficients of $\overline{pq},\overline{pr},\overline{qr}$, with $u<v< w$, then $(U_{\delta}(T))(t)$ is the region bounded by the rays with origin at $p$, given as
$$l_{p,-} = \{(x,y,t) \mid y= y_p + (u-\delta)(x-x_p)\} \; , \; l_{p,+} = \{(x,y,t) \mid y= y_p + (v+\delta)(x-x_p)\},$$
and the rays with origin at $r$, given as
$$l_{r,-} = \{(x,y,t) \mid y= y_r + (w-\delta)(x-x_r)\} \; ; \; l_{r,+} = \{(x,y,t) \mid y= y_r + (v+\delta)(x-x_r)\}.$$

\begin{figure}[h]
\centering
\includegraphics[width=15cm]{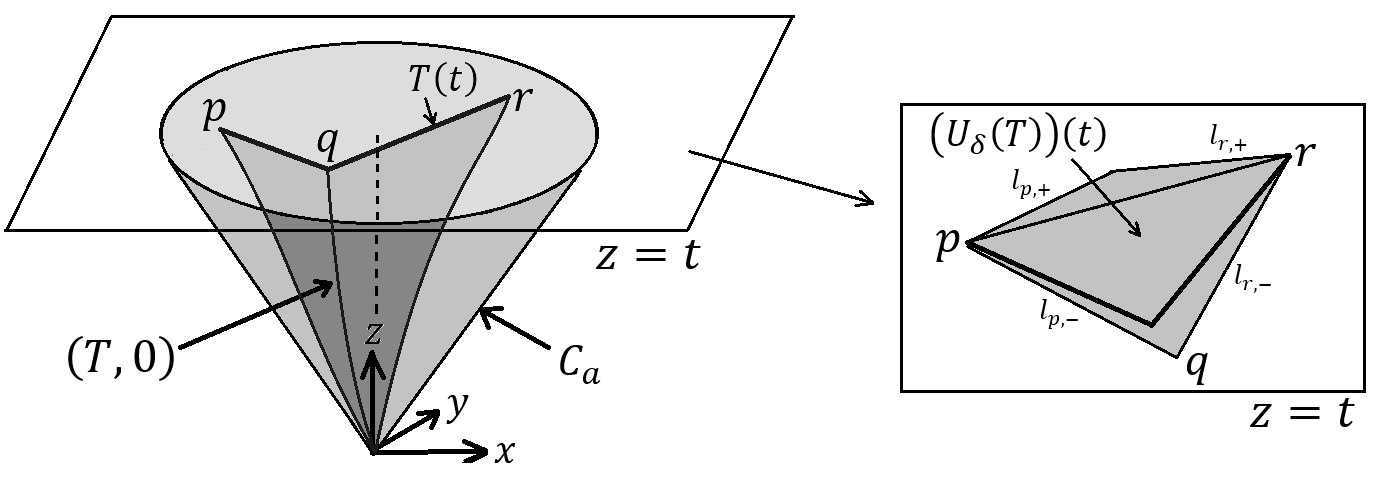}
\label{fig12}
\caption{Graphic Representation of Example \ref{supporting-envelope-ex1}.}
\end{figure}
\end{example}

\begin{example}\label{supporting-envelope-ex2}\normalfont
Consider the synchronized triangle given as the germ of
$$T= \{(x,x^2,t) \in C_2^3 \mid -t \le x \le t \}.$$
For every $0<\delta <1$, we have $U_{\delta}(T) = U_1 \cup U_2$, where
$$ U_1 =\{(x,y,t) \in C_2^3 \mid -t \le x \le 0 \; ; \; -(2t+\delta)(t+x)+t^2 \le y \le \delta (t+x) + t^2 \},$$
$$ U_2 =\{(x,y,t) \in C_2^3 \mid 0 \le x \le t \; ; \; (2t+\delta)(x-t)+t^2 \le y \le \delta (t-x) + t^2 \}.$$
More generally, if $(T,0) \subset (C_a^3,0)$ is a convex synchronized triangle, such that its plane link $T(t)$ is a convex function $f_t: [x_0,x_1] \to \mathbb{R}$, with
$$u = \frac{\partial f}{\partial x_+}(x_0) \; , \; v = \frac{f(x_1)-f(x_0)}{x_1-x_0} \; , \; w = \frac{\partial f}{\partial x_-}(x_1),$$
then $u<v<w$, and thus $(U_{\delta}(T))(t)$ is the region bounded by the rays with origin at $p=(x_0,f(x_0))$, given as
$$l_{p,-} = \{(x,y,t) \mid y= f(x_0) + (u-\delta)(x-x_0)\} \; , \; l_{p,+} = \{(x,y,t) \mid y= f(x_0) + (v+\delta)(x-x_0)\},$$
and by the rays with origin at $r=(x_1,f(x_1))$, given as
$$l_{r,-} = \{(x,y,t) \mid y= f(x_1) + (w-\delta)(x-x_1)\} \; , \; l_{r,+} = \{(x,y,t) \mid y= f(x_1) + (v+\delta)(x-x_1)\}.$$

\begin{figure}[h]
\centering
\includegraphics[width=15cm]{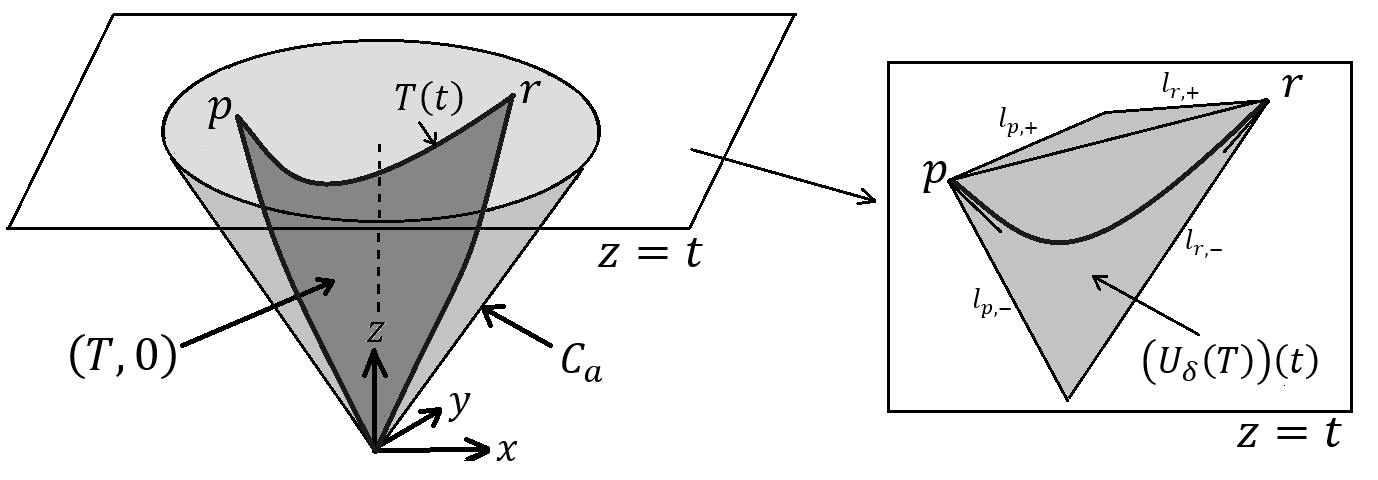}
\label{fig13}
\caption{Graphic representation of Example \ref{supporting-envelope-ex2}.}
\end{figure}
\end{example}

\begin{proposition}\label{synch-triangle-is-kneadable}
Let $M, \delta>0$ and let $(T,0)$ be a $M$-bounded synchronized triangle germ. Then $(T,0)$ is kneadable in its $\delta$-supporting envelope $U_\delta(T)$.
\end{proposition}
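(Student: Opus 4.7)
The plan is to apply Theorem \ref{amb-isotopy-criterion} inside the region $R := U_\delta(T)$, choosing the four required synchronized triangles as follows: $T_1$ is the graph of $f_t$ and $T_2$ is the graph of $g_t$ (so $R$ is precisely the region they bound); $W_1 := T$, whose generating functions family is $\{a_t\}$; and $W_2 := \overline{\gamma_0 \gamma_1}$, whose generating functions family $\{\ell_t\}$ consists of the linear interpolations $\ell_t(x) = y_0(t) + \bar{s}(t)(x - x_0(t))$ with $\bar{s}(t) := (y_1(t) - y_0(t))/(x_1(t) - x_0(t))$. The theorem will then supply an ambient bi-Lipschitz isotopy in $R$ carrying $W_1 = T$ to $W_2 = \overline{\gamma_0 \gamma_1}$, and since $T_1, T_2$ share the same pair of boundary arcs $\gamma_0, \gamma_1$, the isotopy will automatically be invariant on $\partial R$, which is exactly what kneadability requires.

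First I would check that $T_1$ and $T_2$ are indeed $M'$-bounded synchronized triangles sharing boundary arcs with $T$. Each of $f_t, g_t$ is a min or max of two affine pieces whose slopes lie in $[-M-\delta,\, M+\delta]$, so $T_1, T_2$ are piecewise linear and $(M+\delta)$-bounded. Viewing $\bar s(t)$ as a one-sided limit of secant slopes of $a_t$ from either endpoint, one has $\bar s(t) \in [m_{t,0}, M_{t,0}] \cap [m_{t,1}, M_{t,1}]$. A direct substitution at $x = x_i(t)$ then shows that the two lines through $\gamma_i(t)$ are the minimizer/maximizer defining $f_t, g_t$ there, giving $f_t(x_i(t)) = g_t(x_i(t)) = y_i(t)$ for $i = 0,1$, as required.

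The heart of the argument is verifying the ratio bound. Set $u := x - x_0(t) > 0$, $v := x_1(t) - x > 0$, and write the four defining lines as
\[ L_1 = y_0 + (m_{t,0}-\delta)u,\; L_2 = y_0 + (M_{t,0}+\delta)u,\; L_3 = y_1 - (M_{t,1}+\delta)v,\; L_4 = y_1 - (m_{t,1}-\delta)v, \]
so $g_t = \max(L_1, L_3)$ and $f_t = \min(L_2, L_4)$. The definitions of $m_{t,i}, M_{t,i}$ immediately yield $a_t - L_1 \ge \delta u$, $a_t - L_3 \ge \delta v$, $L_2 - a_t \ge \delta u$, $L_4 - a_t \ge \delta v$, so taking the appropriate min over the two branches of $f_t$ and $g_t$ gives
\[ a_t(x) - g_t(x) \ge \delta \min(u,v), \qquad f_t(x) - a_t(x) \ge \delta \min(u,v), \]
and the same inequalities hold with $\ell_t$ in place of $a_t$ because $\bar s(t) \in [m_{t,0}, M_{t,0}] \cap [m_{t,1}, M_{t,1}]$. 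For the denominator, $f_t - g_t \le L_2 - L_1 \le 2(M+\delta)u$ and also $f_t - g_t \le L_4 - L_3 \le 2(M+\delta)v$, hence $f_t - g_t \le 2(M+\delta)\min(u,v)$. Combining the three estimates shows that both $(a_t - g_t)/(f_t - g_t)$ and $(\ell_t - g_t)/(f_t - g_t)$ lie in the interval $[\delta/(2(M+\delta)),\; 1 - \delta/(2(M+\delta))]$, which is exactly the condition needed to apply Theorem \ref{amb-isotopy-criterion} with $M' := 2(M+\delta)/\delta$ (enlarged beyond $1$ if necessary). The resulting isotopy, invariant on $\partial R$, is the one witnessing kneadability.

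The delicate step is the bound $f_t - g_t \le 2(M+\delta)\min(u,v)$. A uniform bound of the form $(2M+2\delta)(x_1(t) - x_0(t))$ is useless here because it does not shrink near the endpoints and would fail to dominate the factor $\delta\min(u,v)$ in the numerator, so the ratio could approach $0$ or $1$. The key is that both the one-sided estimates $L_2 - L_1 \lesssim u$ and $L_4 - L_3 \lesssim v$ must be applied \emph{simultaneously}, which upgrades the right-hand side to $\min(u,v)$ and matches the scaling of the numerator estimates, yielding a uniform ratio bound and thus reducing the proposition to the already established isotopy theorem.
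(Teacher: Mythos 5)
Your proposal is correct and follows essentially the same route as the paper: both reduce the statement to Theorem \ref{amb-isotopy-criterion} applied in $U_\delta(T)$ with $W_1=T$ and $W_2=\overline{\gamma_0\gamma_1}$ (your $\ell_t$ is the paper's $b_t$), and both arrive at the same bound $\tilde M = 2(M+\delta)/\delta$. Your $\min(u,v)$ bookkeeping is just a slightly cleaner packaging of the paper's case split on which affine branch attains $g_t(x)$, where the numerator and denominator are both normalized by the distance to the same endpoint.
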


\begin{proof}
Here we use the notations as in Definition \ref{supporting-envelope} and Remark \ref{supporting-envelope-remark}. For each $t>0$, let $r_t$ be the angular coefficient of the line connecting $\gamma_0(t)$ and $\gamma_1(t)$ and let $b_t: [x_0(t),x_1(t)] \to \mathbb{R}$, given by $b_t(x)=y_0(t)+(x-x_0(t))r_t$, $x_0(t) \le x \le x_1(t)$. Let $(W_1,0)$, $(W_2,0)$ be the synchronized triangles whose generating functions families are $\{a_t\}$, $\{b_t\}$, respectively. Let us check if $f_t,g_t, a_t,b_t$ fulfies the conditions of Theorem \ref{amb-isotopy-criterion}. By the definition of $M_{t,i},m_{t,i}$, we have that, for each $t>0$ and $x \in (x_0(t),x_1(t))$, $m_{ t,0}, m_{t,1} \le r_t \le M_{t,0}, M_{t,1}$. 
\begin{enumerate}
\item $g_t(x) < a_t(x),b_t(x)$, because if $g_t(x)=y_0(t)+(m_{t,0}-\delta)(x-x_0(t) )$, then
$$g_t(x)=y_0(t)+(m_{t,0}-\delta)(x-x_0(t)) < y_0(t) + m_{t,0}(x-x_0(t) ) \le$$
$$\le y_0(t) + \left(\dfrac{a_t(x)-y_0(x)}{x-x_0(t)}\right)(x-x_0(t))=a_t(x),$$
$$g_t(x)=y_0(t)+(m_{t,0}-\delta)(x-x_0(t)) < y_0(t) + m_{t,0}(x-x_0(t) ) \le$$
$$\le y_0(t) + \left(r_t{x-x_0(t)}\right)(x-x_0(t))=b_t(x),$$
and if $g_t(x) = y_1(t)+(M_{t,1}+\delta)(x-x_1(t))$, then
$$g_t(x)=y_1(t)+(M_{t,1}+\delta)(x-x_1(t)) < y_1(t) + M_{t,1}(x-x_1(t) ) \le$$
$$\le y_1(t) + \left(\dfrac{a_t(x)-y_1(x)}{x-x_1(t)}\right)(x-x_1(t))=a_t(x),$$
$$g_t(x)=y_1(t)+(M_{t,1}+\delta)(x-x_1(t)) < y_1(t) + M_{t,1}(x-x_1(t) ) \le$$
$$\le y_1(t) + \left(r_t{x-x_1(t)}\right)(x-x_1(t))=b_t(x).$$

\item $f_t(x) > a_t(x), b_t(x)$ (the proof is analogous).

\item If $\tilde{M} = \max \left\{M, \dfrac{2(\delta + M)}{\delta} \right\}$, then we will prove that the functions $a_t,b_t,f_t,g_t $ have derivatives bounded by $\tilde{M}$ and are such that, for all $x \in (x_0(t),x_1(t))$,
$$\dfrac{1}{\tilde{M}} \cdot \left( f_t(x) - g_t(x) \right) \le a_t(x)-g_t(x),b_t(x)-g_t( x) \le \left(1- \dfrac{1}{\tilde{M}}\right) \cdot \left( f_t(x) - g_t(x) \right).$$
\end{enumerate}

Let's separate the analysis into two cases, according to the value of $g_t(x)$.

\textit{Case 1:} If $g_t(x)=y_0(t)+(m_{t,0}-\delta)(x-x_0(t))$, then, from the definition of $f_t(x)$, we have $f_t(x ) \le y_0(t)+(M_{t,0}+\delta)(x-x_0(t))$ and so
$$\dfrac{a_t(x)-g_t(x)}{f_t(x)-g_t(x)} = \dfrac{\dfrac{a_t(x)-y_0(t)}{x-x_0(t) }-\dfrac{g_t(x)-y_0(t)}{x-x_0(t)}}{\dfrac{f_t(x)-y_0(t)}{x-x_0(t)}-\dfrac{ g_t(x)-y_0(t)}{x-x_0(t)}} \ge \dfrac{\dfrac{a_t(x)-y_0(t)}{x-x_0(t)}-(m_{t ,0}-\delta)}{(M_{t,0}+\delta)-(m_{t,0}-\delta)} \ge $$
$$\ge \dfrac{m_{t,0}-(m_{t,0}-\delta)}{(M_{t,0}+\delta)-(m_{t,0}-\delta) }=\dfrac{\delta}{2\delta + (M_{t,0}-m_{t,0})} \ge \dfrac{\delta}{2(\delta + M)} \ge \dfrac {1}{\tilde{M}}.$$

\textit{Case 2:} $g_t(x)=y_1(t)+(M_{t,1}+\delta)(x-x_1(t))$; then, from the definition of $f_t(x)$, we have $f_t(x) \le y_1(t)+(m_{t,1}-\delta)(x-x_1(t))$ and so
$$\dfrac{a_t(x)-g_t(x)}{f_t(x)-g_t(x)} = \dfrac{\dfrac{a_t(x)-y_1(t)}{x-x_1(t) }-\dfrac{g_t(x)-y_1(t)}{x-x_1(t)}}{\dfrac{f_t(x)-y_1(t)}{x-x_1(t)}-\dfrac{ g_t(x)-y_1(t)}{x-x_1(t)}} \ge \dfrac{\dfrac{a_t(x)-y_1(t)}{x-x_1(t)}-(M_{t ,1}+\delta)}{(m_{t,1}-\delta)-(M_{t,1}+\delta)} \ge $$
$$\ge \dfrac{M_{t,1}-(M_{t,1}+\delta)}{(m_{t,0}-\delta)-(M_{t,0}+\delta) }=\dfrac{\delta}{2\delta + (M_{t,0}-m_{t,0})} \ge \dfrac{\delta}{2(\delta + M)} = \dfrac{ 1}{\tilde{M}}.$$

So, in both cases, $\frac{1}{\tilde{M}} \cdot \left( f_t(x) - g_t(x) \right) \le a_t(x)-g_t(x)$. Similarly, we have $\frac{1}{\tilde{M}} \cdot \left( f_t(x) - g_t(x) \right) \le f_t(x)-a_t(x)$, which implies
$$a_t(x)-g_t(x) = (f_t(x)-g_t(x)) - (f_t(x) - a_t(x)) \le \left(1- \dfrac{1}{\tilde {M}}\right) \cdot \left( f_t(x) - g_t(x) \right).$$

Analogously,

$$\dfrac{1}{\tilde{M}} \cdot \left( f_t(x) - g_t(x) \right) \le b_t(x)-g_t(x) \le \left(1- \dfrac{1}{\tilde{M}}\right) \cdot \left( f_t(x) - g_t(x) \right). $$
\end{proof}

\section{Reduction to Linear Triangles}

In this section, we will use the ideas seen in the previous section to prove that the ambient bi-Lipschitz classification problem can be reduced to the analysis of surfaces germs that are a finite union of linear triangles. We also solve the problem for germs which link is either a circle or two lines, which is the base case for the main theorem, for the connected link case.

\begin{proposition}\label{2linear-knead-to-1linear}
Let $(\gamma_i,0) \in (C_a^{3},0)$ ($i=1,2,3$) be distinct arcs such that
$$(Y,0)=(\overline{\gamma_1 \gamma_2} \cup \overline{\gamma_2 \gamma_3}, 0)$$
is a LNE germ. Then, $(X,0)$ is ambient bi-Lipschitz equivalent to $(\overline{\gamma_1 \gamma_3}, 0)$.
\end{proposition}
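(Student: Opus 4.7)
The plan is to reinterpret $(Y,0)$ as a piecewise linear synchronized triangle with boundary arcs $\gamma_1,\gamma_3$, apply the kneadability theorem from the previous section to deform it (inside a supporting envelope) onto the linear triangle $\overline{\gamma_1\gamma_3}$, and then extend the deformation by the identity to an ambient bi-Lipschitz map of $\mathbb{R}^3$.

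First I would apply the Arc Translation Lemma \ref{translation} along $\gamma_2$ to assume $\gamma_2(t)=(0,0,t)$, and then apply a rotation in the $xy$-plane (an isometry, hence ambient bi-Lipschitz) so that the bisector of the angle $\angle\gamma_1\gamma_2\gamma_3$ points along the $y$-axis. Since $(Y,0)$ is LNE, Remark \ref{angle-linear-triangle} gives $\angle\gamma_1\gamma_2\gamma_3>0$, so in these new coordinates the $x$-components of $\gamma_1(t)-\gamma_2(t)$ and $\gamma_3(t)-\gamma_2(t)$ have opposite signs and stay bounded away from $0$ relative to $\|\gamma_i(t)-\gamma_2(t)\|$ for small $t>0$. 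Writing $\gamma_i(t)=(x_i(t),y_i(t),t)$ with $x_1(t)<0<x_3(t)$, the plane link $Y(t)$ becomes the graph of the piecewise linear function $f_t\colon[x_1(t),x_3(t)]\to\mathbb{R}$ with $f_t(x)=(y_1(t)/x_1(t))\,x$ on $[x_1(t),0]$ and $f_t(x)=(y_3(t)/x_3(t))\,x$ on $[0,x_3(t)]$. The slopes $y_i(t)/x_i(t)$ are uniformly bounded by some $M>0$, so $(Y,0)$ is an $M$-bounded synchronized triangle with boundary arcs $\gamma_1$ and $\gamma_3$.

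Next I would invoke Proposition \ref{synch-triangle-is-kneadable}: for any $\delta>0$ small, $(Y,0)$ is kneadable in its $\delta$-supporting envelope $U_\delta(Y)$, so there is an ambient bi-Lipschitz isotopy in $U_\delta(Y)$ carrying $(Y,0)$ onto $(\overline{\gamma_1\gamma_3},0)$ and invariant on $\partial U_\delta(Y)$. Applying Proposition \ref{extensão-bi-lip-invariante-bola} (with $\psi=\mathrm{id}$ and $V=\mathrm{int}(U_\delta(Y))$), this isotopy extends by the identity on the complement of $U_\delta(Y)$ to an ambient bi-Lipschitz map of $\mathbb{R}^3$. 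Composing with the inverses of the initial translation and rotation (both of which preserve linear triangles) yields the desired ambient bi-Lipschitz equivalence of $(Y,0)$ and $(\overline{\gamma_1\gamma_3},0)$.

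The main obstacle is the first step: one must verify that after the rotation the piecewise linear plane link really realizes $Y$ as an $M$-bounded synchronized triangle, i.e.\ that the slopes of the two pieces remain uniformly bounded as $t\to 0^+$. This is precisely where the LNE hypothesis enters, through the positive limit angle at $\gamma_2$ supplied by Remark \ref{angle-linear-triangle}; without it the slopes could blow up and the synchronized-triangle framework (and hence Proposition \ref{synch-triangle-is-kneadable}) would fail to apply directly.
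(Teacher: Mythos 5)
Your argument is correct and rests on the same key lemma as the paper's proof (Proposition \ref{synch-triangle-is-kneadable} applied to $Y$ realized as an $M$-bounded synchronized triangle, followed by the extension by the identity as in Remark \ref{amb-isotopy-extension}), but your normalization is genuinely different and avoids the paper's case analysis. The paper rotates so that $\overrightarrow{\gamma_1\gamma_3}$ becomes horizontal; this exhibits $Y$ as a synchronized triangle only when both base angles $\angle\gamma_3\gamma_1\gamma_2$ and $\angle\gamma_1\gamma_3\gamma_2$ are acute (otherwise the broken plane link fails to be a graph over $[x_1(t),x_3(t)]$), so it needs a second case in which an auxiliary arc $\gamma\subset\overline{\gamma_2\gamma_3}$ with $\|\gamma_2(t)-\gamma(t)\|=\|\gamma_2(t)-\gamma_1(t)\|$ produces an isosceles configuration, one kneading flattens $\overline{\gamma_1\gamma_2}\cup\overline{\gamma_2\gamma}$, and the acute case is then applied to the result. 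Your rotation, which makes the bisector of the limit angle at the apex $\gamma_2$ vertical, turns the plane link into a V-shaped graph with slopes approximately $\pm\cot(\theta_2/2)$ over $[x_1(t),x_3(t)]$ in every case; since LNE forces $\theta_2=\angle\gamma_1\gamma_2\gamma_3>0$ (Remark \ref{angle-linear-triangle}), these slopes are uniformly bounded and a single application of Proposition \ref{synch-triangle-is-kneadable} finishes the proof. Two minor points: you should note that $Y$ is indeed a H\"older triangle with boundary arcs $\gamma_1,\gamma_3$ (required by Definition \ref{synch-triangle-def}), which follows from \cite{birbrair1999} since $Y$ is the union of two H\"older triangles glued along the common boundary arc $\gamma_2$; and be aware that in the paper's later uses of this proposition the region swept by the kneading must be shown disjoint from the rest of the surface, a control that the paper obtains from its $\theta$-kneading envelopes, so your supporting envelope of the V-shaped triangle, while perfectly adequate for the proposition as stated, would require its own non-intersection estimates if substituted into Lemmas \ref{reduction-lemma1} and \ref{reduction-lemma2}.
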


\begin{proof}
For $i=1,2,3$, let $\theta_i (t) = \angle \gamma_{i-1}\gamma_i \gamma_{i+1} (t)$ and $\theta_i = \angle \gamma_ {i-1}\gamma_i \gamma_{i+1}$ (consider indices modulo 3). According to Remark \ref{angle-linear-triangle}, $\theta_2 > 0$. Since $\theta_1 + \theta_2 + \theta_3 = \pi$, we have two cases to consider.

\textit{Case 1:} $\theta_1 , \theta_3 < \frac{\pi}{2}$. Taking a rotation of axes if necessary, we can assume that $\overrightarrow{\gamma_1 \gamma_3} = (1,0,0)$, so that $(Y,0)$ is a $M$-bounded synchronized triangle, where $M= \max\{ \tan(\theta_1+\varepsilon),\tan(\theta_3+\varepsilon)\}$, for some small enough $\varepsilon>0$ (see Example \ref{supporting-envelope-ex1}). By Proposition \ref{synch-triangle-is-kneadable}, the result is proved in this case.

\textit{Case 2:} $\theta_1 \ge \frac{\pi}{2}$ or $\theta_3 \ge \frac{\pi}{2}$. Let us just analyze $\theta_1 \ge \frac{\pi}{2}$, as the other case is analogous. For every $t>0$, let $\gamma(t) \in \overline{\gamma_2(t) \gamma_3(t)}$ such that $\| \gamma_2(t) - \gamma_1(t) \| = \| \gamma_2(t)-\gamma(t) \|$. Notice that $\gamma=\gamma(t)$ is a semialgebraic set, hence an arc. So, taking a rotation of axes, if necessary, we can assume that $\overrightarrow{\gamma_2\gamma} = (1,0,0)$, so that $(\overline{\gamma_1\gamma_2}\cup \overline {\gamma_2 \gamma},0)$ is the germ of a $M$-bounded synchronized triangle, for some $M>0$, since $\angle \gamma_2 \gamma \gamma_1 = \angle \gamma_2 \gamma_1 \gamma = \frac{\pi- \theta_2} {2} < \frac{\pi}{2}$. By Proposition \ref{synch-triangle-is-kneadable}, $(Y,0)$ is ambient bi-Lipschitz equivalent to $(\overline{\gamma_1\gamma}\cup \overline{\gamma \gamma_3},0)$, and since $\angle \gamma \gamma_1 \gamma_3 , \angle \gamma \gamma_3 \gamma_1 < \frac{\pi}{2}$ (because $\angle \gamma_1 \gamma \gamma_3 = \frac{\pi + \theta_2 }{2} > \frac{\pi}{2}$), the result reduces to Case 1.

\begin{figure}[h]
\centering
\includegraphics[width=15cm]{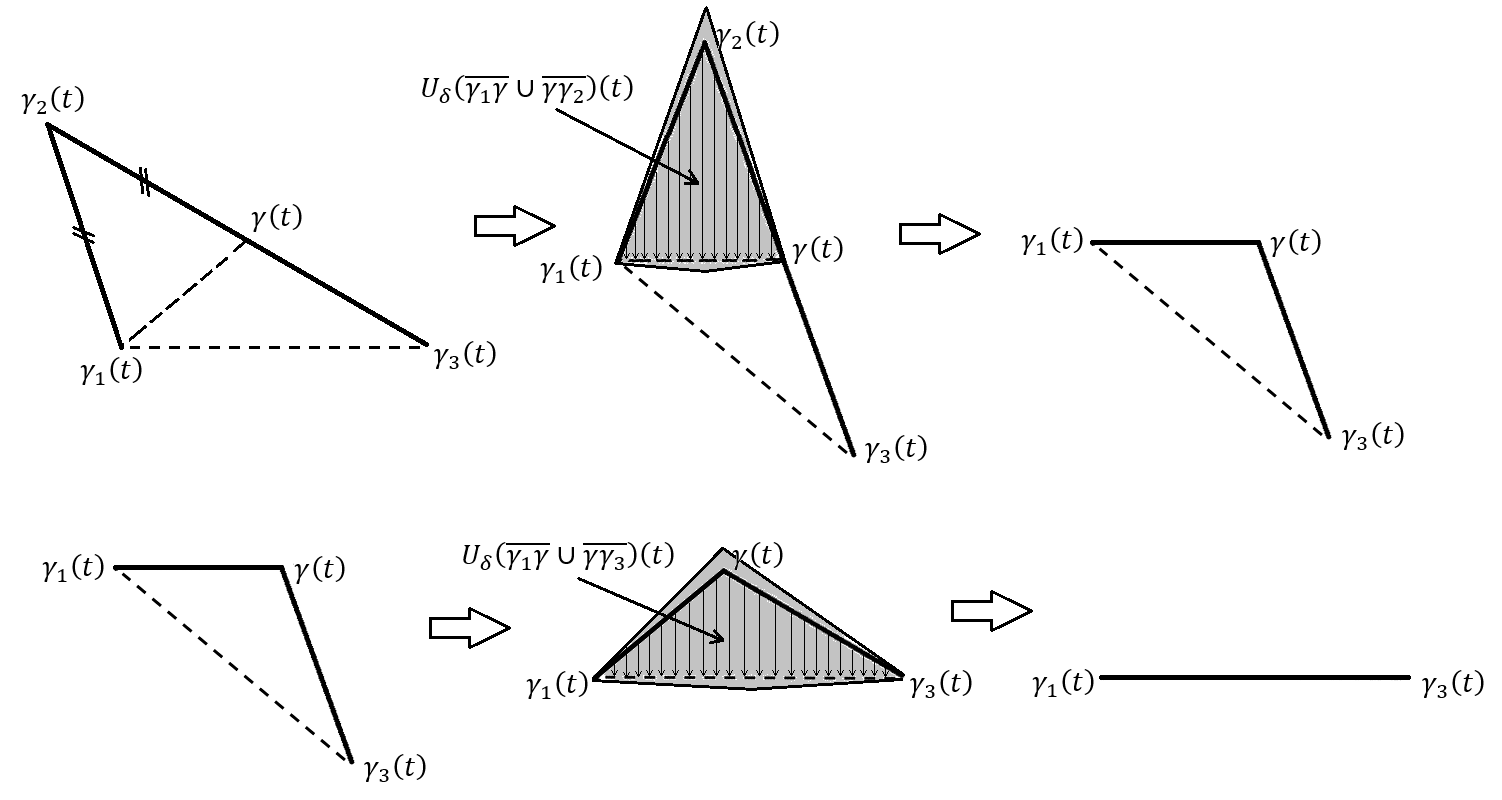}
\label{fig20}
\caption{Case 2 of Proposition \ref{2linear-knead-to-1linear}.}
\end{figure}
\end{proof}

\begin{definition}\normalfont \label{kneading-envelope}
Let $\gamma_1, \gamma_2, \gamma_3$ be arcs and let $Y = \overline{\gamma_1 \gamma_2} \cup \overline{ \gamma_2 \gamma_3}$ be a surface whose germ is LNE. Given $\theta>0$, for each $t>0$ and for $i=1, 3$, let $r_{i,+}(t), r_{i,-}(t)$ be lines passing through $\gamma_i(t)$ and external to the triangle $\gamma_1 \gamma_2 \gamma_3 (t)$ such that
$$\angle (r_{i,-}(t),\overline{\gamma_1 \gamma_3}(t)) = \angle (r_{i,+}(t), \overline{\gamma_i \gamma_2}( t)) = \theta.$$
For $\theta$ small enough, the lines $r_{1,+} (t) ,r_{3,+} (t)$ intersect at a point $\gamma_+(t)$ and the lines $r_{1 ,-} (t) ,r_{3,-} (t)$ intersect at a point $\gamma_-(t)$. Let the arcs $\gamma_+ = \gamma_+(t)$, $\gamma_- = \gamma_-(t)$, $V_{\theta}(t)$ be the convex quadrilateral with vertices $\gamma_1(t)$ , $\gamma_+(t)$, $\gamma_3(t)$ and $\gamma_-(t)$ and $V_{\theta}(Y) = \cup_{t>0} U_{\theta}( t) \cup \{0\}$. We define the \textit{$\theta$-kneading envelope of $Y$} as the germ of the set $V_{\theta}(Y)$ and denote $\gamma_+$ as the boundary arc of this $\theta $-envelope closest to $\gamma_2$.
\end{definition}

\begin{figure}[h]
\centering
\includegraphics[width=10cm]{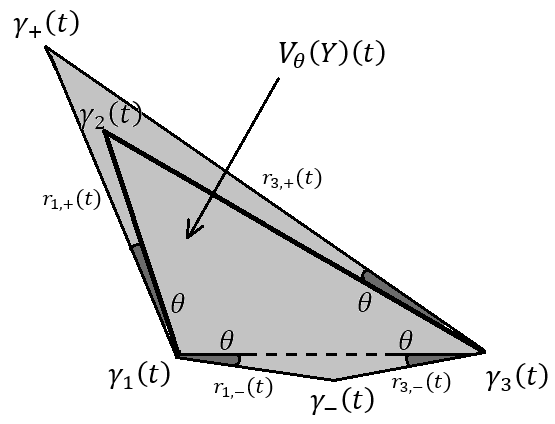}
\label{fig21}
\caption{The $\theta$-kneading envelope of $Y=\overline{\gamma_1 \gamma_2} \cup \overline{ \gamma_2 \gamma_3}$.}
\end{figure}

\begin{remark}\label{kneading-envelope-remark}
The $\theta$-kneading envelope of $Y$ is not necessarily a supporting envelope of $Y$, since it is possible that $\angle \gamma_2 \gamma_1 \gamma_3 > \frac{\pi}{2} $. However, for every $\theta>0$ small enough, we can take $\delta>0$ such that the $\delta$-supporting envelopes (or their respective images per rotation of axes) involved in the proof of Proposition $\ref {2linear-knead-to-1linear}$ are all contained in the $\theta$-kneading envelope of $Y$, so that $Y$ is kneadable in $V_\theta(Y)$. In particular, if $X \subset C_a^{3}$ is a closed semialgebraic surface such that $(Y,0) \subseteq (X,0)$, and if there exists $\theta>0$ such that $(V_ {\theta}(Y),0) \cap (X \setminus Y,0) = \emptyset$, then $(X,0)$ is ambient bi-Lipschitz equivalent to $((X \setminus Y)\cup \overline{\gamma_1 \gamma_3},0)$.
\end{remark}

\begin{proposition}\label{linear-triangle-is-horn}
Let $(\gamma_i,0) \in (C_a^{3},0)$ ($i=1,2,3$) be distinct arcs such that
$$(X,0)=(\overline{\gamma_1 \gamma_2} \cup \overline{\gamma_2 \gamma_3} \cup \overline{\gamma_3 \gamma_1},0)$$
is LNE. Then, there is $\beta \in \mathbb{Q}_{\ge 1}$ such that $(X,0)$ is ambient bi-Lipschitz equivalent to the $\beta$-standard horn germ $(H_{\beta},0)$.
\end{proposition}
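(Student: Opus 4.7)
My plan is as follows. First, I exploit the LNE hypothesis to show that the three pairwise tangency orders $\alpha_{ij}:=tord(\gamma_i,\gamma_j)$ all coincide with a common value $\beta\in\Q_{\ge 1}$. The triangle inequality applied to $\gamma_i(t)-\gamma_j(t)$ shows that at least two of $\alpha_{12},\alpha_{13},\alpha_{23}$ equal the common minimum; WLOG $\alpha_{12}=\alpha_{13}=\beta\le\alpha_{23}$. If $\alpha_{23}>\beta$, the midpoints $p(t):=\tfrac12(\gamma_1(t)+\gamma_2(t))\in\overline{\gamma_1\gamma_2}$ and $q(t):=\tfrac12(\gamma_1(t)+\gamma_3(t))\in\overline{\gamma_1\gamma_3}$ satisfy $|p(t)-q(t)|=\tfrac12|\gamma_2(t)-\gamma_3(t)|\sim t^{\alpha_{23}}$, but every path between them in $X$ has inner length $\gtrsim t^\beta$ (going either through a neighborhood of $\gamma_1$ or along the opposite edge $\overline{\gamma_2\gamma_3}$), contradicting LNE. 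Hence $\alpha_{12}=\alpha_{13}=\alpha_{23}=\beta$.

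Second, apply the Arc Translation Lemma (Lemma \ref{translation}) with the centroid arc $\gamma_c(t):=\tfrac13(\gamma_1(t)+\gamma_2(t)+\gamma_3(t))$ to obtain, in a sufficiently wide cone $(C_a^3,0)$, an ambient bi-Lipschitz equivalent surface $(\tilde X,0)$ whose vertex arcs $\tilde\gamma_i:=\gamma_i-\gamma_c$ satisfy $\tilde\gamma_1+\tilde\gamma_2+\tilde\gamma_3\equiv 0$. By Newton--Puiseux, $\tilde\gamma_i(t)=(c_it^\beta,t)+o(t^\beta)$ with $c_i\in\R^2$, $\sum_ic_i=0$, and the same midpoint comparison rules out a collinear limit, so $\triangle(c_1,c_2,c_3)$ is non-degenerate with the origin as its interior centroid. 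Let $(X_0,0):=(\overline{\delta_1\delta_2}\cup\overline{\delta_2\delta_3}\cup\overline{\delta_1\delta_3},0)$ with $\delta_i(t):=(c_it^\beta,t)$ be the corresponding model triangular horn.

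The principal step, which I expect to be the main obstacle, is the ambient bi-Lipschitz equivalence $(\tilde X,0)\sim(X_0,0)$ in $(C_a^3,0)$. I would proceed edge-by-edge: for each pair $(i,j)$, after a rotation of axes aligning $\overline{\delta_i\delta_j}$ horizontally, the linear triangles $\overline{\tilde\gamma_i\tilde\gamma_j}$ and $\overline{\delta_i\delta_j}$ differ only in $o(t^\beta)$ at their endpoints. A small preliminary ambient adjustment (supported in a conic neighborhood of each vertex, built by the same blending technique as in Lemma \ref{translation}) aligns the $x$-ranges, after which both become synchronized triangles fitting in a common $\delta$-supporting envelope (Definition \ref{supporting-envelope}); inside each envelope, Theorem \ref{amb-isotopy-criterion} produces an ambient bi-Lipschitz isotopy carrying one edge onto the other while fixing the envelope's boundary. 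Since $\triangle(c_1,c_2,c_3)$ is non-degenerate, for $\delta>0$ small enough the three envelopes are pairwise disjoint, so Proposition \ref{colagem-bi-lip-inner} glues the three isotopies and Proposition \ref{extensão-bi-lip-invariante-bola} extends by the identity to all of $C_a^3$.

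Finally, define the $1$-homogeneous bi-Lipschitz map $h\colon\R^2\to\R^2$ by $h(v):=v/r_T(v/|v|)$ for $v\ne 0$ and $h(0):=0$, where $r_T(u)>0$ denotes the distance from the origin to $\partial\triangle(c_1,c_2,c_3)$ in direction $u\in S^1$; $h$ is piecewise linear on the three sectors from the origin over an edge of the triangle, bi-Lipschitz because $r_T$ is bounded between positive constants, and sends $\partial\triangle(c_1,c_2,c_3)$ bijectively onto the unit circle. A cone-interpolation construction in the spirit of Lemma \ref{translation} promotes $(x,y,t)\mapsto(h(x,y),t)$ to an ambient bi-Lipschitz map on $C_a^3$ invariant on the boundary and sending $X_0$ to the rotational surface $\{x_1^2+x_2^2=t^{2\beta}\}$. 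Corollary \ref{ajuste-cornetas} with $g(t)=t^\beta$ identifies this surface with the standard $\beta$-horn $H_\beta$ up to ambient equivalence in the cone, and Proposition \ref{suficiencia-em-cone} completes the proof by lifting the equivalence to $\R^3$.
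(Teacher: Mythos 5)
Your first step (all three tangency orders coincide, and the rescaled limit triangle $\triangle(c_1,c_2,c_3)$ is non-degenerate with the origin in its interior after the centroid translation) is correct, and your overall route is genuinely different from the paper's: the paper keeps the three vertex arcs fixed and kneads the arcs of the circumscribed circle of $\gamma_1(t)\gamma_2(t)\gamma_3(t)$ onto the chords, so that it never has to move a vertex and can finish directly with Corollary \ref{ajuste-cornetas}; you instead normalize to a model triangular horn $X_0$ and then straighten its cross-sections to circles. The difficulty is that both of the steps you defer to ``constructions in the spirit of Lemma \ref{translation}'' are exactly where the work lies, and neither is an instance of the paper's lemmas. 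First, once the vertices of $\tilde X$ are moved onto $\delta_i(t)=(c_it^\beta,t)$, the edges of $\tilde X$ and of $X_0$ are the \emph{same} segments, so there is nothing left for Theorem \ref{amb-isotopy-criterion} to do; conversely, if you apply it before matching the vertices, the triangles $W_1=\overline{\tilde\gamma_i\tilde\gamma_j}$ and $W_2=\overline{\delta_i\delta_j}$ are not aligned on boundary arcs, the three isotopies do not agree on the shared vertex arcs, and the three supporting envelopes are not pairwise disjoint (adjacent ones meet along the common vertex arc). So the entire content of $(\tilde X,0)\sim(X_0,0)$ is the simultaneous displacement of the three vertex arcs by $o(t^\beta)$, dragging both adjacent edges along; Lemma \ref{translation} translates \emph{every} plane link by a single arc and cannot do this, and the local blending you invoke is a new construction whose bi-Lipschitz bounds must be checked (it is plausible, e.g.\ via a sectionwise affine map taking one triangle to the other interpolated to the identity, but it is not in the paper and not in your text).

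Second, the promotion of $h(v)=v/r_T(v/|v|)$ to an ambient map of $(C_a^3,0)$ fixing $\partial C_a^3$ is not a routine cone interpolation: $h$ is a $1$-homogeneous map that is \emph{not} close to the identity, so on the annulus between radius $\sim t^\beta$ (where you need $h$) and radius $\sim at$ (where you need the identity) you must produce a uniformly bi-Lipschitz isotopy between $h|_{S^1}$ and $\mathrm{id}_{S^1}$ and spread it radially; Lemmas \ref{translation} and \ref{dilatation} only handle translations and scalar dilatations, both of which converge to the identity, and do not cover this. (Also, $h$ is $1$-homogeneous but not piecewise linear on the three sectors --- on the sector over the edge $\langle x,n\rangle=d$ one gets $h(v)=\langle v,n\rangle v/(d|v|)$ --- though it is indeed bi-Lipschitz since $r_T$ is Lipschitz and bounded away from $0$ and $\infty$.) Both gaps can very likely be filled, but as written the proof replaces the paper's two concrete mechanisms (kneading in supporting envelopes with fixed endpoints, and the circumscribed circle) with two unproved ambient constructions.
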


\begin{proof}
For $i=1,2,3$, let $\theta_i (t) = \angle \gamma_{i-1}\gamma_i \gamma_{i+1} (t)$ and $\theta_i = \angle \gamma_ {i-1}\gamma_i \gamma_{i+1}$ (consider indices modulo 3). By Remark \ref{angle-linear-triangle}, $\theta_1,\theta_2,\theta_3 > 0$, which implies that there exists $\beta \in \mathbb{Q}_{\ge 1}$ such that
$$tord(\gamma_1,\gamma_2) = tord(\gamma_2,\gamma_3) = tord(\gamma_3,\gamma_1) = \beta.$$
Thus, there are $a_1,a_2,a_3 > 0$ such that:
$$ \| \gamma_2(t) - \gamma_3(t) \| = a_1t^{\beta} + o(t^{\beta}),$$
$$ \| \gamma_3(t) - \gamma_1(t) \| = a_2t^{\beta} + o(t^{\beta}),$$
$$\| \gamma_1(t) - \gamma_2(t) \| = a_3t^{\beta} + o(t^{\beta}).$$
Notice that $a_1,a_2,a_3$ are the lengths of the sides of a triangle. It is well known that the circunradius of a triangle with sides $x,y,z$ and area:
$$S=\frac{\sqrt{(x+y+z)(-x+y+z)(x-y+z)(x+y-z)}}{4}$$
is $R=\frac{xyz}{4S}$. Then, if
$$c=\frac{a_1a_2a_3}{\sqrt{(a_1+a_2+a_3)(-a_1+a-2+a_3)(a_1-a_2+a_3)(a_1+a_2-a_3)}},$$
we have that, for all $t>0$ small enough, if $\Gamma(t)$ is the circuncircle of triangle $\gamma_1(t) \gamma_2 (t) \gamma_3 (t)$ (which is non-degenerate, since $\theta_i >0$), the radius $R(t)$ of $\Gamma(t)$ has a Puiseux expansion given by $R(t) = ct^\beta + o(t^{\beta })$. By Lemma \ref{translation}, we can also assume that $\Gamma(t)$ has center at the origin. Defining $\Gamma := \left(\cup_{t>0} \Gamma(t) \right) \cup \{0\} \subset C_a^3$, we will prove that $(\Gamma,0)$ is ambient bi-Lipschitz equivalent to $(X,0)$ in $C_a^3$. Let us prove this by dividing into two cases.

\textit{Case 1:} if $\theta_1,\theta_2,\theta_3 < \frac{\pi}{2}$, we can assume that $t>0$ is small enough so that $\gamma_1(t)\gamma_2 (t)\gamma_3(t)$ is an acute triangle, whose internal angles satisfy, for $i=1,2,3$, $\varepsilon < \theta_i(t) < \frac{\pi}{2} - \varepsilon$, for some small $\varepsilon > 0$. In this way, if $\Gamma_i (t)$ is the arc of $\Gamma(t)$ delimited by $\gamma_{i-1}(t)$ and $\gamma_{i+1}(t)$ and which does not contain $\gamma_i(t)$ (indices modulo 3) and if $\Gamma_i = \left(\cup_{t>0} \Gamma_i(t) \right) \cup \{0\} \subset C_a^ 3$, by a rotation of axes, if necessary, we can see $\Gamma_i(t)$ as a plane link of a $M$-bounded, convex synchronized triangle $(T,0)$, with $M=\cot{\varepsilon}$. This is due to the fact that the tangents to $\Gamma$ by $\gamma_{i-1}(t)$ and $\gamma_{i+1}(t)$ form with $\overline{ \gamma_{i-1}\gamma_{i+1}(t)}$ angles smaller than $\frac{\pi}{2} - \varepsilon$. 

\begin{figure}[h]
	\centering
	\includegraphics[width=8cm]{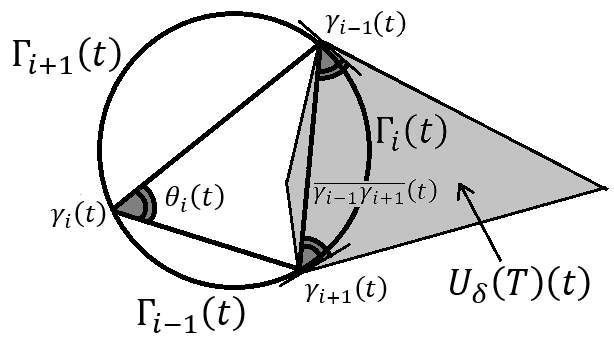}
	\label{fig14}
	\caption{Construction of the $\delta$-supporting envelope of $\Gamma_i(t)$.}
\end{figure}

Hence, there exists $\delta>0$ such that the $\delta$-supporting envelope of $\Gamma_i(t)$ does not intersect $(X(t)\cup \Gamma (t)) \setminus (\overline{ \gamma_{i-1} \gamma_{i+1} (t)} \cup \Gamma_i (t))$, where $(T,0)$ is ambient bi-Lipschitz equivalent to $(\overline{\gamma_ {i-1}\gamma_{i+1}},0)$, by Proposition \ref{synch-triangle-is-kneadable}. Therefore, each $\Gamma_i$ is kneadable in its respective supporting envelope. Applying such transformations on each supporting envelope, we obtain that $(\Gamma,0)$ is ambient bi-Lipschitz equivalent to $(X,0)$ in $(C_a^3,0)$.

\begin{figure}[h!]
\centering
\includegraphics[width=15cm]{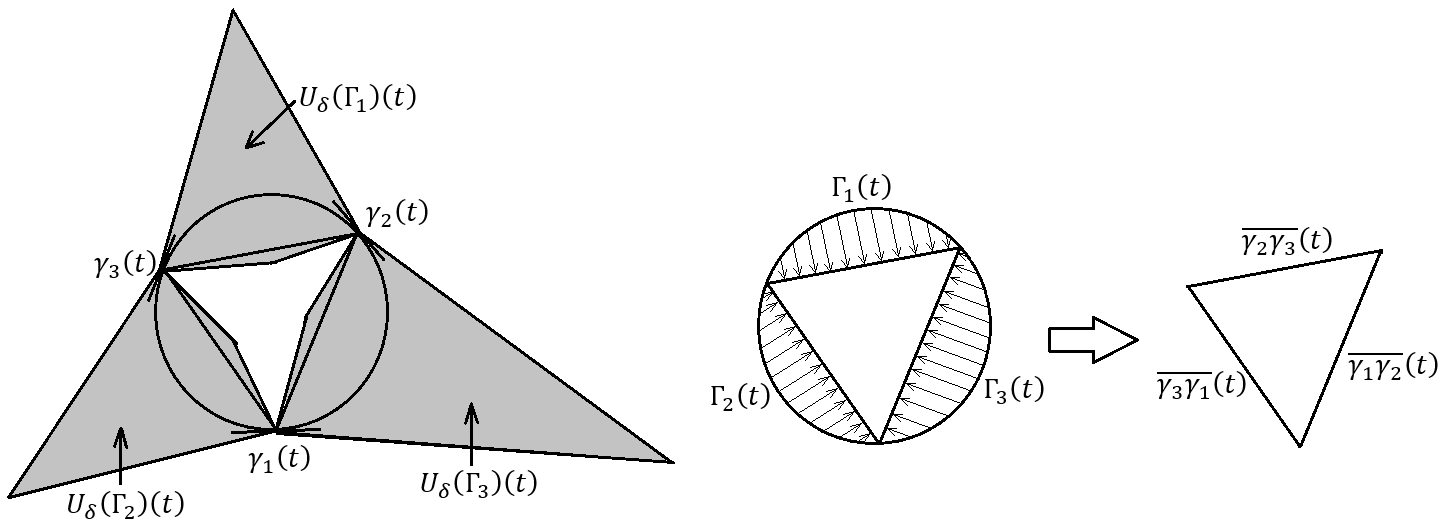}
\label{fig15}
\caption{"Kneading" the arcs $\Gamma_1, \Gamma_2, \Gamma_3$ in Case 1 of Proposition \ref{linear-triangle-is-horn}.}
\end{figure}

\textit{Case 2:} there is $i \in \{ 1, 2, 3 \}$ such that $\theta_i \ge \frac{\pi}{2}$ (say $i=1$). In this case, $\Gamma_2 ,\Gamma_3$ are kneadable in their respective supporting envelopes, but the same is not true for $\Gamma_1$. To work around this situation, we take, for each $t>0$, $\gamma(t)$ as the midpoint of the arc $\Gamma_1(t)$ and define $\Gamma_{1,2}(t)$ as the region in $\Gamma_1(t)$ bounded by $\gamma_2(t), \gamma(t)$ and $\Gamma_{1,3}(t)$ the region in $\Gamma_1(t)$ bounded by $\gamma_3(t), \gamma(t)$. Also denote $\Gamma_{1,j} = \left(\cup_{t>0} \Gamma_{1,j}(t) \right) \cup \{0\} \subset C_a^3$ ($j =2.3$). By the same arguments of Case 1, we have that $\Gamma_2, \Gamma_3, \Gamma_{1,2}, \Gamma_{1,3}$ are kneadable in their respective supporting envelopes, where $(\Gamma,0)$ is ambient bi-Lipschitz equivalent to $(\overline{\gamma_2 \gamma_1} \cup \overline{\gamma_1 \gamma_3} \cup \overline{\gamma_3 \gamma} \cup \overline{\gamma \gamma_2},0) $ in $(C_a^3.0)$.

Since $\angle \gamma \gamma_2 \gamma_3 (t) = \angle \gamma \gamma_3 \gamma_2 (t) = \frac{\theta_1}{2} < \frac{\pi}{2} - \varepsilon$ , we have that $\overline{\gamma_2 \gamma} \cup \overline{\gamma \gamma_3}$ is kneadable in a $\delta$-supporting envelope that does not intersect interior points of $\overline{\gamma_2 \gamma_1} \cup \overline{\gamma_3 \gamma_1}$, and so $(\overline{\gamma_2 \gamma_1} \cup \overline{\gamma_1 \gamma_3} \cup \overline{\gamma_3 \gamma} \cup \overline{\gamma_2 \gamma},0)$ is ambient bi-Lipschitz equivalent to $(\overline{\gamma_2 \gamma_1} \cup \overline{\gamma_1 \gamma_3} \cup \overline{\gamma_2 \gamma_3},0) = (X ,0)$ into $(C_a^3,0)$, as we wanted. As $(X,0)$ is ambient bi-Lipschitz equivalent to $(\Gamma,0)$ in $C_a^3$, by Corollary \ref{ajuste-cornetas}, we obtain the result.
\end{proof}

\begin{figure}[h!]
\centering
\includegraphics[width=15cm]{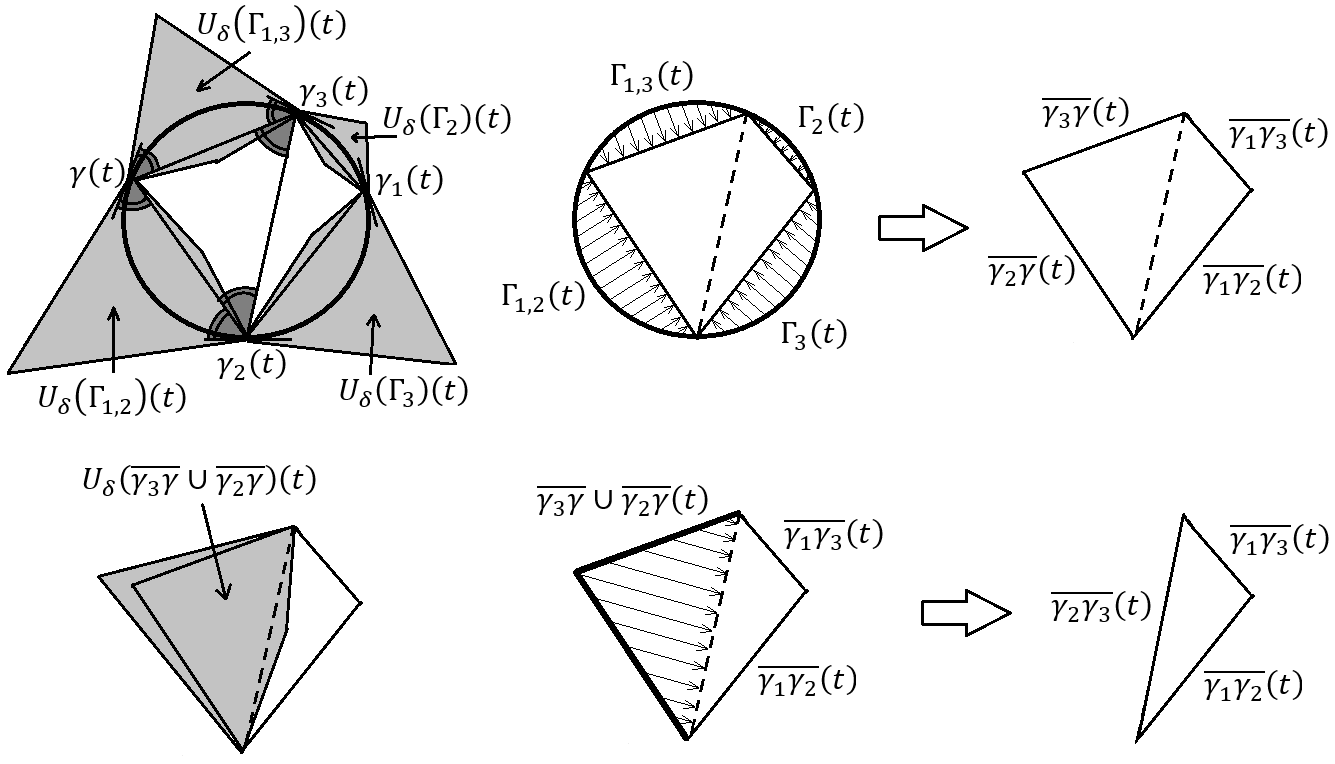}
\label{fig16}
\caption{The $\delta$-envelopes and "kneads" in Case 2 of Proposition \ref{linear-triangle-is-horn}.}
\end{figure}

\begin{proposition}\label{linear-decomposition}
Let $a>0$ and let $(X,0) \subset (C_{a}^{3},0)$ be a pure, closed, semialgebraic, 2-dimensional LNE surface germ with connected link. Then, $(X,0)$ is ambient bi-Lipschitz equivalent to a germ of a surface formed by a finite union of linear triangles germs.
\end{proposition}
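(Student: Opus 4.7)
The plan is to combine the convex decomposition from Proposition \ref{convex-decomp} with the kneadability property from Proposition \ref{synch-triangle-is-kneadable}. First, I would apply Proposition \ref{convex-decomp} to the germ $(X,0)$ with a sufficiently small parameter $\delta' > 0$, obtaining a decomposition $(X,0) = \bigcup_{i=1}^{n} (X_i, 0)$ into convex synchronized triangle germs. Up to the rotations $r_{\theta_i}$ provided by the decomposition, each $(X_i,0)$ corresponds to a convex, $M$-bounded synchronized triangle $(T_i,0)$ whose generating functions $\{f_{t}^{(i)}\}$ have lateral derivatives at the endpoints $x_0^{(i)}(t), x_1^{(i)}(t)$ differing by less than $\delta'$. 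This makes each $T_i$ close in shape to the linear triangle $\overline{\gamma_{i-1,i}\,\gamma_{i,i+1}}$ determined by its own boundary arcs.

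Next, for each index $i$, Proposition \ref{synch-triangle-is-kneadable} provides an ambient bi-Lipschitz isotopy $\varphi^{(i)}$ in the $\delta$-supporting envelope $U_{\delta}(T_i)$ that deforms $(T_i,0)$ into the linear triangle determined by its boundary arcs and is invariant on $\partial U_{\delta}(T_i)$. By Remark \ref{amb-isotopy-extension}, each $\varphi^{(i)}$ extends to an ambient bi-Lipschitz isotopy of $(C_a^{3},0)$ that is the identity outside $U_{\delta}(T_i)$. After undoing the rotation $r_{\theta_i}$, one obtains an ambient bi-Lipschitz map $\Phi_i$ of $(C_a^{3},0)$ supported in $r_{\theta_i}^{-1}(U_{\delta}(T_i))$ that sends $(X_i,0)$ onto the linear triangle with the same boundary arcs. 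The sought map is then the composition $\Phi_n \circ \cdots \circ \Phi_1$.

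The main obstacle is to choose the parameters $\delta'$ (in the convex decomposition) and $\delta$ (in the supporting envelopes) so that the maps $\Phi_i$ do not conflict with one another. Concretely, one must show that the open envelopes $r_{\theta_i}^{-1}(\mathrm{int}\,U_{\delta}(T_i))$ can be made pairwise compatible: their overlaps must lie only at common boundary arcs (where each $\Phi_i$ is the identity) and must not contain interior points of any other $X_j$ or of the already-produced linear triangle images. Here the LNE hypothesis is essential. By Remark \ref{angle-linear-triangle} and the characterization of LNE in terms of tangency orders (Remark \ref{Rem:tord}), adjacent triangles $X_i$, $X_j$ meet along their shared boundary arc at a positive angle bounded uniformly away from zero; non-adjacent $X_i$, $X_j$ meet only at $0$, and the connected link assumption gives the decomposition a simple combinatorial shape (a chain or a cycle) that is preserved under small perturbations. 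Choosing first $\delta'$ smaller than a fraction of this minimal angle (so that each $T_i$ is thin enough to be contained in a narrow tubular region around its corresponding linear triangle), and then $\delta$ small accordingly, guarantees the required non-interference.

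Once the parameters are fixed, the sequential composition $\Phi_n \circ \cdots \circ \Phi_1$ is an ambient bi-Lipschitz homeomorphism of $(C_a^{3},0)$ which, by construction, maps $(X,0)$ onto $\bigcup_{i=1}^{n} (\overline{\gamma_{i-1,i}\,\gamma_{i,i+1}}, 0)$, a finite union of linear triangles as desired. By Proposition \ref{suficiencia-em-cone}, this yields the conclusion for the original surface germ $(X,0) \subset (\R^3,0)$ after undoing the reduction $\psi$ of Proposition \ref{reduction}. The hard step is verifying that the supporting envelopes can be arranged non-invasively; once that is done, the rest of the argument is a straightforward gluing of the elementary kneading isotopies provided by Proposition \ref{synch-triangle-is-kneadable}.
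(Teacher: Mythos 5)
Your strategy coincides with the paper's: take a $\delta$-convex decomposition (Proposition \ref{convex-decomp}), knead each convex synchronized triangle into the linear triangle spanned by its boundary arcs inside its $\delta$-supporting envelope (Proposition \ref{synch-triangle-is-kneadable}, Remark \ref{amb-isotopy-extension}), and compose the resulting ambient maps. You also correctly isolate the only nontrivial point, namely that the envelopes $U_i$ must pairwise meet only in common boundary-arc points. The gap is that your justification of this point does not work. The observation that non-adjacent triangles satisfy $(X_i\cap X_j,0)=\{0\}$ is a statement about germs and gives no control at scale $t$: for a fixed aperture $\delta$ the envelope $U_i(t)$ protrudes from $X_i(t)$ by a distance comparable to $\delta$ times the distance to the nearest vertex of $X_i(t)$, and germ-level disjointness does not prevent $X_j(t)$ from passing within that distance of $X_i(t)$ for small $t$. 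Likewise, the ``positive angle along a shared boundary arc'' only addresses adjacent pairs, and Remark \ref{Rem:tord} (tangency orders of arcs) is not the right tool here; what is actually needed is a \emph{uniform} constant $C$ such that every plane link $X(t)$ is $C$-LNE, which is precisely what Theorem \ref{Edson-Rodrigo} supplies.

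The paper's proof fills exactly this hole with a quantitative argument: first fix $C$ with $X(t)$ being $C$-LNE for all small $t$, then choose $\delta$ so small that any two points $p,q\in U_i(t)$ subtend an angle smaller than $\arctan\frac{1}{4C}$ at each vertex $\gamma_{i,0}(t)$, $\gamma_{i,1}(t)$. If some $P\in U_i(t)\cap U_j(t)$ were not a common boundary-arc point, dropping perpendiculars from $P$ onto the chords of $X_i(t)$ and $X_j(t)$ produces points $P_i\in X_i(t)$, $P_j\in X_j(t)$ with $\|P_i-P_j\|\le \frac{1}{2C}\, d_{X(t)}(P_i,P_j)$, contradicting the $C$-LNE property of the link; this handles adjacent and non-adjacent pairs simultaneously. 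Your argument needs this (or an equivalent estimate) inserted where you currently assert that choosing $\delta'$ and $\delta$ small ``guarantees the required non-interference''.
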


\begin{proof}
By Proposition \ref{convex-decomp}, for each $\delta>0$, there is a $\delta$-convex decomposition $(X,0) = \bigcup_{i=1}^{n} (X_i,0 )$. For $i=1,\dots, n$, let $(\gamma_{i,0},0)$, $(\gamma_{i,1},0)$ be the boundary arcs of the triangle $(X_i, 0)$. If $T_i = r_{\theta_i}(X_i)$, $U_{\delta}(T_i)$ is the $\delta$-supporting envelope of $T_i$ and $U_i=r_{-\theta_i}(U_ {\delta}(T_i))$, then since $T_i$ is a $\delta$-bounded convex synchronized triangle, we have that, for every $C>1$, there is $\delta>0$ small enough such that, for all $t>0$ small enough and all points $p,q \in U_i(t)$,
$$\angle p \gamma_{i,0}(t) q , \angle p \gamma_{i,1}(t) q < \arctan{\frac{1}{4C}}.$$

By Theorem \ref{Edson-Rodrigo}, there is $C>1$ such that $X(t)$ is $C$-LNE, for every small $t>0$. By taking $C$ fulfilling this condition and $\delta$ as above, we claim that, for $t>0$ and all $i \le i < j \le n$, $U_i(t) \cap U_j(t) $ is equal to a point on some boundary arc or equal to $\emptyset$. Suppose the opposite, that is, there is $P \in U_i(t) \cap U_j(t)$, $P \ne \gamma_{i,0}(t), \gamma_{j,0}(t),\gamma_{i,1}(t), \gamma_{j,1}(t)$. For $k=i,j$, the perpendicular, by $P$, to $\overline{\gamma_{k,0} \gamma_{k,1}}(t)$ intersects $X_k(t)$ in $ P_k$ and $\overline{\gamma_{k,0} \gamma_{k,1}}(t)$ in $Q_k$. Let $A \in \{\gamma_{i,0}(t),\gamma_{i,1}(t) \}$, $B \in \{ \gamma_{j,0}(t),\gamma_{j,1}(t)\}$ such that the path in $X(t)$ of minimum length connecting $P_i$ to $P_j$ passes through $A$ and $B$. In particular,
$$d_{X(t)}(P_i,P_j) \ge \| P_i - A\| +\| A-B\| + \|B- P_j \|.$$

\begin{figure}[h!]
\centering
\includegraphics[width=8cm]{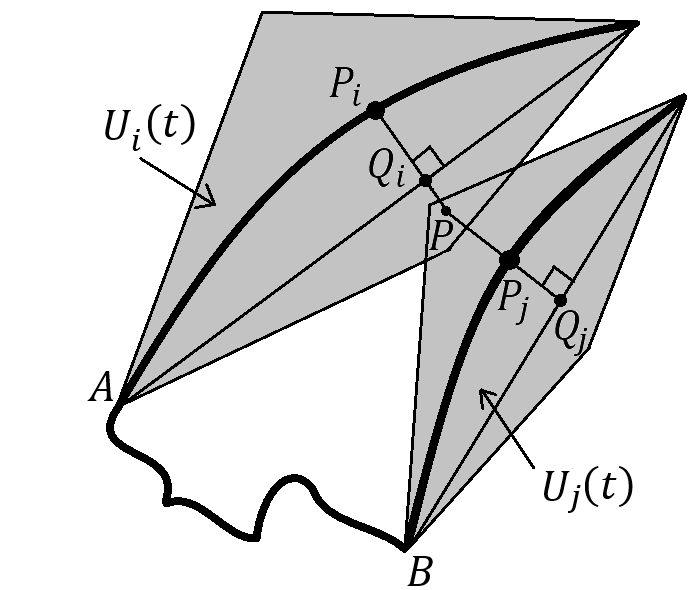}
\label{fig17}
\caption{Proof that $U_i(t) \cap U_j(t)$ is a boundary arc point, or empty.}
\end{figure}

We have
$$\| P_i - P_j\| \le \|P_i - P \| + \|P - P_j \| \le \| P_i - Q_i \| + \| Q_i - P\| + \| P - P_j\| + \|P_j - Q_j \|.$$
Since $\angle P_i A Q_i, \angle PAQ_i, \angle PBQ_j, \angle P_j B Q_j < \arctan{\frac{1}{4C}}$, we get
$$\| P_i - Q_i \| , \| Q_i - P\| \le \frac{\|A - Q_i \|}{4C} \le \frac{\|A - P_i \|}{4C};$$
$$\| P - P_j\| , \|P_j - Q_j \| \le \frac{\| B - Q_j \|}{4C} \le \frac{\| B - P_j\|}{4C}.$$
Therefore,
$$\| P_i - P_j\| \le \frac{\|P_i - A \|+ \| B - P_j\|}{2C} \le \frac{\| P_i - A\| +\| A-B\| + \|B- P_j \|}{2C} \le \frac{d_{X(t)}(P_i,P_j)}{2C},$$
a contradiction. Thus, each $(X_i,0)$ is kneadable into each corresponding $(U_i,0)$.
\end{proof}

\begin{figure}[h]
\centering
\includegraphics[width=15cm]{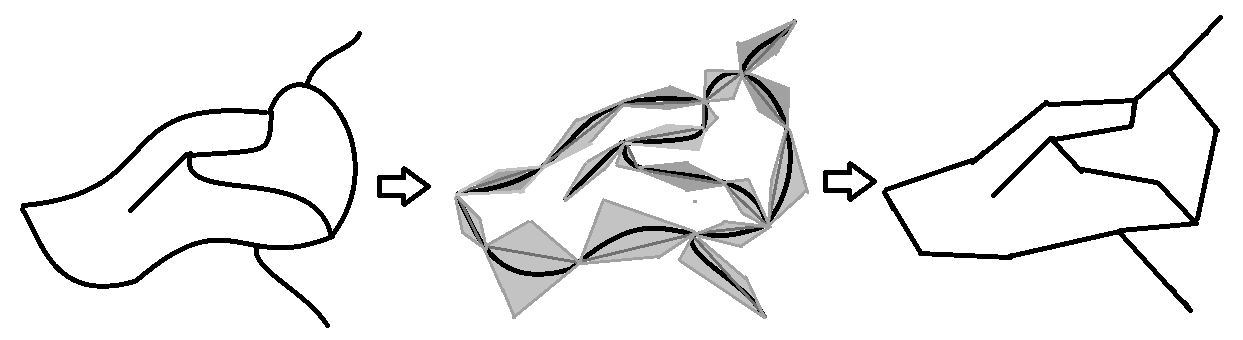}
\label{fig34}
\caption{Proof of Proposition \ref{linear-decomposition}.}
\end{figure}

\section{Polygonal Surfaces}

In this section we consider polygonal surfaces, i.e. LNE surface germs such that the intersection with a parallel plane, situated near the singular point, are polygons. We prove some auxiliary lemmas in order to develop tools for the edge reduction that will be done on the next section.

\begin{definition}\normalfont \label{poly-surface-1}
Let $a>0$ and let $(X,0) \subset (C_a^{3},0)$ be a closed, semialgebraic surface germ. We say that $(X,0)$ is a \textit{polygonal surface germ} if there are $n \in \mathbb{Z}_{\ge 2}$, distinct arcs $\gamma_1, \dots, \gamma_n \in V(X)$ and one of the two following situations holds:
\begin{enumerate}
\item the plane link of $X$ is homeomorphic to $[0,1]$ and $(X,0) = (\cup_{i=1}^{n-1} \overline{\gamma_i \gamma_{i +1}},0)$. In this case, we say that $X$ is a \textit{open polygonal surface germ} (or a open $(n-1)$-gonal), and $n$ is denoted as the number of vertices of $X$;
\item the plane link of $X$ is homeomorphic to $\mathbb{S}^1$, $n\ge 3$ and $(X,0) = (\cup_{i=1}^{n} \overline {\gamma_i \gamma_{i+1}},0)$, where $\gamma_{n+1}=\gamma_1$. In this case, we say that $X$ is a \textit{closed polygonal surface germ} (or a closed $n$-gonal), and $n$ is denoted as the number of vertices of $X$.
\end{enumerate}
In both cases, we denote $(X,0)$ as $(\gamma_1, \dots, \gamma_n)$. The surfaces $\overline{\gamma_1\gamma_2}$, $\dots$, $\overline{\gamma_{n-1}\gamma_n}$ are defined as the \textit{edge surfaces of $(X,0)$} (if $ (X,0)$ is a closed polygonal surface germ, $\overline{\gamma_1\gamma_n}$ is also an edge surface of $(X,0)$).
\end{definition}

\begin{definition}\normalfont
\label{poly-surface-2}
Let $(X,0)=(\gamma_1,\dots,\gamma_n)$ be a polygonal surface germ. We say that $(X,0)$ is a \textit{non-degenerate $n$-gonal surface} if $n \ge 3$ and the following conditions are satisfied:
\begin{itemize}
\item $(X,0)$ is a open $n$-gonal and $\angle \gamma_{i-1}\gamma_i \gamma_{i+1} < \pi$, for $i=2,\dots,n-1$ , or $(X,0)$ is a closed $n$-gonal and $\angle \gamma_{i-1}\gamma_i \gamma_{i+1} < \pi$, for $i=1,\dots,n$ ($ \gamma_0 = \gamma_n$, $\gamma_1 = \gamma_{n+1}$);
\item for all $t>0$ small enough and all $1\le i < j < k \le n$, the points $\gamma_i(t),\gamma_j (t),\gamma_k(t)$ are not collinear.
\end{itemize}
If one of these two conditions are not satisfied, we say that $(X,0)$ is a \textit{degenerate polygonal surface germ}.
\end{definition}

\begin{lemma}\label{reduction-lemma1}
Let $(X,0)=(\gamma_1,\dots,\gamma_n)$ be a (open or closed) LNE polygonal surface germ and let $i \in \{1,\dots,n\}$ such that
\begin{itemize}
\item $\overline{\gamma_{i-1}\gamma_i}$, $\overline{\gamma_{i}\gamma_{i+1}}$ are edge surfaces of $X$;
\item $\alpha = tord(\gamma_{i-1},\gamma_i)=tord(\gamma_{i},\gamma_{i+1})$;
\item $0 < \angle \gamma_{i-1} \gamma_i \gamma_{i+1} < \pi$.
\end{itemize}
Then there is $\varepsilon_0 >0$ such that, for all $\varepsilon \in (0,\varepsilon_0)$, if $(\gamma_{\varepsilon},0)$ is the arc defined by
$$\gamma_{\varepsilon} (t) \in \overline{\gamma_{i-1}(t)\gamma_i(t)} \; ; \; \| \gamma_{\varepsilon}(t) - \gamma_i(t) \| = \varepsilon.t^{\alpha} \; (t>0),$$
Then $(X,0)$ is ambient bi-Lipschitz equivalent to the germ of the (open or closed) polygonal surface germ $(\gamma_1, \dots, \gamma_{i-1}, \gamma_{\varepsilon}, \gamma_ {i+1}, \dots, \gamma_n)$.
\end{lemma}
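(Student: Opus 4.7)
Set $Y := \overline{\gamma_{\varepsilon}\gamma_i}\cup\overline{\gamma_i\gamma_{i+1}}$ and $Y' := \overline{\gamma_{\varepsilon}\gamma_{i+1}}$. Since $\gamma_{\varepsilon}$ lies on $\overline{\gamma_{i-1}\gamma_i}$ we have $\overline{\gamma_{i-1}\gamma_{\varepsilon}}\subset\overline{\gamma_{i-1}\gamma_i}$, and every other edge of the target surface $X':=(\gamma_1,\dots,\gamma_{i-1},\gamma_{\varepsilon},\gamma_{i+1},\dots,\gamma_n)$ coincides with an edge of $X$; in particular $X\setminus Y=X'\setminus Y'$. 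It therefore suffices to construct an ambient bi-Lipschitz map $\Phi:(C_a^3,0)\to(C_a^3,0)$ with $\Phi(Y)=Y'$ that is the identity on a neighbourhood of $X\setminus Y$.

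The angle at $\gamma_i$ inside $Y$ equals $\angle\gamma_{\varepsilon}\gamma_i\gamma_{i+1}=\angle\gamma_{i-1}\gamma_i\gamma_{i+1}\in(0,\pi)$ by hypothesis, so Remark \ref{angle-linear-triangle} together with Theorem \ref{Edson-Rodrigo} ensures that $(Y,0)$ is LNE. Proposition \ref{2linear-knead-to-1linear} then supplies an ambient bi-Lipschitz equivalence of $(Y,0)$ with $(Y',0)$ in $(C_a^3,0)$. Inspecting its proof, the underlying isotopy is obtained via Proposition \ref{synch-triangle-is-kneadable} inside a $\delta$-supporting envelope $U_\delta$ of a suitable rotation of $Y$ (or, when $\angle\gamma_{\varepsilon}\gamma_i\gamma_{i+1}\ge\pi/2$, in the union of two such envelopes for the sub-triangles built from the auxiliary arc $\gamma$ of Case~2 in that proof); by Remark \ref{amb-isotopy-extension} it extends to $(C_a^3,0)$ as the identity outside $U_\delta$, fixing $\gamma_{\varepsilon}$ and $\gamma_{i+1}$.

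The core of the argument is then to choose $\varepsilon$ and $\delta$ small enough that $U_\delta$ meets $X\setminus Y$ only along the arcs $\gamma_{\varepsilon}$ and $\gamma_{i+1}$. Work in coordinates obtained by a rotation in the $xy$-plane for which $\overrightarrow{\gamma_{\varepsilon}\gamma_{i+1}}=(1,0,0)$ (and analogously for the intermediate envelope in Case~2): by the very definition of the supporting envelope, $U_\delta$ is confined to the slab $\{x_{\varepsilon}(t)\le x\le x_{i+1}(t)\}$ in each plane link. The edge $\overline{\gamma_{i-1}\gamma_{\varepsilon}}$ leaves $\gamma_{\varepsilon}(t)$ in the direction $\overrightarrow{\gamma_{\varepsilon}\gamma_{i-1}}=-\overrightarrow{\gamma_{\varepsilon}\gamma_i}$, whose $x$-component is negative, so it enters $\{x<x_{\varepsilon}(t)\}$ and meets $U_\delta$ only at $\gamma_{\varepsilon}$. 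For the edge $\overline{\gamma_{i+1}\gamma_{i+2}}$ (when present), LNE of $X$ forces $\angle\gamma_i\gamma_{i+1}\gamma_{i+2}>0$, while the angular aperture of $U_\delta$ at $\gamma_{i+1}$ is controlled by the slopes from $\gamma_{i+1}$ toward points of $Y$ enlarged by $\pm\delta$, and this aperture tends to $0$ as $\varepsilon,\delta\to 0^+$, so small parameters make $U_\delta$ avoid this edge away from $\gamma_{i+1}$. For non-adjacent edges of $X$, Theorem \ref{Edson-Rodrigo} makes $X(t)$ uniformly $C$-LNE, which forces their outer distance from $\overline{\gamma_i\gamma_{i+1}}$ to satisfy $\gtrsim t^\alpha$ with a fixed constant; since the transverse extent of $U_\delta$ is $O(\delta\cdot t^\alpha)$, a sufficiently small $\delta$ separates them. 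The extension of the isotopy by the identity outside $U_\delta$ therefore yields an ambient bi-Lipschitz map $\Phi$ with $\Phi(X)=(X\setminus Y)\cup Y'=X'$.

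The main obstacle is the quantitative verification in the previous paragraph, especially the non-adjacent-edge estimate: it asks for the $O(\delta t^\alpha)$ transverse size of the envelope to fit inside the $t^\alpha$-scale separation of edges of the plane link provided by the LNE hypothesis on $X$. Without global LNE, non-adjacent edges could approach $\overline{\gamma_i\gamma_{i+1}}$ at a higher order than $t^\alpha$, and no choice of $\delta$ would keep $U_\delta$ away from them; the LNE assumption is thus indispensable.
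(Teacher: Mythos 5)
Your overall strategy coincides with the paper's: isolate $Y=\overline{\gamma_\varepsilon\gamma_i}\cup\overline{\gamma_i\gamma_{i+1}}$, knead it onto $\overline{\gamma_\varepsilon\gamma_{i+1}}$ inside an envelope built from Proposition \ref{2linear-knead-to-1linear}, and check that this envelope meets $X\setminus Y$ only along the boundary arcs, so that Remark \ref{kneading-envelope-remark} applies (the paper packages the Case-1/Case-2 supporting envelopes into the $\theta$-kneading envelope of Definition \ref{kneading-envelope}). The difficulty is that your verification of the disjointness, which you correctly identify as the core of the proof, contains two genuine errors.

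First, the slab argument for the edge $\overline{\gamma_{i-1}\gamma_\varepsilon}$ is wrong. In coordinates with $\overrightarrow{\gamma_\varepsilon\gamma_{i+1}}=(1,0,0)$, the vector $\overrightarrow{\gamma_\varepsilon\gamma_i}$ makes angle approximately $\pi-\theta_i$ with the positive $x$-axis, where $\theta_i=\angle\gamma_{i-1}\gamma_i\gamma_{i+1}$; hence $\overrightarrow{\gamma_\varepsilon\gamma_{i-1}}=-\overrightarrow{\gamma_\varepsilon\gamma_i}$ has $x$-component approximately $\cos\theta_i$, which is \emph{positive} whenever $\theta_i<\pi/2$. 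In that case the edge $\overline{\gamma_{i-1}\gamma_\varepsilon}(t)$ enters the slab $\{x\ge x_\varepsilon(t)\}$. It still avoids the envelope, but for a different reason (the convex envelope has aperture $\approx\pi-\theta_i+O(\delta)<\pi$ at the vertex $\gamma_\varepsilon(t)$, while the edge leaves in the direction antipodal to $\overrightarrow{\gamma_\varepsilon\gamma_i}$), and note that $\theta_i<\pi/2$ is exactly the regime where you are in Case 2 of Proposition \ref{2linear-knead-to-1linear} and must repeat the analysis for both sub-envelopes, which you do not do.

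Second, and more seriously, the claim that uniform $C$-LNE of the links forces non-adjacent edges to stay at outer distance $\gtrsim t^\alpha$ from $\overline{\gamma_i\gamma_{i+1}}$ is false. The lemma assumes nothing about $tord(\gamma_{i+1},\gamma_{i+2})$ or $tord(\gamma_{i-2},\gamma_{i-1})$: if $tord(\gamma_{i+1},\gamma_{i+2})=\alpha'>\alpha$, then $\gamma_{i+2}(t)$ lies within $O(t^{\alpha'})=o(t^\alpha)$ of the corner $\gamma_{i+1}(t)$ of the envelope, and $C$-LNE only yields $\|p-q\|\ge\frac{1}{C}d_{X(t)}(p,q)$, where the inner distance from a point of $\overline{\gamma_{i+2}\gamma_{i+3}}(t)$ to a point of $\overline{\gamma_i\gamma_{i+1}}(t)$ near $\gamma_{i+1}(t)$ can itself be $o(t^\alpha)$. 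So no choice of $\delta$ makes your ``transverse extent versus separation'' comparison work near the corners. The paper's argument is pointwise and anchored at $\gamma_i(t)$: a point of $X\setminus Y$ landing in the triangle part of the envelope is within $\varepsilon t^\alpha$ of $\gamma_i(t)$ in the outer metric, whereas its inner distance in $X(t)$ to $\gamma_i(t)$ is at least $\min\{\|\gamma_{i-1}(t)-\gamma_i(t)\|,\|\gamma_i(t)-\gamma_{i+1}(t)\|\}\gtrsim t^\alpha$ — this is where the hypothesis that \emph{both} edges at $\gamma_i$ have tangency order exactly $\alpha$ enters, and it dictates the choice $\varepsilon_0<\min\{b_{i-1},b_i\}/(4C)$; a point in the quadrilateral part is compared with its projection $r$ onto $\overline{\gamma_i\gamma_{i+1}}(t)$ parallel to $\overline{\gamma_\varepsilon\gamma_i}(t)$, using the small aperture at $\gamma_{i+1}(t)$ to obtain $\|q-r\|<\frac{1}{C}\|r-\gamma_{i+1}(t)\|\le\frac{1}{C}d_{X(t)}(q,r)$. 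Some version of this corner analysis is indispensable and is missing from your proposal.
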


\begin{proof}
Since $\alpha = tord(\gamma_{i},\gamma_{i+1})= tord(\gamma_{i-1},\gamma_{i})$, there are $b_{i},b_{i -1}>0$ such that
$$\| \gamma_i(t) - \gamma_{i+1}(t) \| = b_{i}.t^{\alpha}+o(t^{\alpha}) \; ; \; \| \gamma_{i-1}(t) - \gamma_{i}(t) \| = b_{i-1}.t^{\alpha}+o(t^{\alpha}).$$
Then, for every $t>0$ small enough, $\|\gamma_{i}(t) - \gamma_{i+1}(t)\| > \frac{b_{i}}{2}.t^{\alpha}$ and $\|\gamma_{i-1}(t) - \gamma_{i}(t)\| > \frac{b_{i-1}}{2}.t^{\alpha}$, and since $(X,0)$ is LNE, by Theorem \ref{Edson-Rodrigo} there is $C>1$ such that $X(t)$ is $C$-LNE. Furthermore, as $0 < \angle \gamma_{i-1} \gamma_i \gamma_{i+1} < \pi$, we have that there is $M>1$ such that $\sin{(\theta_i (t))} > \frac{2}{M}$, where $\theta_i (t) = \angle \gamma_{i-1} (t) \gamma_i (t) \gamma_{i+1} (t)$, and also that $\angle \gamma_{\varepsilon} \gamma_{i+1} \gamma_i$ is a decreasing function on $\varepsilon$, with $\angle \gamma_{\varepsilon} \gamma_{i+1} \gamma_i \rightarrow 0$ when $\varepsilon \rightarrow 0$.

Consider $\varepsilon_0 < \frac{b_i}{4C}, \frac{b_{i-1}}{4C}$ such that $\angle \gamma_{\varepsilon_0} (t) \gamma_{i+1} ( t) \gamma_i (t) < \arcsin{\left( \frac{1}{2MC} \right)}$ and $\varepsilon \in (0,\varepsilon_0)$. Given $\theta>0$ small enough, the $\theta$-kneading envelope $V_{\theta}(Y)$ of $Y=\overline{\gamma_{\varepsilon} \gamma_i } \cup \overline{ \gamma_i \gamma_{i+1}}$ is $(\gamma_{\varepsilon}, \gamma_+, \gamma_{i+1}, \gamma_-)$, where $\gamma_+$ is the boundary arc of $V_{\theta}(Y)$ closest to $\gamma_i$. Define $\gamma(t)$ as the intersection of the lines $\overleftrightarrow{\gamma_{\varepsilon}(t) \gamma_i (t)}$ and $\overleftrightarrow{\gamma_{+}(t) \gamma_{i+ 1}(t)}$. Take $\theta$ small enough such that, for every $t>0$, and for any point $p$ in the triangle $\gamma_+ (t) \gamma_{\varepsilon} (t) \gamma (t )$ and any point $q$ in the quadrilateral $\gamma_{\varepsilon} (t) \gamma(t) \gamma_{i+1} (t) \gamma_{-}(t)$,

\begin{itemize}
\item $\|p- \gamma_i(t) \| \le \| \gamma_{\varepsilon} (t) - \gamma_i(t) \| = \varepsilon.t^{\alpha};$
\item $\angle q \gamma_{i+1}(t) \gamma_i(t) < \beta$, where $\beta \in (0,\frac{\pi}{2})$ is such that $ \sin{(\theta_i(t) \pm \beta)} > \frac{1}{M}$ and $\sin{(\beta)} < \frac{1}{MC}$. Such an angle $\beta$ always exists, because $\sin{\theta_i(t)}>\frac{2}{M} > \frac{1}{M}$ and
$$\beta < \angle \gamma_{\varepsilon_0} (t) \gamma_{i+1} (t) \gamma_i (t) + 2\theta<\arcsin{\left( \frac{1}{2MC} \right)} + 2 \theta < \arcsin{\left( \frac{1}{MC} \right)};$$
\item $\| q - r \| \le \| \gamma(t) - \gamma_{\varepsilon} (t) \| = \| \gamma(t) - \gamma_{i} (t) \| + \| \gamma_i(t) - \gamma_{\varepsilon} (t) \| < 2\varepsilon.t^\alpha$, where $r$ is the point on the segment $\overline{\gamma_{i+1}(t) \gamma_i(t)}$ such that $\overline{qr} $ and $\overline {\gamma_{\varepsilon}(t) \gamma_i (t)}$ are parallel.
\end{itemize}

\begin{figure}[h]
\centering
\includegraphics[width=10cm]{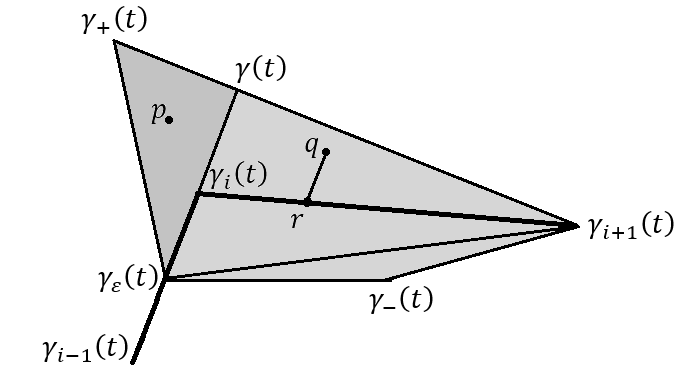}
\label{fig22}
\caption{Proof of the Lemma \ref{reduction-lemma1}.}
\end{figure}

We prove that $V_{\theta}(Y)(t) \cap (X\setminus Y)(t) = \emptyset$. Suppose the contrary. Then, there is at least one point at such intersection. We will divide the analysis in two cases.

\textit{Case 1:} the intersection point $p$ is in triangle $\gamma_+ (t) \gamma_{\varepsilon} (t) \gamma (t)$. In this case, we have

$$\|p- \gamma_i(t) \| \le \varepsilon . t^{\alpha} < \frac{1}{C} \left( \frac{b_i}{2}.t^{\alpha} \right) < \frac{1}{C} \| \gamma_i (t) - \gamma_{i+1}(t) \|,$$
$$\|p- \gamma_i(t) \| \le \varepsilon . t^{\alpha} < \frac{1}{C} \left( \frac{b_{i-1}}{2}.t^{\alpha} \right) < \frac{1}{C} \| \gamma_{i-1} (t) - \gamma_{i}(t) \|.$$
Since $d_{X(t)}(p, \gamma_i(t)) \ge \min \{ \| \gamma_i (t) - \gamma_{i+1}(t) \|, \| \gamma_{i-1} (t) - \gamma_{i}(t) \| \}$, it follows that $d_{X(t)}(p, \gamma_i(t)) > C\|p- \gamma_i(t) \|$, a contradiction.

\textit{Case 2}: the intersection point $q$ is in the quadrilateral $\gamma_{\varepsilon} (t) \gamma(t) \gamma_{i+1} (t) \gamma_{-}(t )$. In this case, we have

$$\|q- r \| \le 2\varepsilon.t^{\alpha} \le \frac{1}{C}.\frac{b_{i-1}}{2}.t^{\alpha} < \frac{1}{ C}\| \gamma_{i-1} (t) - \gamma_{i}(t) \|.$$
For $q$ in the triangle $\gamma_i(t) \gamma(t) \gamma_{i+1}(t)$, we have
$$\angle r q \gamma_{i+1}(t) = \pi - \angle qr \gamma_{i+1}(t) - \angle r \gamma_{i+1}(t) q = $$
$$=\theta_i(t) - \angle r \gamma_{i+1}(t) q \in (\theta_i(t) - \beta, \theta_i(t)).$$
For $q$ in the quadrilateral $\gamma_i(t) \gamma_{\varepsilon}(t) \gamma_{-}(t) \gamma_{i+1}(t)$, we have
$$\angle r q \gamma_{i+1}(t) = \pi - \angle qr \gamma_{i+1}(t) - \angle r \gamma_{i+1}(t) q =$$
$$= (\pi - \theta_i(t))-\angle r \gamma_{i+1}(t) q \in (\pi - \theta_i(t) - \beta, \pi - \theta_i(t )).$$
In any case, $\sin{(\angle r q \gamma_{i+1}(t))}>\frac{1}{M}$. If $h$ is the distance from $r$ to the line $\overleftrightarrow{\gamma_{i+1}(t)q}$, then
$$\|q- r \| = \frac{h}{\sin{\angle r q \gamma_{i+1}(t)}} <M.h =$$
$$= M\| r - \gamma_{i+1}(t)\|\sin{(\angle q\gamma_{i+1}(t) \gamma_i(t))} < \frac{1}{C}\| r - \gamma_{i+1}(t)\|.$$
Since $d_{X(t)}(q, r) \ge \min \{ \| r - \gamma_{i+1}(t) \|, \| \gamma_{i-1} (t) - \gamma_{i}(t) \| \}$, it follows that $d_{X(t)}(q,r) > C\|q- r\|$, a contradiction.

Therefore, from Remark \ref{kneading-envelope-remark}, $(X,0)$ is ambient bi-Lipschitz equivalent to the polygonal surface germ $(\gamma_1, \dots, \gamma_{i-1}, \gamma_{ \varepsilon}, \gamma_{i+1}, \dots, \gamma_n)$.
\end{proof}

\begin{lemma} \label{reduction-lemma2}
Let $(X,0) = (\gamma_1,\dots,\gamma_n)$ be a (open or closed) LNE polygonal surface germ and let $i \in \{1,\dots,n\}$ such that
\begin{itemize}
\item $\overline{\gamma_{i-1}\gamma_i}$, $\overline{\gamma_{i}\gamma_{i+1}}$ are edge surfaces of $X$;
\item $tord(\gamma_{i-1},\gamma_i)>tord(\gamma_{i},\gamma_{i+1})$;
\item $0 < \angle \gamma_{k-1} \gamma_k \gamma_{k+1} < \pi$, for all $k$.
\end{itemize}
Then the following statements are true:
\begin{enumerate}
\item If $X$ is open $n$-gonal and $i=2$, then $(X,0)$ is ambient bi-Lipschitz equivalent to the germ of the open polygonal surface germ ($\gamma_1$, $\gamma_3$, $\dots$, $\gamma_n)$;
\item If $\overline{\gamma_{i-2} \gamma_{i-1}}$ is an edge surface of $(X,0)$ and $tord(\gamma_{i-2}, \gamma_{i-1} ) \le tord(\gamma_{i},\gamma_{i+1})$, then $(X,0)$ is ambient bi-Lipschitz equivalent to the germ of the (open or closed) polygonal surface germ ($\gamma_1 $, $\dots$, $\gamma_{i-1}$, $\gamma_{i+1}$, $\dots$, $\gamma_{n})$.
\end{enumerate}
\end{lemma}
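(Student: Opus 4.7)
The plan is to follow the same kneading-envelope template used in Lemma~\ref{reduction-lemma1}, but applied to the pair of edges meeting at $\gamma_i$, so as to remove the vertex $\gamma_i$ outright rather than merely move it. Concretely, I set $Y=\overline{\gamma_{i-1}\gamma_i}\cup\overline{\gamma_i\gamma_{i+1}}$ and aim to show that, for sufficiently small $\theta>0$, the $\theta$-kneading envelope $V_\theta(Y)$ from Definition~\ref{kneading-envelope} meets $X\setminus Y$ (as germs at $0$) only in $\gamma_{i-1}\cup\gamma_{i+1}$. If this holds, then by Remark~\ref{kneading-envelope-remark} $Y$ is kneadable into $\overline{\gamma_{i-1}\gamma_{i+1}}$ inside $V_\theta(Y)$, and extending the resulting isotopy by the identity on the complement via Proposition~\ref{extensão-bi-lip-invariante-bola} yields an ambient bi-Lipschitz equivalence between $X$ and $(X\setminus Y)\cup\overline{\gamma_{i-1}\gamma_{i+1}}$, which is $(\gamma_1,\gamma_3,\dots,\gamma_n)$ in case (1) and $(\gamma_1,\dots,\gamma_{i-1},\gamma_{i+1},\dots,\gamma_n)$ in case (2), as required.

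First I would analyze the asymptotic shape of the triangle $\gamma_{i-1}(t)\gamma_i(t)\gamma_{i+1}(t)$. Setting $\alpha=tord(\gamma_{i-1},\gamma_i)$ and $\beta=tord(\gamma_i,\gamma_{i+1})$ with $\alpha>\beta$, the side $\|\gamma_{i-1}(t)-\gamma_i(t)\|$ has order $t^\alpha=o(t^\beta)$ while the other two sides have order $t^\beta$. Combining the law of sines with $\angle\gamma_{i-1}\gamma_i\gamma_{i+1}\in(0,\pi)$ yields $\angle\gamma_{i-1}\gamma_{i+1}\gamma_i(t)\to 0$ and $\angle\gamma_i\gamma_{i-1}\gamma_{i+1}(t)\to\pi-\angle\gamma_{i-1}\gamma_i\gamma_{i+1}\in(0,\pi)$ as $t\to 0^+$, so in particular $tord(\gamma_{i-1},\gamma_{i+1})=\beta$ and $\overrightarrow{\gamma_{i-1}\gamma_{i+1}}(t)$ shares its limit with $\overrightarrow{\gamma_i\gamma_{i+1}}(t)$. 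By Theorem~\ref{Edson-Rodrigo} fix $C>1$ such that $X(t)$ is $C$-LNE for $t\ll 1$.

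The disjointness check then splits into non-incident and incident edges. For any edge of $X\setminus Y$ that touches neither $\gamma_{i-1}$ nor $\gamma_{i+1}$, the envelope lies in a Euclidean ball of radius $O(t^\beta)$ around $0$, while such an edge has inner distance $\gtrsim t^\beta$ from both $\gamma_{i-1}(t)$ and $\gamma_{i+1}(t)$, hence outer distance $\gtrsim t^\beta/C$ by $C$-LNE; disjointness from the envelope is then automatic for $t$ small. The incident edges are at most $\overline{\gamma_{i+1}\gamma_{i+2}}$ (if the polygon extends past $\gamma_{i+1}$) and, only in case (2), $\overline{\gamma_{i-2}\gamma_{i-1}}$. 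Near $\gamma_{i+1}$ the envelope is a sector of opening $2\theta+o(1)$, because the triangle angle at $\gamma_{i+1}$ vanishes, so the hypothesis $\angle\gamma_i\gamma_{i+1}\gamma_{i+2}\in(0,\pi)$ easily separates $\gamma_{i+2}$ from $V_\theta(Y)$ for $\theta$ small. In case (1), no edge of $X$ emanates from $\gamma_{i-1}=\gamma_1$, so no further check is needed.

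The main obstacle is the remaining step, namely ruling out in case (2) that $\overline{\gamma_{i-2}\gamma_{i-1}}$ enters $V_\theta(Y)$ near $\gamma_{i-1}$. The envelope's sector at $\gamma_{i-1}$ has limiting opening $\pi-\angle\gamma_{i-1}\gamma_i\gamma_{i+1}+2\theta$, strictly less than $\pi$ for $\theta$ small, so a complementary sector of positive limiting measure exists, and I must show it contains $\overrightarrow{\gamma_{i-1}\gamma_{i-2}}$. Both hypotheses of case (2) come in here: $\angle\gamma_{i-2}\gamma_{i-1}\gamma_i\in(0,\pi)$ separates $\overrightarrow{\gamma_{i-1}\gamma_{i-2}}$ from $\overrightarrow{\gamma_{i-1}\gamma_i}$ by a positive limiting angle, while $tord(\gamma_{i-2},\gamma_{i-1})\le\beta$ ensures $\overline{\gamma_{i-2}\gamma_{i-1}}$ extends to outer distance $\gtrsim t^\beta$ from $\gamma_{i-1}$, so $\overrightarrow{\gamma_{i-1}\gamma_{i-2}}(t)$ stably approaches a limit direction. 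Separation of this direction from $\overrightarrow{\gamma_{i-1}\gamma_{i+1}}$ is then obtained by the same LNE contradiction used in Lemma~\ref{reduction-lemma1}: a hypothetical coincidence would place a point of $\overline{\gamma_{i-2}\gamma_{i-1}}(t)$ and a point of $\overline{\gamma_i\gamma_{i+1}}(t)$ at outer distance $\ll t^\beta$ but inner distance $\gtrsim t^\beta$, contradicting $C$-LNE of $X(t)$. With both separations in hand, shrinking $\theta$ makes $V_\theta(Y)$ locally disjoint from $\overline{\gamma_{i-2}\gamma_{i-1}}$ near $\gamma_{i-1}$, completing the disjointness check and hence the proof.
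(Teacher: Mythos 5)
Your overall strategy coincides with the paper's: knead $Y=\overline{\gamma_{i-1}\gamma_i}\cup\overline{\gamma_i\gamma_{i+1}}$ inside its $\theta$-kneading envelope and verify that $V_\theta(Y)$ meets $X\setminus Y$ only along the shared boundary arcs, then conclude via Remark \ref{kneading-envelope-remark}. Your asymptotic analysis of the triangle $\gamma_{i-1}(t)\gamma_i(t)\gamma_{i+1}(t)$ and your treatment of the edge $\overline{\gamma_{i+1}\gamma_{i+2}}$ are consistent with the paper. However, the step you yourself flag as the main obstacle --- ruling out that $\overline{\gamma_{i-2}\gamma_{i-1}}$ enters the envelope in case (2) --- is not actually closed by your argument. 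You show that the limit direction $\overrightarrow{\gamma_{i-1}\gamma_{i-2}}$ is \emph{distinct from each of the two boundary directions} of the sector of $V_\theta(Y)$ at $\gamma_{i-1}$ (distinct from $\overrightarrow{\gamma_{i-1}\gamma_i}$ by the angle hypothesis, and distinct from $\overrightarrow{\gamma_{i-1}\gamma_{i+1}}$ by an LNE argument). But being distinct from both boundary rays of a sector does not place a direction \emph{outside} the sector: the opening at $\gamma_{i-1}$ tends to $\pi-\angle\gamma_{i-1}\gamma_i\gamma_{i+1}+2\theta$, which can be arbitrarily close to $\pi$, and $\overrightarrow{\gamma_{i-1}\gamma_{i-2}}$ may point strictly into its interior. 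Your proposal produces no contradiction in that scenario.

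This is precisely the content of Claim \ref{tord-analysis} in the paper. If $\angle\gamma_{i-2}\gamma_{i-1}\gamma_i$ is strictly smaller than $\pi-\angle\gamma_{i-1}\gamma_i\gamma_{i+1}$, the ray toward $\gamma_{i-2}$ crosses the line of $\overline{\gamma_i\gamma_{i+1}}(t)$ at a point whose distance to $\gamma_{i-1}(t)$ has order $t^{tord(\gamma_{i-1},\gamma_i)}=o\bigl(t^{tord(\gamma_{i-2},\gamma_{i-1})}\bigr)$, so the crossing point lies on the actual edge $\overline{\gamma_{i-2}\gamma_{i-1}}(t)$ and the link fails to be a simple arc or circle --- a topological contradiction. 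In the boundary case $\angle\gamma_{i-2}\gamma_{i-1}\gamma_i=\pi-\angle\gamma_{i-1}\gamma_i\gamma_{i+1}$ no crossing is forced, and the paper needs a separate, more delicate LNE violation built from auxiliary collinear points at an intermediate scale $t^{\alpha_0}$ with $tord(\gamma_{i-2},\gamma_{i-1})<\alpha_0<tord(\gamma_{i-1},\gamma_i)$. Neither sub-argument appears in your proposal. A secondary, smaller issue: your disjointness check for non-incident edges (``the envelope lies in a ball of radius $O(t^\beta)$, such edges are at outer distance $\gtrsim t^\beta/C$'') does not follow by constants alone, since the quadrilateral part of the envelope has diameter comparable to $t^\beta$ with a constant that does not shrink as $\theta\to 0$; the paper instead compares each candidate point $q$ with its projection $r$ onto $\overline{\gamma_i\gamma_{i+1}}(t)$ and plays the graded bound $\|q-r\|<\frac{1}{C\tilde C}\|\gamma_{i+1}(t)-r\|$ against the inner-distance bound $d_{X(t)}(q,r)\ge\min\bigl\{\|\gamma_{i+1}(t)-r\|,\|\gamma_{i-2}(t)-\gamma_{i-1}(t)\|\bigr\}$, which itself relies on Claim \ref{tord-analysis} having already excluded $\overline{\gamma_{i-2}\gamma_{i-1}}$ from the envelope.
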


\begin{proof}
Let $\theta_i = \angle \gamma_{i-1} \gamma_i \gamma_{i+1}$ and $C>1$ such that $X(t)$ is $C$-LNE, for $t>0 $ (Theorem \ref{Edson-Rodrigo}). As in the proof of the previous lemma, given $\theta>0$ small enough, the $\theta$-kneading envelope $V_{\theta}(Y)$ of $Y=\overline{\gamma_{\varepsilon} \gamma_i } \cup \overline{\gamma_i \gamma_{i+1}}$ is $(\gamma_{\varepsilon}, \gamma_+, \gamma_{i+1}, \gamma_-)$, where $\gamma_+$ is the boundary arc of $V_{\theta}(Y)$ closest to $\gamma_i$. Let $\gamma(t)$ be the intersection of lines $\overleftrightarrow{\gamma_{i-1} \gamma_i (t)}$ and $\overleftrightarrow{\gamma_{+} \gamma_{i+1} (t) }$. Since $tord(\gamma_{i-1},\gamma_i)>tord(\gamma_{i},\gamma_{i+1})$, we have $\angle \gamma_{i-1} \gamma_{i +1} \gamma_i = 0$ and $\angle \gamma_{i+1} \gamma_{i-1} \gamma_i = \pi - \theta_i$.

We also have $\angle\gamma \gamma_i \gamma_{i+1} = \pi - \theta_i$, $\angle \gamma \gamma_{i+1} \gamma_i = \angle \gamma \gamma_{i-1 } \gamma_{+}=\theta$, $\angle \gamma_i \gamma \gamma_{i+1} = \theta_i - \theta$ and $\angle \gamma_{i-1} \gamma_{+} \gamma = \theta_i - 2\theta$, for $\theta <\frac{\theta_i}{2}$. Hence, if $tord(\gamma_i, \gamma_{i+1}) = \alpha$, then when we consider the arcs $\gamma_i$, $\gamma$, $\gamma_{i+1}$, we obtain
$$tord(\gamma_i, \gamma)=tord(\gamma, \gamma_{i+1})=\alpha \Rightarrow tord(\gamma_{i-1},\gamma)=\alpha,$$
and considering the arcs $\gamma_{i-1}$, $\gamma$, $\gamma_{+}$, we have, from $tord(\gamma_{i-1},\gamma)=\alpha$,
$$tord(\gamma_{i-1}, \gamma_{+})=tord(\gamma_{+}, \gamma)=\alpha.$$
\begin{figure}[h]
\centering
\includegraphics[width=10cm]{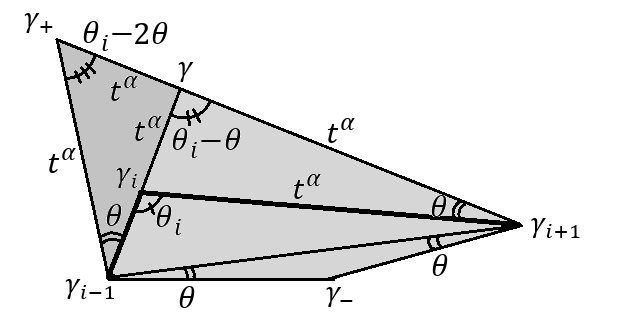}
\label{fig23a}
\caption{Determining orders of tangency in Lemma \ref{reduction-lemma2}.}
\end{figure}

Furthermore, there is $b_{\theta}>0$ such that the Puiseux expansion of $\|\gamma_{i-1}(t)- \gamma(t)\|$ is $b_{\theta}.t ^{\alpha}+o(t^{\alpha})$, with $b_{\theta} \to 0$ when $\theta \to 0$. Thus, for each $t>0$, the greatest distance connecting two points in the triangle $\gamma_{i-1}(t)\gamma_{+}(t)\gamma(t)$ has a Puiseux expansion equal to $ c_{\theta}.t^{\alpha}+o(t^{\alpha})$, with $c_{\theta}>0$ and $c_{\theta} \to 0$ when $\theta \to 0$. In particular, there exists $\theta>0$ such that, for $t>0$ and for every point $p$ in triangle $\gamma_+(t) \gamma_{i-1}(t) \gamma(t)$,
$$\|p-\gamma_i(t)\| < \frac{1}{C\tilde{C}}\|\gamma_i(t)-\gamma_{i+1}(t)\|  \; \; \; (*),$$
where $\tilde{C} \ge 1$ satisfies $\|\gamma_{i-2}(t) - \gamma_{i-1}(t)\| \ge \frac{1}{\tilde{C}}\|\gamma_{i-2}(t) - \gamma_{i-1}(t)\|$, for $t>0$ small ($ \tilde{C}$ exists, because $tord(\gamma_{i-2}, \gamma_{i-1}) \le tord(\gamma_{i},\gamma_{i+1})$). As it was done in the previous lemma, there is $\theta>0$ small enough such that every point $q$ in the quadrilateral $\gamma_{i-1}(t) \gamma(t) \gamma_{i+1} (t)\gamma_{-}(t)$,
$$ \| q - r \| < \frac{1}{C\tilde{C}}\| \gamma_{i+1}(t) - r \| \le \frac{1}{C\tilde{C}}\| \gamma_{i}(t) - \gamma_{i+1}(t) \| \; \; \; (**).$$

\begin{claim}\label{tord-analysis}
If $\overline{\gamma_{i-2}\gamma_{i-1}}$ is an edge surface of $(X,0)$ and $tord(\gamma_{i-2}, \gamma_{i-1}) < tord(\gamma_{i-1},\gamma_i)$, then there is $\theta>0$ small enough such that $\overline{\gamma_{i-2}\gamma_{i-1}}(t) \cap V_{\theta}(Y) = \{ \gamma_{i-1}(t)\}$, for $t>0$.
\end{claim}

\begin{proof}
Suppose the contrary, that is, for all $\theta>0$, there is $t>0$ small enough such that the ray $s(t)=\overrightarrow{\gamma_{i-1} (t) \gamma_ {i-2} (t)}$ intersects the interior of quadrilateral $\gamma_{i-1}(t) \gamma_{+}(t) \gamma_{i+1}(t) \gamma_{-}(t) $. Since $\angle \gamma_{i-2} \gamma_{i-1} \gamma_i >0$, we can take $\theta$ small enough such that $s(t)$ does not intersect the interior of triangle $\gamma_{ i-1}(t), \gamma_+(t), \gamma(t)$. Therefore, $s(t)$ intersect the interior of quadrilateral $\gamma_{i-1}(t), \gamma(t), \gamma_{i+1}(t), \gamma_{-}(t)$ , which implies $\angle \gamma_{i-2} \gamma_{i-1} \gamma_i \le \pi - \theta_i$. We now consider two cases.

\textit{Case 1:} If $\angle \gamma_{i-2} \gamma_{i-1} \gamma_i < \pi - \theta_i$, then $s(t)$ intersects $\overline{\gamma( t) \gamma_{i+1}(t)}$ at a point $\tilde{\gamma}(t)$. Let the arc $\tilde{\gamma} = \tilde{\gamma}(t)$. Because $\angle \gamma_i (t) \tilde{\gamma}(t) \gamma_{i-1}(t) = \pi - \angle \gamma_{i-2}(t) \gamma_{i-1 }(t) \gamma_i(t) - \theta_i >0$, we have
$$\alpha_1 = tord(\gamma_{i-1}, \gamma_i)=tord(\gamma_i, \tilde{\gamma})=tord(\tilde{\gamma},\gamma_{i-1}).$$
Hence, $\tilde{\gamma}(t)$ is the intersection of the segments $\overline{\gamma_{i-2}\gamma_{i-1}}(t)$ and $\overline{\gamma_{i }\gamma_{i+1}}(t)$, and it contradicts the fact that $X(t) \simeq [0,1]$ or $X(t) \simeq \mathbb{S}^1$.

\begin{figure}[h]
\centering
\includegraphics[width=9cm]{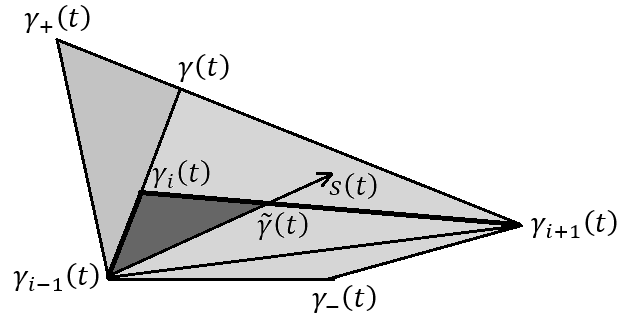}
\label{fig23}
\caption{Proof of Claim \ref{tord-analysis}, Case 1.}
\end{figure}

\textit{Case 2:} If $\angle \gamma_{i-2} \gamma_{i-1} \gamma_i = \pi - \theta_i$, let $\alpha_0 \in \mathbb{Q}_{\ge 1}$ such that $tord(\gamma_{i-2},\gamma_{i-1}),tord(\gamma_i, \gamma_{i+1}) < \alpha_0 < \alpha_1$ and let $\rho_0(t) \in \overline{\gamma_i(t) \gamma_{i+1}(t)}$ such that $\| \gamma_i(t) - \rho_0(t)\|=t^{\alpha_0}$. Consider $\rho_1(t) \in C_a^3(t)$ such that $\gamma_{i-1}(t) \gamma_i(t) \rho_0(t) \rho_1(t)$ is a parallelogram, let $\rho_2(t)$ be the intersection of lines $\overleftrightarrow{\rho_0(t) \rho_1(t)}$ and $\overleftrightarrow{\gamma_{i-2}(t) \gamma_{i-1}( t)}$, and define the arcs $\rho_k = \rho_k(t)$ ($k=0,1,2$). As $\angle \gamma_i \gamma_{i-1} \rho_1 = \pi - \theta_i = \angle \gamma_{i-2} \gamma_{i-1} \gamma_i = \angle \rho_{2} \gamma_ {i-1} \gamma_i$, we have $\angle \rho_2 \gamma_{i-1} \rho_1 = 0$ and $$\{\angle \rho_2 \rho_1 \gamma_{i-1} , \angle \rho_1 \rho_2 \gamma_{i-1}\} = \{ \theta_i, \pi - \theta_i\} \ne \{0,\pi \}.$$

\begin{figure}[h]
\centering
\includegraphics[width=10cm]{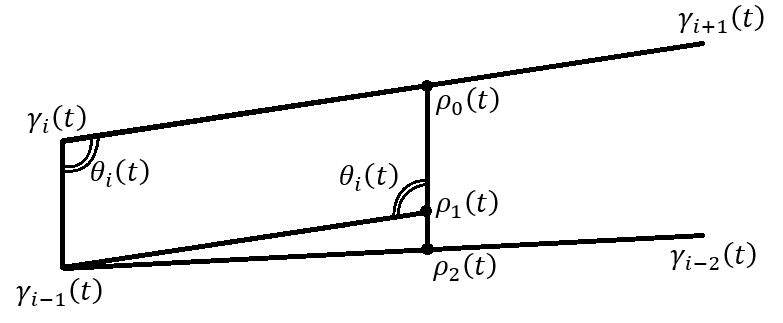}
\label{fig24}
\caption{Proof of Claim \ref{tord-analysis}, Case 2.}
\end{figure}
Thus, $tord(\gamma_{i-1}, \rho_2) = tord(\gamma_{i-1}, \rho_1) = tord(\gamma_{i}, \rho_0) = \alpha_0$, where $d_ {X(t)}(\rho_2(t), \rho_0(t))$ is
$$ \| \rho_2(t) - \gamma_{i-1}(t)\| + \|\gamma_{i-1}(t) - \gamma_i(t) \| + \|\gamma_i(t) - \rho_0(t) \| > \|\gamma_i(t) - \rho_0(t) \| = t^{\alpha_0}. $$
Furthermore, there is $\alpha_1 \in \mathbb{Q}_{\ge 1}$, $\alpha_1 > \alpha_0$, such that $tord(\rho_2, \rho_1) = \alpha_1$. Since $\alpha = tord(\rho_0, \rho_1)$ and $\rho_0(t)$, $\rho_1(t)$, $\rho_2(t)$ are collinear, it follows that there exists $b>0$ such that $\| \rho_2(t) - \rho_0(t)\| = b.t^{\tilde{\alpha}} + o(t^{\tilde{\alpha}})$, where $\tilde{\alpha}=\min\{\alpha,\alpha_1\} > \alpha_0 $. Therefore, there exists $0<t<\left( \frac{1}{2bC}\right)^{\frac{1}{\tilde{\alpha}-\alpha_0}}$ such that $\| \rho_2(t) - \rho_0(t)\| < 2b.t^{\tilde{\alpha}}$, and with this we get
$$\dfrac{\| \rho_2(t) - \rho_0(t)\|}{d_{X(t)}(\rho_2(t), \rho_0(t))}<\dfrac{2b.t^{\tilde{\alpha }}}{t^{\alpha_0}}=2b.t^{\tilde{\alpha}-\alpha_0}<\frac{1}{C},$$
a contradiction, since $X(t)$ is $C$-LNE. Then the claim is proved.
\end{proof}

Back to the proof of the Lemma, take $\theta>0$ satisfying $(*)$, $(**)$ and Claim \ref{tord-analysis}, in case the hypothesis $(2)$ is assumed. Let us show that $V_{\theta}(Y)(t) \cap (X\setminus Y)(t) = \emptyset$. Assume the opposite and divide the analysis into two cases.

\textit{Case 1:} the intersection point is $p$ in the triangle $\gamma_+ (t) \gamma_{i-1} (t) \gamma (t)$. If hypothesis $(1)$ is assumed, then
$$d_{X(t)}(p, \gamma_i(t)) \ge \|\gamma_{i}(t) - \gamma_{i+1}(t)\| \ge \frac{1}{\tilde{C}}\| \gamma_{i} (t) - \gamma_{i+1}(t) \|,$$
and if hypothesis $(2)$ is assumed, then
$$d_{X(t)}(p, \gamma_i(t)) \ge \min \{ \|\gamma_{i}(t) - \gamma_{i+1}(t)\|, \| \gamma_{i-2}(t) - \gamma_{i-1}(t)\| \} \ge \frac{1}{\tilde{C}}\| \gamma_{i} (t) - \gamma_{i+1}(t) \|.$$
In each case, we have
$$\|p- \gamma_i(t) \| < \frac{1}{C\tilde{C}}\| \gamma_{i} (t) - \gamma_{i+1}(t) \| \le \frac{1}{C}. d_{X(t)}(p, \gamma_i(t)),$$
a contradiction.

\begin{figure}[h]
\centering
\includegraphics[width=10cm]{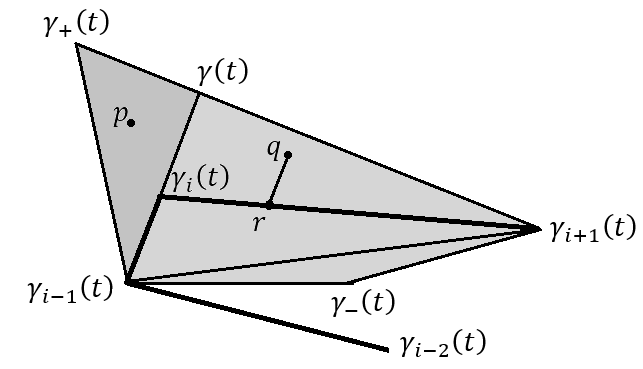}
\label{fig25}
\caption{Proof of Lemma \ref{reduction-lemma2}.}
\end{figure}

\textit{Case 2:} the intersection point is $q$ in quadrilateral $\gamma_{i-1} (t) \gamma(t) \gamma_{i+1} (t) \gamma_{-}(t )$. If hypothesis $(1)$ is assumed, then $d_{X(t)}(q, r) \ge \|\gamma_{i+1}(t) - r\| > C \| q-r \|$, and if the hypothesis $(2)$ is assumed, then $\|\gamma_{i-2}(t) - \gamma_{i-1}(t)\| \ge \frac{1}{\tilde{C}}\|\gamma_{i}(t) - \gamma_{i+1}(t)\| \ge \frac{1}{\tilde{C}}\|\gamma_{i+1}(t) - r\|$.
In any case, we obtain
$$d_{X(t)}(q, r) \ge \min \{ \|\gamma_{i+1}(t) - r\|, \|\gamma_{i-2}(t) - \gamma_{i-1}(t)\| \} \ge \frac{1}{\tilde{C}}\|\gamma_{i+1}(t) - r\|.$$
Thus, $\|q- r \| < \frac{1}{C\tilde{C}}\| \gamma_{i+1}(t) - r \| \le \frac{1}{C}d_{X(t)}(q, r)$, a contradiction. The result follows Remark \ref{kneading-envelope-remark}.
\end{proof}

\begin{lemma}\label{convert-non-degenerate}
Let $n \in \mathbb{N}_{\ge 3}$ and let $(X,0) = (\gamma_1 , \dots , \gamma_n)$ be a LNE, degenerate polygonal surface germ. Suppose there is $\alpha \in \mathbb{Q}_{\ge 1}$ such that all edge surfaces of $(X,0)$ are $\alpha$-H\"older triangles. If $0 <\angle \gamma_{i-1} \gamma_i \gamma_{i+1} < \pi$, for every $i$, then there is a non-degenerate polygonal surface germ $(\tilde{X},0)=(\gamma_1, \tilde \gamma_2, \dots, \tilde \gamma_{n-1}, \gamma_n)$ such that $(X,0)$ is ambient bi-Lipschitz equivalent to $(\tilde{X},0)$.
\end{lemma}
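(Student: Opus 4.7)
The plan is to reach a non-degenerate polygonal configuration by repeatedly applying Lemma \ref{reduction-lemma1} (together with its mirror version, obtained by swapping the roles of $\gamma_{i-1}$ and $\gamma_{i+1}$, whose hypotheses are symmetric) to perturb each intermediate arc $\gamma_i$, $i=2,\dots,n-1$, with a suitably small and generic parameter $\varepsilon_i$. Since only intermediate arcs are moved, the endpoints $\gamma_1$ and $\gamma_n$ are preserved, yielding the required form $(\gamma_1,\tilde\gamma_2,\dots,\tilde\gamma_{n-1},\gamma_n)$.

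First, I would verify that the hypotheses of Lemma \ref{reduction-lemma1} apply at every intermediate index: the common tangency order $\alpha$ is given, the angle condition $0<\angle\gamma_{i-1}\gamma_i\gamma_{i+1}<\pi$ holds, and the surrounding edges are edge surfaces of $X$. Set $v_\ell:=\lim_{t\to 0^+}\pi_z(\gamma_\ell(t))/t^\alpha\in\mathbb{R}^2$; then a ``left'' perturbation of $\gamma_j$ by amount $\varepsilon$ shifts $v_j$ along the line $v_{j-1}v_j$ by $\varepsilon\cdot\overrightarrow{v_jv_{j-1}}$, while a ``right'' perturbation shifts it along $v_jv_{j+1}$. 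By the angle hypothesis, these two lines are distinct.

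Next, I would translate non-degeneracy into a genericity condition in leading order. The triple $(i,j,k)$ is collinear for all small $t$ only if the leading-order points $v_i,v_j,v_k$ are collinear in $\mathbb{R}^2$ (the borderline case in which leading terms coincide collinearly but higher Puiseux terms break collinearity is handled by the same argument applied to the first non-cancelling order). Under this reduction, each collinearity condition becomes a polynomial equation in the $\varepsilon_\ell$'s. The crucial observation is that if both the left and right perturbation rays at $v_j$ lay on the line through $v_i, v_k$, then $v_{j-1},v_j,v_{j+1}$ would all lie on that line, contradicting the angle hypothesis at $j$. Hence, for every triple and every intermediate vertex in it, at least one perturbation direction is transverse to the line spanned by the other two leading-order points, so the corresponding collinearity polynomial is nontrivial.

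Finally, a dimension count concludes the proof. Fix a direction assignment $(d_2,\dots,d_{n-1})\in\{L,R\}^{n-2}$ such that every collinear triple admits a transverse perturbation at one of its constituent intermediate vertices; such an assignment exists by applying the previous observation triple by triple (each triple has at least one index in $\{2,\dots,n-1\}$, and at that vertex a transverse direction is available). For this assignment, the ``bad set'' in $(0,\varepsilon_0)^{n-2}$ producing some collinearity is a finite union of proper algebraic subvarieties, whose complement is nonempty and contains arbitrarily small parameters. Choosing $\varepsilon_i$ in this complement, small enough that throughout the iteration all angles stay in $(0,\pi)$ and tangency orders remain equal to $\alpha$, the iterated composition of ambient bi-Lipschitz equivalences provided by Lemma \ref{reduction-lemma1} yields the desired equivalence between $(X,0)$ and a non-degenerate polygonal surface $(\tilde X,0)$. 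The main obstacle is the combinatorial step of selecting a coherent direction assignment simultaneously compatible with every potential collinear triple, together with verifying that each iterative application maintains the hypotheses required for the next; the key leverage is that angles and tangency orders depend continuously on the $\varepsilon_i$, so sufficiently small choices suffice.
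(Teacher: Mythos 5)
Your overall strategy---perturb each intermediate vertex by a small generic amount using Lemma \ref{reduction-lemma1} while keeping $\gamma_1,\gamma_n$ fixed---is the same as the paper's, but the way you achieve genericity of the final positions differs in one crucial structural point, and that is where the gap lies. You move each intermediate vertex exactly once, along one of its two edge directions, so you must commit to a global direction assignment $(d_2,\dots,d_{n-1})\in\{L,R\}^{n-2}$ under which \emph{every} collinear triple has some intermediate vertex whose assigned direction is transverse to that triple's line. Your justification (``such an assignment exists by applying the previous observation triple by triple'') only shows that each triple, taken in isolation, can be fixed by a suitable choice at one of its vertices; it does not show simultaneous satisfiability. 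A given vertex $j$ may lie on several collinear triples supported on different lines through $v_j$, with the left direction at $j$ contained in some of those lines and the right direction contained in others, so neither value of $d_j$ serves all triples through $j$; whether the remaining intermediate vertices of those triples can always take up the slack is a genuine constraint-satisfaction problem (each clause is a disjunction, over the triple's intermediate vertices, of literals ``$d_\ell=L$'', ``$d_\ell=R$'' or ``true''), and you give no argument that it is always solvable. The paper sidesteps this entirely by perturbing each vertex \emph{twice}: the first move (along $\overline{\gamma_{i-1}\gamma_i}$) is used only to make the direction from the new vertex to $\gamma_{i+1}$ non-parallel to every $\overrightarrow{\gamma_j \gamma_k}$; the second move, along this now generic direction, then escapes all lines $\overleftrightarrow{\gamma_j(t)\gamma_k(t)}$ at once for all but finitely many values of the parameter, with no combinatorial choices to coordinate.

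A second, smaller gap is your reduction to leading order. When two non-adjacent vertices satisfy $tord(\gamma_i,\gamma_k)>\alpha$, the leading coefficients $v_i$ and $v_k$ coincide, so ``the line through $v_i,v_k$''---and hence the transversality dichotomy your whole argument rests on---is undefined; you defer this to ``the first non-cancelling order'' without carrying it out, and it is not clear that the key observation (at least one edge direction at $v_j$ avoids the line) has an analogue at that order. The paper again avoids this by working with the actual lines $\overleftrightarrow{\gamma_j(t)\gamma_k(t)}$ and the Puiseux expansion $\| \gamma(t)-\gamma_{j,k}(t)\| = a_{j,k}t^{\alpha_{j,k}}+o(t^{\alpha_{j,k}})$ of the distance to the intersection point, which is well defined whatever the pairwise tangency orders are. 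To repair your proof you would need either to prove the existence of a consistent direction assignment (together with the higher-order analysis), or to adopt the paper's two-step perturbation, which makes both difficulties disappear.
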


\begin{proof}
Perform the following algorithm successively for $i=2, \dots, n-1$.

\textit{Step 1:} choose $\varepsilon_1 >0$ small enough such that $\overrightarrow{\gamma_{\varepsilon_1} \gamma_{i+1}} \ne \pm \overrightarrow{\gamma_j \gamma_{k}}$ , for all distinct indices $j,k \in \{1,\dots,n\}$, and set $\gamma = \gamma_{\varepsilon_1}$.

\textit{Step 2:} choose $\varepsilon_2 >0$ small enough such that $\overrightarrow{\tilde{\gamma} \gamma_{i-1}} \ne \pm \overrightarrow{\gamma_j \gamma_{k}}$, for all distinct indices $j,k$ in $\{1,\dots,n\}$, and set $\tilde{\gamma} = \gamma_{\varepsilon_2}$. 

\textit{Step 3:} Label $\tilde{\gamma}$ as $\gamma_i$. If $i<n-1$, bo back to step 1, changing $i$ to $i+1$. If $i=n-1$, stop the algorithm.

\begin{figure}[h]
	\centering
	\includegraphics[width=14cm]{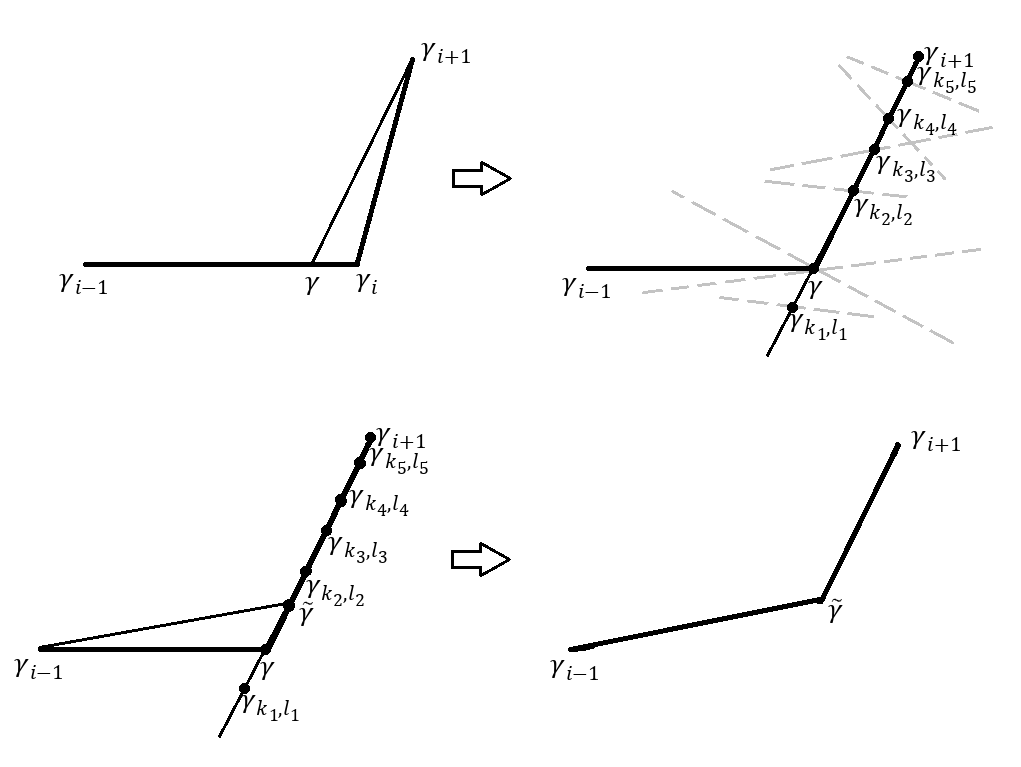}
	\label{fig29}
	\caption{transforming $(X,0)$ into a non-degenerate polygonal surface.}
\end{figure}

Step 1 is possible to perform, since $\pi > \angle \gamma_{i-1} \gamma_{i} \gamma_{i+1} >0$. By Lemma \ref{reduction-lemma1} there is $\varepsilon>0$ such that, for every $\varepsilon_1\in (0,\varepsilon)$, if $\gamma_{\varepsilon_1}(t) \in \overline{\gamma_{i-1}(t) \gamma_{i}(t) }$ is such that $\| \gamma_{\varepsilon_1}(t) - \gamma_i(t)\| = \varepsilon_1 . t^{\alpha}$ and $\gamma_{\varepsilon_1} = \gamma_{\varepsilon_1}(t)$. Then, $(X,0)$ is ambient bi-Lipschitz equivalent to the germ of the LNE polygonal surface germ $(\gamma_1,\dots, \gamma_{i-1},\gamma_{\varepsilon_1},\gamma_{i+1},\dots, \gamma_n)$. 

To see why Step 2 is possible to perform, for each $t>0$ and for all distinct indices $j,k$ in $\{1,\dots,n\}$, let $\gamma_{k, l}(t)$ the intersection of line $\overleftrightarrow{\gamma_k (t) \gamma_l(t)}$ with line $\overleftrightarrow{\gamma (t) \gamma_{i+1}(t)}$ . Since $\overrightarrow{\gamma \gamma_{i+1}} \ne \pm \overrightarrow{\gamma_j \gamma_{k}}$, such intersections always exists (it is also possible to have $\gamma_{k,l}(t) = \gamma(t)$). Let the arcs $\gamma_{j,k} =\gamma_{j,k}(t)$. For indices $j,k$ such that $(\gamma_{j,k},0) \ne (\gamma,0)$, there is $a_{j,k} >0$ and $\alpha_{j,k} \in \mathbb{Q}_{\ge 1}$ such that $\| \gamma(t) - \gamma_{j,k}(t)\| = a_{j,k}.t^{\alpha_{j,k}}+o(t^{\alpha_{j,k}})$. Since $\pi > \angle \gamma_{i-1} \gamma \gamma_{i+1} >0$, by Lemma \ref{reduction-lemma1} there is $\varepsilon>0$ such that, for every $\varepsilon_2\in (0,\varepsilon)$, if $\gamma_{\varepsilon_2}(t) \in \overline{\gamma(t) \gamma_{i+1}(t)}$ is such that $\| \gamma_{\varepsilon_2}(t) - \gamma(t)\| = \varepsilon_2 . t^{\alpha}$ and $\gamma_{\varepsilon_2} = \gamma_{\varepsilon_2}(t)$. Then $(X,0)$ is ambient bi-Lipschitz equivalent to the germ of the LNE polygonal surface germ $(\gamma_1,\dots, \gamma_{i-1},\gamma_{\varepsilon_2},\gamma_{i+1},\dots, \gamma_n)$. 

Finally, notice that when the algorithm is applied to $i$, $\gamma_i$ is transformed into an arc where $\gamma_i(t)$ does not belong to any line connecting $\gamma_j(t)$ to $\gamma_k(t)$, with $j,k \ne i$ being distinct indices in $\{1,\dots, n\}$. Since the arcs $\gamma_1, \gamma_n$ remained fixed, $(X,0)$ is ambient bi-Lipschitz equivalent to the germ of a non-degenerate polygonal surface $(\tilde{X},0)=(\gamma_1, \tilde \gamma_2, \dots, \tilde \gamma_{n-1}, \gamma_n)$, which shares the arcs $\gamma_1, \gamma_n$ with $(X,0)$.
\end{proof}

\section{Edge Reductions on Polygonal Surfaces}

Now we show that if the link of a LNE surface in $\R^3$ is a segment then the germ is ambient bi-Lipschitz equivalent to a H\"older triangle, and if the link is homeomorphic to a circle, then the corresponding reduction  procedure makes the link to be triangular, showing that it is ambient bi-Lipschitz equivalent to a either a horn or a cone. In other words, this section is devoted to show that every LNE surface germ with isolated singularity and connected link is ambient Lipschitz trivial.

\begin{lemma}\label{reduction-collinear}
Let $(X,0) = (\gamma_1 , \dots , \gamma_n)$ be a LNE, degenerated polygonal surface germ and consider $\gamma_{n+1} = \gamma_1$ when $X$ is a closed polygonal surface. If $\angle \gamma_{i-1} \gamma_i \gamma_{i+1} = \pi$, for some $i$, then $(X,0)$ is ambient bi-Lipschitz equivalent to $(\gamma_1 , \dots, \gamma_{i-1}, \gamma_{i+1}, \dots, \gamma_n)$.
\end{lemma}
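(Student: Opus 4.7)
The plan is to apply Proposition \ref{2linear-knead-to-1linear} to the sub-surface germ $(Y,0)=(\overline{\gamma_{i-1}\gamma_i}\cup\overline{\gamma_i\gamma_{i+1}},0)$, obtaining an ambient bi-Lipschitz equivalence between $(Y,0)$ and $(\overline{\gamma_{i-1}\gamma_{i+1}},0)$, and then to extend this equivalence to an ambient equivalence of all of $(X,0)$ by arranging that the associated kneading envelope of $Y$ is thin enough to avoid $X\setminus Y$.

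First I would verify that the germ $(Y,0)$ is itself LNE, since this is the hypothesis of Proposition \ref{2linear-knead-to-1linear}. The assumption $\angle\gamma_{i-1}\gamma_i\gamma_{i+1}=\pi$ forces $\lim_{t\to 0^+}\angle\gamma_{i-1}(t)\gamma_i(t)\gamma_{i+1}(t)=\pi$, so for all sufficiently small $t$ the cosine of this angle is at most $1-2/T^2$ with $T$ uniformly close to $1$. Lemma \ref{abertura-limitada} then yields $\|p-\gamma_i(t)\|+\|\gamma_i(t)-q\|\le T\,\|p-q\|$ for every $p\in\overline{\gamma_{i-1}(t)\gamma_i(t)}$ and $q\in\overline{\gamma_i(t)\gamma_{i+1}(t)}$, so $Y(t)$ is uniformly $T$-LNE, and Theorem \ref{Edson-Rodrigo} upgrades this to LNE of the germ.

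Once $Y$ is known to be LNE, Proposition \ref{2linear-knead-to-1linear} supplies an ambient bi-Lipschitz map $\Phi\colon(C_a^3,0)\to(C_a^3,0)$ with $\Phi(Y)=\overline{\gamma_{i-1}\gamma_{i+1}}$, and by Remark \ref{kneading-envelope-remark} this $\Phi$ can be arranged to be the identity outside a kneading envelope $V_\theta(Y)$, for any sufficiently small $\theta>0$. It then suffices to choose $\theta$ so that $V_\theta(Y)\cap(X\setminus Y)=\emptyset$, because $\Phi$ would fix $X\setminus Y$ pointwise and $\Phi(X)=\overline{\gamma_{i-1}\gamma_{i+1}}\cup(X\setminus Y)=(\gamma_1,\dots,\gamma_{i-1},\gamma_{i+1},\dots,\gamma_n)$.

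The hard part will be exactly this last disjointness, handled by the contradiction strategy of Lemmas \ref{reduction-lemma1} and \ref{reduction-lemma2}. Theorem \ref{Edson-Rodrigo} provides $C>1$ such that $X(t)$ is uniformly $C$-LNE. The envelope $V_\theta(Y)(t)$ is a convex quadrilateral that collapses onto the segment $\overline{\gamma_{i-1}(t)\gamma_{i+1}(t)}$ as $\theta\to 0$ and pinches to a point at each of the vertices $\gamma_{i-1}(t)$, $\gamma_{i+1}(t)$. Any hypothetical $p\in(X\setminus Y)(t)\cap V_\theta(Y)(t)$ would therefore sit at arbitrarily small outer distance from one of those two vertices while its inner distance in $X(t)$ to that vertex is bounded below by the planar link length of the preceding (respectively, following) edge surface, contradicting $C$-LNE for $\theta$ small enough. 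The only delicate sub-case is when one of the adjacent arcs, say $\gamma_{i-2}$ or $\gamma_{i+2}$, happens to be asymptotically collinear with $\overrightarrow{\gamma_{i-1}\gamma_{i+1}}$; there one must compare the orders of tangency of the adjacent arcs against the shrinking width of $V_\theta(Y)$, exactly in the spirit of Claim \ref{tord-analysis}, to recover the contradiction.
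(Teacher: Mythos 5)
Your overall route coincides with the paper's: treat $Y=\overline{\gamma_{i-1}\gamma_i}\cup\overline{\gamma_i\gamma_{i+1}}$ as a degenerate two-edge piece, knead it onto $\overline{\gamma_{i-1}\gamma_{i+1}}$ inside an arbitrarily thin envelope, and use the uniform $C$-LNE of the links $X(t)$ (Theorem \ref{Edson-Rodrigo}) to show the envelope misses $X\setminus Y$. Your preliminary verification that $(Y,0)$ is LNE via Lemma \ref{abertura-limitada} is correct, and is a point the paper does not spell out (the paper instead observes that, after a rotation, $Y$ is a $\delta$-bounded convex synchronized triangle for every $\delta>0$ and invokes Proposition \ref{synch-triangle-is-kneadable} together with the argument of Proposition \ref{linear-decomposition}).

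The disjointness step, however, is argued incorrectly, and this is the step you yourself flag as the hard part. A point $p\in(X\setminus Y)(t)\cap V_\theta(Y)(t)$ need \emph{not} be at small outer distance from $\gamma_{i-1}(t)$ or $\gamma_{i+1}(t)$: the envelope is a long thin quadrilateral, and $p$ can sit near its middle, at distance comparable to $\|\gamma_{i-1}(t)-\gamma_{i+1}(t)\|$ from both vertices. Conversely, for the points that genuinely are close to a vertex --- those on the adjacent edges $\overline{\gamma_{i-2}\gamma_{i-1}}(t)$ and $\overline{\gamma_{i+1}\gamma_{i+2}}(t)$ --- the inner distance to that vertex \emph{equals} the outer distance, so it is not bounded below by any edge length and your contradiction evaporates. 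The correct comparison (the one in Proposition \ref{linear-decomposition}, which the paper's proof cites) is relative rather than absolute: for $p$ in the envelope, the perpendicular to the chord $\overline{\gamma_{i-1}(t)\gamma_{i+1}(t)}$ through $p$ meets $Y(t)$ at a point $p'$ with $\|p-p'\|$ at most a constant times $\theta$ times the distance $d$ from $p'$ to the nearer vertex (the envelope pinches linearly at its two ends), whereas every path in $X(t)$ from $p\notin Y(t)$ to $p'\in Y(t)$ must pass through $\gamma_{i-1}(t)$ or $\gamma_{i+1}(t)$, so $d_{X(t)}(p,p')\gtrsim d$; choosing $\theta$ with $\tan\theta<1/C$ contradicts $C$-LNE. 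The adjacent edges are excluded near the pinch points by a separate, angle-based observation: LNE forces $\angle\gamma_{i-2}\gamma_{i-1}\gamma_i>0$ and $\angle\gamma_i\gamma_{i+1}\gamma_{i+2}>0$ (Remark \ref{angle-linear-triangle}), so for small $\theta$ these straight edges leave the convex corner cones of the envelope immediately and cannot re-enter. Your closing appeal to Claim \ref{tord-analysis} is not the right tool here: that claim compares orders of tangency of non-linear configurations, whereas in this lemma all edges are linear and the issue is purely one of angles.
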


\begin{proof}
Since $\angle \gamma_{i-1} \gamma_i \gamma_{i+1} = \pi$, we have $\angle \gamma_i \gamma_{i-1} \gamma_{i+1} = \angle \gamma_i \gamma_{i+1} \gamma_{i-1} = 0$, so there is $\theta>0$ small enough such that the $\theta$-kneading envelope $V_{\delta}(Y)$ of $(Y,0)=(\overline{\gamma_{i-1}\gamma_i}\cup \overline{\gamma_i\gamma_{i+1}},0)$ does not intersect $(X \setminus Y,0)$, by the same arguments of Proposition \ref{linear-decomposition} (by a rotation of axes, if necessary, $(Y,0)$ can be seen as a $\delta$-bounded convex synchronized triangle germ, for each $\delta>0$). Therefore, $(Y,0)$ is kneadable in $V_{\delta}(Y_1)$ and hence $(X,0)$ is ambient bi-Lipschitz equivalent to the germ of $(\gamma_1 , \dots, \gamma_{i-1}, \gamma_{i+1}, \dots, \gamma_n)$.

\begin{figure}[h]
\centering
\includegraphics[width=15cm]{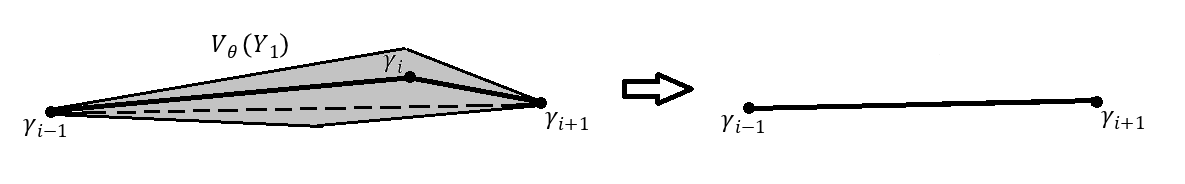}
\label{fig18}
\caption{Proof of Lemma \ref{reduction-collinear}.}
\end{figure}
\end{proof}

\begin{lemma} \label{triangulacao}
Let $n \in \mathbb{N}_{\ge 3}$ and let $P_1, P_2, \dots, P_n$ points in the plane, with any three of then non collinear, such that $X$ is either a simple, open polygonal line with edges $\overline{P_1P_2}, \dots, \overline{P_{n-1}P_n}$, or a simple, closed polygonal line with edges $\overline{P_1P_2}, \dots, \overline{P_{n-1}P_n}, \overline{P_nP_1}$. Then there is a set $I$ of pairs of indices $( i,j )$, with $1\le i < j \le n$, such that:
\begin{itemize}
\item $\cup_{\{i,j\} \in I} \overline{P_i P_j}$ are all the edges of a triangulation of the convex hull of $X$;
\item $(1,2),\dots,(n-1,n) \in I$, if $X$ is a open polygonal line;
\item $(1,2),\dots,(n-1,n), (1,n) \in I$, if $X$ is a closed polygonal line.
\end{itemize}
\end{lemma}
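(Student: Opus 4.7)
The plan is to reduce the problem to the classical triangulation theorem for simple polygons via a uniform construction handling the open and closed cases at once. Let $H$ denote the convex hull of $\{P_1, \ldots, P_n\}$ and let $\partial H$ be its boundary polygon, whose vertices form a subset of $V = \{P_1, \ldots, P_n\}$. Form the planar straight-line graph $G$ with vertex set $V$ and edge set $E(G) = E_0 \cup E_{\partial H}$, where $E_0$ is the set of edges of $X$ and $E_{\partial H}$ the set of edges of $\partial H$. Because $X$ lies in $H$, the edges of $E_0$ lie in $H$ as well; the general-position hypothesis (no three $P_i$ collinear) rules out any edge of $X$ lying along $\partial H$ except as an edge of $\partial H$ itself, so $G$ is a simple planar straight-line graph. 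Moreover $G$ is connected: in the closed case $X$ itself is a cycle containing at least one vertex on $\partial H$, and in the open case $X$ is a path passing through every convex-hull vertex of $V$.

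By standard planar-graph topology, the bounded faces of $G$ are simply connected open regions whose closures cover $H$, and the boundary of each such face is a simple cycle in $G$ using only vertices of $V$. Hence each bounded face is a simple polygonal region in general position with at least three vertices from $\{P_1,\ldots,P_n\}$. Applying the classical polygon triangulation theorem (Meisters' two-ears theorem, via induction on the number of vertices of the polygon) to each such face produces a triangulation whose vertices lie among the boundary vertices of the face and whose edges have the form $\overline{P_iP_j}$. Taking the union of all these local triangulations, together with the edges of $G$ itself, yields a triangulation of $H$ whose edge set $I$ satisfies $I \supseteq E_0$, which is precisely what is required; in the open case the extra edges are those needed both to fill in $\partial H$ and to subdivide each bounded face.

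The main obstacle I anticipate is verifying carefully that the bounded faces of $G$ are indeed simple polygons in general position, specifically, that each boundary cycle consists only of vertices of $V$ with no three of them collinear, and that distinct bounded faces share only complete edges of $G$. Both properties follow from $G$ being a connected simple planar straight-line graph together with the general-position hypothesis on the $P_i$, but they require some careful topological bookkeeping (in particular, to exclude degenerate situations such as vertices of $V$ being isolated or repeated on a single face boundary). Once these are in place, the polygon triangulation theorem applied face-by-face delivers the full triangulation, and the gluing along shared edges of $G$ is automatic.
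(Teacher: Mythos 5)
Your approach---forming the planar straight-line graph $G = X \cup \partial H$ and triangulating each bounded face---is genuinely different from the paper's, which argues by induction on the number of edges of $X$, inserting the edge $\overline{P_{n-1}P_n}$ into a triangulation of the remaining configuration and locally re-triangulating the regions that this edge crosses. Your route would be cleaner if it worked as stated, but there is a genuine gap at exactly the step you flag and then dismiss: the bounded faces of $G$ need \emph{not} be simple polygons, and simplicity of the face boundaries does not ``follow from $G$ being a connected simple planar straight-line graph'' together with general position. Connectivity is not enough; for every face boundary to be a simple cycle one needs $G$ to be $2$-connected. In the closed case this holds because $X$ is a Hamiltonian cycle of $G$, but in the open case it fails whenever a terminal subpath of $X$ consists entirely of non-hull vertices. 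The simplest counterexample: $n=4$, with $P_1,P_2,P_3$ the hull vertices and $P_4$ interior to the triangle, and $X$ the path $P_1P_2P_3P_4$. Then $G$ has a single bounded face, namely the open triangle with the segment $\overline{P_3P_4}$ deleted; its boundary walk is $P_1P_2P_3P_4P_3P_1$, which traverses the bridge $\overline{P_3P_4}$ twice and repeats the vertex $P_3$. This face is only a weakly simple polygon (a polygon with a slit), so Meisters' two-ears theorem does not apply to it as stated.

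The gap is repairable, but it requires an actual argument rather than the appeal to connectivity: one can invoke the stronger standard fact that every planar straight-line graph in general position extends to a triangulation of the convex hull of its vertex set without added vertices; or extend the ear/induction argument to weakly simple polygons; or first augment $G$ with non-crossing edges so that every face boundary becomes a simple cycle (for instance, joining each degree-one endpoint and each cut vertex of $G$ to a visible vertex of the face containing it). Any of these fills the hole, but none is supplied by the hypotheses you cite, so as written the proof is incomplete in the open case.
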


\begin{proof}
We will prove the result by induction on the number of edges of $X$. The base case with 2 edges is trivial. For the inductive step, let $\tilde{X}$ be the open polygonal line with edges $\overline{P_1P_2}$, $\dots$, $\overline{P_{n-2}P_{n-1}}$ if $X$ is a open polygonal line, and let $\tilde{X}$ be the open polygonal line with edges $\overline{P_1P_2}$, $\dots$, $\overline{P_{n-2}P_{n-1}}$, $\overline{P_nP_1}$ if $X$ is a closed polygonal line. By hypothesis, there is a set $\tilde{I}$ of pairs of indices $( i,j ); 1\le i < j \le n$ satisfying the lemma conditions for the open, simple polygonal $\tilde{X}$. Let $Y$ be the convex hull of $\tilde{X}$, and suppose $Y$ has vertices $P_{i_1}, \dots ,P_{i_t}$ in counterclockwise order, with $1\le i_i < \dots < i_t \le n$.

Let $\ell = \overline{P_{n-1}P_n}$. Since no three vertices of $X$ are collinear, if there is $(i,j) \in \tilde{I}$ such that $\ell$ intersect $\overline{P_i P_j}$ in at least two points, then $\ell=\overline{P_i P_j}$ and taking $I=\tilde{I}$, the result follows by induction. Suppose now that, for every $(i,j)\in I$, $\ell$ intersects $\overline{P_i P_j}$ in at at most one point. We have two cases to consider.

\textit{Case 1:} $\overline{P_{n-1}P_n}$ don't have points in the interior of $Y$.

In this case, by cyclically relabeling $P_{i_1}$, \dots, $P_{i_t}$ if necessary, there is $1\le a <b\le  t$ such that $\overline{P_{i_a} P_n}$ and $\overline{P_{i_b} P_n}$ are edges of the convex hull of $X$. Therefore, the set $I = \tilde{I} \cup \{(i_a, n), (i_{a}+1,n), \dots, (i_b, n)\}$ satisfies the lemma conditions for $X$ and the result follows by induction.

\textit{Case 2:} $\overline{P_{n-1}P_n}$ have at least one point in the interior of $Y$.

In this case, $\ell$ intersect at least one triangle of the triangulation of $Y$. If $\ell$ don't intersect any edge of the triangulation, then $\ell$ is inside some triangle with vertices $P_{n-1}, P_a, P_n$. Therefore, the set $I = \tilde{I} \cup \{ (a, n), (b, n) \}$ satisfies the lemma conditions to $X$ and the result follows by induction.

If $\ell$ intersect at least one edge of the triangulation, let $\ell_1, \dots, \ell_s$ be such edges and $X_i =\ell_i \cap \ell$ ($i=1,\dots, s$). Without loss of generality, suppose $P_{n-1}, X_1, \dots, X_s, P_n$ are in this order on $\ell$. Let also $T_1, \dots, T_s$ be triangles of the triangulation of $Y$, with $T_1$ having vertex at $P_{n-1}$ and edge $\ell_1$ and, for $i=2, \dots, s$, $T_i$ having edges $\ell_{i-1}, \ell_i$. We have two subcases to consider:

\textit{Case 2.1:} If $P_n$ is in the interior of $Y$, let $T$ be triangle with edge $\ell_s$ that contain $P_n$ and suppose the vertices of $T$ are $P_u, P_v, P_w$, with $\ell_s = \overline{P_u P_v}$. Since $\ell_1 ,\dots, \ell_s$ intersect $\overline{P_{n-1}P_n}$, they are not edges of $X$. So, removing $\ell_1, \dots, \ell_s$ from the triangulation of $Y$ and adding $\overline{P_{n-1}P_n}, \overline{P_u P_n}, \overline{P_v P_n}, \overline{P_w P_n}$, we decompose $Y$ into several triangles and two polygons that can be triangulated by hypothesis. Joining this triangulation with the remaining triangles of the triangulation of $Y$, the result follows by induction.

\begin{figure}[h]
	\centering
	\includegraphics[width=15cm]{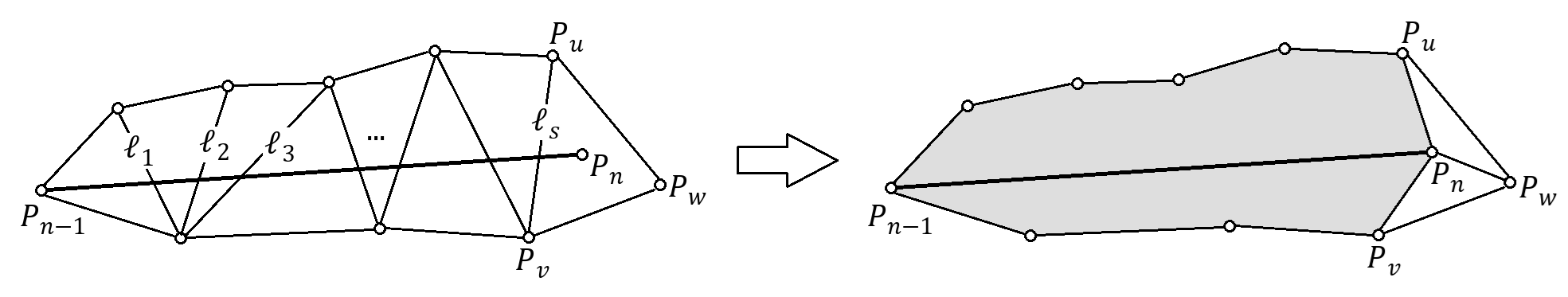}
	\label{tri1}
	\caption{Proof of Case 2.1 of Lemma \ref{triangulacao}.}
\end{figure}

\textit{Case 2.2:} If $P_n$ is not in the interior of $Y$, there is $1\le a <b\le  t$ such that $\overline{P_{i_a} P_n}$ and $\overline{P_{i_b} P_n}$ are edges of the convex hull of $X$ (cyclically relabeling $P_{i_1}$, \dots, $P_{i_t}$ if necessary). Since $\ell_1 ,\dots, \ell_s$ intersect $\overline{P_{n-1}P_n}$, they are not edges of $X$. So, removing $\ell_1, \dots, \ell_s$ from the triangulation of $Y$ and adding $\overline{P_{n-1}P_n}, \overline{P_{i_a} P_n}, \dots, \overline{P_{i_b} P_n}$, we decompose the convex hull of $X$ into several triangles and two polygons that can be triangulated by hypothesis. Joining this triangulation with the remaining triangles of the triangulation of $Y$, the result follows by induction.

\begin{figure}[h]
	\centering
	\includegraphics[width=15cm]{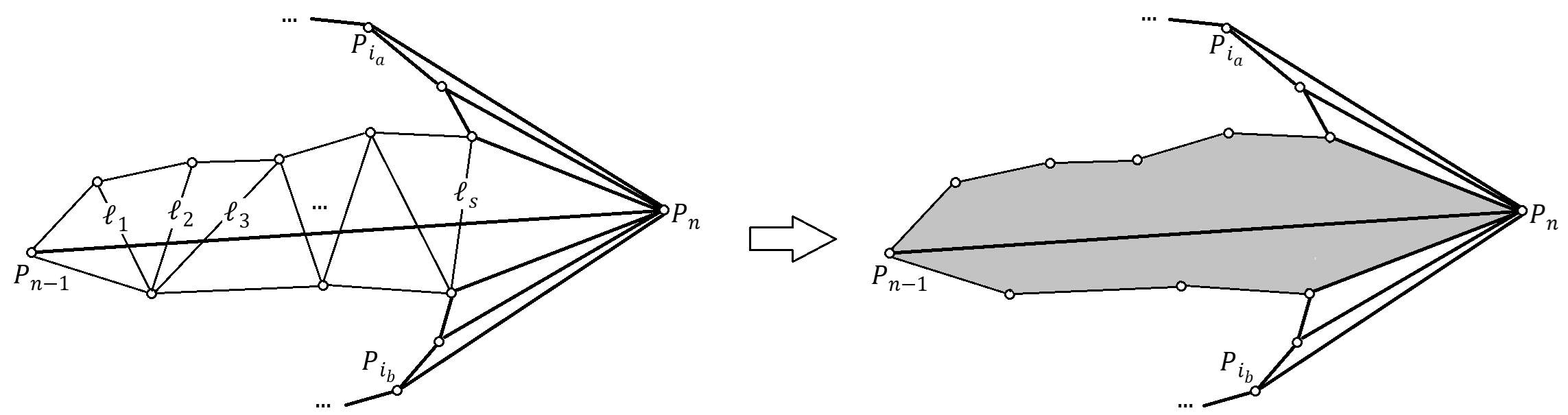}
	\label{tri1}
	\caption{Proof of Case 2.2 of Lemma \ref{triangulacao}.}
\end{figure}
\end{proof}

\begin{lemma}\label{reduction-triangulation}
Let $n\in \mathbb{N}_{\ge 3}$ and $(X,0) = (\gamma_1 , \dots , \gamma_n)$ a LNE, non degenerated polygonal surface and suppose there is $\alpha \in \mathbb{Q}_{\ge 1}$ such that all surface edges germs of $(X,0)$ are $\alpha$-H\"older triangles. Suppose $0 <\angle \gamma_{i-1} \gamma_i \gamma_{i+1} < \pi$, for each $i\in \{1,\dots,n\}$.
\begin{itemize}
\item If $X$ is a open polygonal surface, then $(X,0)$ is ambient bi-Lipschitz equivalent to $(\overline{\gamma_1 \gamma_n},0)$.
\item If $X$ is a closed polygonal surface, then $(X,0)$ is ambient bi-Lipschitz equivalent to a LNE, closed $3$-gonal surface germ.
\end{itemize}
\end{lemma}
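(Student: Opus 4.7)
I plan to prove both statements by induction on the number $n$ of vertices, with base case $n=3$. The closed case $n=3$ is immediate since $X$ is already a closed $3$-gonal surface. The open case $n=3$ follows from Lemma \ref{2linear-knead-to-1linear} applied to $Y = X = \overline{\gamma_1\gamma_2}\cup\overline{\gamma_2\gamma_3}$, which is LNE by hypothesis. For the inductive step with $n \ge 4$, the key idea is to identify an interior ``ear'' vertex $\gamma_i$ (in the open case, with $2 \le i \le n-1$), apply Lemma \ref{2linear-knead-to-1linear} together with Remark \ref{kneading-envelope-remark} to eliminate $\gamma_i$, and invoke the induction hypothesis on the resulting polygonal surface with $n-1$ vertices.

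To locate an ear, I consider the rescaled limit configuration $v_i := \lim_{t \to 0^+} t^{-\alpha}(x_i(t), y_i(t)) \in \R^2$, where $\gamma_i(t) = (x_i(t), y_i(t), t)$. Non-degeneracy ensures that these limits exist, are pairwise distinct, and have no three collinear, so they form a simple polygonal line (open or closed) in $\R^2$ with the same combinatorial type as $X(t)$ for small $t$. Applying Lemma \ref{triangulacao} to the convex hull of this polygonal line yields a triangulation in which all original edges $\overline{v_iv_{i+1}}$ appear. The dual graph of this triangulation is a tree, and each of its leaves corresponds to a triangle using two edges of the polygonal line (an ``ear triangle''), producing at least two ears. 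In the closed case, any of these ears supplies the desired vertex $\gamma_i$; in the open case with $n \ge 4$, at most two leaves can correspond to triangles containing $v_1$ or $v_n$, so there remains at least one ear triangle whose apex is an interior vertex $v_i$ with $2 \le i \le n-1$.

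Fix such an ear vertex $\gamma_i$ and set $Y = \overline{\gamma_{i-1}\gamma_i}\cup\overline{\gamma_i\gamma_{i+1}}$, which is LNE as a subset of the LNE surface $X$. By the ear property, in the limit configuration the closed triangle $v_{i-1}v_iv_{i+1}$ meets the remaining edges of the polygonal line only at $v_{i-1}$ and $v_{i+1}$. Since every pertinent distance (between vertices, or from a vertex to a non-incident edge) scales exactly like $t^\alpha$ by the equal-exponent hypothesis and non-degeneracy, I can choose $\theta > 0$ small enough so that the $\theta$-kneading envelope $V_\theta(Y)$ satisfies $(V_\theta(Y), 0) \cap (X \setminus Y, 0) = \{0\}$. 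Remark \ref{kneading-envelope-remark} then provides an ambient bi-Lipschitz equivalence between $(X,0)$ and $(X',0)$ where $X' = (\gamma_1,\dots,\gamma_{i-1},\gamma_{i+1},\dots,\gamma_n)$. The surface $X'$ remains LNE by Proposition \ref{im-LNE-is-LNE}; non-degeneracy is inherited; the new edge satisfies $tord(\gamma_{i-1},\gamma_{i+1})=\alpha$ because $\angle\gamma_{i-1}\gamma_i\gamma_{i+1}>0$; and the ear property combined with non-degeneracy ensures that the two affected angles, at $\gamma_{i-1}$ and $\gamma_{i+1}$, remain strictly in $(0,\pi)$. In the open case the endpoints $\gamma_1, \gamma_n$ are preserved since we removed an interior vertex, so the induction hypothesis applied to $X'$ yields ambient bi-Lipschitz equivalence with $(\overline{\gamma_1\gamma_n},0)$; in the closed case the induction terminates once the vertex count reaches $3$.

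The main obstacle is producing the uniform $\theta > 0$ in the kneading step: the ear property gives pointwise separation between $\gamma_{i-1}(t)\gamma_i(t)\gamma_{i+1}(t)$ and the other edges of $X(t)$ for each fixed $t$, but one must convert this into a single $\theta$ valid for all small $t$. This requires a Puiseux-expansion comparison of the distances between the ear triangle and the other edges, where the equal-exponent hypothesis ensures all quantities scale at the same rate $t^\alpha$ and non-degeneracy prevents cancellation of leading-order coefficients, so the ratio of the ``separation gap'' to the envelope size stays bounded away from zero.
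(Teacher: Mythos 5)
Your overall strategy (induct on $n$, triangulate the convex hull of the link so that every edge of $X(t)$ is an edge of the triangulation, locate an ``ear'' triangle $\gamma_{i-1}\gamma_i\gamma_{i+1}$ with two consecutive edges of $X$, and collapse it) is the same as the paper's, but two of your key steps fail as stated. The combinatorial step first: the dual graph of the triangulation is \emph{not} a tree in general, because vertices of $X(t)$ lying in the interior of the convex hull create cycles in the dual (a triangle with one interior point already gives a $3$-cycle); and a leaf of the dual, when one exists, is a triangle with two edges on the \emph{boundary of the convex hull}, which need not be edges of $X(t)$. The existence of a triangle with two highlighted edges is true, but the paper obtains it by double counting ($T=2n-k-2$ triangles versus $2n-a-2>T$ incidences of highlighted edges in the open case), not via dual leaves. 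A secondary problem is your rescaled limit configuration: $t^{-\alpha}(x_i(t),y_i(t))$ need not converge (the vertices are only $O(t)$, so you must recenter), and even after recentering three limit points can become collinear although $\gamma_i(t),\gamma_j(t),\gamma_k(t)$ are non-collinear for every fixed $t$; Lemma \ref{triangulacao} should be applied to $X(t)$ for fixed small $t$, as the paper does.

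The more serious gap is the kneading step. You claim non-degeneracy yields a uniform $\theta>0$ with $V_\theta(Y)\cap(X\setminus Y)=\{0\}$ for $Y=\overline{\gamma_{i-1}\gamma_i}\cup\overline{\gamma_i\gamma_{i+1}}$. But $V_\theta(Y)$ opens by the angle $\theta$ \emph{beyond} the diagonal $\overline{\gamma_{i-1}\gamma_{i+1}}$ at the corners, and non-degeneracy (which excludes collinearity only for each fixed $t$) does not prevent the limit angle $\angle\gamma_{i-2}\gamma_{i-1}\gamma_{i+1}$ from being $0$. Concretely, take $n=4$ with $\gamma_1(t)=(\tfrac{3t}{10},-t^2,t)$, $\gamma_2(t)=(0,0,t)$, $\gamma_3(t)=(\tfrac{t}{2},t,t)$, $\gamma_4(t)=(t,0,t)$: this open polygonal surface is LNE and non-degenerate, all edges are $1$-H\"older, the interior angles lie in $(0,\pi)$, and the ear is at $\gamma_3$; yet $\overline{\gamma_1\gamma_2}$ leaves $\gamma_2$ at angle $\arctan(10t/3)\to 0$ on the far side of the diagonal $\overline{\gamma_2\gamma_4}$, so its initial portion enters $V_\theta(Y)(t)$ for every fixed $\theta>0$ once $t$ is small. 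The same phenomenon breaks your induction: after deleting $\gamma_3$ the new angle at $\gamma_2$ is $\angle\gamma_1\gamma_2\gamma_4=0$, so $X'$ no longer satisfies the hypotheses of the lemma. This is precisely why the paper first applies Lemma \ref{reduction-lemma1} to pull $\gamma_{i+1}$ back to a point $\gamma_\varepsilon$ in the interior of the edge $\overline{\gamma_i\gamma_{i+1}}$: the new diagonal $\overline{\gamma_{i-1}\gamma_\varepsilon}$ then lies strictly inside the empty triangle $\gamma_{i-1}\gamma_i\gamma_{i+1}$, which simultaneously makes room for the kneading envelope and keeps the surviving angles strictly positive. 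Your argument needs this (or an equivalent) preliminary adjustment before the envelope can be chosen.
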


\begin{proof}
We will prove the result by induction on $n$ (in what follows, consider $\gamma_0=\gamma_n$ and $\gamma_{1} = \gamma_{n+1}$, if $(X,0)$ is a closed polygonal surface). The base case $n=3$ was already been studied in Proposition \ref{2linear-knead-to-1linear}. For the inductive step, consider a set $I$ of index pairs $( i,j ); 1\le i < j \le n$ such that
\begin{enumerate}
\item $(1,2),\dots,(n-1,n) \in I$, if $(X,0)$ is a open polygonal surface germ, and $(1,2),\dots,(n-1,n), (1,n) \in I$, if $X$ is a closed polygonal surface germ;
\item for every $t>0$ small enough, $\cup_{\{i,j\} \in I} \overline{\gamma_i(t)\gamma_j(t)}$ is a triangulation of the convex hull of the set $\{ \gamma_1(t), \dots, \gamma_n(t) \}$.
\end{enumerate}
Since $(X,0)$ is a non-degenerate polygonal surface germ and the topological type of $X(t)$ is invariant for small $t$ (Theorem \ref{conical}), $I$ exists, by Lemma \ref{triangulacao}.

\begin{figure}[h]
\centering
\includegraphics[width=13cm]{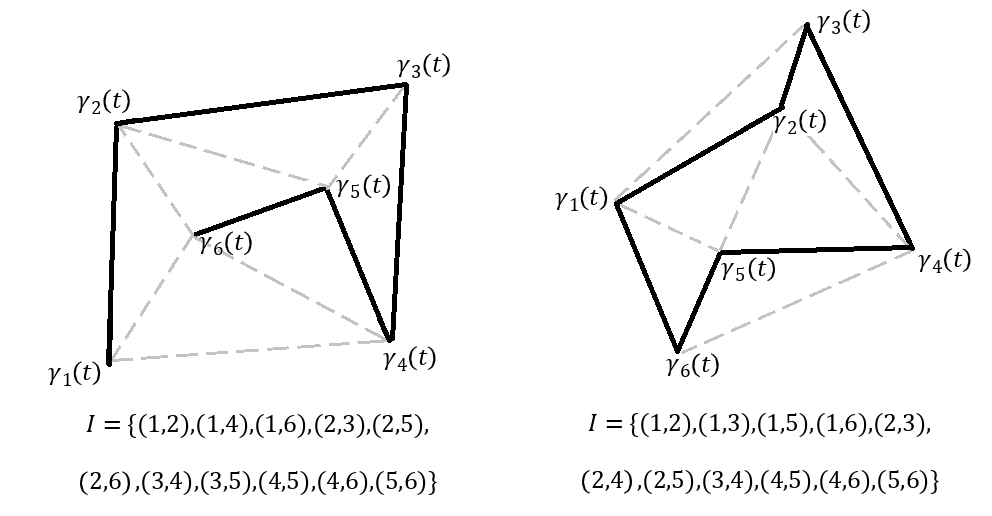}
\label{fig19}
\caption{Examples of polygonal surface link triangulations and their respective index pair sets $I$.}
\end{figure}

Consider the edges $\overline{\gamma_i \gamma_{i+1}}(t)$ in the link $X(t)$ as highlighted and suppose that the convex hull of $\{ \gamma_1(t), \dots, \gamma_n(t) \}$ is a $k$-gon, where $a$ of its $k$ edges are highlighted. For example, in the triangulation of the left diagram in Figure 33, the highlighted edges are $\overline{\gamma_1(t)\gamma_2(t)}$, $\dots$, $\overline{\gamma_5(t)\gamma_6( t)}$ and
$$I=\{ (1,2),(1,4),(1,6),(2,3),(2,5),(2,6),(3,4),(3 ,5),(4,5),(4,6),(5,6)\}.$$
In the triangulation of the right diagram in Figure 33, the highlighted edges are $\overline{\gamma_1(t)\gamma_2(t)}$, $\dots$, $\overline{\gamma_5(t)\gamma_6(t) }$, $\overline{\gamma_6(t)\gamma_1(t)}$ and
$$I=\{ (1,2),(1,3),(1,5),(1,6),(2,3),(2,4),(2,5),(3 ,4),(4,5),(4,6),(5,6)\}.$$
Let $T$ be the number of triangles inside this convex hull determined by the triangulation. Calculating the sum of the internal angles of all these triangles, we have $T . \pi$ in total. On the other hand, when counting this total looking at the angles in the $k$-gon of the convex hull and the angles around each interior point, we have a total of $(k-2).\pi + (n-k).2\pi =(2n-k-2) .\pi$. Equalizing these quantities, we have that the total number of triangles is $T=2n-k-2$.

\begin{claim}
If $n\ge 4$, then among those $2n-k-2$ triangles, there is one of then with exactly 2 highlighted edges.
\end{claim}

\begin{proof}
Suppose the opposite, that is, each triangle has at most one highlighted edge (the case where a triangle can have 3 highlighted edges is possible only when $n=3$), and let us first analyze the case $X(t) \simeq [0,1]$. Counting the number of highlighted edges in each triangle, we have at most $T=2n-k-2$ highlighted edges. On the other hand, as there are $a$ highlighted edges on the boundary of the convex hull, there are $n-1-a$ highlighted edges on the interior of the convex hull. Therefore, as each edge in the interior was counted twice and each edge in the boundary was counted once, the total of the highlighted edges counted in each triangle is equal to $2(n-1-a)+a=2n-a-2$. Since $a<k$, we have $2n-a-2>2n-k-2$, contradiction.

In the case $X(t) \simeq \mathbb{S}^1$, the maximum number of highlighted edges is still $2n-k-2$, but counting in a similar way the highlighted edges in the boundary ($a$ edges) and the interior of the convex hull ($n-a$ edges), this total is equal to $2(n-a)+a=2n-a$. Since $a \le k$, we have $2n-a > 2n-k-2$, again a contradiction.
\end{proof}

Now consider such a triangle and suppose its highlighted edges are $\overline{\gamma_{i-1}(t)\gamma_{i}(t)}, \overline{\gamma_{i}(t)\gamma_{i+1}(t)}$. If $2\le i \le n-2$, by Lemma \ref{reduction-lemma1} there is $\varepsilon>0$ such that, if $\gamma_{\varepsilon}(t) \in \overline{\gamma_{i}(t) \gamma_{i+1}(t) }$ is such that $\| \gamma_{\varepsilon}(t) - \gamma_{i+1}(t)\| = \varepsilon . t^{\alpha}$ and $\gamma_{\varepsilon} = \gamma_{\varepsilon}(t)$, then $(X,0)$ is ambient bi-Lipschitz equivalent to the germ of the LNE (open or closed) polygonal surface $(\gamma_1,\dots, \gamma_{i},\gamma_{\varepsilon},\gamma_{i+2},\dots, \gamma_n)$. Since $\angle \gamma_{i+1} \gamma_{i-1} \gamma_{\varepsilon} > 0$, there exists a $\theta$-kneading envelope $V_{\theta}(Y)$ of $ Y=\overline{\gamma_{i-1}\gamma_i}\cup \overline{\gamma_i \gamma_{\varepsilon}}$, such that $V_{\theta}(Y)$ does not intersect any point of $( X \setminus Y,0)$, as the triangle $\gamma_{i-1} \gamma_{\varepsilon} \gamma_{i+1} (t)$ has no points of $X(t)$ in its interior.

\begin{figure}[h]
	\centering
	\includegraphics[width=15cm]{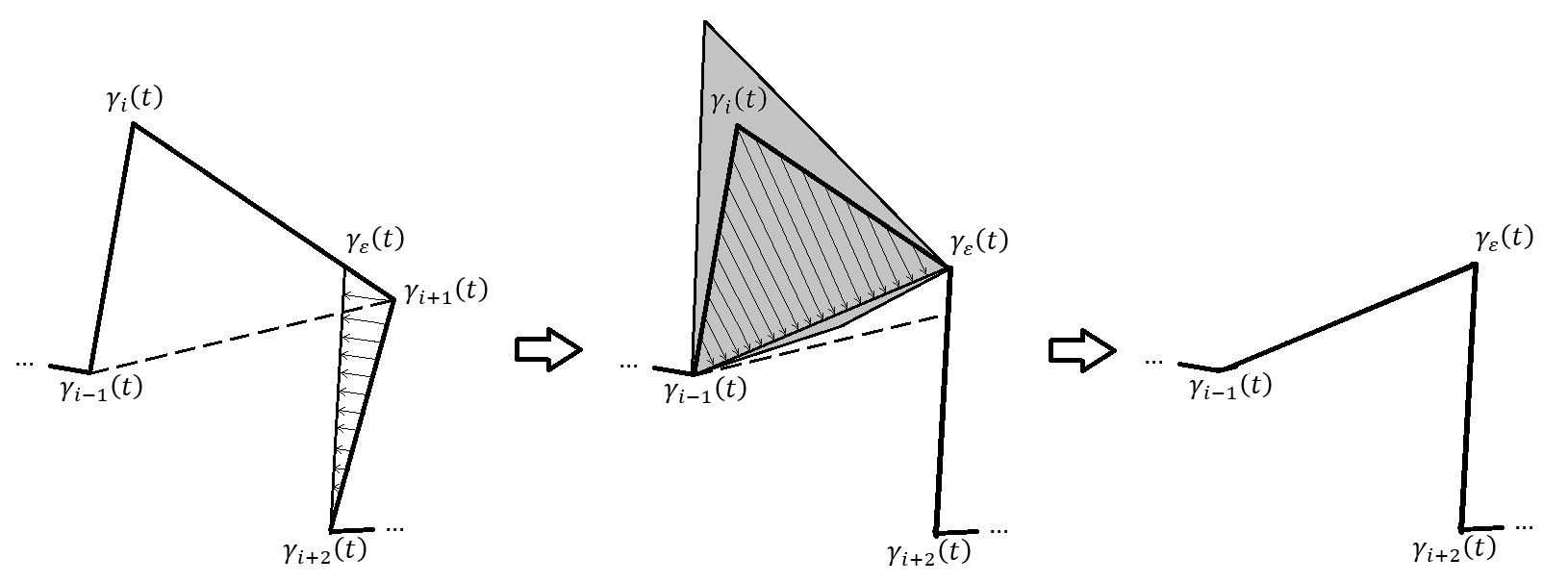}
	\label{fig26}
	\caption{Proof of Lemma \ref{reduction-triangulation}.}
\end{figure}

Therefore, $(Y,0)$ is kneadable in $V_{\theta}(Y)$ and $(X,0)$ is ambient bi-Lipschitz equivalent to the germ of the polygonal surface $(\gamma_1, \dots , \gamma_{i-1}, \gamma_{\varepsilon}, \gamma_{i+2}, \dots, \gamma_n)$, and the result follows by induction. This argument also works for every $i$ in the case $X(t) \simeq \mathbb{S}^1$, by renaming $\gamma_1,\dots,\gamma_n$ as $\gamma_{k+1},\dots,\gamma_{k+n}$, respectively, for some $k$ (where the indices are taken modulo $n$), and in the case $X(t) \simeq [0,1]$, this argument also works for $i=n-1,n$, by renaming $\gamma_1,\dots,\gamma_n$ as $\gamma_{n},\dots,\gamma_{1}$, respectively.
\end{proof}

\begin{proposition}\label{edge-reduction}
Let $a>0$, $n \in \mathbb{N}_{\ge 3}$ and let $(\gamma_1, \dots, \gamma_n)=(X,0) \subset (C_{a}^{3},0)$ be a LNE, polygonal surface germ, degenerated or not.
\begin{enumerate}
\item If $(X,0)$ is a open $(n-1)$-gonal, then $(X,0)$ is ambient bi-Lipschitz equivalent to $(\gamma_1 , \gamma_n)$.
\item If $(X,0)$ is a closed $n$-gonal, then $(X,0)$ is ambient bi-Lipschitz equivalent to a germ of a closed $3$-gonal LNE surface germ.
\end{enumerate}
\end{proposition}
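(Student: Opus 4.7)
The plan is to prove Proposition \ref{edge-reduction} by strong induction on the number of vertices $n$, combining the three reduction lemmas of this section with the finishing machinery already proved. The base cases are immediate: an open polygonal with $n=2$ is already $(\gamma_1,\gamma_2)$, and a closed polygonal with $n=3$ is already in the target form.

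For the inductive step, I first normalize the polygonal. If some interior vertex $\gamma_i$ has $\angle\gamma_{i-1}\gamma_i\gamma_{i+1}=\pi$, Lemma \ref{reduction-collinear} removes $\gamma_i$ up to ambient bi-Lipschitz equivalence, yielding an LNE polygonal with $n-1$ vertices to which the inductive hypothesis applies. So I may assume every angle $\angle\gamma_{i-1}\gamma_i\gamma_{i+1}$ lies in $(0,\pi)$.

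Next, if the edge tangency orders $\alpha_j := tord(\gamma_j,\gamma_{j+1})$ are not all equal, I plan to apply Lemma \ref{reduction-lemma2} somewhere. For an open polygonal, whenever $\alpha_1>\alpha_2$ case (1) directly removes $\gamma_2$, and by reversing the orientation the same works when $\alpha_{n-1}>\alpha_{n-2}$. Otherwise both endpoints ``start non-decreasingly'', and one searches among interior indices $i$ for a strict decrease $\alpha_{i-1}>\alpha_i$ satisfying the auxiliary inequality $\alpha_{i-2}\le\alpha_i$ of case (2). When a plateau of equal maximal tangencies obstructs direct application, the idea is to first apply Lemma \ref{reduction-lemma1} at some interior vertex with equal adjacent tangencies to shift it along a boundary arc, producing an ambient bi-Lipschitz equivalent polygonal in which the modified angular data allows case (2) to be invoked at a neighboring vertex. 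The closed case proceeds analogously with cyclic indexing. Each successful reduction gives a polygonal with $n-1$ vertices, invoking the inductive hypothesis.

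When all edge tangencies are equal to a common exponent $\alpha$, the polygonal may still be degenerate (collinear triples or degenerate angles), so I first apply Lemma \ref{convert-non-degenerate} to replace it by an ambient bi-Lipschitz equivalent non-degenerate polygonal with the same $\gamma_1$ and $\gamma_n$. Lemma \ref{reduction-triangulation} then concludes: in the open case it yields equivalence with $(\overline{\gamma_1\gamma_n},0)$, proving (1); in the closed case it yields equivalence with an LNE closed $3$-gonal surface germ, proving (2). The hardest part will be the combinatorial analysis in the non-constant-tangency stage: establishing that when case (2) of Lemma \ref{reduction-lemma2} fails at every candidate vertex one can still, via Lemma \ref{reduction-lemma1}, bring the polygonal into a configuration where a reduction becomes available. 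The LNE hypothesis is used here to control the geometry of tangency plateaus and to guarantee that the preparatory shifts do not create new obstructions.
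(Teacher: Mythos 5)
You correctly reproduce the skeleton of the paper's argument: induction on $n$, removal of $\pi$-angles via Lemma \ref{reduction-collinear}, the equal-order case via Lemma \ref{convert-non-degenerate} followed by Lemma \ref{reduction-triangulation}, and the endpoint reductions via Lemma \ref{reduction-lemma2}(1). The gap is in the step you yourself flag as the hardest: what to do when the maximal tangency order $\alpha$ is attained on a plateau $\alpha_{k_1}=\dots=\alpha_{k_2}=\alpha$ of length at least $2$ sitting strictly inside the polygonal. Your proposed mechanism --- apply Lemma \ref{reduction-lemma1} at an interior plateau vertex to ``shift it along a boundary arc'' so that case (2) of Lemma \ref{reduction-lemma2} becomes applicable nearby --- cannot work. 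Lemma \ref{reduction-lemma1} replaces $\gamma_i$ by a point $\gamma_\varepsilon$ at distance $\varepsilon t^{\alpha}$ on an adjacent edge, which leaves every tangency order $tord(\gamma_j,\gamma_{j+1})$ unchanged; but the hypotheses of Lemma \ref{reduction-lemma2}(2) are conditions on tangency orders ($tord(\gamma_{i-1},\gamma_i)>tord(\gamma_i,\gamma_{i+1})$ together with $tord(\gamma_{i-2},\gamma_{i-1})\le tord(\gamma_i,\gamma_{i+1})$), not on angles. At the right end of a plateau of length at least $2$ one has $\alpha_{k_2-1}=\alpha>\alpha_{k_2+1}$, so the auxiliary inequality fails at every candidate vertex and no vertex-shifting restores it.

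The paper's resolution is a genuinely different device, and it is the real content of the proof. One isolates the plateau inside a sub-polygonal $Z=(\tilde\gamma,\gamma_{k_1},\dots,\gamma_{k_2+1},\gamma)$, where $\gamma$ and $\tilde\gamma$ are points cut at distance $a_\alpha t^{\alpha}$ into the two adjacent, strictly longer edges; all edges of $Z$ then have order $\alpha$, so the equal-order case applies and reduces $Z$ to the single linear triangle $\overline{\tilde\gamma\gamma}$. The essential extra verification is that every kneading envelope used in that reduction lies in a ball $D_K(t)$ of radius $Kt^{\alpha}$ about the plateau, and that $D_K(t)$ is disjoint from $(X\setminus Z)(t)$ precisely because the adjacent edges have strictly smaller order (hence length much larger than $t^{\alpha}$) and $X(t)$ is uniformly $C$-LNE. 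A further nontrivial argument (comparing leading Puiseux coefficients via the triangle inequality) is needed in the closed case to show that such an interior plateau, bounded on both sides by strictly smaller orders, exists after cyclic relabeling; your ``analogous with cyclic indexing'' also elides this. Without the plateau-collapsing device your induction does not close.
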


\begin{proof}
The proof will be again by induction on $n$. In what follows, consider $\gamma_0=\gamma_n$, and $\gamma_{1} = \gamma_{n+1}$ if $(X,0)$ is a closed polygonal surface germ. The initial case $n=3$ has already been studied in Proposition \ref{2linear-knead-to-1linear}. For the inductive step, let's separate in several cases.

\textit{Case 1:} $\alpha_i = \alpha_j$, for all $i \ne j$. Let $\alpha$ be the common value of $\alpha_i$ and let us separate this case into two subcases.

\textit{Case 1.1:} if $(X,0)$ is a non-degenerate polygonal surface germ, the result follows by Lemma \ref{reduction-triangulation}.

\textit{Case 1.2:} if $(X,0)$ is a degenerate polygonal surface germ, then if $\angle \gamma_{i-1} \gamma_i \gamma_{i+1} = \pi$, for some $i$, then $(X,0)$ is ambient bi-Lipschitz equivalent to $(\gamma_1 , \dots, \gamma_{i-1}, \gamma_{i+1}, \dots, \gamma_n)$, by Lemma \ref{reduction-collinear}, and the result follows by induction. If $0 <\angle \gamma_{i-1} \gamma_i \gamma_{i+1} < \pi$, for every $i$, then by Lemma \ref{convert-non-degenerate} there is a non-degenerate polygonal surface $\tilde{X}$ with $n$ vertices such that $(X,0)$ is ambient bi-Lipschitz equivalent to $(\tilde{X},0)$, which shares the arcs $(\gamma_1,0), (\gamma_n,0)$ with $(X,0)$, reducing the proof to Case 1.1.

\textit{Case 2:} there are $i,j$ such that $\alpha_i \ne \alpha_j$. We will divide this case in more subcases.

\textit{Case 2.1:} $X(t) \simeq [0,1]$ and $\alpha_1 > \alpha_2$. By Lemma \ref{reduction-lemma2}, item $(1)$, $(X,0)$ is ambient bi-Lipschitz equivalent to the germ of the polygonal surface $(\gamma_1, \gamma_3, \dots, \gamma_n)$ , and the result follows by induction.

\textit{Case 2.2:} $X(t) \simeq [0,1]$ and $\alpha_{n-1} > \alpha_{n-2}$. This case is analogous to Case 2.1, by just renaming $\gamma_1,\dots,\gamma_n$ to $\gamma_n,\dots,\gamma_1$, respectively.

\textit{Case 2.3:} $X(t) \simeq [0,1]$ and $\alpha_{n-1} \le \alpha_{n-2}$, $\alpha_1 \le \alpha_2$. Let $\alpha = \max\{\alpha_1,\dots,\alpha_{n-1}\}$ and consider indices $1\le k_1 \le k_2 \le n-1$ such that $\alpha_{k_1} = \dots = \alpha_{k_2} = \alpha$, $\alpha_{k_1-1}<\alpha$, if $k_1 >1$, and $\alpha_{k_2+1}<\alpha$, if $ k_2 <n-1$. Let $C>1$ such that $X(t)$ is $C$-LNE, for every small $t>0$ (Theorem \ref{Edson-Rodrigo}). As $\alpha = tord(\gamma_{k_1},\gamma_{k_1+1})=\dots=tord(\gamma_{k_2 }, \gamma_{k_2+1})$, there is $a_{\alpha} >0$ such that $d_{X(t)}(\gamma_{k_1}(t), \gamma_{k_2+1}(t)) = a_{\alpha}.t^{\alpha} + o( t^{\alpha})$. Now we split this subcase into three subcases.

\textit{Case 2.3.1:} $k_1 = 1$. In this case, since $\alpha = \alpha_1 \le \alpha_2$ and there is $\alpha_i \ne \alpha_j$, we have $2\le k_2 \le n-2$. For $t>0$, let $\gamma(t) \in \overline{\gamma_{k_2+1}(t) \gamma_{k_2 +2}(t)}$ such that $\| \gamma_{k_2+1}(t) - \gamma(t) \|=a_{\alpha}.t^{\alpha}$. Notice that $\gamma(t)$ is well defined for all $t>0$ small enough, since $\alpha_{k_2 +1} < \alpha_{k_2}$. Let also $\gamma = \gamma(t)$ and let $(Y,0)$ be the open polygonal surface germ $(\gamma_1, \dots, \gamma_{k_2+1}, \gamma)$. By Case 1, $(Y,0)$ is ambient bi-Lipschitz equivalent to $\overline{\gamma_1, \gamma}$. Furthermore, there is $K>a_{\alpha}$ such that, if $D_K(t) = \{ p \in C_a^3(t) \mid \|p- \gamma_{k_2+1}(t)\| \le K.t^{\alpha}\}$ and $D_k=\bigcup_{t>0}D_K(t) \cup \{0\}$, then $(D_K,0)$ contains the germs of the $\theta$-kneading envelopes used to construct the ambient bi-Lipschitz map from $(Y,0)$ to $(\overline{\gamma_1 \gamma},0)$.

\begin{figure}[h]
\centering
\includegraphics[width=10cm]{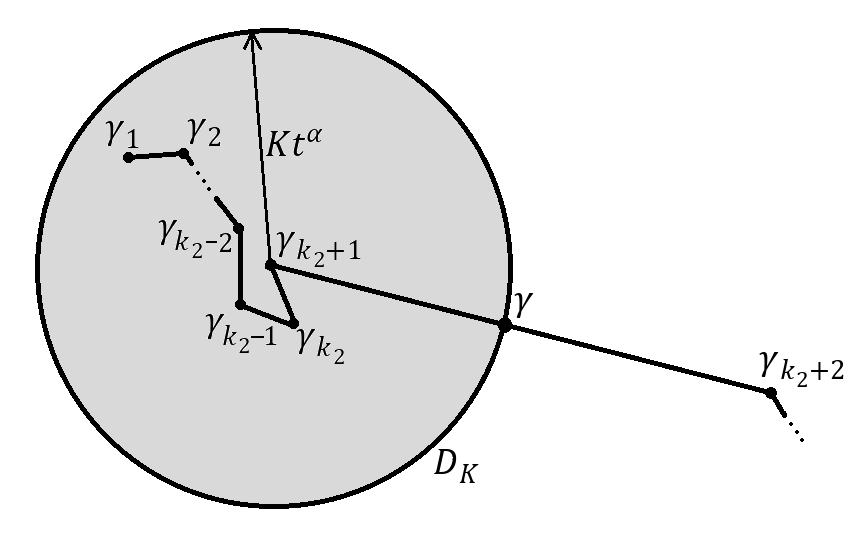}
\label{fig27}
\caption{Resolution of Case 2.3.1 of Proposition \ref{edge-reduction}.}
\end{figure}

Suppose that there is $p \in (X\setminus Y)(t)$ such that $p \in D_K(t)$. Then, $\|p-\gamma_{k_2+1}(t)\| \le K.t^{\alpha}$, but from $tord(\gamma_{k_2+1} , \gamma_{k_2+2})< \alpha$ we have $$d_{X(t)}(p,\gamma_{k_2+1}(t)) \ge \| \gamma_{k_2+1}(t) - \gamma_{k_2+2}(t)\| > C.K.t^{\alpha}.$$ 
This is a contradiction, and thus the $\theta$-kneading envelopes used to transform $(Y,0)$ into $(\overline{\gamma_1 \gamma},0)$ does not intersect $(X \setminus Y,0)$. Therefore $(X,0)$ is ambient bi-Lipschitz equivalent to the germ of the open polygonal surface germ $(\gamma_1, \gamma, \gamma_{k_2+2}, \dots, \gamma_n)$, and the result follows by induction.

\textit{Case 2.3.2:} $k_2 = n-1$. This case is analogous to Case 2.3.1, by just renaming $\gamma_1,\dots,\gamma_n$ to $\gamma_n,\dots,\gamma_1$, respectively.

\textit{Case 2.3.3:} $1 < k_1 \le k_2 < n-1$. Assume without loss of generality $\alpha_{k_1-1} \le \alpha_{k_2+1}$ (otherwise rename $\gamma_1,\dots,\gamma_n$ to $\gamma_n,\dots,\gamma_1$, respectively). If $k_1 = k_2$, then by Lemma \ref{reduction-lemma2}, item $(2)$, $(X,0)$ is ambient bi-Lipschitz equivalent to the open polygonal surface germ $(\gamma_1,\dots, \gamma_{k_1}, \gamma_{k_1 +2},\dots,\gamma_n)$ and the result follows by induction. If $k_1 < k_2$, for each $t>0$, let $\gamma(t) \in \overline{\gamma_{k_2+1}(t) \gamma_{k_2 +2}(t)}$ and $\tilde{\gamma}(t) \in \overline{\gamma_{k_1-1}(t) \gamma_{k_1}(t)}$ such that
$$\| \gamma_{k_2+1}(t) - \gamma(t) \|=\| \gamma_{k_1}(t) - \tilde{\gamma}(t) \|=a_{\alpha}t^{\alpha}.$$
Notice that $\gamma(t), \tilde{\gamma}(t)$ are well defined for all $t>0$ small enough, since $\alpha_{k_2 +1}, \alpha_{k_1-1 } < \alpha$. Let also $\gamma = \gamma(t)$, $\tilde{\gamma} =\tilde{\gamma}(t)$ and let $Z$ be the open polygonal surface $(\tilde{\gamma},\gamma_{k_1}, \dots, \gamma_{k_2+1}, \gamma)$. By Case 1, $(Z,0)$ is ambient bi-Lipschitz equivalent to $\overline{\tilde{\gamma} \gamma}$. Furthermore, similarly to Case 2.3.1, there exists $K>a_{\alpha}$ such that $(D_K,0)$ contains the germs of the $\theta$-kneading envelopes used to construct the ambient bi-Lipschitz map from $(Z,0)$ to $( \overline{\tilde{\gamma}\gamma},0)$.

\begin{figure}[h]
\centering
\includegraphics[width=13cm]{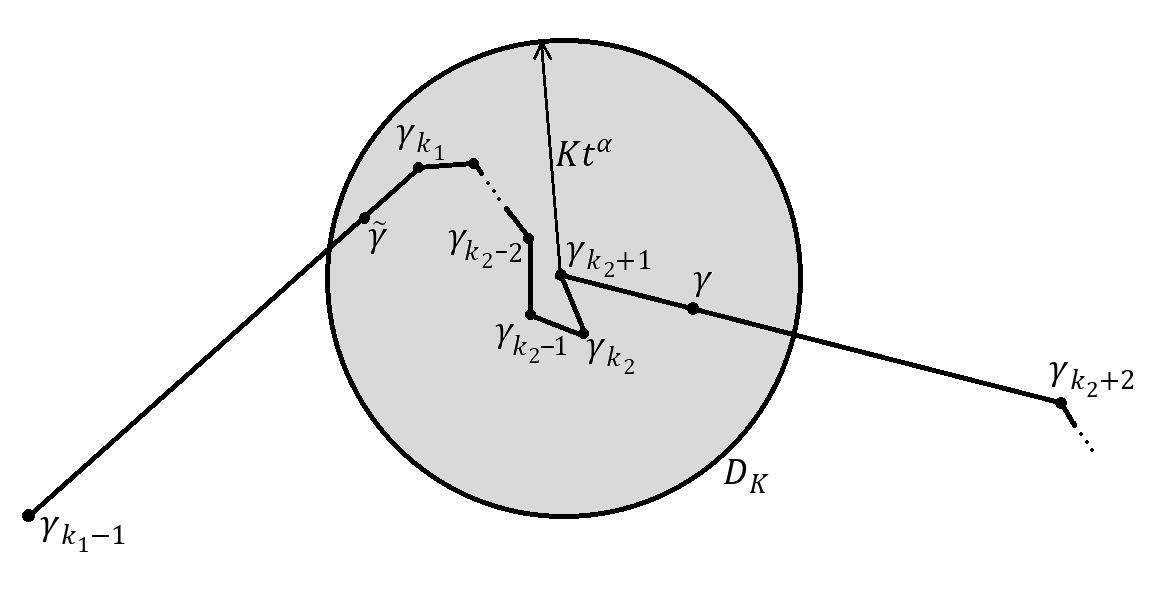}
\label{fig28}
\caption{Resolution of Case 2.3.3 of Proposition \ref{edge-reduction}.}
\end{figure}

Suppose there is $p \in (X \setminus Z)(t)$ such that $p \in D_K(t)$. Then, $\|p-\gamma_{k_2+1}(t)\| \le K.t^{\alpha}$. On the other hand, for small enough $t$,
$$\| \gamma_{k_2+1}(t) - \gamma_{k_2+2}(t)\|, \|\gamma_{k_1-1}(t) - \gamma_{k_1}(t) \| > C.Kt^{\alpha}.$$

Since $tord(\gamma_{k_2+1} , \gamma_{k_2+2}), tord(\gamma_{k_1 -1} , \gamma_{k_1})< \alpha$ and the path with minimum length in $X(t)$ connecting $p$ to $\gamma_{k_2+1}(t)$ contains the segment $\overline{\gamma_{k_1-1}\gamma_{k_1}} (t)$ or the segment $\overline{\gamma_{k_2+1}\gamma_{k_2+2}}(t)$, it follows that $d_{X(t)}(p,\gamma_{k_2+1}(t)) > C.K.t^{\alpha} \ge C. \|p-\gamma_{k_2+1}(t)\| $, a contradiction. Hence, the $\theta$-kneading envelopes used to construct the ambient bi-Lipschitz map from $(Z,0)$ to $(\overline{\tilde{\gamma} \gamma},0)$ does not intersect $(X \setminus Z,0)$. Therefore $(X,0)$ is ambient bi-Lipschitz equivalent to the germ of the open polygonal surface $(\gamma_1, \dots, \tilde{\gamma}, \gamma, \gamma_{k_2+2}, \dots, \gamma_n)$, and the result follows by induction.

\textit{Case 2.4:} $X(t) \simeq \mathbb{S}^1$. Let $\alpha = \max\{\alpha_1,\dots,\alpha_{n}\}$ and $\tilde{\alpha} = \min\{\alpha_1,\dots,\alpha_{n}\}$. For $i=1,\dots, n$, there is $a_i>0$ such that $\|\gamma_i(t) - \gamma_{i+1}(t)\| = a_i.t^{\alpha_i}+o(t^{\alpha_i})$. If $j \in \{1,\dots,n\}$ is such that $\tilde{\alpha} = \alpha_j$, since
$$\|\gamma_j(t) - \gamma_{j+1}(t)\| \ge \sum_{i \ne j} \|\gamma_i(t) - \gamma_{i+1}(t)\| \Rightarrow a_j.t^{\tilde{\alpha}} +o(t^{\tilde{\alpha}}) \ge \sum_{i \ne j} (a_i.t^{\alpha_i} +o( t^{\alpha_i})),$$
there are $i \ne j$ such that $\alpha_i = \tilde{\alpha}$. If $i = j+1$, $\angle \gamma_j \gamma_{j+1} \gamma_{j+2} >0$ implies that $tord(\gamma_j, \gamma_{j+2})= \tilde{\alpha}$, and thus there is $b>0$ such that $\|\gamma_j(t) - \gamma_{j+2}(t)\| = b.t^{\tilde{\alpha}} + o(t^{\tilde{\alpha}})$. Since we have
$$\|\gamma_j(t) - \gamma_{j+2}(t)\| \ge \sum_{i \ne j,j+1} \|\gamma_i(t) - \gamma_{i+1}(t)\| \Rightarrow b.t^{\tilde{\alpha}} +o(t^{\tilde{\alpha}}) \ge \sum_{i \ne j,j+1} (a_i.t^{\alpha_i} + o(t^{\alpha_i})),$$
it follows that there exists $k \ne \{j,j+1\}$ such that $\alpha_k = \tilde{\alpha}$. If $i=j-1$, analogously there is $k \ne \{j,j-1\}$ such that $\alpha_k = \tilde{\alpha}$. In any case, we can consider indices $1 < k_1 \le k_2 < n-1$ such that $\alpha_{k_1} = \dots = \alpha_{k_2} = \alpha$, $\alpha_{k_1-1}<\alpha$ and $\alpha_{k_2+1}<\alpha$, labeling $\gamma_1,\dots,\gamma_n$ as $\gamma_{1+l},\dots, \gamma_{n+l}$, if needed (indices modulo $n$). The rest of the proof is completely analogous to what was done in Case 2.3.3.
\end{proof}

\begin{theorem}\label{main}
	Let $(X,0) \subset (\mathbb{R}^3, 0)$ be a Lipschitz normally embedded semialgebraic surface germ.
	\begin{enumerate}
		\item If the link of $X$ is homeomorphic to $[0,1]$, then $(X,0)$ is ambient bi-Lipschitz equivalent to the germ of a standard $\alpha$-H\"older triangle embedded in $ \mathbb{R}^3$, with principal vertex at the origin, for some $\alpha \in \mathbb{Q}_{\ge 1}$.
		\item If the link of $X$ is homeomorphic to $\mathbb{S}^1$, then $(X,0)$ is ambient bi-Lipschitz equivalent to the standard $\beta$-horn germ $(H_\beta,0)$, for some $\beta \in \mathbb{Q}_{\ge 1}$.
	\end{enumerate}
\end{theorem}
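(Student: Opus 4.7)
The plan is to assemble the reductions developed in Sections 3--10 into a single chain, since the theorem is essentially a corollary once those propositions are in place. First, I would invoke Proposition \ref{suficiencia-em-cone} to reduce the problem to working inside a cone: after rotating so that the axis of the cone avoids the tangent directions of arcs in $V(X)$, the stereographic map $\psi$ of Proposition \ref{reduction} converts $(X,0)$ into an LNE surface germ inside some $(C_a^3,0)$ whose link is still homeomorphic to $[0,1]$ or to $\mathbb{S}^1$. Then, since the link is connected in both cases, I would apply Proposition \ref{linear-decomposition} to replace $(X,0)$ by an ambient bi-Lipschitz equivalent finite union of linear triangles sharing the origin as a common vertex, i.e., a polygonal surface germ in the sense of Definition \ref{poly-surface-1}.

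For case (1), where the link is homeomorphic to $[0,1]$, the output is an open polygonal surface germ $(\gamma_1, \dots, \gamma_n)$. Proposition \ref{edge-reduction}(1) then reduces this germ, by a sequence of ambient bi-Lipschitz maps, to the single linear triangle germ $(\overline{\gamma_1 \gamma_n}, 0)$, and Proposition \ref{linear-triangle-is-holder} identifies $(\overline{\gamma_1 \gamma_n}, 0)$ with the standard $\alpha$-H\"older triangle embedded in $\mathbb{R}^3$, where $\alpha = tord(\gamma_1, \gamma_n)$. For case (2), where the link is homeomorphic to $\mathbb{S}^1$, the same procedure yields a closed polygonal surface germ, and Proposition \ref{edge-reduction}(2) produces an ambient bi-Lipschitz equivalence with a closed $3$-gonal LNE surface germ, which Proposition \ref{linear-triangle-is-horn} shows is ambient bi-Lipschitz equivalent to a standard $\beta$-horn germ $(H_\beta, 0)$ for a uniquely determined $\beta \in \mathbb{Q}_{\ge 1}$.

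The main point I would verify carefully is that each successive equivalence can be composed without leaving the cone $(C_a^3, 0)$: this is guaranteed because all the ambient maps produced in Sections 3, 7, 9, and 10 are invariant on $\partial C_a^3$ near the origin, via Remark \ref{amb-isotopy-extension} together with Proposition \ref{extensão-bi-lip-invariante-bola}, so that the composition is a genuine ambient bi-Lipschitz equivalence in the sense of Definition \ref{equiv-amb-cone} and can be transported back to $\mathbb{R}^3$ by Proposition \ref{suficiencia-em-cone}. The genuinely hard part, already absorbed into Propositions \ref{linear-decomposition} and \ref{edge-reduction}, is that the LNE hypothesis is essential to guarantee that the kneading envelopes used at each reduction step are disjoint from the remainder of the surface, preventing the creation of new self-intersections; without LNE the kneading operation could push one portion of the surface into another, as the counterexamples in \cite{BBG} show.
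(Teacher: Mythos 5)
Your proposal follows exactly the same chain as the paper's proof: reduce to the cone via Proposition \ref{suficiencia-em-cone}, pass to a polygonal surface germ via Proposition \ref{linear-decomposition}, then apply Proposition \ref{edge-reduction} followed by Proposition \ref{linear-triangle-is-holder} (segment link) or Proposition \ref{linear-triangle-is-horn} (circle link). The argument is correct, and your added remarks on composing boundary-invariant maps inside $(C_a^3,0)$ are consistent with how the paper sets things up.
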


\begin{proof}
	By Proposition \ref{suficiencia-em-cone}, it is enough to prove ambient bi-Lipschitz equivalence for $(X,0)\subset (C_a^3,0)$, $a>0$. From Proposition \ref{linear-decomposition}, $(X,0)$ is ambient bi-Lipschitz equivalent to a LNE polygonal surface germ in $(C_a^{3},0)$.
	\begin{itemize}
		\item If $X(t) \simeq [0,1]$, by Proposition \ref{edge-reduction}, $(X,0)$ is ambient bi-Lipschitz equivalent to the germ of an open $1$-gonal, and by Proposition \ref{linear-triangle-is-holder}, $(X,0)$ is ambient bi-Lipschitz equivalent to the germ of a standard $\alpha$-H\"older triangle embedded in $\mathbb{R}^3$, with principal vertex at the origin, for some $\alpha \in \mathbb{Q}_{\ge 1}$.
		\item If $X(t) \simeq \mathbb{S}^1$, by Proposition \ref{edge-reduction}, $(X,0)$ is ambient bi-Lipschitz equivalent to the germ of a closed $3$-gonal surface, and by Proposition \ref{linear-triangle-is-horn}, $(X,0)$ is ambient bi-Lipschitz equivalent to the germ of the standard $\beta$-horn $(H_\beta,0)$, for some $\beta \in \mathbb{Q}_{\ge 1}$.
	\end{itemize}
\end{proof}

\section{Germs of Surfaces With Disconnected Link}

In this section we are going to prove the main result of the paper.

\begin{theorem}\label{non-connected} Two LNE surface germs in $\R^3$ with isolated singularity are ambient bi-Lipschitz equivalent if and only if they are outer bi-Lipschitz equivalent and ambient topologically equivalent.
\end{theorem}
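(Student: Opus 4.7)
The plan is as follows. The forward implication is immediate: an ambient bi-Lipschitz homeomorphism of $\R^3$ sending $X$ to $Y$ is automatically a homeomorphism of pairs and restricts on $X$ to an outer bi-Lipschitz map. So I focus on the converse. Let $(X,0),(Y,0)\subset(\R^3,0)$ be LNE germs with isolated singularity, let $h\colon(X,0)\to(Y,0)$ be an outer bi-Lipschitz map, and suppose there is an ambient homeomorphism of $\R^3$ carrying $X$ to $Y$.

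First I would decompose each germ into its link components. Because the singularity is isolated, the link $X\cap\mathbb{S}^2_\varepsilon$ has finitely many connected components, and $(X,0)$ is the union of the corresponding cone-subgerms $(X_1,0),\dots,(X_m,0)$. Each $(X_i,0)$ has connected link and remains LNE (as a closed subgerm of a germ that is LNE at an isolated singularity, by Theorem \ref{Edson-Rodrigo}). Since outer bi-Lipschitz maps preserve connectedness of links, after relabeling we may assume $h(X_i,0)=(Y_i,0)$. Theorem \ref{main} then provides, for each $i$, ambient bi-Lipschitz maps sending $(X_i,0)$ and $(Y_i,0)$ to the same standard model---a Hölder triangle or a horn with matched exponent, the exponent being an outer invariant.

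This reduces the problem to the case where $X$ and $Y$ are each finite unions of standard Hölder triangles and horns placed in $\R^3$ with matching outer metrics and matching ambient topology. Here I would introduce the extended canonical tree (Definition \ref{extended-canonical}): a combinatorial object whose vertices are the components, decorated by their type and exponent, and whose edges record the pairwise outer tangency orders together with the ambient topological arrangement of the link components as disjoint arcs and circles in the link sphere $\mathbb{S}^2$. Crucially, this arrangement is purely combinatorial, since $\mathbb{S}^2$ admits no nontrivial knotting. The outer bi-Lipschitz and ambient topological hypotheses together force the extended canonical trees of $X$ and $Y$ to agree.

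The main obstacle, and the heart of the argument, is the globalization: the per-component ambient maps produced by Theorem \ref{main} live in separate cones $(C_{a_i}^3,0)$ and do not automatically paste into a single ambient map of $\R^3$. The plan is to use a separating family of cones (Definition \ref{separating-cones}) to place the components of $X$ into pairwise disjoint solid cones; to use translations (Lemma \ref{translation}) and dilatations (Lemma \ref{dilatation}) inside each cone to align the standard model of $X_i$ with that of $Y_i$ in both position and radial scale; to apply the per-component ambient map from Theorem \ref{main} inside each cone; and to extend by the identity outside, using Proposition \ref{extensão-bi-lip-invariante-bola} and the gluing principle of Proposition \ref{colagem-bi-lip-inner}. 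Agreement of the extended canonical trees is precisely the combinatorial input needed so that the cone arrangement for $X$ can be carried to that of $Y$ by a global ambient bi-Lipschitz self-map of $\R^3$ realizing the prescribed ambient isotopy in $\mathbb{S}^2$; the pairwise tangency orders in the tree control how the widths of these cones must scale with the radial parameter $t$. The hypothesis of isolated singularity is essential at this gluing stage, as the explicit example at the end of the introduction demonstrates that the conclusion can fail for LNE surface germs with non-isolated singularities.
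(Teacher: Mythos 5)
Your overall strategy coincides with the paper's: decompose into link components, normalize each component via Theorem \ref{main}, separate the components by a family of cones, and glue per-component ambient maps supported in disjoint conical regions. The forward implication and the per-component reduction are fine (note that all pairwise tangency orders between distinct components of an LNE germ with isolated singularity are equal to $1$, so the separating cones can be taken straight and need not ``scale with $t$'' as you suggest).

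The genuine gap is in the globalization step, which you acknowledge as ``the heart of the argument'' but then dispose of by asserting that agreement of the extended canonical trees ``is precisely the combinatorial input needed so that the cone arrangement for $X$ can be carried to that of $Y$ by a global ambient bi-Lipschitz self-map of $\R^3$.'' That assertion is exactly what has to be proved: given two families of disjoint LNE semialgebraic circles in $\mathbb{S}^2$ that are related by a homeomorphism of $\mathbb{S}^2$, one must \emph{construct} a bi-Lipschitz self-map of $\mathbb{S}^2$ (equivalently, a conical ambient bi-Lipschitz self-map of $\R^3$) carrying one family to the other. Combinatorial agreement of the trees does not by itself produce such a map; the paper devotes Proposition \ref{topol->lipeo-in-S2} to it, arguing by induction on the number of curves: one picks a leaf of the canonical tree, uses Proposition \ref{amb-bi-lip-local-link} to shrink the corresponding curve of each family to a small round circle, encloses the two round circles in a common auxiliary cone inside the target region, and matches them there, all while keeping the previously matched curves fixed. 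Your proposal contains no substitute for this construction, and the remaining tools you cite (translations, dilatations, Propositions \ref{extensão-bi-lip-invariante-bola} and \ref{colagem-bi-lip-inner}) only apply once the supporting regions have already been matched. A secondary, smaller gap: the hypotheses give two a priori unrelated bijections between the components of $X$ and of $Y$ (one from the outer bi-Lipschitz map, one from the ambient homeomorphism), and you need to argue that they can be combined into a single correspondence preserving both the exponents and the arrangement in $\mathbb{S}^2$ --- this is what the isomorphism of extended canonical trees encodes, and it deserves a sentence of justification rather than being taken for granted.
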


In the previous sections we proved this result for surfaces with connected link. Now we are going to prove the same result for the surfaces with disconnected link. We need first some additional definitions and propositions.

\begin{definition}\label{separating-cones}\normalfont
	Let $(X,0) \subset (\R^3,0)$ be a germ of a semialgebraic surface, with isolated singularity at the origin. Let $\{C_i\}_{i\in I}$ be a finite family of simple, closed curves in $\mathbb{S}^2 \subset \R^3$ and let $\tilde C_i$ be the cone, at $0$, of $C_i$, for each $i \in I$. We say that $\{ \tilde C_i\}_{i\in I}$ is a \emph{separating family of cones} for $(X,0)$ if it satisfies the following conditions, for each $t>0$ small enough:
	
	\begin{enumerate}
		\item for any two connected components $(X_1,0)$ and $(X_2,0)$ of $(X \setminus \{0\},0)$, the sets $(X_1)_t$ and $(X_1)_t$ belongs to different connected components of $\mathbb{S}^2_{t} \setminus \left(\cup_{i\in I} \tilde C_i\right)$.
		
		\item The family $\{(\tilde C_i,0 )\}_{i\in I}$ is ambient topologically equivalent, in $(\mathbb{R}^3,0)$, to the family $\{X_i\}_{i\in I}$ of the connected components of $(X \setminus \{0\},0)$.
	\end{enumerate}
\end{definition}

\begin{definition}\label{canonical-tree}\normalfont
	Let $(X,0) \subset (\R^3,0)$ be a LNE semialgebraic surface germ, with isolated singularity at the origin. 	For $t>0$ small enough, $X_t \subset \mathbb{S}^2_{t}$ is a finite family of simple, closed curves. The
	\normalfont \emph{canonical tree of $(X,0)$} is the graph $\tilde G(X)$ defined as follows: 
	\begin{itemize}
		\item the vertices of $\tilde G(X)$ are the connected components of $S_{t} \setminus X$;
		\item two vertices are connected by an edge if, and only if, its corresponding connected components have a common boundary curve in $X_t$.
	\end{itemize}
\end{definition}

\begin{remark}\label{canonical-tree-separating} The canonical tree of $(X,0)$ does not depend on $t$, due to Theorem \ref{conical}. Since $\mathbb{S}^2_{t}$ is connected and a family of $n$ simple, closed, non intersecting curves in $\mathbb{S}^2_{t}$ determines $n+1$ regions, the graph $\tilde G(X)$ is in fact a tree. Moreover, if $\{\tilde C_i\}_{i\in I}$ is any separating family of cones for $(X,0)$, then the canonical tree of $(\cup_{i \in I}\tilde C_i,0)$ is isomorphic to the canonical tree of $(X,0)$.
\end{remark}

\begin{proposition}\label{separating-cones-prop} Let $(X,0) \subset (\R^3,0)$ be a LNE semialgebraic surface germ, with isolated singularity at the origin. Then, there is a semialgebraic separating family of cones for $(X,0)$, and each cone of this family is also LNE.
\end{proposition}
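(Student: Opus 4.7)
The plan is to construct the separating family as small semialgebraic perturbations of the straight cones over the components of the link. First, I would apply the Local Conical Structure Theorem (Theorem \ref{conical}) to $(X,0)$: for some $\varepsilon>0$, there is a semialgebraic homeomorphism $h_\varepsilon\colon \overline{\mathbb{B}_\varepsilon^3}\cap X \to 0*(\mathbb{S}_\varepsilon^2\cap X)$ fixing $\mathbb{S}_\varepsilon^2\cap X$ and preserving $\|\cdot\|$. The isolated singularity assumption forces the link $L:=X\cap \mathbb{S}_\varepsilon^2$ to be a smooth $1$-submanifold of the sphere, i.e., a disjoint union of smooth semialgebraic simple closed curves $L_1,\dots,L_k$, one for each connected component $X_j$ of $(X\setminus\{0\},0)$. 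The map $h_\varepsilon$ identifies each $X_j$ near $0$ with the straight cone $\tilde L_j = 0*L_j$.

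Next, I would rescale by setting $L_j':=L_j/\varepsilon \subset \mathbb{S}^2$ and choose pairwise disjoint semialgebraic tubular neighborhoods $N_j \cong L_j'\times(-1,1)$ of the $L_j'$ in $\mathbb{S}^2$, which exist by standard semialgebraic transversality. The separating curve is defined as the parallel curve $C_j:=L_j'\times\{1/2\}$, a smooth simple closed semialgebraic curve; by construction, the $C_j$ are pairwise disjoint and each $C_j$ is disjoint from every $L_i'$. To verify condition $(1)$, observe that $\tilde C_j\cap \mathbb{S}^2_t = t\cdot C_j$ separates in $\mathbb{S}^2_t$ the thin annular region $t\cdot(L_j'\times(-1/2,1/2))$ containing $(X_j)_t$ from the rest, and the pairwise disjointness of the $N_j$'s guarantees that distinct $(X_j)_t$ and $(X_{j'})_t$ lie in distinct components of $\mathbb{S}^2_t\setminus\bigcup_i \tilde C_i$.

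For condition $(2)$, each tubular chart gives a semialgebraic isotopy inside $N_j$ carrying $L_j'$ to $C_j$. Concatenating these and extending by the identity outside $\bigcup_j N_j$ produces a semialgebraic ambient homeomorphism $\Phi$ of $\mathbb{S}^2$ with $\Phi(L_j')=C_j$ for each $j$. Coning $\Phi$ from $0$ yields an ambient semialgebraic homeomorphism of $\R^3$ sending $\tilde L_j$ to $\tilde C_j$; composing with the ambient extension of $h_\varepsilon^{-1}$, which identifies the germ $(X,0)$ with $(\bigcup_j \tilde L_j,0)$, gives the required ambient topological equivalence of the families.

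Finally, for LNE of each $\tilde C_j$: since $C_j$ is a smooth compact simple closed semialgebraic curve, it is LNE in $\mathbb{S}^2$, so arc length along $C_j$ is comparable to ambient Euclidean distance. For $p_i = t_iq_i\in \tilde C_j$ with $q_i\in C_j$, I would check directly that both the inner and outer distances on $\tilde C_j$ are comparable to $|t_1-t_2|+\sqrt{t_1t_2}\cdot\|q_1-q_2\|$: the upper bound for the inner metric comes from the path moving first radially and then along a rescaling of $C_j$, and the lower bound for the outer metric from the law of cosines on $\R^3$. The main obstacle I foresee is the careful verification of condition $(2)$: one must ensure that $\Phi$ can be chosen semialgebraic and respects the global nested arrangement of the $L_j'$ on $\mathbb{S}^2$, so that the germ-level arrangement of the $\tilde C_j$'s in $\R^3$ really matches that of the $X_j$'s and not merely up to reordering of pieces.
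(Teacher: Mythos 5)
There is a genuine gap in your verification of condition (1). You choose the separating curves $C_j$ as parallel curves to $L_j' = (X_j)_\varepsilon/\varepsilon$, the link at the \emph{fixed} radius $\varepsilon$, and then claim that $(X_j)_t$ lies in the rescaled half-tube $t\cdot(L_j'\times(-1/2,1/2))$ for all small $t$. This containment is false in general: as $t\to 0$ the rescaled links $(X_j)_t/t$ converge to the \emph{tangent} link $L_0(X_j)=C(X_j,0)\cap\mathbb{S}^2$, not to $L_j'$. If $X_j$ is a $\beta$-horn with $\beta>1$ (which is exactly what Theorem \ref{main} says a component can be), $L_0(X_j)$ is a single point and $(X_j)_t/t$ is a curve of diameter $\approx t^{\beta-1}$ collapsing onto it, so it leaves any fixed annulus around the circle $L_j'$ as soon as $t\ll\varepsilon$; your cone $\tilde C_j$ then need not contain, and may even intersect, $X_j$. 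Even for $\beta=1$ the link $L_j'$ at radius $\varepsilon$ generally differs from $L_0(X_j)$ by more than the width of your tube. Relatedly, you never use the LNE hypothesis in the separation argument, but it is essential: it is what guarantees that the tangent links of distinct components are disjoint, and without that two components could collapse onto the same tangent ray, in which case no fixed pair of curves chosen from the picture at radius $\varepsilon$ can separate them for all small $t$.

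The paper's proof repairs exactly this point: it builds the separating curves as boundary components of $\varepsilon$-neighborhoods $U_\varepsilon(L_i)$ of the \emph{tangent} links $L_i$ in $\mathbb{S}^2$ (distinguishing the zero-dimensional case, $\beta>1$, from the one-dimensional case, $\beta=1$), using LNE to make these neighborhoods disjoint; the containment $(X_i,0)\subset C(U_\varepsilon(L_i),0)$ then holds by the definition of tangent cone, which is what your fixed-radius construction lacks. It also selects, for each $i$, the boundary curve of $U_\varepsilon(L_i)$ that is ``external'' with respect to a base point $x_0$, which settles the side-of-the-curve ambiguity you flag at the end of your proposal and is needed to match the nested arrangement required by condition (2). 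Your remaining ingredients (coning an isotopy of the sphere, and the LNE of a straight cone over a compact LNE curve) are fine, but the construction of the curves themselves must be anchored to the tangent link rather than to the link at a fixed radius.
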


\begin{proof}
	Let $\{(X_i,0)\}_{i\in I}$ be the family of connected components of $(X \setminus \{0\},0)$ and consider $C(X,0)$ the tangent cone of $X$ at $0$. Let also $L_0(X)=C(X,0)\cap \mathbb{S}^2$ be the tangent link of $X$. Since $(X,0)$ is LNE, $L_0(X)$ is the union of connected components $L_i \in \mathbb{S}^2$, such that $C(L_i,0)$ is the tangent cone of $(X_i,0)$, for each $i \in I$. Let $U_{\varepsilon}(L_i) = (\cup_{x\in L_i}B_\varepsilon(x))\cap \mathbb{S}^2$ be the $\varepsilon$-neighborhood of $L_i$ in $\mathbb{S}^2$. The sets $L_i$ may be one-dimensional, if $(X_i,0)$ is a 1-horn, or zero-dimensional, if $(X_i,0)$ is a $\beta$-horn, for $\beta>1$. If $L_j$ is zero-dimensional, then the boundary of its $\varepsilon$-neighborhood has just one connected component; otherwise, for $\varepsilon$ small enough, it have two connected components. Take $\varepsilon>0$ small such that
	\begin{itemize}
		\item $\varepsilon < \frac{1}{2} \cdot min\{d(L_j, L_i) \; ; \; i,j \in I; \; i\ne j\}$;
		\item For each $i \in I$ and $0<\varepsilon^{\prime}<\varepsilon$, $U_{\varepsilon}(L_i)$ and $U_{\varepsilon^{\prime}}(L_i)$ are homeomorphic and admits $L_i$ as a deformation retract in $\mathbb{S}^2$ (In particular, each boundary curve of $U_{\varepsilon}(L_i)$ is isotopic in $\mathbb{S}^2$ to $L_i$).
	\end{itemize}
	
	For each $i \in I$, the connected component $(X_i,0)$ belongs to $C(U_{\varepsilon}(L_i),0)$. Moreover, since $(X,0)$ is LNE and $\varepsilon < \frac{1}{2} \cdot d(L_i, L_j)$, the tangent cones of different components do not intersect and the cones over the $\varepsilon$-neighborhoods $U_{\varepsilon}(L_i)$ also do not intersect. Let also $\{M_{\ell}\}_{\ell \in L}$ be the set of all boundary arcs of the ${U_{\varepsilon}(L_i)}_{i\in I}$. Choose a point $x_0 \in \mathbb{S}^2 \setminus ((\cup_{\ell \in L}M_\ell)\cup (\cup_{i \in I}L_i))$. For each $i \in I$, we say that a boundary  curve $\tilde M_{\ell}$ of the $\varepsilon$-neighborhood of $L_i$ is external with respect to $x_0$ if there exists a path in $\mathbb{S}^2$ connecting $x_0$ with $M_{\ell}$, that does not intersect $L_i$. It is clear that, for each $x_0$ and $i \in I$, there is exactly one boundary curve $\tilde M_\ell$ of $U_{\varepsilon}(L_i)$ that is external with respect to $x_0$.
	
	Let $\{C_i\}_{i \in I}$ be the collection of all the curves from $\{M_{\ell}\}_{\ell\in L}$ which are external with respect to $x_0$ and let $\tilde C_i$ be the cone, at $0$, of $C_i$, for each $i \in I$. Now it is straightforward to see that ${\tilde C_i}$ satisfies all the conditions of Definition \ref{separating-cones}. 
\end{proof}

\begin{corollary}\label{canonical-tree-topol-inv} Let $(X,0), (Y,0) \subset (\R^3,0)$ be LNE semialgebraic surface germs, each one with isolated singularity at the origin. Then, $(X,0)$ is ambient topologically equivalent to $(Y,0)$ if, and only if, $\tilde G(X)$ is isomorphic to $\tilde G(Y)$.
\end{corollary}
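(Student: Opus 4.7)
The plan is to prove both implications separately. The forward direction should be essentially automatic: if there is a germ homeomorphism $H:(\R^3,0)\to(\R^3,0)$ with $H(X)=Y$, then for small $t>0$ it restricts to a homeomorphism of punctured neighborhoods of $0$ carrying $X\setminus\{0\}$ onto $Y\setminus\{0\}$. Passing to small sphere links, $H$ induces a bijection between the connected components of $\mathbb{S}^2_t\setminus X$ and of $\mathbb{S}^2_{t'}\setminus Y$ (for suitable scales $t,t'$) that preserves the incidence relation ``share a boundary curve''; this is precisely a graph isomorphism $\tilde G(X)\cong \tilde G(Y)$.

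For the converse, I would first replace both germs by simpler reference objects. By Proposition~\ref{separating-cones-prop}, choose semialgebraic separating families of cones $\{\tilde C_i^X\}_{i\in I_X}$ for $(X,0)$ and $\{\tilde C_j^Y\}_{j\in I_Y}$ for $(Y,0)$. By Remark~\ref{canonical-tree-separating}, the canonical tree of each cone family coincides with that of its parent, and by condition~(2) of Definition~\ref{separating-cones}, each cone family is ambient topologically equivalent in $(\R^3,0)$ to the family of connected components of its parent germ. Hence the problem reduces to proving that $\bigl(\bigcup_{i}\tilde C_i^X, 0\bigr)$ and $\bigl(\bigcup_{j}\tilde C_j^Y, 0\bigr)$ are ambient topologically equivalent under the assumption that their canonical trees are isomorphic.

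Intersecting with a small sphere and rescaling to $\mathbb{S}^2$, this last statement becomes a purely two-dimensional claim: any two finite collections of pairwise disjoint simple closed curves in $\mathbb{S}^2$ with isomorphic region-trees are ambient isotopic in $\mathbb{S}^2$. I would prove this by induction on the number of curves, using the Schoenflies theorem to match a pair of leaves. Specifically, pick a leaf of the tree: on either side it corresponds to a simple closed curve bounding a disk that contains no other curve of the family. Schoenflies produces a homeomorphism of $\mathbb{S}^2$ matching these two curves together with their innermost disks; applying the inductive hypothesis to the complementary configurations then extends this to an ambient homeomorphism $h:\mathbb{S}^2\to\mathbb{S}^2$ carrying the full curve collections onto each other. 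Extending radially via $H(r x)=r\,h(x)$ for $r\ge 0$ and $x\in\mathbb{S}^2$ yields a germ homeomorphism of $(\R^3,0)$ sending cones to cones.

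The hard part will be the inductive matching on $\mathbb{S}^2$: one must choose a leaf on the $X$-side and the corresponding leaf on the $Y$-side coherently with the tree isomorphism, and one must orient the Schoenflies homeomorphism so that the correct face of the chosen curve is identified with the correct child-subtree on each side. Once this compatibility is tracked through the induction, the Schoenflies step and the radial extension are both routine, and stringing the constructions together via the separating-cone reduction yields the desired ambient topological equivalence between $(X,0)$ and $(Y,0)$.
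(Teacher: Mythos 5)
Your proposal is correct and follows essentially the same route as the paper: both directions are reduced, via Proposition \ref{separating-cones-prop} and Remark \ref{canonical-tree-separating}, to the statement that two finite families of disjoint simple closed curves in $\mathbb{S}^2$ are ambient homeomorphic if and only if their region-adjacency trees are isomorphic. The only difference is that you sketch a Schoenflies-based induction for that last two-dimensional fact, which the paper treats as immediate from Definition \ref{canonical-tree}; your extra detail is harmless and arguably an improvement.
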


\begin{proof}
	If $\{\tilde C_i\}_{i\in I}$ and $\{\tilde C_i^{\prime}\}_{i\in I}$ are separating families of cones for $(X,0)$ and $(Y,0)$, respectively, it is enough to show that $\cup_{i\in I} \tilde C_i$ and $\cup_{i\in I} \tilde C_i^{\prime}$ are ambient topologically equivalent if, and only if, $\tilde G(\cup_{i\in I} \tilde C_i)$ and $\tilde G(\cup_{i\in I} \tilde C_i^{\prime})$ are isomorphic. For each $i\in I$ and $t>0$ small enough, let $C_i = \tilde C_i \cap \mathbb{S}^2_{t}$ and $C_i^{\prime} = \tilde C_i^{\prime} \cap \mathbb{S}^2_{t}$. Then $\cup_{i\in I} \tilde C_i$ and $\cup_{i\in I} \tilde C_i^{\prime}$ are ambient topologically equivalent if, and only if, $\cup_{i\in I} C_i$ and $\cup_{i\in I} C_i^{\prime}$ are ambient topologically equivalent in $\mathbb{S}^2$, because $\tilde C_i$ and $\tilde C_i^{\prime}$ are cones. By Definition \ref{canonical-tree}, the result follows.
\end{proof}

\begin{proposition}\label{amb-bi-lip-local-link}
	Let $C \subset \mathbb{S}^2$ be a LNE, semialgebraic curve, homeomorphic to $\mathbb{S}^1$, and let $\tilde C$ be the cone, at $0$, of $C$. Let $(X_1,0), (X_2,0) \in \mathbb{R}^3$ be two LNE surface germs with link homeomorphic to either $[0,1]$ or $\mathbb{S}^1$ such that:
	\begin{enumerate}
		\item $(X_1,0)$ is outer bi-Lipschitz equivalent to $(X_2,0)$;
		\item $tord(X_1,\tilde C)=tord(X_2,\tilde C)= 1$;
		\item $(X_1\setminus \{0\},0)$ and $(X_2\setminus \{0\},0)$ are in the same connected component $(T,0)$ of $(\R^3 \setminus \tilde C,0)$.
	\end{enumerate}
	Then, there is a bi-Lipschitz map $\varphi: (\R^3,0) \to (\R^3,0)$ such that $\varphi|_{(\R^3 \setminus T)} = id_{(\R^3 \setminus T)}$ and $\varphi(X_1,0)=(X_2,0)$.
\end{proposition}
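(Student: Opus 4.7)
The plan is to reduce this to the already established case of Theorem \ref{main} by building a ``thickened cone'' $\tilde C' \subset T$ containing both $X_1$ and $X_2$ but strictly separated from the boundary $\tilde C$, applying the ambient machinery of Sections 3--10 inside $\tilde C'$, and extending by the identity outside. The essential point is that all the ambient bi-Lipschitz maps produced in the proof of Theorem \ref{main} (starting from Proposition \ref{extensão-bi-lip-invariante-bola} and continuing through Lemmas \ref{translation}, \ref{dilatation}, Theorem \ref{amb-isotopy-criterion} with Remark \ref{amb-isotopy-extension}, Propositions \ref{synch-triangle-is-kneadable}, \ref{2linear-knead-to-1linear}, \ref{linear-triangle-is-horn}, \ref{edge-reduction}) are constructed to be the identity on the boundary of the ambient cone $C_a^3$. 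Hence, if we can replace $C_a^3$ by a cone sitting inside $T$ and separated from $\tilde C$, the resulting map will extend by the identity to all of $\R^3$.

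First I would use the hypothesis $tord(X_i, \tilde C) = 1$ together with the fact that $C$ is a smooth (in the sense of LNE) closed curve in $\mathbb{S}^2$: for every arc $\gamma\in V(X_1)\cup V(X_2)$ and every arc $\delta \in V(\tilde C)$ we have $\|\gamma(t)-\delta(t)\|\geq c\, t$ for some $c>0$. This yields a constant $\rho>0$ such that $X_1\cup X_2$ is contained in the ``shrunken region'' $T_\rho \subset T$ defined by removing from $T$ a tubular neighborhood of $\tilde C$ of width $\rho t$ at level $t$. I would then construct an outer bi-Lipschitz homeomorphism $\Psi:(T_\rho,0)\to (C_a^3,0)$ of the form guaranteed by Proposition \ref{reduction} (using a stereographic-type projection adapted to the component $T$, which is cone-like since $\tilde C$ is a cone over a simple closed curve in $\mathbb{S}^2$). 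The key requirement is that $\Psi$ fixes $\partial T_\rho$ pointwise (in the same spirit as Proposition \ref{extensão-bi-lip-invariante-bola}), so that extending by the identity on $\R^3 \setminus T_\rho$ produces an outer bi-Lipschitz self-homeomorphism of $\R^3$.

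Next, inside $C_a^3$ I would apply Theorem \ref{main} to both $\Psi(X_1)$ and $\Psi(X_2)$: since these are outer bi-Lipschitz equivalent LNE surface germs with link homeomorphic to either $[0,1]$ or $\mathbb{S}^1$, they are both ambient bi-Lipschitz equivalent, inside $C_a^3$, to the same standard model (a $T_\alpha$ or $H_\beta$, with $\alpha$ or $\beta$ determined by the outer Lipschitz type). Concretely, Proposition \ref{suficiencia-em-cone} together with the chain of ambient bi-Lipschitz maps built in Sections 5--10 gives, for each $i=1,2$, a map $\varphi_i:(C_a^3,0)\to (C_a^3,0)$ fixing $\partial C_a^3$ pointwise, such that $\varphi_i(\Psi(X_i))$ equals the standard model. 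Then $\varphi := \varphi_2^{-1}\circ \varphi_1$ is an ambient bi-Lipschitz self-map of $(C_a^3,0)$ fixing $\partial C_a^3$ and sending $\Psi(X_1)$ to $\Psi(X_2)$. Finally I would pull back via $\Psi$ and extend by the identity outside $T_\rho$ using Proposition \ref{extensão-bi-lip-invariante-bola}, obtaining the desired $\varphi:(\R^3,0)\to(\R^3,0)$ which is the identity on $\R^3 \setminus T$ and sends $X_1$ to $X_2$.

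The main obstacle is the construction of $\Psi$: one must exhibit a concrete outer bi-Lipschitz homeomorphism from the semialgebraic region $T_\rho \subset T$ onto a standard cone $C_a^3$, fixing the boundary $\partial T_\rho$, and whose Jacobian is bounded away from $0$ and $\infty$. The argument should follow Proposition \ref{reduction} almost verbatim, but adapted to a ``tube over a simple closed curve in $\mathbb{S}^2$'' rather than to a spherical cap; the LNE hypothesis on $C$ is precisely what guarantees that the spherical region bounded by $C$ is bi-Lipschitz equivalent to a standard disk in $\R^2$, and thus that the conical region bounded by $\tilde C$ is bi-Lipschitz equivalent to $C_a^3$ in a way compatible with the radial structure. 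Once this normalization step is carried out (with sufficient care that $\Psi$ extends by identity off $T_\rho$), the rest is a straightforward bookkeeping of the constructions of the preceding sections.
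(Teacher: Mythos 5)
Your plan is correct and follows essentially the same route as the paper: normalize the component $T$ to a standard cone $C_a^3$, use condition (2) to confine the (images of the) two germs to a strictly smaller cone $C_{a'}^3$, apply Theorem \ref{main} to each of them to reach a common standard model via maps fixing $\partial C_{a'}^3$, compose, conjugate back, and extend by the identity via Proposition \ref{extensão-bi-lip-invariante-bola}. The one place you diverge is the step you flag as the ``main obstacle'': you propose to build the normalization $\Psi$ by hand through an adapted stereographic projection, whereas the paper gets it for free --- since $\tilde C$ is the cone over an LNE curve it is a $1$-horn, so Theorem \ref{main}(2) already supplies an ambient bi-Lipschitz map $\varphi_0$ of $(\R^3,0)$ carrying $\overline T$ onto $(C_a^3,0)$ and $\tilde C$ onto $\partial C_a^3$; the shrinking to $C_{a'}^3$ is then done \emph{after} this normalization rather than before. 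One small correction: you cannot (and need not) require $\Psi$ itself to fix $\partial T_\rho$ pointwise while mapping $T_\rho$ onto $C_a^3$; what Proposition \ref{extensão-bi-lip-invariante-bola} actually needs is that the \emph{inner} map $\varphi_2^{-1}\circ\varphi_1$ fixes $\partial C_a^3$, so that the conjugate $\Psi^{-1}\circ(\varphi_2^{-1}\circ\varphi_1)\circ\Psi$ restricts to the identity on $\partial T_\rho$ and glues with the identity outside --- which is exactly how you assemble the map in your final step, so only the earlier phrasing is off.
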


\begin{proof}
	Since $\tilde C$ is a cone, by Theorem \ref{main}, for each $a>0$, there is a bi-Lipschitz map $\varphi_0: (\R^3,0) \to (\R^3,0)$ such that $\varphi_0(\overline{T})=(C_a^3,0)$ and $\varphi(\tilde C,0) = (\partial C_a^3,0)$.  By conditions (2) and (3), there is $a^{\prime}<a$ such that $\varphi_0(X_1,0),\varphi_0(X_2,0) \subset (C_{a^{\prime}}^3,0)$. Choose $a$ large enough such that $a^{\prime}>1$. By condition (1) and Theorem \ref{main}, there is $\theta \in \Q_{\ge 1}$ and bi-Lipschitz maps $\varphi_i : (C_{a^{\prime}}^3,0) \to (C_{a^{\prime}}^3,0)$ ($i=1,2$)
	such that $\varphi_i|_{\partial C_{a^{\prime}}^3}=id_{\partial C_{a^{\prime}}^3}$ and $\varphi_i\circ \varphi_0 (X_i,0)$ is a $\theta$-H\"older triangle, if the link of $X_i$ is homeomorphic to $[0,1]$, or $\varphi_i\circ \varphi_0 (X_i,0)$ is a $\theta$-horn, if the link of $X_i$ is homeomorphic to $\mathbb{S}^1$.
	
	Define $\tilde \varphi: (C_a^3,0) \to (C_a^3,0)$ and $\varphi: (\R^3,0) \to (\R^3,0)$ as:
	$$\tilde \varphi(p) =
	\begin{cases}
		p, & p \notin (C_{a^{\prime}}^3,0) \\
		\varphi_2^{-1} \circ \varphi_1 (p), & p \in (C_{a^{\prime}}^3,0) 
	\end{cases},$$
	$$\varphi(p) =
	\begin{cases}
		p, & p \notin (\overline{T},0) \\
		\varphi_0^{-1} \circ \tilde \varphi \circ \varphi_0 (p), & p \in 	(\overline{T},0) 
	\end{cases}.$$
	
	By Proposition \ref{extensão-bi-lip-invariante-bola}, $\varphi$ is a bi-Lipschitz map. By construction, $\varphi|_{(\R^3 \setminus T)} = id_{(\R^3 \setminus T)}$ and $\varphi(X_1,0)=(X_2,0)$.
\end{proof}

\begin{proposition}\label{topol->lipeo-in-S2} Let $n \in \mathbb{N}_{\ge 1}$ and $\{C_i\}_{1\le i \le n}$, $\{C_i^{\prime}\}_{1\le i \le n}$ be two families of LNE, semialgebraic curves, each one homeomorphic to $\mathbb{S}^1$ in $\mathbb{S}^2$, such that $C_i \cap C_j=C_i^{\prime}\cap C_j^{\prime}=\emptyset$, for each $i\ne j$. If there is a homeomorphism $\varphi: \mathbb{S}^2 \to \mathbb{S}^2$ such that $\varphi(C_i)=C_i^{\prime}$, for $1\le i \le n$, then there is a bi-Lipschitz map $\tilde \varphi: \mathbb{S}^2 \to \mathbb{S}^2$ such that $\tilde \varphi(C_i)=C_i^{\prime}$, for $1\le i \le n$.
\end{proposition}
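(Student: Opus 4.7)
The plan is to construct $\tilde{\varphi}$ region by region. The $n$ curves partition $\mathbb{S}^2$ into $n+1$ closed regions $R_j$, each a compact semialgebraic 2-manifold with LNE boundary, and the homeomorphism $\varphi$ induces a correspondence $R_j \leftrightarrow R_j^{\prime}$. First I would fix, for each $i$, a bi-Lipschitz homeomorphism $\psi_i \colon C_i \to C_i^{\prime}$ — this is possible because both curves are LNE semialgebraic and homeomorphic to $\mathbb{S}^1$, hence each is bi-Lipschitz equivalent to $\mathbb{S}^1$. The task is then to extend the family $\{\psi_i\}$ to a bi-Lipschitz map $R_j \to R_j^{\prime}$ on every region, after which the region-wise maps glue along the common boundary curves via Proposition \ref{colagem-bi-lip-inner} and Proposition \ref{bi-lip-extende-fecho} to yield $\tilde{\varphi}$.

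The core technical task is the following: given two compact semialgebraic planar regions $R, R^{\prime}$ on $\mathbb{S}^2$ of the same topological type — each a sphere with $k$ open disks removed — together with prescribed bi-Lipschitz maps between corresponding boundary circles, construct a bi-Lipschitz map $R \to R^{\prime}$ extending them. I would proceed by induction on $k$. For $k=1$ the region is a topological disk with LNE boundary; decomposing it into finitely many semialgebraic LNE H\"older triangles via an adaptation of the synchronized and convex decompositions (Proposition \ref{synch-decomp} and Proposition \ref{convex-decomp}) to compact planar domains, transferring the decomposition to $R^{\prime}$ compatibly, and then building triangle-wise bi-Lipschitz maps through Proposition \ref{linear-triangle-is-holder} and Theorem \ref{amb-isotopy-criterion} produces the extension. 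For $k \geq 2$ one cuts $R$ along a properly embedded semialgebraic LNE arc joining two distinct boundary components, reducing to regions with fewer boundary components, and applies the inductive hypothesis.

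The main obstacle will be the compatibility of piecewise constructions. At each gluing step — whether stitching triangles inside a single region or stitching the cut pieces in the inductive step — one must ensure that the bi-Lipschitz maps produced on the pieces match exactly on their common boundary arcs, which is nontrivial because the prescribed boundary data is given only on the original $C_i$. This requires choosing the cutting arcs and their bi-Lipschitz parametrizations judiciously, and tracking constants so that the global bi-Lipschitz constant remains finite; the gluing lemma Proposition \ref{colagem-bi-lip-inner} then delivers the inner bi-Lipschitz property, while LNE-ness of the regions converts it back to the outer estimate. A secondary difficulty is that the synchronized decomposition of Section 5 is formulated for surface germs in the cone $C_a^3$ and must be adapted to the compact planar setting on $\mathbb{S}^2$, though the arguments transfer essentially unchanged.
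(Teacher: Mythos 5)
Your overall architecture (cut $\mathbb{S}^2$ into the $n+1$ regions, prescribe bi-Lipschitz maps on the curves $C_i$, extend region by region, glue with Proposition \ref{colagem-bi-lip-inner}) is plausible, but the load-bearing step is not justified: you need that a prescribed bi-Lipschitz homeomorphism between the boundaries of two compact semialgebraic planar regions of the same topological type extends to a bi-Lipschitz homeomorphism of the regions. This is a genuine two-dimensional bi-Lipschitz extension theorem (of Tukia type), and the tools you invoke do not deliver it. The synchronized/convex decompositions of Section 5, H\"older triangles, and Theorem \ref{amb-isotopy-criterion} are all statements about \emph{germs at a cone point}: their content is the control of the collapse rate $\alpha$ (or the bound $M$ on the derivatives of the generating family $f_t$) as $t\to 0$, and none of this has a meaning for a compact region of $\mathbb{S}^2$ sitting away from any singular point. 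So the claim that the arguments ``transfer essentially unchanged'' to the compact planar setting is exactly where the proof is missing; what you would actually need is a classification of compact semialgebraic surfaces with boundary up to bi-Lipschitz equivalence \emph{rel boundary}, which is not established in the paper and is not a routine adaptation. A secondary issue is that fixing all the $\psi_i$ in advance forces every region with $k\ge 2$ boundary circles to admit an extension compatible with $k$ independently chosen boundary maps simultaneously, and you give no argument for this compatibility (orientation being only the most visible constraint).

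The paper takes a different route precisely to avoid this extension problem: it passes to the cones $\tilde C_i$, $\tilde C_i'$ over the curves and works with the surface germs $X=\cup_i\tilde C_i$, $X'=\cup_i\tilde C_i'$ in $(\R^3,0)$, where the already-proved germ machinery applies. The induction is on the number of curves $n$ (guided by a degree-one vertex of the canonical tree), not on the number of boundary components of a region: the base case is Theorem \ref{main} with $\beta=1$, and the inductive step uses Proposition \ref{amb-bi-lip-local-link} twice, first to shrink $\Phi_0(\tilde C_n)$ and $\tilde C_n'$ to cones over small round circles inside the relevant complementary component and then to match them, with all maps supported in that component. No boundary data is prescribed in advance, so no compatibility problem arises. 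The descent back to $\mathbb{S}^2$ is then done by Valette's theorem on the link of a semialgebraic germ, which converts the ambient bi-Lipschitz equivalence of the cones into a bi-Lipschitz self-map of the sphere carrying $C_i$ to $C_i'$. If you want to salvage your approach, you would need to either prove the rel-boundary extension theorem for compact semialgebraic planar regions or cite an external result of Tukia type; as written, the argument has a gap at its central step.
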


\begin{proof}
	For $i=1,\dots,n$, let $\tilde C_i$, $\tilde C_i^{\prime}$ be the tangent cones of $C_i$, $C_i^{\prime}$, respectively, and define $X=\cup_{i=1}^{n}\tilde C_i$, $X^{\prime}=\cup_{i=1}^{n}\tilde C_i^{\prime}$.
	We will prove by induction on $n$ that there is a bi-Lipschitz map $\Phi: \R^3 \to \R^3$ such that $\Phi(X)=X^{\prime}$. This, together with Corollary 0.2 of \cite{Valette-Link}, will give the desired bi-Lipschitz map $\tilde \varphi$ on the link $\mathbb{S}^2$.
	
	The base case $n=1$ is a consequence of Theorem \ref{main}, item (2), when $\beta=1$. For the inductive step, consider a vertex $v$ of the canonical tree $\tilde G(X)$ with degree 1, $u$ the vertex of $\tilde G(X)$ adjacent to $v$ and let $v'$, $u^{\prime}$ the vertices of $\tilde G(X^{\prime})$ isomorphic to $v$, $u$, respectively. Let $T_u$, $T_v$ be the connected components of $\R^3 \setminus X$ associated with $u$ and $v$, respectively, and let $T^{\prime}_u$, $T^{\prime}_v$ be the connected component of $\R^3 \setminus X^{\prime}$ associated with $u^{\prime}$ and $v^{\prime}$, respectively. Since $v$, $v^{\prime}$ have degree 1, suppose $\tilde C_n = \partial T_v$ and $\tilde C_n^{\prime} = \partial T^{\prime}_v$. Let also $T=T_u \cup \tilde C_n \cup T_v$ and $T'=T'_u \cup \tilde C'_n \cup T'_v$.
	
	By hypothesis, there is a bi-Lipschitz map $\Phi_0: \R^3 \to \R^3$ such that $\Phi_0(X\setminus(\tilde C_n \setminus\{0\})) = X^{\prime}\setminus(\tilde C_n^{\prime} \setminus\{0\})$. By Corollary \ref{canonical-tree-topol-inv}, $\tilde G(X)$ and $\tilde G(X^{\prime})$ are isomorphic and then $\Phi_0(X\setminus (T\setminus \{0\}))=X'\setminus (T'\setminus \{0\})$, $\Phi_0(T)=T'$ (in particular, $\Phi_0(\tilde C_n), \tilde C_n' \subset T'$). Now let $\tilde C, \tilde C'\subset T'$ be cones, at $0$, of curves homeomorphic to $\mathbb{S}^1$ such that $\tilde C, \tilde C'$ contains $\Phi_0(\tilde C_n), \tilde C_n'$ in its interior with respect to $T'$, respectively. Such curves exists by Proposition \ref{separating-cones-prop}.
	
	By Proposition \ref{amb-bi-lip-local-link}, for each $\varepsilon>0$ small enough, there are bi-Lipschitz maps $\Phi_1, \Phi'_1: \R^3 \to \R^3$ such that $\Phi_1 \circ \Phi_0(\tilde C_n)$ and $\Phi'_1(\tilde C_n')$ are cones of circles of radius $\varepsilon$ in $T'\cap \mathbb{S}^2$ and $\Phi_1|_{(\R^3 \setminus T')}=\Phi'_1|_{(\R^3 \setminus T')}=id_{(\R^3 \setminus T')}$. Choose $\varepsilon>0$ small enough such that there is a semialgebraic curve $D\subset T'\cap \mathbb{S}^2$ homeomorphic to $\mathbb{S}^1$ containing $\Phi_1 \circ \Phi_0(\tilde C_n) \cap \mathbb{S}^2$ and $\Phi'_1(\tilde C'_n) \cap \mathbb{S}^2$ in its interior (relative to $T'$). One way to construct such curve is to connect the centers of such circles of radius $\varepsilon$ by arcs of maximum circles in $T'$, then consider the boundary of points in $T'$ with distance at most $3\varepsilon/2$ of the union of those arcs. 
	
	By Proposition \ref{amb-bi-lip-local-link} there is a bi-Lipschitz map $\Phi_2: \R^3 \to \R^3$ such that $\Phi_2(\Phi_1 \circ \Phi_0(\tilde C_n))=\Phi'_1(\tilde C'_n)$ and $\Phi_2|_{(\R^3 \setminus T')}=id_{(\R^3 \setminus T')}$. Therefore, $\Phi=(\Phi_1')^{-1}\circ\Phi_2\circ\Phi_1\circ\Phi_0$ satisfies $\Phi(\tilde C_n)=\tilde C'_n$ by construction and $\Phi(\tilde C_i)=\tilde C'_i$ for $i<n$, by hypothesis, and thus the result follows by induction.
	\begin{figure}[h]
		\centering
		\includegraphics[width=15cm]{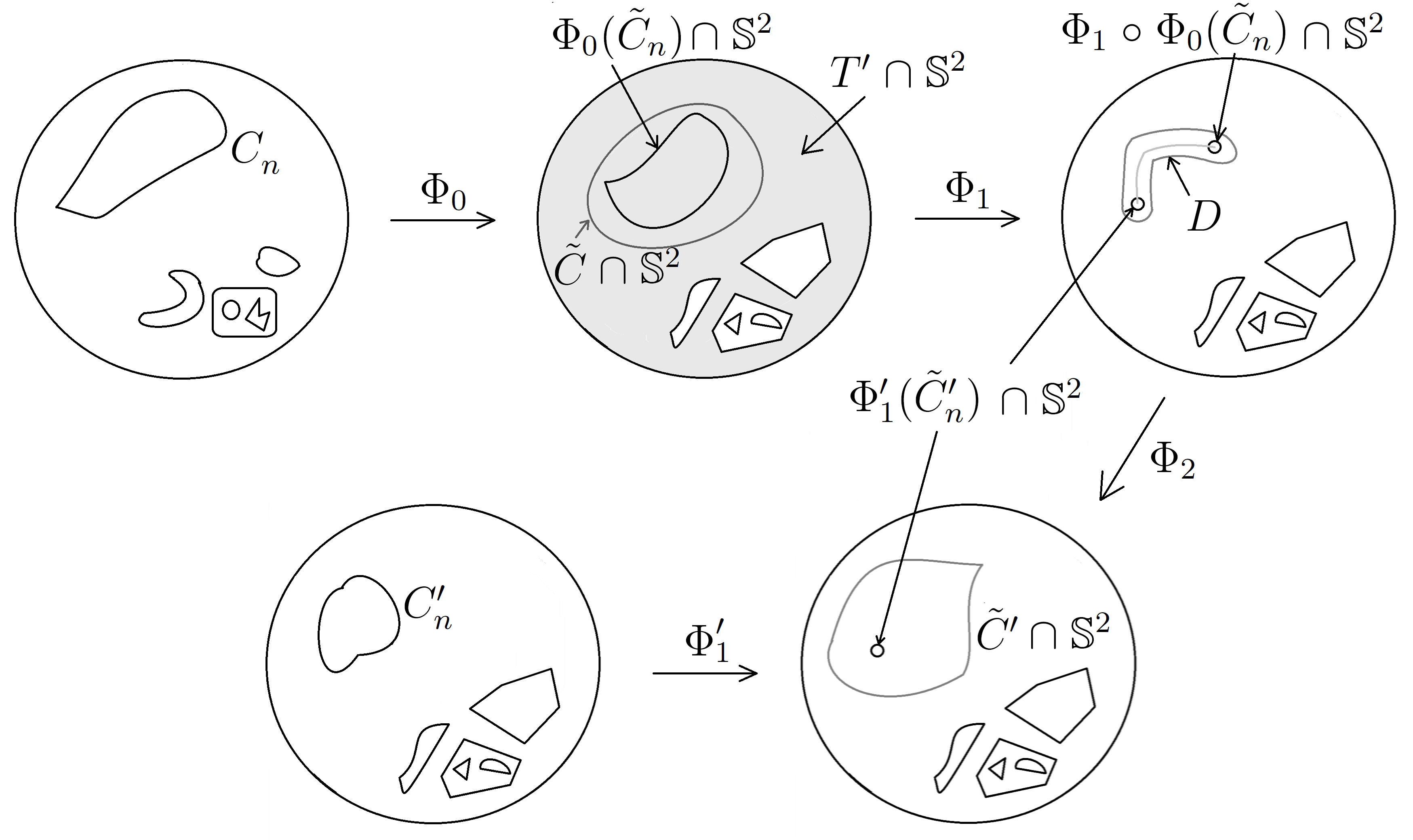}
		\label{fig-fig}
		\caption{Proof of Proposition  \ref{topol->lipeo-in-S2}.}
	\end{figure}
\end{proof}

\begin{definition}\label{extended-canonical}\normalfont 
	Let $(X,0) \subset (\R^3,0)$ be a LNE, semialgebraic surface germ, with isolated singularity at the origin. Consider $V(\tilde G(X))$ the set of all vertices of $\tilde G(X)$. The \emph{extended canonical tree of $X$} is the graph $\tilde G(X)$ together with a function $\rho : V(\tilde G(X)) \to \mathbb{Q}_{\ge 1}$ defined as follows: for each $t>0$ small enough, if $v$ the vertex associated with the connected component $X_v \cap \mathbb{S}_t^2$ of $\mathbb{S}^2_{t} \setminus X$, and if $\beta \in \mathbb{Q}_{\ge 1}$ is the only rational number such that the limit $\lim\limits_{\varepsilon \to 0^+}\frac{Area(X_v(\varepsilon))}{t^{\beta}} \in \mathbb{R}_+^*$, then $\rho(v)=\beta$. 
\end{definition}

\begin{remark}\label{extended-canonical-remark}
	The extended canonical tree is the combinatorial object containing all information needed for the classification of LNE surface germs in $\R^3$ with isolated singularity at the origin, up to ambient bi-Lipschitz equivalence. The tree contains the ambient topology information (Corollary \ref{canonical-tree-topol-inv}) and the function $\rho$ contains the exponent of the corresponding horn, thus describing the outer Lipschitz geometry. Therefore, two such germs are ambient topologically equivalent and outer bi-Lipschitz equivalent if, and only if, they have isomorphic extended canonical trees. Finally, for a proof of existence and uniqueness of $\beta$, see \cite{Bir-Bras}.
\end{remark}
	
\begin{theorem}\label{classification} Two LNE surface germs $(X,0)$ and $(Y,0)$ with isolated singularity at the origin are ambient bi-Lipschitz equivalent in $(\R^3,0)$ if, and only if, the corresponding extended canonical trees are isomorphic. In other words, Theorem \ref{non-connected} holds.
\end{theorem}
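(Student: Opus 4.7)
The forward direction is immediate: an ambient bi-Lipschitz self-map of $(\R^3,0)$ carrying $(X,0)$ onto $(Y,0)$ sends each connected component of $(X\setminus\{0\},0)$ to one of $(Y\setminus\{0\},0)$ and each component of $\mathbb{S}^2_t\setminus X$ to one of $\mathbb{S}^2_t\setminus Y$, so it induces an isomorphism of canonical trees (Corollary \ref{canonical-tree-topol-inv}); since the map is outer bi-Lipschitz on each surface component and respects asymptotic area growth on each region, the exponents $\rho$ are preserved as well (Remark \ref{extended-canonical-remark}). For the converse, suppose an isomorphism $\Psi:\tilde G(X)\to\tilde G(Y)$ of extended canonical trees is given; the plan proceeds in two stages.

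In the first stage I match the separating cone families. I would apply Proposition \ref{separating-cones-prop} to produce semialgebraic LNE separating families $\{\tilde C_i\}_{i\in I}$ for $(X,0)$ and $\{\tilde C'_i\}_{i\in I}$ for $(Y,0)$, taking the parameter $\varepsilon$ of that construction small enough that each horn-component of $X$ (resp.\ of $Y$) lies well inside a single component of $\R^3\setminus\bigcup_i\tilde C_i$ (resp.\ $\R^3\setminus\bigcup_i\tilde C'_i$) and has tangency order $1$ with every bounding cone. By Remark \ref{canonical-tree-separating}, the canonical trees of $\bigcup_i\tilde C_i$ and $\bigcup_i\tilde C'_i$ agree with $\tilde G(X)$ and $\tilde G(Y)$, hence are isomorphic via (the underlying tree of) $\Psi$; Corollary \ref{canonical-tree-topol-inv} then yields an ambient topological equivalence in $\R^3$. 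Slicing at $\mathbb{S}^2$ and applying Proposition \ref{topol->lipeo-in-S2} upgrades this to a bi-Lipschitz map of $\mathbb{S}^2$ matching the link curves, which I would extend by radial coning to an ambient bi-Lipschitz map $\Phi_0:(\R^3,0)\to(\R^3,0)$ with $\Phi_0(\tilde C_i)=\tilde C'_i$ for every $i\in I$. Replacing $X$ by $\Phi_0(X)$, one may then assume $\tilde C_i=\tilde C'_i$ throughout.

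In the second stage I match horns region by region. In each connected component $T$ of $\R^3\setminus\bigcup_i\tilde C_i$ the isomorphism $\Psi$ pairs the (at most one) horn-component $X_T$ of $X$ lying inside $T$ with the horn-component $Y_T$ of $Y$ lying inside $T$; the equality $\rho(X_T)=\rho(Y_T)$ forces equal horn exponents (Theorem \ref{main} together with Remark \ref{extended-canonical-remark}), so $(X_T,0)$ and $(Y_T,0)$ are outer bi-Lipschitz equivalent. Combined with $tord(X_T,\partial T)=tord(Y_T,\partial T)=1$ from the first stage, Proposition \ref{amb-bi-lip-local-link} delivers a bi-Lipschitz self-map $\varphi_T$ of $\R^3$ that is the identity on $\R^3\setminus T$ and sends $X_T$ to $Y_T$. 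Since the regions $T$ are pairwise disjoint and each $\varphi_T$ is the identity outside $T$, the finite composition $\Phi_1$ of the $\varphi_T$ is a well-defined bi-Lipschitz self-map of $\R^3$ sending the modified $X$ onto $Y$, and $\Phi_1\circ\Phi_0$ realises the required ambient bi-Lipschitz equivalence.

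The hard part is the first stage: the two cone families must be indexed so as to realise $\Psi$ \emph{and} to satisfy the tangency-order hypothesis of Proposition \ref{amb-bi-lip-local-link} for every horn in every region. Both conditions come out of the proof of Proposition \ref{separating-cones-prop} once $\varepsilon$ is small, but a $\Psi$-compatible indexing has to be installed by hand, by relabelling the external boundary curves of the $\varepsilon$-neighbourhoods of the tangent links according to $\Psi$. A secondary technical point, handled via Remark \ref{extended-canonical-remark} and \cite{Bir-Bras}, is the explicit passage from the vertex invariant $\rho$ to the horn exponent of each adjacent surface component.
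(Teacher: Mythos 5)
Your overall strategy coincides with the paper's: separating families of cones (Proposition \ref{separating-cones-prop}), matched ambiently via Proposition \ref{topol->lipeo-in-S2} and radial coning, followed by a region-by-region adjustment with Proposition \ref{amb-bi-lip-local-link} and a final gluing. The forward direction and the treatment of the $\beta>1$ horns in your second stage are fine.

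There is, however, a genuine gap in your second stage for components whose horn exponent equals $1$. For such a component $X_T$ the tangent link is a circle, and the connected component $T$ of $\R^3\setminus\bigcup_i\tilde C_i$ containing $X_T$ is bounded not only by the external curve $\tilde C_T$ but also by the separating cones of every component nested directly inside $X_T$; hence $T\cap\mathbb{S}^2$ is in general a disk with holes, not a disk. Proposition \ref{amb-bi-lip-local-link} is stated for a connected component of the complement of a \emph{single} cone, and its proof begins by mapping $\overline{T}$ onto the solid cone $C_a^3$ -- so it cannot be invoked to produce a map that is the identity on $\R^3\setminus T$ for such a multiply-connected $T$. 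If you instead take $T$ to be the component of $\R^3\setminus\tilde C_T$ containing $X_T$, the resulting map is no longer the identity near the nested cones and destroys the matching already achieved inside them. The paper sidesteps this by first using Theorem \ref{main} (case $\beta=1$) to replace each $1$-horn component by its separating cone, i.e.\ arranging $(X_i,0)=(\tilde C_i,0)$ and $(Y_i,0)=(\tilde C_i',0)$ for those indices, so that the map $\Phi$ from Proposition \ref{topol->lipeo-in-S2} already carries them onto each other; the local-adjustment step is then reserved for the $\beta>1$ components, whose regions genuinely are cones over disks. Your argument is repaired by inserting this preliminary straightening of the $1$-horn components before matching the cone families.
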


\begin{proof}
	The direct implication was discussed in Remark \ref{extended-canonical-remark}, so let us prove the converse. Let $(X,0)=(X_1,0)\cup \ldots \cup (X_n,0)$, $(Y,0)=(Y_1,0)\cup \ldots \cup (Y_n,0)$, with $(X_i,0)$, $(Y_i,0)$ being $\beta_i$-horns, for $i=1,\ldots,n$, and $\{0\}=(X_i \cap X_j,0) = (Y_i \cap Y_j,0)$, for $i\ne j$. Let also $\{\tilde C_i\}_{1\le i \le n}$ and $\{\tilde C_i'\}_{1\le i \le n}$ be separating families of cones for $(X,0)$ and $(Y,0)$, respectively, with $(X_i,0)$, $(\tilde C_i,0)$, $(\tilde C_i',0)$ and $(Y_i,0)$ being ambient topologically equivalent, for each $i\in \{1,\dots,n\}$. By Theorem \ref{main}, each $(X_i,0), (Y_i,0)$ with $\beta_i=1$ is ambient bi-Lipschitz equivalent to their tangent cone, and therefore we can suppose, after a ambient bi-Lipschitz map, that $(X_i,0)=(\tilde C_i,0)$, $(Y_i,0) = (\tilde C_i',0)$ for all such $i \in \{1,\dots,n\}$.
	
	By Proposition \ref{topol->lipeo-in-S2}, there is a bi-Lipschitz map $\Phi: \R^3 \to \R^3$ such that $\Phi(\tilde C_i)=\tilde C_i'$, for every $i$, if we set $\Phi(0)=0$ and $\Phi(tx)=t\tilde \varphi(x)$, for $x \in \mathbb{S}^2$ and $t>0$. If $k\in \{1,\dots,n\}$ is such that $\beta_k>1$, the LNE condition on $(X,0)$ and $(Y,0)$ implies that each $\Phi(X_k)$, $Y_k$ is in the same connected component $T_k$ of $\R^3 \setminus (\cup_{i=1}^{n} \tilde C'_i)$ whose boundary is $\tilde C_k$ and $T_k\cap \mathbb{S}^2$ is homeomorphic to an open disk. By Proposition \ref{amb-bi-lip-local-link}, there is a ambient bi-Lipschitz map $\varphi_k: (\R^3,0) \to (\R^3,0)$ such that $\varphi|_{(\R^3\setminus T_k)}=id_{(\R^3\setminus T_k)}$ and $\varphi_k(\Phi(X_k,0))=(Y_k,0)$.
	
	Finally, if we consider the map $\varphi: (\R^3,0) \to (\R^3,0)$ as
	$$ \varphi(p) =
	\begin{cases}
		\varphi_k \circ \Phi(p), & p \in (T_k,0) \; (\beta_k >1) \\
		\Phi(p), & p \in \left(\R^3 \setminus \left(\bigcup_{\beta_k>1}T_k\right),0\right) 
	\end{cases},$$
	then $\varphi$ is a well-defined bi-Lipschitz map, by Proposition \ref{colagem-bi-lip-inner}, such that $\varphi(X_i,0)=\Phi(X_i,0)=(Y_i,0)$, for all $i \in \{1,\dots,n\}$ such that $\beta_i=1$, and $\varphi(X_k,0)=\varphi_k(\Phi(X_k,0))=(Y_k,0)$, for all $k \in \{1,\dots,n\}$ such that $\beta_k>1$. Therefore, $(X,0)$ and $(Y,0)$ are ambient bi-Lipschitz equivalent.
\end{proof}

\begin{remark}\label{important-example}\normalfont
	For germs of LNE surfaces with non-isolated singularity in $\R^3$, Theorem \ref{non-connected} is not true in general. For a counterexample, consider, for each $0<t<1$,
	$$X_1(t)=\overline{A(t)B(t)} \cup \overline{B(t)C(t)} \cup \overline{C(t)A(t)} \cup \overline{A(t)D_1(t)}$$
	$$X_2(t)=\overline{A(t)B(t)} \cup \overline{B(t)C(t)} \cup \overline{C(t)A(t)} \cup \overline{A(t)D_2(t)}$$
	$$X_1 = (\cup_{t>0} X_1(t))\cup \{0\} \; ; \; X_2 = (\cup_{t>0} X_2(t))\cup \{0\}$$
	
	where:
	
	$$A(t) = (0,0,t) \; ; \; B(t) =(t^2,t^2,t) \; ; \; C(t)=(t^2,-t^2,t)$$
	$$D_1(t) = (t^3,0,t)\; ; \;D_2(t) = (-t^3,0,t)$$
	
	\begin{figure}[h]
		\centering
		\includegraphics[width=8cm]{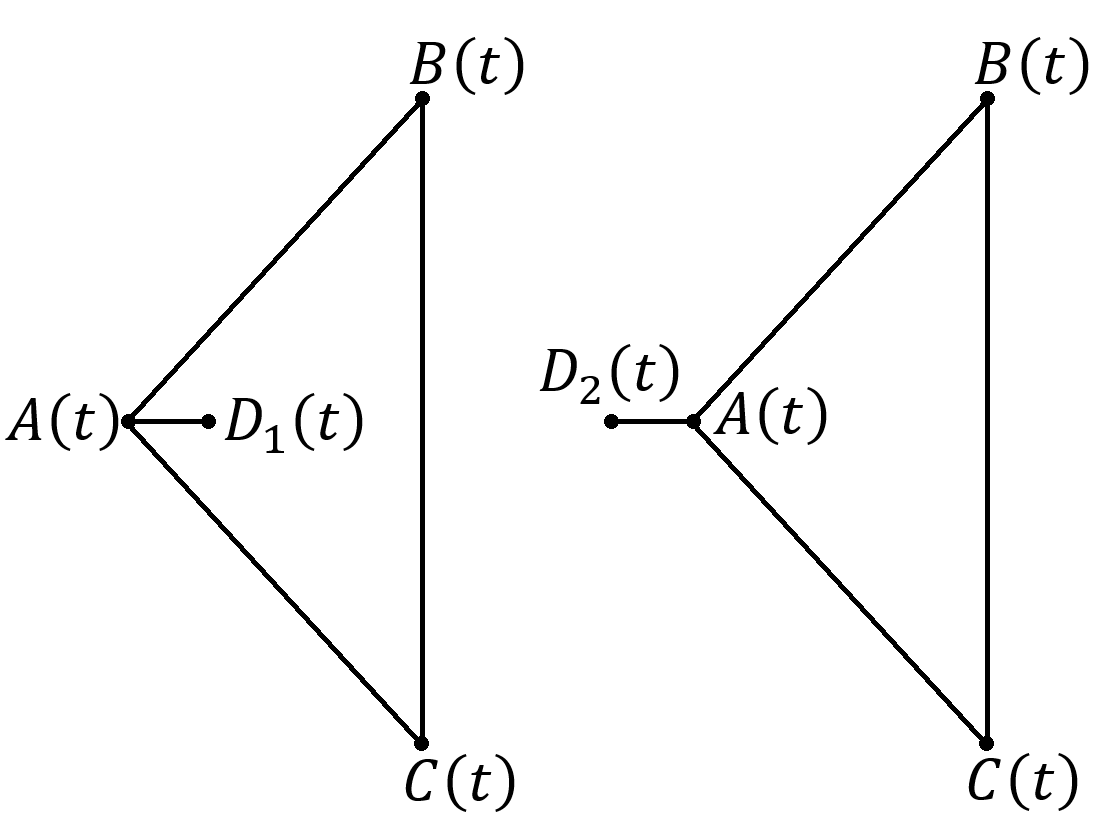}
		\label{fig-fig}
		\caption{The links of $X_1$ and $X_2$ in Remark \ref{important-example}.}
	\end{figure}
\end{remark} 

The following proposition is devoted to prove that $(X_1,0)$ and $(X_2,0)$ are indeed counterexamples to Theorem \ref{non-connected}.

\begin{proposition}\label{important-example-proposition}
	The surfaces germs $(X_1,0)$ and $(X_2,0)$ are ambient topological equivalent, outer bi-Lipschitz equivalent, but not ambient bi-Lipschitz equivalent.
\end{proposition}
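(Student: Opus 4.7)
Write $\mathcal{T}=\overline{\gamma_A\gamma_B}\cup\overline{\gamma_B\gamma_C}\cup\overline{\gamma_C\gamma_A}$ for the common triangular cone shared by $X_1$ and $X_2$. Near the origin, $\R^3\setminus\mathcal{T}$ has two connected components: a thin interior $I$ whose slice by $\{z=t\}$ is the open triangle with vertices $A(t),B(t),C(t)$, and the exterior $O$. By direct inspection $\overline{\gamma_A\gamma_{D_1}}\subset\overline{I}$ while $\overline{\gamma_A\gamma_{D_2}}\subset\overline{O}$; this placement is the essential distinction between the two germs. For the ambient topological equivalence, Theorem \ref{conical} reduces the question to comparing the embedded links $L_i=X_i\cap\mathbb{S}_\varepsilon^2(0)$. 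Both $L_1,L_2$ realize the same planar graph (a triangle with a pendant arc attached at a vertex), differing only in which of the two open disks cut by the triangle contains the pendant. Since both such disks are topological $2$-disks, there is a self-homeomorphism of $\mathbb{S}_\varepsilon^2(0)$ that fixes the triangle setwise, swaps the two disks, and matches the pendant arcs (any two arcs in a disk emanating from the same boundary vertex into the interior are ambient isotopic); its conical extension is an ambient homeomorphism of $\R^3$ sending $X_1$ to $X_2$.

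For outer bi-Lipschitz equivalence, define $\varphi\colon X_1\to X_2$ to be the identity on $\mathcal{T}$ and the reflection $(s,0,t)\mapsto(-s,0,t)$ on $\overline{\gamma_A\gamma_{D_1}}$; the two rules agree on $\gamma_A$, so $\varphi$ is well-defined and continuous. Since $\varphi$ is a piecewise isometry, only cross-piece outer distortion needs control. For $p=(s,0,t)$ in the pendant and $q=(x,y,t')$ in $\mathcal{T}$ (both $s,x\ge 0$), the squared-distance ratio equals $\frac{(s+x)^2+y^2+(t-t')^2}{(s-x)^2+y^2+(t-t')^2}$; adding the common positive term $(t-t')^2$ only moves the ratio toward $1$, so the worst case is planar. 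A one-variable computation gives $\sup_{r\ge 0}\frac{r^2+2r+2}{r^2-2r+2}=3+2\sqrt 2$, attained on $\overline{\gamma_A\gamma_B}$ where $y=x$; hence $\varphi$ is outer $(1+\sqrt 2)$-bi-Lipschitz.

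Now suppose an $L$-bi-Lipschitz $\Phi\colon(\R^3,0)\to(\R^3,0)$ with $\Phi(X_1)=X_2$ exists. Tangency orders are bi-Lipschitz invariants; since $tord(\gamma_A,\gamma_B)=tord(\gamma_A,\gamma_C)=2$ and $tord(\gamma_A,\gamma_{D_1})=3$, and $\gamma_A$ is distinguished as the unique arc where three $2$-faces meet, the map $\Phi$ must satisfy $\Phi(\gamma_A)=\gamma_A$, $\Phi(\{\gamma_B,\gamma_C\})=\{\gamma_B,\gamma_C\}$, and $\Phi(\gamma_{D_1})=\gamma_{D_2}$. Consequently $\Phi$ permutes the three linear triangles comprising $\mathcal{T}$; their union is invariant, so $\Phi(\mathcal{T})=\mathcal{T}$ as subsets of $\R^3$, and $\Phi$ permutes the components $\{I,O\}$ of $\R^3\setminus\mathcal{T}$. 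The topological boundary of $O$ in $\R^3$ equals $\mathcal{T}$, whereas the other component $I\setminus\overline{\gamma_A\gamma_{D_1}}$ of $\R^3\setminus X_1$ has boundary $\mathcal{T}\cup\overline{\gamma_A\gamma_{D_1}}=X_1$; symmetrically, for $X_2$, the unique component with boundary equal to $\mathcal{T}$ is $I$ itself. Since $\Phi$ is a homeomorphism sending $X_1$ to $X_2$, it must send $O$ onto $I$.

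The contradiction comes from volume growth at the origin, and this is the main obstacle. A direct slice integration shows $\mathrm{Vol}(I\cap\mathbb{B}_r^3(0))\approx r^5$, because the slice of $I$ at height $t$ has area $\approx t^4$, while $\mathrm{Vol}(O\cap\mathbb{B}_r^3(0))\approx r^3$. The area formula for bi-Lipschitz maps gives $\mathrm{Vol}(\Phi(A))\ge L^{-3}\,\mathrm{Vol}(A)$ for every measurable $A$, and $\Phi(O\cap\mathbb{B}_r^3(0))\subseteq\mathbb{B}_{Lr}^3(0)$ since $\Phi(0)=0$. Combined with $\Phi(O)=I$:
\begin{equation*}
L^{-3}r^3\;\lesssim\;\mathrm{Vol}(\Phi(O\cap\mathbb{B}_r^3(0)))\;\le\;\mathrm{Vol}(I\cap\mathbb{B}_{Lr}^3(0))\;\approx\;L^5r^5,
\end{equation*}
so $L^{-8}\lesssim r^2$. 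Letting $r\to 0^+$ forces $L$ to be unbounded, contradicting that $\Phi$ is $L$-bi-Lipschitz. Hence no such $\Phi$ exists, and the proposition follows.
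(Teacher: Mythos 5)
Your proof is correct, and its decisive step --- singling out the complementary region whose topological boundary is only the triangular cone $\mathcal{T}$, showing that any ambient homeomorphism must carry the ``outside'' region determined by $X_1$ onto the ``inside'' region determined by $X_2$, and then contradicting this with volume growth $r^3$ versus $r^5$ --- is exactly the paper's argument; the paper phrases the final contradiction via the volume growth numbers of \cite{Bir-Bras}, whereas you rederive the needed inequality directly from the measure distortion of an $L$-bi-Lipschitz map together with $\Phi(\mathbb{B}_r^{3}(0))\subseteq\mathbb{B}_{Lr}^{3}(0)$, which is cleaner and self-contained. Where you genuinely diverge is the outer equivalence: the paper notes that $X_1$ and $X_2$ are inner bi-Lipschitz equivalent (same H\"older complex) and that both are LNE (uniformly LNE links plus Theorem \ref{Edson-Rodrigo}), so inner equivalence upgrades to outer equivalence; you instead exhibit an explicit piecewise isometry and bound its cross-piece distortion by $1+\sqrt{2}$, which avoids the LNE verification but is special to this example, while the paper's route is the one that generalizes. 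Two minor remarks: your tangency-order observations are not actually needed, since the purely topological facts that $\gamma_A$ is the non-manifold locus and that the pendant is the unique sheet with a free boundary arc already force $\Phi(\mathcal{T})=\mathcal{T}$; and the reduction of ambient topological equivalence to a comparison of links uses the \emph{ambient} version of the local conic structure theorem, which is standard for semialgebraic sets but strictly stronger than Theorem \ref{conical} as stated --- the paper is equally terse on this point, so this is not a gap relative to the source.
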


\begin{proof}
	Since the corresponding H\"older complexes of $(X_1,0)$ and $(X_2,0)$ are combinatorially equivalent, $(X_1,0)$ and $(X_2,0)$ are inner bi-Lipschitz equivalent. For each $0<t< 1$, $X_1(t)$ and $X_2(t)$ are finite unions of segments, with the angles between any two of them in the interval $[\frac{\pi}{6},\frac{5\pi}{6}]$. Then, $X_1(t)$ and $X_2(t)$ are $C$-LNE, for some $C>0$. By Theorem \ref{Edson-Rodrigo}, $(X_1,0)$ and $(X_2,0)$ are LNE, and thus they are also outer Lipschitz equivalent. 
	
	For each $\varepsilon>0$ small enough and $i=1,2$, consider the set $Y_i(\varepsilon)$ as the region of $\mathbb{S}^2_{\varepsilon}$ bounded by $X_i \cap \mathbb{S}^2_{\varepsilon}$, not containing the interior of the "tail" $\cup_{t>0}\overline{A(t)D_i(t)} \cap \mathbb{S}^2_{\varepsilon}$, and let $Y_i = \{0\}\cup\left(\cup_{0<\varepsilon < 1}Y_i(\varepsilon)\right)$. It is clear that $(X_1,0)$ and $(X_2,0)$ are topologically equivalent, and for every homeomorphism $h:(X_1,0) \to (X_2,0)$, we have $h(Y_1,0)=h(Y_2,0)$. 
	
	Suppose that $(X_1,0)$ and $(X_2,0)$ are ambient Lipschitz equivalent. Then $(Y_1,0)$ and $(Y_2,0)$ are ambient Lipschitz equivalent. In particular, the volume growth numbers of $Y_1$ and $Y_2$ at the origin must be equal (see \cite{Bir-Bras}). On the other hand, since ${\rm area}(Y_1\cap \mathbb{S}^2_{t}) \approx t$ and ${\rm area}(Y_2\cap \mathbb{S}^2_{t}) \approx t^2$, the volume growth number of $Y_1$ is equal to $2\cdot 1+1=3$ and the volume growth number of $Y_2$ is equal to $2\cdot 1+1=5$, contradiction. Therefore, $(X_1,0)$ and $(X_2,0)$ are not ambient Lipschitz equivalent.
\end{proof}


\end{document}